\newcommand{\arXiv}[1]{\href{http://arxiv.org/abs/#1}{\tt arXiv:\nolinkurl{#1}}}
\newcommand{\arxiv}[1]{\href{http://arxiv.org/abs/#1}{\tt arXiv:\nolinkurl{#1}}}
\newcommand{\googlebooks}[1]{(preview at \href{http://books.google.com/books?id=#1}{google books})}
\definecolor{dark-red}{rgb}{0.7,0.25,0.25}
\definecolor{dark-blue}{rgb}{0.15,0.15,0.55}
\definecolor{medium-blue}{rgb}{0,0,.8}
\theoremstyle{plain}
\newtheorem{thm}{Theorem}[section]
\newtheorem*{thm*}{Theorem}
\newtheorem{cor}[thm]{Corollary}
\newtheorem*{claim}{Claim}
\newtheorem{lem}[thm]{Lemma}
\newtheorem{prop}[thm]{Proposition}
\newtheorem{quest}[thm]{Question}
\newtheorem{quests}[thm]{Questions}
\theoremstyle{definition}
\newtheorem{defn}[thm]{Definition}
\newtheorem{nota}[thm]{Notation}
\newtheorem{assume}[thm]{Assumption}
\newtheorem{ex}[thm]{Example}
\newtheorem{rem}[thm]{Remark}
\newtheorem{rems}[thm]{Remarks}
\newtheorem{fact}[thm]{Fact}
\newtheorem{facts}[thm]{Facts}
\DeclareMathOperator{\Ad}{Ad}
\DeclareMathOperator{\CAR}{CAR}
\DeclareMathOperator{\GICAR}{GICAR}
\DeclareMathOperator{\End}{End}
\DeclareMathOperator{\id}{id}
\DeclareMathOperator{\OP}{op}
\DeclareMathOperator{\Out}{Out}
\DeclareMathOperator{\sign}{sign}
\DeclareMathOperator{\spann}{span}
\DeclareMathOperator{\Stab}{Stab}
\DeclareMathOperator{\Tr}{Tr}
\DeclareMathOperator{\tr}{tr}
\DeclareMathOperator{\wt}{wt}
\newcommand{\D}{\displaystyle}
\newcommand{\comment}[1]{}
\newcommand{\noshow}[1]{}
\newcommand{\MR}[1]{}
\newcommand{\be}{\begin{enumerate}[(1)]}
\newcommand{\ee}{\end{enumerate}}
\newcommand{\itm}[1]{\item[\underline{\ensuremath{#1}:}]}
\newcommand{\itt}[1]{\item[\underline{\text{#1}:}]}
\newcommand{\N}{\mathbb{N}}
\newcommand{\Z}{\mathbb{Z}}
\newcommand{\C}{\mathbb{C}}
\newcommand{\I}{\infty}
\newcommand{\set}[2]{\left\{#1 \middle| #2\right\}}
\renewcommand{\a}{\mathfrak{a}}
\newcommand{\n}{\mathfrak{n}}
\newcommand{\m}{\mathfrak{m}}
\newcommand{\jw}[1]{f^{(#1)}}
\newcommand{\op}{^{\OP}}
\def\semicolon{;}
\def\applytolist#1{
    \expandafter\def\csname multi#1\endcsname##1{
        \def\multiack{##1}\ifx\multiack\semicolon
            \def\next{\relax}
        \else
            \csname #1\endcsname{##1}
            \def\next{\csname multi#1\endcsname}
        \fi
        \next}
    \csname multi#1\endcsname}
\def\calc#1{\expandafter\def\csname c#1\endcsname{{\mathcal #1}}}
\def\bbc#1{\expandafter\def\csname bb#1\endcsname{{\mathbb #1}}}
\def\bfc#1{\expandafter\def\csname bf#1\endcsname{{\mathbf #1}}}
\def\sfc#1{\expandafter\def\csname s#1\endcsname{{\sf #1}}}
\tikzstyle{shaded}=[fill=red!10!blue!20!gray!30!white]
\tikzstyle{unshaded}=[fill=white]
\tikzstyle{empty box}=[circle, draw, thick, fill=white, opaque, inner sep=2mm]
\tikzstyle{annular}=[scale=.65, inner sep=1mm, baseline=-.1cm]
\tikzstyle{rectangular}=[scale=.75, inner sep=1mm, baseline=-.1cm]
\newcommand{\nbox}[6]{
	\draw[very thick, #1] ($#2+(-#3,-#3)+(-#4,0)$) rectangle ($#2+(#3,#3)+(#5,0)$);
	\coordinate (a) at ($#2+(-#4,0)$);
	\coordinate (b) at ($#2+(#5,0)$);
	\node at ($1/2*(a)+1/2*(b)$) {#6};
}
\newcommand{\String}[1]{
	\draw ($#1+(0,-.4)$) -- ($#1+(0,.4)$);
}
\newcommand{\BrokenString}[1]{
	\draw ($#1+(0,-.4)$) -- ($#1+(0,-.15)$);
	\draw ($#1+(0,.4)$) -- ($#1+(0,.15)$);
	\filldraw ($#1+(0,-.15)$) circle (.05cm);
	\filldraw ($#1+(0,.15)$) circle (.05cm);
}
\newcommand{\DottedString}[1]{
	\draw ($#1+(0,-.4)$) -- ($#1+(0,.4)$);
	\filldraw #1 circle (.05cm);
}
\newcommand{\BrokenTopString}[1]{
	\draw ($#1+(0,.4)$) -- ($#1+(0,0)$);
	\filldraw ($#1+(0,0)$) circle (.05cm);
}
\newcommand{\BrokenBottomString}[1]{
	\draw ($#1+(0,-.4)$) -- ($#1+(0,0)$);
	\filldraw ($#1+(0,0)$) circle (.05cm);
}
\newcommand{\PartialIso}[1]{
	\draw ($#1+(0,.15)$) -- ($#1+(0,.4)$);
	\draw ($#1+(0,-.4)$) -- ($#1+(.2,.4)$);
	\draw ($#1+(.2,-.4)$) -- ($#1+(.2,-.15)$);
	\filldraw ($#1+(0,.15)$) circle (.05cm);
	\filldraw ($#1+(.2,-.15)$) circle (.05cm);
}
\newcommand{\PartialIsoStar}[1]{
	\draw ($#1+(0,-.15)$) -- ($#1+(0,-.4)$);
	\draw ($#1+(0,.4)$) -- ($#1+(.2,-.4)$);
	\draw ($#1+(.2,.4)$) -- ($#1+(.2,.15)$);
	\filldraw ($#1+(0,-.15)$) circle (.05cm);
	\filldraw ($#1+(.2,.15)$) circle (.05cm);
}
\newcommand{\PartialDottedIso}[1]{
	\draw ($#1+(0,.15)$) -- ($#1+(0,.4)$);
	\draw ($#1+(0,-.4)$) -- ($#1+(.2,.4)$);
	\draw ($#1+(.2,-.4)$) -- ($#1+(.2,-.15)$);
	\filldraw ($#1+(0,.15)$) circle (.05cm);
	\filldraw ($#1+(.2,-.15)$) circle (.05cm);
	\filldraw ($#1+(.1,0)$) circle (.05cm);
}
\newcommand{\PartialDottedIsoStar}[1]{
	\draw ($#1+(0,-.15)$) -- ($#1+(0,-.4)$);
	\draw ($#1+(0,.4)$) -- ($#1+(.2,-.4)$);
	\draw ($#1+(.2,.4)$) -- ($#1+(.2,.15)$);
	\filldraw ($#1+(0,-.15)$) circle (.05cm);
	\filldraw ($#1+(.2,.15)$) circle (.05cm);
	\filldraw ($#1+(.1,0)$) circle (.05cm);
}
\newcommand{\AnnularString}[1]{
	\draw ($(#1:.5cm)$) -- ($(#1:1.5cm)$);
}
\newcommand{\AnnularBrokenTopString}[1]{
	\draw ($(#1:1.15cm)$) -- ($(#1:1.5cm)$);
	\filldraw ($(#1:1.15cm)$) circle (.05cm);
}
\newcommand{\identity}{
\begin{tikzpicture}[baseline=-.1cm]
	\nbox{}{(0,0)}{.4}{-.2}{-.2}{}
	\String{(0,0)}	
\end{tikzpicture}}
\newcommand{\confetti}{
\begin{tikzpicture}[baseline=-.1cm]
	\nbox{}{(0,0)}{.4}{-.2}{-.2}{}
	\draw (0,-.15)--(0,.15);
	\filldraw (0,-.15) circle (.05cm);
	\filldraw (0,.15) circle (.05cm);
\end{tikzpicture}}
\newcommand{\eOne}{
\begin{tikzpicture}[baseline=-.1cm]
	\nbox{}{(0,0)}{.4}{-.2}{-.2}{}
	\BrokenString{(0,0)}	
\end{tikzpicture}}
\newcommand{\eOnePerp}{
\begin{tikzpicture}[baseline=-.1cm]
	\nbox{}{(0,0)}{.4}{-.2}{-.2}{}
	\DottedString{(0,0)}	
\end{tikzpicture}}
\newcommand{\doubleDotStrand}{
\begin{tikzpicture}[baseline=-.1cm]
	\nbox{}{(0,0)}{.4}{-.2}{-.2}{}
	\String{(0,0)}	
	\filldraw (0,.15) circle (.05cm);
	\filldraw (0,-.15) circle (.05cm);
\end{tikzpicture}}
\newcommand{\doubleDotEnd}{
\begin{tikzpicture}[baseline=-.1cm]
	\nbox{}{(0,0)}{.4}{-.2}{-.2}{}
	\draw (0,-.4)--(0,.15);
	\filldraw (0,.15) circle (.05cm);
	\filldraw (0,-.15) circle (.05cm);
\end{tikzpicture}}
\newcommand{\cwBrokenLoop}{
\begin{tikzpicture}[rectangular]
	\clip (.5,1.1) --(-.9,1.1) -- (-.9,-1.1) -- (.5,-1.1);
	\filldraw (-.4,-.15) circle (.05cm);
	\filldraw (-.4,.15) circle (.05cm);
	\draw (-.4,-.4)--(-.4,-.15);
	\draw (-.4,.4)--(-.4,.15);
	\draw [very thick] (-.6,-.4) --(-.6,.4) -- (-.2,.4) -- (-.2,-.4)--(-.6,-.4);
	\draw (-.4,.4) arc (180:0:.3cm) -- (.2,-.4) arc (0:-180:.3cm);
	\draw (.1,.1) -- (.2,0) -- (.3,.1);
	\draw [very thick] (.4,1) --(-.8,1) -- (-.8,-1) -- (.4,-1)--(.4,1);
\end{tikzpicture}}
\newcommand{\ccwBrokenLoop}{
\begin{tikzpicture}[rectangular,xscale=-1]
	\clip (.5,1.1) --(-.9,1.1) -- (-.9,-1.1) -- (.5,-1.1);
	\filldraw (-.4,-.15) circle (.05cm);
	\filldraw (-.4,.15) circle (.05cm);
	\draw (-.4,-.4)--(-.4,-.15);
	\draw (-.4,.4)--(-.4,.15);
	\draw [very thick] (-.6,-.4) --(-.6,.4) -- (-.2,.4) -- (-.2,-.4)--(-.6,-.4);
	\draw (-.4,.4) arc (180:0:.3cm) -- (.2,-.4) arc (0:-180:.3cm);
	\draw (.1,.1) -- (.2,0) -- (.3,.1);
	\draw [very thick] (.4,1) --(-.8,1) -- (-.8,-1) -- (.4,-1)--(.4,1);
\end{tikzpicture}}
\newcommand{\cwDottedLoop}{
\begin{tikzpicture}[rectangular]
	\clip (.9,.9) --(-.9,.9) -- (-.9,-.9) -- (.9,-.9);
	\draw (0,0) circle (.5cm);
	\filldraw (-.5,0) circle (.05cm);
	\draw (.6,.1) -- (.5,0) -- (.4,.1);
	\draw [very thick] (.8,.8) --(-.8,.8) -- (-.8,-.8) -- (.8,-.8)--(.8,.8);
\end{tikzpicture}}
\newcommand{\ccwDottedLoop}{
\begin{tikzpicture}[rectangular]
	\clip (.9,.9) --(-.9,.9) -- (-.9,-.9) -- (.9,-.9);
	\draw (0,0) circle (.5cm);
	\filldraw (.5,0) circle (.05cm);
	\draw (-.6,.1) -- (-.5,0) -- (-.4,.1);
	\draw [very thick] (.8,.8) --(-.8,.8) -- (-.8,-.8) -- (.8,-.8)--(.8,.8);
\end{tikzpicture}}
\begin{document}
\title{Infinite index subfactors and the GICAR categories}
\author{Vaughan F. R. Jones and  David Penneys }
\date{\today}
\maketitle
\begin{abstract}
Given a II$_1$-subfactor $A\subset B$ of arbitrary index, we show that the rectangular GICAR category, also called the rectangular planar rook category, faithfully embeds as $A-A$ bimodule maps among the bimodules $\bigotimes_A^n L^2(B)$. 
As a corollary, we get a lower bound on the dimension of the centralizer algebras $A_0'\cap A_{2n}$ for infinite index subfactors, and we also get that $A_0'\cap A_{2n}$ is nonabelian for $n\geq 2$, where $(A_n)_{n\geq 0}$ is the Jones tower for $A_0=A\subset B=A_1$.
We also show that the annular GICAR/planar rook category acts as maps amongst the $A$-central vectors in $\bigotimes_A^n L^2(B)$, although this action may be degenerate.
We prove these results in more generality using bimodules.

The embedding of the GICAR category builds on work of Connes and Evans who originally found GICAR algebras inside Temperley-Lieb algebras with finite modulus.
\end{abstract}

\section{Introduction}

\subsection{Finite vs. infinite index}

In \cite{MR696688}, Jones pioneered the modern theory of subfactors.
Starting with a finite index {\rm II}$_1$-subfactor $A_0\subseteq A_1$, he used his basic construction to construct the Jones tower $(A_n)_{n\geq 0}$ iteratively by adding the Jones projections $(e_n)_{n\geq 1}$, which satisfy the Temperley-Lieb relations. 
Jones used these Temperley-Lieb algebras to show that the index lies in the range $\set{4\cos^2(\pi/n)}{n\geq 3}\cup [4,\I)$, and he found a hyperfinite subfactor for each allowed index value.

A finite index subfactor is studied by analyzing its standard invariant, the two towers of finite dimensional centralizer algebras $(A_i'\cap A_j)_{i=0,1;j\geq 0}$. 
The standard invariant has been axiomatized in three different ways: Ocneanu's paragroups \cite{MR996454}, Popa's $\lambda$-lattices \cite{MR1334479}, and Jones' planar algebras \cite{math/9909027}. 

Some finite index results generalize to infinite index subfactors.
Discrete, irreducible, ``depth $2$" subfactors correspond to outer (cocycle) actions of Kac algebras \cite{MR1055223,MR1387518}.
The classical Galois correspondence also holds for outer actions of infinite discrete groups and minimal actions of compact groups \cite{MR1622812}. 

Burns, in his Ph.D. thesis \cite{1111.1362}, studied extremality and rotations for infinite index subfactors, as the key ingredient in proving isotopy invariance for Jones' planar algebras in \cite{math/9909027} is the rotation operator (also known to Ocneanu \cite{MR1317353}). 
Essentially, Burns observed that for infinite index subfactors, the centralizer algebras $A_0'\cap A_{n}$ and the central $L^2$-vectors $A_0'\cap L^2(A_n)$ do not coincide.

Using this observation, the second author generalized the work of Burns in \cite{MR3040370}, where he gave a planar calculus for an arbitrary index {\rm II}$_1$-factor bimodule $\sb{A}H_A$. 
Setting $H^n=\bigotimes_A^n H$, he found two planar operads acting on the centralizer algebras $\cQ_n=A'\cap (A\op)'\cap B(H^n)$ and the central $L^2$-vectors $\cP_n=A'\cap H^n$ respectively whose actions are compatible.
We recover the subfactor case when $A=A_0$ and $H=L^2(A_1)$.
Interestingly, this planar structure was discovered without the use of Jones' basic construction and without the resulting Jones projections.

Hence we have one possible definition for the standard invariant of an infinite index subfactor, or a {\rm II}$_1$-factor bimodule: the centralizer algebras $\cQ_\bullet=(\cQ_n)_{n\geq 0}$ and the central $L^2$-vectors $\cP_\bullet=(\cP_n)_{n\geq 0}$, together with their compatible planar calculi.

\subsection{The simplest possible standard invariant}

The Jones subfactors with index at most 4 discovered in \cite{MR696688} have the simplest possible standard invariants; they consist entirely of the Temperley-Lieb algebras generated by the Jones projections.
Since these projections are always contained in the centralizer algebras, the Temperley-Lieb standard invariant is always contained within the standard invariant of a finite index subfactor.
Hence each subfactor planar algebra has a canonical Temperley-Lieb planar subalgebra. 

In \cite{MR1198815}, for every index greater than $4$, Popa found a (non-hyperfinite) subfactor whose standard invariant is only Temperley-Lieb, and his methods led to his famous subfactor reconstruction theorem \cite{MR1334479}.
An important open question is to determine for which indices greater than 4 there is a hyperfinite subfactor whose standard invariant is Temperley-Lieb.

The main motivation for this article is the following question.
\begin{quest}
For infinite index subfactors, what is the simplest possible standard invariant?
\end{quest}

When the index is infinite, one still has a Jones tower $(A_n)_{n\geq 0}$ of type {\rm II} factors, but $A_n$ is type {\rm II}$_\I$ for $n\geq 2$ (see Section \ref{sec:SubfactorBackground}).
In this case, Burns showed in \cite{1111.1362} that the odd canonical trace-preserving operator-valued weight $T_{2n+1} : A_{2n+1}\to A_{2n}$ is a conditional expectation, which results in an odd Jones projection $e_{2n+1}\in \cQ_{n+1}$. 
We immediately see that $\dim(\cQ_n)\geq n$, since the abelian algebra generated by the odd Jones projections is contained in $\cQ_n$.
However, the odd Jones projections actually give us non-abelian structure as well.

\begin{thm}
The odd Jones projections are equivalent in $\cQ_n$. 
Hence $\cQ_n$ is not abelian for $n\geq 2$.
\end{thm}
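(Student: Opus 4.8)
The plan is to work with the identification $\cQ_n=A'\cap(A\op)'\cap B(H^n)=\End_{A\text{-}A}\!\big(\bigotimes_A^nL^2(B)\big)$ recalled above (with $A=A_0$ and $B=A_1$), and to recognize each odd Jones projection, in this picture, as a cup-cap placed at one fixed tensor slot. Since all such cup-caps will turn out to have the same range up to a canonical unitary isomorphism of bimodules, the projections will be Murray--von Neumann equivalent, a suitable partial isometry between them being an $A$-$A$ bimodule map and hence an element of $\cQ_n$.

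Write $H={}_AL^2(B)_A$ and let $p\in\End_{A\text{-}A}(H)$ be the orthogonal projection of $H$ onto the canonical copy of the unit bimodule ${}_AL^2(A)_A\subseteq H$; equivalently, $p$ is the normalized cup-cap, which at the first level is the Jones projection $e_1$, and it is bounded precisely because $E_A\colon B\to A$ is a conditional expectation. First I would verify that, under the identification above, the odd Jones projection $e_{2k-1}\in\cQ_n$ (defined for $1\le k\le n$ by Burns' theorem that $T_{2k-1}$ is a conditional expectation) acts as the cup-cap inserted in the $k$-th slot:
\[
 e_{2k-1}\ =\ \id^{\otimes(k-1)}\otimes\,p\otimes\id^{\otimes(n-k)}
\]
(only the odd Jones projections belong to the even centralizers $\cQ_n$; the even ones land in the odd centralizers $A_0'\cap A_{2n+1}$). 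This identification --- which passes between the Jones tower built from the conditional expectations and the bimodule description, and which uses the boundedness of $p$ --- is the step I expect to carry the real content; it is precisely what the embedding of the rectangular planar rook/GICAR category into $\cQ_\bullet$ encodes.

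Granting this, the rest is routine. Because ${}_AL^2(A)_A$ is the unit object for Connes fusion of $A$-$A$ bimodules, the canonical collapsing map
\[
 \textstyle\bigotimes_A^{k-1}H\ \otimes_A\ {}_AL^2(A)_A\ \otimes_A\ \bigotimes_A^{n-k}H\ \xrightarrow{\ \sim\ }\ \bigotimes_A^{n-1}H
\]
is a unitary isomorphism of $A$-$A$ bimodules, and its domain is exactly the range of $e_{2k-1}$ inside $\bigotimes_A^nH$, taken with the restricted inner product (which agrees with the intrinsic Connes-fusion inner product). Hence the ranges of $e_1,e_3,\dots,e_{2n-1}$ are pairwise unitarily isomorphic as $A$-$A$ bimodules, and composing such an isomorphism with the projection onto one range and the inclusion of another produces a partial isometry $u\in\cQ_n$ with $u^*u=e_{2i-1}$ and $uu^*=e_{2j-1}$ --- equivalently, $u$ is the image of a planar-rook morphism relating the two idempotents, the usual finite-index partial isometry proportional to $e_3e_2e_1$ having no analogue here. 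Thus all the odd Jones projections are Murray--von Neumann equivalent in $\cQ_n$. Finally, for $n\ge2$ the projections $e_1=p\otimes\id^{\otimes(n-1)}$ and $e_3=\id\otimes p\otimes\id^{\otimes(n-2)}$ are plainly distinct --- they act differently on $\zeta\otimes\xi\otimes(\cdots)$ whenever $\xi\in L^2(A)$ and $\zeta\in L^2(B)\ominus L^2(A)$ is nonzero, which it is since $A\ne B$ --- so $\cQ_n$ contains two distinct Murray--von Neumann equivalent projections and therefore is not abelian.
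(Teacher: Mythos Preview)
Your proposal is correct and follows essentially the same approach as the paper. The paper makes the partial isometries explicit by introducing creation and annihilation operators $\a_i,\a_j^*$ (built from $e_A^*=L(\zeta)$ and $e_A=L(\zeta)^*$) and computing $(\a_i\a_j^*)(\a_j\a_i^*)=e_{2i-1}$, $(\a_j\a_i^*)(\a_i\a_j^*)=e_{2j-1}$ algebraically; your ``collapse at slot $j$, then re-insert $L^2(A)$ at slot $i$'' map is exactly $\a_i\a_j^*$, and your distinctness argument via a nonzero vector in $L^2(B)\ominus L^2(A)$ is what the paper does more generally in Lemma~6.10 and Corollary~6.12.
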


We prove this result in more generality for the case of a {\rm II}$_1$-factor bimodule $\sb{A}H_A$ containing a distinguished central vector $\zeta$, so $\cP_n\neq (0)$.
This is the natural analog of the bimodule $H=L^2(A_1)$ with distinguished $A_0$-central vector $\widehat{1}$.
We give the odd Jones projections for such bimodules in Section \ref{sec:BimoduleRepresentations}.

\subsection{GICAR and planar rook algebras and categories}

The Temperley-Lieb algebras appear implicitly in Lieb's ice-type model in statistical mechanics \cite{LiebIceModel,MR0498284}, \cite[Section 2.5]{JonesPANotes}.
The canonical algebra generated by the odd Jones projections together with the partial isometries witnessing the equivalences is actually another well-studied canonical operator algebra which arises in the study of fermions.

\begin{thm}\label{thm:Main2}
The gauge-invariant canonical anticommutation relations algebra $\GICAR(\cH_n)$ (also known as the fermion algebra) where $\dim(\cH_n)=n$ is represented faithfully in $\cQ_n$ as the odd Jones projections and the partial isometries between them.
\end{thm}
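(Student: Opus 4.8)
The plan is to identify the $\GICAR(\cH_n)$ algebra with the finite-dimensional $*$-algebra generated inside $\cQ_n$ by the odd Jones projections $e_1, e_3, \dots, e_{2n-1}$ together with partial isometries $v_k$ implementing the equivalences $e_{2k-1}\sim e_{2k+1}$ furnished by the previous theorem. First I would recall the standard presentation of $\GICAR(\cH_n)$: writing $c_1,\dots,c_n$ for the CAR generators of $\CAR(\cH_n)$, the gauge-invariant subalgebra $\GICAR(\cH_n)$ is generated by the ``quadratic'' elements $c_i^* c_j$, and as an abstract $C^*$-algebra it is isomorphic to $\bigoplus_{k=0}^n \Mat_{\binom{n}{k}}(\C)$. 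Equivalently, and more usefully here, $\GICAR(\cH_n)$ has a concrete combinatorial model as the rectangular planar rook category/algebra, whose multiplicative structure is exactly that of partial isometries built from commuting projections in ``staircase'' position. So the proof reduces to: (a) write down the candidate generators in $\cQ_n$; (b) verify they satisfy the GICAR/planar rook relations; (c) check faithfulness, i.e. that the representation does not collapse any of the matrix summands.

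For step (b), the key relations to verify are that the $e_{2k-1}$ are mutually commuting projections, that $v_k v_k^* = e_{2k+1}$ and $v_k^* v_k = e_{2k-1}$ (these come directly from the equivalence asserted in the previous theorem), and that $v_k$ commutes appropriately with the ``far away'' projections $e_{2j-1}$ for $|j-k|\geq 2$ while satisfying a braid-like compatibility with $e_{2k-3}$ and $e_{2k+3}$. I would compute all of these using the bimodule/planar picture of $\cQ_n$ from \cite{MR3040370}: each $e_{2k-1}$ is a diagrammatic projection (a cup-cap broken-string pattern in the appropriate spot), and the $v_k$ are the partial-isometry diagrams, so the relations become planar isotopy identities together with the one scalar normalization that appears when a closed loop is removed. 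This is exactly the mechanism by which Connes and Evans located GICAR inside Temperley-Lieb with finite modulus, as the abstract alludes to; the new point is that in the infinite-index setting the relevant ``modulus'' is supplied by the odd basic construction rather than by $[B:A]$, and the broken strings never close up into loops that would force an index scalar, so the computation goes through for arbitrary index.

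For step (c), faithfulness, I would argue that the $2^n$ minimal projections of the abelian algebra generated by $e_1,e_3,\dots,e_{2n-1}$ and its ``complements'' $e_{2k-1}^\perp$ are all nonzero in $\cQ_n$ — this is a rank/dimension computation in the bimodule picture, since the range of such a product of projections is a nonzero sub-bimodule of $H^n$ — and then observe that the partial isometries connect the minimal central projections of $\GICAR$ into single matrix blocks, so no block can be killed once its minimal projections survive. Concretely, each matrix summand $\Mat_{\binom{n}{k}}(\C)$ is generated by one surviving minimal projection together with the $v_j$'s conjugating it around, so nonvanishing of that one projection plus the relations from (b) force the whole block to embed. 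I expect the main obstacle to be exactly this nonvanishing/faithfulness step: verifying the relations is essentially bookkeeping with planar diagrams, but showing that every one of the exponentially many diagrammatic projections is genuinely nonzero in $\cQ_n$ requires a careful analysis of the bimodule $H^n$ — one must rule out any ``hidden'' relation forcing a degeneracy — and this is where the hypothesis that we have a II$_1$-factor bimodule (so that the relative tensor powers are well-behaved and the relevant central projections have strictly positive dimension) is used in an essential way.
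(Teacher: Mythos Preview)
Your outline is correct and closely parallels the paper's argument. The paper defines explicit creation and annihilation maps $\a_i\colon H^{n-1}\to H^n$ and $\a_i^*\colon H^n\to H^{n-1}$ (tensoring with the distinguished central vector $\zeta$ in slot $i$, respectively its adjoint), verifies the rectangular-GICAR/planar-rook relations for them directly, and identifies $e_{2i-1}=\a_i\a_i^*$ and the equivalence partial isometries as $\a_i\a_j^*$ --- so your candidate generators coincide with theirs. For faithfulness the paper is slightly more concrete than your sketch: it chooses a vector $\kappa\in K=H\ominus L^2(A)$ with $\langle\kappa\,|\,\kappa\rangle_A=1_A$, and for each standard-form word $w$ exhibits an explicit pair of simple tensors in $\zeta$'s and $\kappa$'s that separates $w$ from all words of at least the same length, then deduces linear independence of the standard-form words by induction on length. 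Your proposal to show that each of the $2^n$ minimal projections has nonzero range via the binomial decomposition $H^n\cong\bigoplus_k\binom{n}{k}K^k$ is equivalent for the purpose of injectivity on $G_n$ (and is exactly the ``simple proof'' the paper itself flags in the introduction); the explicit test-vector version is what the paper needs to upgrade the statement to faithfulness of the whole rectangular category functor $\sR\sG\to\sR(H)$, not merely of the endomorphism algebras. One minor caution: the relations you write for your $v_k$ (namely $v_k^*v_k=e_{2k-1}$ and $v_kv_k^*=e_{2k+1}$) are the $\sR\sG$/planar-rook relations satisfied by $\a_{k+1}\a_k^*$, not literally the Connes--Evans presentation, in which one takes $f_i=1-e_{2i-1}$ and the generators $u_i$ satisfy $u_i^*u_i=f_{i+1}(1-f_i)$; you should be explicit about which presentation you are matching, and make sure you have also recorded the relations among adjacent $v_k$'s (your ``braid-like compatibility'' needs to be made precise), since commutation with far-away projections alone does not pin down the algebra.
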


For finite index subfactors, this map was constructed by Connes and Evans in their work on representations of the Virasoro algebra \cite{MR990778}.
Our map is the bimodule analog, which is independent of von Neumann dimension.

In fact, there is a simple proof of the existence of such an injection, although further analysis is needed to show the image is correct.
Our distinguished $A$-central vector $\zeta\in H$ yields an $A-A$ bimodule isomorphism $H\cong L^2(A)\oplus K$ where $L^2(A)\cong \overline{A\zeta}^{\|\cdot\|_2}$ and $K\cong\{\zeta\}^\perp$. 
By the binomial theorem, 
$$
\bigotimes_{A}^n H 
\cong
(L^2(A)\oplus K)^{\otimes_A n}
\cong 
\bigoplus_{j=0}^n {n\choose j} K^j
$$
where $K^0=L^2(A)$, and $K^j=\bigotimes_A^j K$.
The obvious intertwiners amongst the $n\choose j$ copies of $K^j$ give a canonical inclusion 
$$
\bigoplus_{j=0}^n M_{n\choose j}(\C) 
\hookrightarrow
\End_{A-A}\left( \bigotimes^n_A H\right).
$$
The left hand side above is isomorphic to $\GICAR(\cH_n)$.

We compute our map explicitly in Section \ref{sec:RectangularGICARReps}, and we show it is compatible with the towers $\GICAR(\cH_\bullet)=(\GICAR(\cH_n))_{n\geq 0}$ and $\cQ_\bullet$, along with their standard representations. 
The GICAR tower arises from choosing an orthonormal basis $(\xi_n)$ of an infinite dimensional separable $\cH$, and setting $\cH_n=\spann\{\xi_1,\dots, \xi_n\}$.
Again, we do so in more generality:

\begin{thm}
The tower $\GICAR(\cH_\bullet)$ fits naturally into a ``rectangular" $*,\otimes$-category $\sR\sG$ which acts faithfully as $A-A$ bimodule maps amongst the $H^n$'s. This action extends the faithful representation from Theorem \ref{thm:Main2}.
\end{thm}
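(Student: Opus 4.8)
The plan is to pin down $\sR\sG$ as an abstract $*,\otimes$-category, to build the action functor out of the distinguished central vector $\zeta$, and then to upgrade the faithfulness of Theorem~\ref{thm:Main2} --- which concerns only the endomorphism algebras --- to faithfulness on all hom-spaces, by exhibiting every hom-space as a corner of an endomorphism algebra. First I would define $\sR\sG$ to have objects the non-negative integers, with $\sR\sG(n\to m)$ the complex span of rectangular GICAR (equivalently, rectangular planar rook) diagrams: non-crossing, order-preserving partial pairings of the $n$ bottom points with the $m$ top points, with every unpaired point terminated by a dot. Vertical stacking is composition, where any resulting component that carries two dots and meets no boundary is simply erased --- no loop scalar appears, which is precisely why the GICAR categories, unlike Temperley--Lieb, see no modulus. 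Horizontal juxtaposition of rectangles is $\otimes$ (with unit object $0$), and reflection in a horizontal line, extended conjugate-linearly, is $*$. I would then record two structural facts: $\End_{\sR\sG}(n)$ is the planar rook algebra, which decomposes as $\bigoplus_{j=0}^{n} M_{\binom{n}{j}}(\C)\cong\GICAR(\cH_n)$ with Bratteli diagram Pascal's triangle; and the inclusions of objects $n\hookrightarrow n+1$ (adjoining a through-strand) induce on endomorphism algebras precisely the standard tower inclusions $\GICAR(\cH_n)\subset\GICAR(\cH_{n+1})$ attached to the flag $\cH_n=\spann\{\xi_1,\dots,\xi_n\}$. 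Equivalently, $\sR\sG$ is the $*,\otimes$-category universally generated by one object $X$ (the object $1$) and one morphism $r\colon 0\to X$ subject to $r^*r=\id_0$.

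Next I would construct the action $\Phi\colon\sR\sG\to\mathsf{Bim}(A)$, the $*,\otimes$-category of $A-A$ bimodules. Normalising $\zeta$ so that the $A-A$ bimodule map $r\colon L^2(A)\to H$, $\widehat{a}\mapsto a\zeta$, is an isometry (i.e.\ $r^*r=\id_{L^2(A)}$) --- the single place where centrality, hence boundedness, of $\zeta$ enters --- I set $\Phi(n)=H^n=\bigotimes_A^n H$ (with $H^0=L^2(A)$) and send the generating morphism of $\sR\sG$ to $r$. By the universal property this determines a functor as soon as the relation $\Phi(r)^*\Phi(r)=\id_{L^2(A)}$ is verified, and $\Phi$ is then automatically a $*,\otimes$-functor once the coherence isomorphisms $H^n\otimes_A H^m\cong H^{n+m}$ are fixed. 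On an arbitrary diagram, $\Phi$ is computed slot-by-slot from the four elementary pieces $\id_H$, $r$, $r^*$, and the projection $rr^*$ onto $\overline{A\zeta}$; carrying this out (Section~\ref{sec:RectangularGICARReps}) shows that $\Phi$ restricted to $\End_{\sR\sG}(n)=\GICAR(\cH_n)$ is exactly the representation by odd Jones projections and partial isometries from Theorem~\ref{thm:Main2}, so the $\sR\sG$-action does extend it, while compatibility with the towers $\GICAR(\cH_\bullet)$ and $\cQ_\bullet$ is built into the monoidality of $\Phi$.

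For faithfulness I would observe that in $\sR\sG$ every object $n$ is an idempotent retract of $n+m$: the diagram $e\in\End_{\sR\sG}(n+m)$ that caps off the last $m$ strands --- i.e.\ $\id_X^{\otimes n}$ tensored with $(rr^*)^{\otimes m}$ --- is an idempotent isomorphic to $\id_n$, with splitting maps $i\colon n\to n+m$ and $p\colon n+m\to n$ built from $r$ and $r^*$; symmetrically $m$ is a retract of $n+m$ via maps $j\colon m\to n+m$, $q\colon n+m\to m$. Then $x\mapsto jxp$ identifies $\sR\sG(n\to m)$ with a corner of $\End_{\sR\sG}(n+m)$, with inverse $y\mapsto qyi$. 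Hence if $\Phi(x)=0$ then $\Phi(jxp)=\Phi(j)\Phi(x)\Phi(p)=0$; since $\End_{\sR\sG}(n+m)\cong\GICAR(\cH_{n+m})$ and $\Phi$ is faithful there by Theorem~\ref{thm:Main2}, we conclude $jxp=0$, hence $x=q(jxp)i=0$. Thus $\Phi$ is faithful on every hom-space and restricts to the representation of Theorem~\ref{thm:Main2}, which is the assertion.

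The faithfulness step is the soft one: once $\sR\sG$ has been set up and Theorem~\ref{thm:Main2} is in hand, the retract argument is essentially formal. The main obstacle is the first step --- checking that the diagrammatic composition is well defined and associative, and identifying $\End_{\sR\sG}(n)$ with $\GICAR(\cH_n)$ together with the correct tower structure (this is the Connes--Evans-type computation) --- together with the more routine bookkeeping of the second step, which writes $\Phi$ out on a general diagram and confirms it restricts to the map of Theorem~\ref{thm:Main2}.
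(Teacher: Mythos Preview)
Your proposal is correct, and the construction of the functor $\Phi$ via the isometry $r=L(\zeta)\colon L^2(A)\to H$ is essentially the paper's construction (the paper writes $e_A^*=L(\zeta)$ and builds $\a_i=\id_{i-1}\otimes_A e_A^*\otimes_A\id_{n-i}$, then checks Relations \eqref{rel:AG1}--\eqref{rel:AG2} in Proposition~\ref{prop:Relations}). Your universal-property phrasing of $\sR\sG$ as freely generated by a single coisometry $r\colon 0\to 1$ is not stated in the paper in that form, but it is equivalent to the paper's presentation of $\sR\sG$ by generators $\alpha_i,\alpha_j^*$ and the standard-form normalisation of Proposition~\ref{prop:Standard}.

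Where you genuinely diverge is in the faithfulness argument. The paper does \emph{not} bootstrap from Theorem~\ref{thm:Main2}; instead it proves faithfulness on all hom-spaces directly and obtains Theorem~\ref{thm:Main2} as a special case. Concretely, the paper chooses a vector $\kappa\in K=H\ominus L^2(A)$ with $\langle\kappa|\kappa\rangle_A=1_A$ (Lemma~\ref{lem:Kappa}), and for each standard-form word $w$ builds explicit test vectors $\xi,\eta$ --- simple relative tensors in $\zeta$'s and $\kappa$'s with the $\zeta$'s placed exactly at the cap/cup positions of $w$ --- so that $\langle w\xi,\eta\rangle=1$ while $\langle w'\xi,\eta\rangle=0$ for any other standard-form word $w'$ of at least the same length (Proposition~\ref{prop:injective}). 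An induction on word length then gives linear independence of standard-form words (Corollary~\ref{cor:LinearlyIndependent}), hence faithfulness (Theorem~\ref{thm:GICARinRH}).

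Your retract argument is cleaner and more formal: it shows that once faithfulness on endomorphism algebras is in hand, faithfulness on all hom-spaces is automatic in any $*,\otimes$-category where every object is a retract of a larger one via the generating coisometry. The paper's approach, on the other hand, is self-contained --- it never invokes Theorem~\ref{thm:Main2} as input --- and the explicit test vectors yield slightly more, namely a triangular separation of standard-form words that makes the linear independence completely transparent. Either route is valid; yours would let one shorten Section~\ref{sec:RectangularGICARReps} by deleting Lemma~\ref{lem:Kappa}, Proposition~\ref{prop:injective}, and Corollary~\ref{cor:LinearlyIndependent} in favour of a one-paragraph corner argument, at the cost of making the proof of the categorical statement logically dependent on the algebra-level Theorem~\ref{thm:Main2} (which in the paper's organisation is proved \emph{via} the categorical statement, so you would need an independent proof of Theorem~\ref{thm:Main2} first --- for instance the binomial-theorem sketch from the introduction).
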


For a finite index subfactor, the image of Connes and Evans' map, which is also the map from Theorem \ref{thm:Main2}, consists of the Kauffman diagrams in the Temperley-Lieb algebra with only shaded caps and cups \cite[Lemma 4.2, Theorem 4.3 ]{MR990778}. 
Contracting shaded regions, we obtain a diagrammatic algebra which also appears in the literature as the planar rook algebra (see Section \ref{sec:PlanarRookAlgebras}).
$$
\begin{tikzpicture}[rectangular]
	\clip (2.5,1.1) --(-1.1,1.1) -- (-1.1,-1.1) -- (2.5,-1.1);
	\filldraw[shaded] (1.8,-1)--(.8,1) -- (1.2,1) -- (2.2,-1);
	\filldraw[shaded] (0,1)--(0,-1) -- (.4,-1) -- (.4,1);
	\filldraw[shaded] (.8,-1) arc (180:0:.3cm);
	\filldraw[shaded] (-.8,-1) arc (180:0:.3cm);
	\filldraw[shaded] (-.8,1) arc (-180:0:.3cm);
	\filldraw[shaded] (1.6,1) arc (-180:0:.3cm);
	\draw [very thick] (2.4,1) --(-1,1) -- (-1,-1) -- (2.4,-1)--(2.4,1);
\end{tikzpicture}
\longleftrightarrow
\begin{tikzpicture}[baseline=-.1cm, scale=1.5]
	\nbox{}{(0,0)}{.4}{.2}{0}{}
	\BrokenString{(-.4,0)}
	\String{(-.2,0)}
	\PartialIsoStar{(0,0)}
\end{tikzpicture}
\longleftrightarrow
\left(
\begin{tikzpicture}[baseline=.2cm, scale=.6]
	\clip (-.2,-.2) -- (3.2,-.2) -- (3.2,1.2) -- (-.2,1.2);
	\draw (1,0) -- (1,1);
	\draw (2,1) -- (3,0);
	\filldraw (0,0) circle (.1cm);
	\filldraw (1,0) circle (.1cm);
	\filldraw (2,0) circle (.1cm);
	\filldraw (3,0) circle (.1cm);
	\filldraw (0,1) circle (.1cm);
	\filldraw (1,1) circle (.1cm);
	\filldraw (2,1) circle (.1cm);
	\filldraw (3,1) circle (.1cm);
\end{tikzpicture}
\right)$$

The representation theory of these diagrammatic algebras was studied in \cite{MR2541502}, where they showed the Bratteli diagram for the tower of algebras resulting from the right inclusion is Pascal's Triangle. 
Of course this also follows from the isomorphism with the tower of GICAR algebras (see Sections \ref{sec:GICAR}, \ref{sec:PlanarRookCategories}, and \ref{sec:DigrammaticFermions}), and we get a diagrammatic representation of the infinite dimensional GICAR algebra in Section \ref{sec:DigrammaticFermions}.
We remark that Bigelow-Ramos-Yi showed that the Jones and Alexander polynomials can be recovered via traces on the planar rook algebras \cite{MR2978881}.

Just as there is an annular version of the Temperley-Lieb category, there is an annular GICAR category $\sA\sG$, which contains the rectangular GICAR category $\sR\sG$.
$$
\begin{tikzpicture}[annular]
	\clip (0,0) circle (1.6cm);
	\draw (0:1.15cm) -- (0:1.5cm);
	\filldraw (0:1.15cm) circle (.05cm);
	\draw (60:1.15cm) -- (60:1.5cm);
	\filldraw (60:1.15cm) circle (.05cm);
	\draw (60:0.5cm) .. controls ++(60:.4cm) and ++(-60:.4cm) .. (120:1.5cm);
	\draw (180:0.5cm) -- (180:.85cm);
	\filldraw (180:.85cm) circle (.05cm);
	\draw (180:1.15cm) -- (180:1.5cm);
	\filldraw (180:1.15cm) circle (.05cm);
	\draw (0:0.5cm) .. controls ++(0:.4cm) and ++(120:.4cm) .. (300:1.5cm);
	\draw (240:0.5cm) -- (240:.85cm);
	\filldraw (240:.85cm) circle (.05cm);
	\draw (240:1.15cm) -- (240:1.5cm);
	\filldraw (240:1.15cm) circle (.05cm);
	\node at (30:.75cm) {$\star$};
	\node at (30:1.25cm) {$\star$};
	\draw[very thick, red] (30:.5cm) -- (30:1.5cm);
	\draw [very thick] (0,0) circle (.5cm);
	\draw [very thick] (0,0) circle (1.5cm);
\end{tikzpicture}
\longleftrightarrow
\begin{tikzpicture}[rectangular]
	\clip (1.9,1.1) --(-1.1,1.1) -- (-1.1,-1.1) -- (1.9,-1.1);
	\draw (-.6,1)--(-.6,.35);
	\filldraw (-.6,.35) circle (.05cm);
	\draw (.2,1)--(.2,.35);
	\filldraw (.2,.35) circle (.05cm);
	\draw (.6,1)--(.6,.35);
	\filldraw (.6,.35) circle (.05cm);
	\draw (1.4,1)--(1.4,.35);
	\filldraw (1.4,.35) circle (.05cm);
	\draw (-.2,1)--(-.6,-1);
	\draw (1,1)--(1.4,-1);
	\draw (.1,-1)--(.1,-.35);
	\filldraw (.1,-.35) circle (.05cm);	
	\draw (.7,-1)--(.7,-.35);
	\filldraw (.7,-.35) circle (.05cm);	
	\draw [very thick] (1.8,1) --(-1,1) -- (-1,-1) -- (1.8,-1)--(1.8,1);
\end{tikzpicture}
$$
Diagrams in $\sA\sG$ are obtained from diagrams in $\sR\sG$ by tensoring the morphisms with themselves around the outside, i.e., gluing the rectangles into annuli, and then allowing for rotation.
This has two consequences:
\be
\item
$\sA\sG$ is no longer a tensor category, and
\item 
$\sA\sG$ must act on the spaces obtained from the $H^n$'s by tensoring themselves on the outside, i.e., the invariant vectors of the bimodules.
\ee
Using the Burns rotations studied in \cite{MR3040370}, we get the following theorem:

\begin{thm}
There is an action of the annular GICAR category $\sA\sG$ as maps amongst the sequence of central $L^2$-vectors $\cP_\bullet$.
\end{thm}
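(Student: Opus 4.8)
The plan is to assemble the action from two pieces that are already in hand: the faithful action of the rectangular category $\sR\sG$ on $H^\bullet$ by $A-A$ bimodule maps established in the previous theorem, and the Burns rotation on the central $L^2$-vectors supplied by the planar calculus of \cite{MR3040370}. First I would observe that the $\sR\sG$-action passes to $\cP_\bullet$ for free: if $\phi\colon H^m\to H^n$ is an $A-A$ bimodule map and $\eta\in\cP_m=A'\cap H^m$, then $a\phi(\eta)=\phi(a\eta)=\phi(\eta a)=\phi(\eta)a$ for every $a\in A$, so $\phi(\eta)\in\cP_n$. Thus $\sR\sG$ already acts on $\cP_\bullet$, and the theorem is the assertion that this extends to the larger, non-tensor category $\sA\sG$, whose only new generators over $\sR\sG$ are the rotations obtained by spinning a glued-up rectangular diagram around the annulus --- equivalently, by moving the distinguished $\star$-ray past the diagram.

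Next I would fix a presentation of $\sA\sG$ adapted to this: it is generated by the (glued-up) morphisms of $\sR\sG$ together with an invertible rotation $\rho_n$ on the object with $n$ boundary points, composition being radial stacking of annuli, subject to (i) all relations already holding in $\sR\sG$, and (ii) the annular isotopy relations that rewrite ``$\rho$ composed with a rectangular generator $T$'' by pulling the corresponding string once around the annulus. Reading these off the annular pictures above, the generators involved are the GICAR/planar-rook ones, namely through-strands together with the broken and dotted strings (diagrammatically, the Jones projection $e_1$ and its complement), so only finitely many relation-types need inspection.

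I would then define the candidate functor $F$ on generators by sending each $\sR\sG$-morphism to its restriction to $\cP_\bullet$ from the previous theorem and sending $\rho_n$ to the Burns rotation $\rho_n\colon\cP_n\to\cP_n$ of \cite{MR3040370}, and verify the relations. Relations of type (i) hold because they hold for the $\sR\sG$-action on $H^\bullet$. For relations of type (ii), the Burns rotation was constructed in \cite{MR3040370} as part of a planar calculus on $\cP_\bullet$ compatible with the action on the $H^\bullet$'s, so the ``rotate past $T$'' identities hold at the operator level; I would check them on the individual GICAR generators by unwinding the definition of $\rho_n$ on a through-strand and on a broken string. Since the theorem claims only an action and --- as the abstract flags --- allows it to be degenerate, no faithfulness or non-degeneracy need be proved, so the infinite-index pathologies of the Burns rotation (it need not be unitary, need not have finite order, and a priori is defined only on the algebraic central vectors) do not obstruct the construction.

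The main obstacle is exactly this relation-checking: confirming that ``pulling a through-strand around the annulus'' and ``pulling a broken string around the annulus'' correspond on $\cP_\bullet$ \emph{precisely} to the compatibility identities for $\rho$ from \cite{MR3040370}, and that no boundedness or finite-order hypothesis --- which would fail in the general infinite-index setting --- is secretly needed to make the assignment consistent; a secondary chore is nailing down the presentation of $\sA\sG$ with exactly these relations. Once compatibility of $\rho_n$ with the $\sR\sG$-generators is established, functoriality on arbitrary composites is formal, since every annular GICAR diagram isotopes into a glued-up rectangular diagram followed by a single rotation, and the previous theorem already handles the rectangular part.
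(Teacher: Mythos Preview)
Your overall strategy---restrict the rectangular action to $\cP_\bullet$ and adjoin the Burns rotation for $\tau$---is exactly what the paper does. But there is a genuine gap in your proposal, and it lies precisely in the sentence where you dismiss the ``infinite-index pathologies of the Burns rotation (it need not be unitary, need not have finite order\dots)'' as irrelevant because no faithfulness is claimed.

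This is wrong: faithfulness is not the issue, well-definedness is. The category $\sA\sG$ is defined by generators and relations, and among the relations is \eqref{rel:AG3}: $\tau^*=\tau^{-1}$ and $\tau^n=\id_{[n]}$. For the assignment $\tau\mapsto\rho$ to extend to a $*$-functor, $\rho$ \emph{must} be unitary and \emph{must} satisfy $\rho^n=\id_{\cP_n}$. If $\rho$ fails either, the functor simply does not exist---no amount of allowing degeneracy or non-faithfulness rescues this. The paper handles this by imposing Assumption \ref{assume:Extremal} (extremality of $H$), which by \cite{MR3040370} guarantees that $\rho=\sum_\beta L_\beta R_\beta^*$ converges strongly on $\cP_n$ to a unitary with $\rho^n=\id$. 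You need this hypothesis, and you need to invoke it explicitly for \eqref{rel:AG3}.

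A secondary point: your plan to present $\sA\sG$ as ``$\sR\sG$ plus rotations plus isotopy relations'' and then check compatibility diagrammatically is workable but unnecessarily vague. The paper already has an explicit algebraic presentation of $\sA\sG$ (Relations \eqref{rel:AG1}--\eqref{rel:AG4}, with \eqref{rel:AG5} derived), so the proof reduces to checking those four relation families. Relations \eqref{rel:AG1}--\eqref{rel:AG2} are inherited from the rectangular action (Proposition \ref{prop:Relations}); \eqref{rel:AG3} is unitarity and periodicity of $\rho$ (extremality); and \eqref{rel:AG4}, namely $\a_i^*\rho=\rho\a_{i-1}^*$ for $i\geq 2$, is verified by a direct inner-product computation against simple tensors using the defining property of the Burns rotation. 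That is the entire proof---no separate ``pull a broken string around the annulus'' check is needed once you work with the algebraic generators.
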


However, this action is not necessarily faithful, and there are subfactor examples where it is completely degenerate. 
This is in stark contrast to the finite index case, where the action of the annular Temperley-Lieb category is never degenerate.
In Section \ref{sec:Representations}, we compute the representation theory of $\sR\sG$ and $\sA\sG$ in the spirit of Graham and Lehrer's cellular algebras \cite{MR1376244}, as was done for the affine and annular Temperley-Lieb categories in \cite{MR1659204} and \cite{MR1929335} respectively.

\subsection{Examples}

By Theorem \ref{thm:Main2}, we see that $\cQ_n$ must contain $\GICAR(\cH_n)$.
In Examples \ref{ex:OuterZAction} and \ref{ex:OuterZAction2}, we give an example of a {\rm II}$_1$-factor bimodule with a distinguished central vector such that $\cQ_n$ is exactly the image of $\GICAR(\cH_n)$ and $\dim(\cP_n)=1$ for all $n\geq 0$.
However, this example does not come from a subfactor, and at this point, we do not have such an example.

We note that in the subfactor case, $\cP_{2n} \cong L^2(\cQ_n,\Tr_n)$ \cite[Remark 4.27]{MR3040370}, and the only Hilbert-Schmidt element in $\cQ_n$ in the image of $\GICAR(\cH_n)$ is the product of all the odd Jones projections, which can be identified with the element $\widehat{1}\otimes \cdots \otimes \widehat{1}\in H^n$ (see Example \ref{ex:SInfinity}).
In \cite{MR3040370} it was shown that when $H=L^2(A_1)$ for the subgroup-subfactor $A_0=R\rtimes \Stab(1)\subset R\rtimes S_\I=A_1$, we have $\dim(\cQ_n)<\I$ and $\dim(\cP_n)=1$ for all $n\geq 0$. 

\subsection{Outline}

In Section \ref{sec:Fermions}, we give a background on fermionic Fock space and the CAR and GICAR algebras along with planar rook algebras.
In Section \ref{sec:GICARCategories}, we define the diagrammatic annular and rectangular planar rook categories and the abstract annular and rectangular GICAR categories, and we show they are respectively equivalent.
We then give the classification of the finite dimensional Hilbert space representations of the annular and rectangular categories in Section \ref{sec:Representations}.
We give the background necessary for our {\rm II}$_1$-factor bimodule and subfactor representations of these categories in Section \ref{sec:InfiniteBackground}, and we construct these representations in Section \ref{sec:BimoduleRepresentations}.

\subsection{Future research}

We will continue to search for an example of an infinite index subfactor with the simplest possible standard invariant, or to attempt to show no such example exists.

In the recent article \cite{1110.5671}, the authors clarify the connection between bifinite Hilbert bimodules and two-sided dualizability.
Given an infinite index {\rm II}$_1$-subfactor $A\subset B$, the standard bimodule $\sb{A}L^2(B)_B$ is finite on only one side, so we have only one-sided duals.
In future work, we will clarify the connection between one-sided finite Hilbert bimodules and one-sided dualizability.
We will work with an operator-valued index for bimodules over finite von Neumann algebras which may be infinite in several distinct ways.
It would be interesting if there were different types of one-sided duals associated to the different flavors of one-sided finite index bimodules.

\subsection{Acknowledgements}
This work was completed in three installments: 
while David Penneys visited Vanderbilt University in Spring 2011 (thanks to Dietmar Bisch and Jesse Peterson);
while both authors visited Institut Henri Poincar\'{e} during the 2011 trimester on von Neumann algebras and ergodic theory of groups actions (thanks to the organizers Damien Gaboriau, Sorin Popa, and Stefaan Vaes); and
during the Summer of 2013.

The second author would like to thank Michael Hartglass and James Tener for helpful conversations.
David Penneys was supported in part by the Natural Sciences and Engineering Research Council of Canada.
Both authors were also supported by 
NSF DMS grant 0856316
and
DOD-DARPA grants HR0011-11-1-0001 and HR0011-12-1-0009.

\section{Fermions and planar rook algebras}\label{sec:Fermions}

In this section, we give the background material on fermionic Fock space, the CAR and GICAR algebras, and planar rook algebras.

\subsection{Fermions, CAR, and GICAR}\label{sec:GICAR}

We take the following definitions from \cite[Chapter 18]{JonesVNA}.
Suppose $\cH$ is a Hilbert space. 

\begin{defn}
The $n$-th exterior power of $\cH$ is $\Lambda^n\cH=p_n\bigotimes^n \cH$, where $p_n$ is the projection given by
$$
p_n(\xi_1\otimes\cdots \otimes \xi_n ) = \frac{1}{n!}\sum_{\sigma\in S_n} (-1)^{\sign(\sigma)} \xi_{\sigma(1)}\otimes\cdots\otimes \xi_{\sigma(n)}.
$$
The fermionic Fock space $\cF(\cH)$ is given by $\cF(H)=\bigoplus_{n\geq 0} \Lambda^n \cH$.
Given $\xi_1,\dots,\xi_n\in\cH$, we set
$$
\xi_1\wedge\cdots\wedge \xi_n = \sqrt{n!}\,\, p_n(\xi_1\otimes\cdots\otimes \xi_n).
$$
The inner product on $\cF(\cH)$ is given by
$$
\langle 
\eta_1\wedge\cdots \wedge \eta_n,
\xi_1\wedge\cdots\wedge \xi_n
\rangle
=
\det\big((\langle \eta_i,\xi_j\rangle)_{i,j}\big).
$$
For $f\in \cH$, the left creation operator $a(f)$ is given by the unique linear extension of
$$
a(f)(\xi_1\wedge\cdots \wedge \xi_n) = f\wedge \xi_1\wedge\cdots\wedge \xi_n),
$$
and its adjoint is given by
$$
a(f)^*(\xi_1\wedge\cdots \wedge \xi_n) = \sum_{i=1}^n (-1)^{i+1} \langle \xi_i,f\rangle(\xi_1\wedge\cdots \wedge\widehat{\xi_i}\wedge\cdots\wedge \xi_n).
$$
\end{defn}

\begin{rem}
The wave function of several fermions is antisymmetric, so the exterior power $\Lambda^n(\cH)$ describes $n$ identical fermions.
The fermionic Fock space $\cF(\cH)$ is used to treat a countably infinite family of fermions.
\end{rem}

\begin{defn}
If $\cH$ is a complex vector space, the canonical anticommutation relations algebra $\CAR(\cH)$ is the unital $*$-algebra with generators $a(f)$ for $f\in \cH$ subject to the following relations:
\begin{align}
&\text{The map $f\mapsto a(f)$ is linear.}
\tag{CAR1}
\label{rel:CAR1}
\\
&\text{$a(f)a(g)+a(g)a(f)=0$ for all $f,g\in\cH$.}
\tag{CAR2}
\label{rel:CAR2}
\\
&\text{$a(f)a(g)^*+a(g)^*a(f)=\langle f,g\rangle 1_{B(\cH)}$ for all $f,g\in\cH$.}
\tag{CAR3}
\label{rel:CAR3}
\end{align}
\end{defn}

\begin{fact}
There is a unique C* norm and normalized trace on $\CAR(\cH)$.
\end{fact}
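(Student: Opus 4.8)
The plan is to prove the three parts of the fact — existence and uniqueness of a C*-norm, and existence and uniqueness of a normalized trace — by using the standard description of $\CAR(\cH)$ as a UHF-type algebra, or rather by exhibiting $\CAR(\cH)$ as a direct limit of full matrix algebras when $\cH$ is finite dimensional, and then taking a further direct limit. First I would reduce to the finite-dimensional case: if $\dim\cH = n < \infty$, pick an orthonormal basis $f_1,\dots,f_n$, and use the relations \eqref{rel:CAR1}, \eqref{rel:CAR2}, \eqref{rel:CAR3} to check that the $2^n$ ordered products $a(f_{i_1})^{\epsilon_1}\cdots a(f_{i_k})^{\epsilon_k}$ (with $i_1 < \cdots < i_k$ and $\epsilon_j \in \{1,*\}$ suitably normal-ordered) span $\CAR(\cH)$ linearly; this shows $\dim_{\C}\CAR(\cH) \leq 4^n$. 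Then I would produce a concrete $*$-representation on Fock space $\cF(\cH)$ by $f \mapsto a(f)$ (the creation operators of the first definition), which is the Jordan--Wigner type construction; one checks directly that this representation is faithful and that its image is all of $\Mat_{2^n}(\C)$ (equivalently, exhibit an explicit isomorphism $\CAR(\C^n) \cong \Mat_2(\C)^{\otimes n} \cong \Mat_{2^n}(\C)$ using the self-adjoint unitaries built from $a(f_i) + a(f_i)^*$ and $i(a(f_i) - a(f_i)^*)$ together with a $\Z/2$-grading operator to handle the anticommutation across tensor factors). Since $\CAR(\C^n) \cong \Mat_{2^n}(\C)$ is a simple finite-dimensional C*-algebra, it has a unique C*-norm and a unique normalized (tracial) state, namely $2^{-n}\Tr$.

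Next I would pass to the general case. For an inclusion $\cH \subseteq \cK$ of finite-dimensional Hilbert spaces, the universal property of $\CAR$ gives a unital $*$-homomorphism $\CAR(\cH) \to \CAR(\cK)$, which is injective because the source is simple; moreover it is trace-preserving for the normalized traces, since a unital embedding of one matrix algebra into another carries the unique normalized trace of the smaller into the restriction of the unique normalized trace of the larger (this is where the normalization is essential — a unital embedding $\Mat_{2^k} \hookrightarrow \Mat_{2^m}$ is automatically trace-preserving for normalized traces). Hence for arbitrary $\cH$ we have $\CAR(\cH) = \varinjlim \CAR(\cH_F)$ over finite-dimensional subspaces $\cH_F$, a direct limit of simple finite-dimensional C*-algebras with unital trace-preserving connecting maps. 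The C*-norm on the algebraic direct limit is then forced: on each $\CAR(\cH_F)$ it must be the unique C*-norm, these are compatible, and the completion is the (AF, in fact UHF when $\cH$ is infinite-dimensional, isomorphic to the CAR algebra $M_{2^\infty}$) C*-algebra. Uniqueness of the normalized trace on the C*-completion follows from uniqueness on each $\CAR(\cH_F)$ together with density: any tracial state restricts on each finite stage to the unique normalized trace there, hence is determined on a dense subalgebra and so is unique; existence is the inductive-limit trace.

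The main obstacle is the finite-dimensional structure theorem $\CAR(\C^n) \cong \Mat_{2^n}(\C)$ — specifically, getting the anticommutation bookkeeping right in the Jordan--Wigner isomorphism so that operators from different basis directions genuinely anticommute rather than commute, which requires inserting the grading (parity) operators $\Gamma = \prod_j (1 - 2a(f_j)^*a(f_j))$ correctly into the tensor factorization. Once that isomorphism is in hand, everything else is the soft, standard machinery of simple finite-dimensional C*-algebras and inductive limits. I would also remark (as is classical) that the unique trace is the Fock vacuum state composed with nothing — more precisely it is \emph{not} the vacuum state but the unique tracial state, which on normal-ordered monomials vanishes except on scalars; this can be recorded but is not needed for the statement. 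Finally, I should note the minor subtlety in the statement's phrasing: $\CAR(\cH)$ is defined for $\cH$ merely a complex vector space in \eqref{rel:CAR3} via an inner product, so implicitly $\cH$ is assumed to carry a (sesquilinear) inner product; the proof above uses that structure throughout.
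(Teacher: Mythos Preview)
The paper does not prove this statement; it is recorded as a \texttt{fact} without proof and without an explicit reference, presumably because it is classical (see, e.g., Bratteli--Robinson or Davidson's \emph{$C^*$-Algebras by Example}). Your argument is the standard one and is essentially correct: identify $\CAR(\C^n)\cong M_{2^n}(\C)$ via Jordan--Wigner, invoke simplicity for uniqueness of the C*-norm and trace at each finite stage, and pass to the direct limit.

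One minor slip: your spanning set for $\CAR(\C^n)$ is miscounted. You write ``the $2^n$ ordered products'' but then (correctly) bound the dimension by $4^n$. A clean basis consists of the Wick-ordered monomials $a_{i_1}^*\cdots a_{i_k}^* a_{j_1}\cdots a_{j_\ell}$ with $i_1<\cdots<i_k$ and $j_1<\cdots<j_\ell$, one for each pair of subsets of $\{1,\dots,n\}$, giving exactly $4^n$ elements; this matches $\dim M_{2^n}(\C)$ and so the Fock representation, being nonzero on a simple algebra, must be an isomorphism. Apart from that, the argument is sound, and your remark about the implicit inner product on $\cH$ is well taken.
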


\begin{defn}
Given a $u\in U(\cH)$, the Bogoliubov automorphism $\alpha_u$ of $\CAR(\cH)$ is given by $\alpha_u(a(f))=a(uf)$ for all $f\in\cH$.
\end{defn}

\begin{defn}
The gauge-invariant canonical anticommutation relations algebra $\GICAR(\cH)$ is $\CAR(\cH)^{U(1)}$, where $U(1)$ is the scalars acting by Bogoliubov automorphisms on $\CAR(\cH)$.
\end{defn}

\begin{fact}\label{fact:ChooseBasis}
Suppose $\cH$ is separable and infinite dimensional with a fixed choice of orthonormal basis $(\xi_i)_{i\geq 1}$. Let $\cH_n=\spann\{\xi_1,\dots, \xi_n\}$, and define $A_n = \CAR(\cH_n)$ and $G_n=A_n^{U(1)}=\GICAR(\cH_n)$.
We use the abbreviation $a_i=a(\xi_i)$ and $a_i^*=a(\xi_i)^*$ for all $i\geq 1$.

The inclusion $\cH_n\hookrightarrow \cH_{n+1}$ induces inclusions of algebras $A_n\hookrightarrow A_{n+1}$ and $G_n\hookrightarrow G_{n+1}$.
A straightforward calculation (e.g., see \cite[Examples III.5.4-5]{MR1402012}) shows
\begin{align*}
A_n &=C^*\{a_1,\dots, a_n,a_1^*,\dots,a_n^*\}
\cong \bigotimes^n_{k=1} M_2(\C)\cong M_{2^n}(\C) \text{ and}\\
G_n &=\spann\set{a_{i_k}\cdots a_{i_1}a_{j_1}^*\cdots a_{j_k}^*}{i_1<\cdots <i_k,\,\, j_1<\cdots <j_k,\,\,k\in\N}
\cong \bigoplus_{k=0}^n M_{n\choose k}(\C),
\end{align*}
where the inclusion $A_n\hookrightarrow A_{n+1}$ is given by $x\mapsto x\otimes 1$, and the Bratteli diagram for the tower $(G_n)_{n\geq 1}$ is Pascal's triangle.
\[
\xymatrix@R=5pt@C=5pt{
&&&&1\ar@{-}[dr]\ar@{-}[dl] \\
&&&1\ar@{-}[dr]\ar@{-}[dl]  & & 1\ar@{-}[dr]\ar@{-}[dl] \\
&&1\ar@{-}[dr]\ar@{-}[dl]  && 2\ar@{-}[dr]\ar@{-}[dl]  && 1\ar@{-}[dr]\ar@{-}[dl] \\
&1\ar@{..}[dr]\ar@{..}[dl]\ &&3\ar@{..}[dr]\ar@{..}[dl]\ && 3\ar@{..}[dr]\ar@{..}[dl]\ && 1\ar@{..}[dr]\ar@{..}[dl]\\
&&&&&&&&&&
}
\]
\end{fact}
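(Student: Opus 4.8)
\emph{Strategy.} Everything follows from pinning down the Fock-space model of $\CAR(\cH_n)$ and then taking $U(1)$-fixed points. First I would represent $\CAR(\cH_n)$ on its fermionic Fock space $\cF(\cH_n)=\bigoplus_{k=0}^n\Lambda^k\cH_n$, which has orthonormal basis $\{\xi_U:U\subseteq\{1,\dots,n\}\}$, $\xi_U$ the increasing wedge of the $\xi_u$ ($u\in U$), and total dimension $\sum_k\binom nk=2^n$; the operators $a(f),a(f)^*$ satisfy \eqref{rel:CAR1}--\eqref{rel:CAR3} there. The joint spectral projections of the commuting projections $a_i^*a_i$ are the rank-one projections $p_{\xi_U}$, and $a_{j_1}^*\cdots a_{j_r}^*a_{i_1}\cdots a_{i_s}$ carries $\xi_U$ to $\pm\xi_{U'}$ whenever $U\setminus U'=\{i_1<\cdots<i_s\}$ and $U'\setminus U=\{j_1<\cdots<j_r\}$, so sandwiching by the $p_{\xi_U}$ produces every matrix unit: the Fock representation is onto $B(\cF(\cH_n))\cong M_{2^n}(\C)$. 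Since $\Lambda^\bullet\cH_n\cong\bigotimes_{i=1}^n\Lambda^\bullet(\C\xi_i)\cong\bigotimes_{i=1}^n\C^2$ as Hilbert spaces, this also gives $\CAR(\cH_n)\cong\bigotimes_{k=1}^nM_2(\C)$, concretely via the Jordan--Wigner map $a_j\mapsto\sigma_z^{\otimes(j-1)}\otimes\sigma_-\otimes1^{\otimes(n-j)}$; and since $\cF(\cH_{n+1})=\cF(\cH_n)\otimes\C^2$ with $a_j$ ($j\le n$) acting as $a_j\otimes1$ (no sign is picked up wedging past $\xi_{n+1}$), the induced inclusion $\CAR(\cH_n)\hookrightarrow\CAR(\cH_{n+1})$ is $x\mapsto x\otimes1$.

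\emph{From $\CAR$ to $\GICAR$.} Using \eqref{rel:CAR2} to reorder factors and \eqref{rel:CAR3} to contract matched creation/annihilation pairs (each contraction replaces a word by a scalar times a shorter word plus a word with that pair transposed, so the rewriting terminates), every word in the generators reduces to a linear combination of the normal-ordered monomials $m_{S,T}=\big(\prod_{i\in S}a_i^*\big)\big(\prod_{j\in T}a_j\big)$, $S,T\subseteq\{1,\dots,n\}$, both products increasing. There are $4^n=\dim M_{2^n}(\C)$ of these, so they are a basis of $\CAR(\cH_n)$ (and in particular the Fock representation is faithful, hence an isomorphism). As $\alpha_\lambda(m_{S,T})=\lambda^{|T|-|S|}m_{S,T}$, this basis diagonalizes the gauge action, so $\GICAR(\cH_n)=\CAR(\cH_n)^{U(1)}=\spann\{m_{S,T}:|S|=|T|\}$, which is the span in the statement after relabelling by $k=|S|=|T|$.

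\emph{Matrix structure and Bratteli diagram.} The number operator $N=\sum_{i=1}^n a_i^*a_i$ lies in $\GICAR(\cH_n)$ and acts on $\Lambda^k\cH_n$ as $k$, so every element of $\GICAR(\cH_n)$ preserves $\cF(\cH_n)=\bigoplus_k\Lambda^k\cH_n$ and $\GICAR(\cH_n)\subseteq\bigoplus_{k=0}^nB(\Lambda^k\cH_n)=\bigoplus_{k=0}^nM_{\binom nk}(\C)$. This is an equality by dimension count, using the Vandermonde identity: $\dim\GICAR(\cH_n)=\#\{(S,T):|S|=|T|\}=\sum_k\binom nk^2=\binom{2n}n=\sum_k\dim M_{\binom nk}(\C)$ (alternatively, the $m_{S,S}$ with $|S|=k$ exhaust the rank-one projections on $\Lambda^k\cH_n$ and the $m_{S',S}$, $|S'|=|S|=k$, interchange them, so $\GICAR(\cH_n)$ acts irreducibly on each $\Lambda^k\cH_n$). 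Finally, restricting the simple $G_{n+1}$-module $\Lambda^k\cH_{n+1}$ to $G_n$ gives the $G_n$-invariant orthogonal decomposition $\Lambda^k\cH_{n+1}=\Lambda^k\cH_n\oplus(\Lambda^{k-1}\cH_n\wedge\xi_{n+1})$, with $G_n$ acting through its $k$-th and $(k-1)$-st summands respectively, each with multiplicity one; hence the Bratteli edges from level $n$ to level $n+1$ join $(n+1,k)$ to $(n,k)$ and $(n,k-1)$ by single edges, which is exactly Pascal's triangle, matching $\binom{n+1}k=\binom n{k-1}+\binom nk$.

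\emph{Main obstacle.} The one step that is not pure bookkeeping is linear \emph{independence} of the $m_{S,T}$ --- equivalently, that $\dim\CAR(\cH_n)$ is exactly $4^n$ and the Fock representation is faithful, not merely surjective; the rewriting above only shows the $m_{S,T}$ span, and collapse must be ruled out. I would obtain this from the surjectivity onto $M_{2^n}(\C)$ proved in the first step, or else cite the uniqueness of the $C^*$-norm on $\CAR(\cH)$ recorded earlier. Granted that, everything else --- the gauge computation, the Vandermonde count, and the branching rule --- is routine.
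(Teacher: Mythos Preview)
Your proof is correct. The paper does not actually prove this Fact --- it is stated with a citation to Davidson's book --- but the paper does supply the closely related Theorem \ref{thm:GICARRepresentations}, whose parts (1)--(3) establish irreducibility of $\Lambda^k\cH_n$, the decomposition of the regular representation, and the branching rule $\Lambda^k\cH_n\big|_{G_{n-1}}\cong\Lambda^{k-1}\cH_{n-1}\oplus\Lambda^k\cH_n$; your Fock-space argument and your restriction computation for the Bratteli diagram are exactly this approach. One small cosmetic discrepancy: the paper's spanning set uses the ordering $a_{i_k}\cdots a_{i_1}a_{j_1}^*\cdots a_{j_k}^*$ (annihilators left, creators right), while your $m_{S,T}$ are normal-ordered with creators on the left; both span the same gauge-fixed subspace, so this does not affect the argument.
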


The following facts are well known about the GICAR algebras.
We provide a short proof for the convenience of the reader.
Let $\cH$, $(\xi_i)_{i\geq 1}$, $\cH_n$, and $G_n$ be as in Fact \ref{fact:ChooseBasis}.

\begin{thm}\label{thm:GICARRepresentations}
\mbox{}
\be
\item
The representation of $G_n$ on $\Lambda^k \cH_n$ is irreducible.
\item
The left regular representation of $G_n$ breaks up as 
$$
G_n \cong \bigoplus_{k=0}^n {n\choose k} \Lambda^k\cH_n.
$$
Thus the complete list of irreducible representations of $G_n$ is $\set{\Lambda^k\cH_n}{k=0,\dots, n}$.
\item
When restricted to the image of $G_{n-1}$ in $G_{n}$, 
$$
\Lambda^{k}\cH_{n}\cong \Lambda^{k-1}\cH_{n-1}\oplus \Lambda^{k}\cH_{n-1},
$$
where $\Lambda^{k-1}\cH_{n-1}=(0)$ if $k=0$ and $\Lambda^{k+1}\cH_n=(0)$ if $k=n$.
\ee
\end{thm}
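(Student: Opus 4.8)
The plan is to prove all three parts together by exploiting the isomorphism $G_n\cong\bigoplus_{k=0}^n M_{n\choose k}(\C)$ from Fact \ref{fact:ChooseBasis} and matching each matrix block with the action on the corresponding exterior power. First I would fix notation: a basis for $\Lambda^k\cH_n$ is given by the vectors $\xi_S=\xi_{i_1}\wedge\cdots\wedge\xi_{i_k}$ indexed by $k$-element subsets $S=\{i_1<\cdots<i_k\}$ of $\{1,\dots,n\}$, so $\dim\Lambda^k\cH_n={n\choose k}$. The spanning elements $a_{i_k}\cdots a_{i_1}a_{j_1}^*\cdots a_{j_k}^*$ of $G_n$, where $I=\{i_1<\cdots<i_k\}$ and $J=\{j_1<\cdots<j_k\}$, should be thought of as ``matrix units'' $E_{I,J}$ up to sign, and I would verify directly from the creation/annihilation formulas that acting on $\Lambda^k\cH_n$ such an element sends $\xi_J\mapsto\pm\xi_I$ and kills $\xi_T$ for $T\neq J$; meanwhile it annihilates $\Lambda^m\cH_n$ for $m\neq k$ (creation and annihilation must balance to stay in the gauge-invariant part, and the orders must match the sizes of $I,J$). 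This identifies the block $M_{n\choose k}(\C)$ in the decomposition of $G_n$ with $\End(\Lambda^k\cH_n)$.

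For part (1), once we know the image of $G_n$ in $\End(\Lambda^k\cH_n)$ contains all the matrix units $E_{I,J}$ (up to nonzero scalar), it is all of $\End(\Lambda^k\cH_n)\cong M_{n\choose k}(\C)$, so the representation is irreducible; one just needs the combinatorial bookkeeping that every pair $(I,J)$ of equal-size subsets is realized by some $a_{i_k}\cdots a_{i_1}a_{j_1}^*\cdots a_{j_k}^*$. For part (2), since $G_n\cong\bigoplus_k M_{n\choose k}(\C)$ is a finite-dimensional semisimple algebra whose irreducible modules are exactly the $\Lambda^k\cH_n$ for $k=0,\dots,n$ (by part (1) and the block structure), the left regular representation is $\bigoplus_k (\dim\Lambda^k\cH_n)\,\Lambda^k\cH_n=\bigoplus_k {n\choose k}\,\Lambda^k\cH_n$, which is the standard Wedderburn fact; completeness of the list of irreducibles follows because a semisimple algebra has exactly as many simple modules as simple blocks.

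For part (3), the branching rule, I would use the obvious orthogonal decomposition of the ambient tensor space compatible with $\cH_n=\cH_{n-1}\oplus\C\xi_n$: a $k$-element subset $S$ of $\{1,\dots,n\}$ either contains $n$ or not. Write $\Lambda^k\cH_n = V_0\oplus V_1$ where $V_0=\spann\{\xi_S : n\notin S\}$ and $V_1=\spann\{\xi_S : n\in S\}$. Then $V_0$ is naturally identified with $\Lambda^k\cH_{n-1}$, and $V_1$ with $\Lambda^{k-1}\cH_{n-1}$ via $\xi_S\mapsto\xi_{S\setminus\{n\}}$ (possibly up to a uniform sign coming from moving $\xi_n$ past $k-1$ factors, which is harmless). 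The point is that the subalgebra $G_{n-1}\subset G_n$ acts only through $a_i,a_i^*$ with $i<n$, hence preserves both $V_0$ and $V_1$ and acts on them precisely as the standard representations $\Lambda^k\cH_{n-1}$ and $\Lambda^{k-1}\cH_{n-1}$ respectively; this is immediate from the formulas for $a(f)$ and $a(f)^*$ once one observes $\xi_n$ is inert under the $G_{n-1}$-action. The edge cases $k=0$ and $k=n$ are handled by declaring the absent summand to be $(0)$, as in the statement.

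The main obstacle, such as it is, is purely bookkeeping: keeping track of the signs $(-1)^{\sign(\sigma)}$ and the ordering conventions $i_1<\cdots<i_k$, $j_1<\cdots<j_k$ so that the elements $a_{i_k}\cdots a_{i_1}a_{j_1}^*\cdots a_{j_k}^*$ genuinely behave as (scaled) matrix units rather than as more complicated operators — in particular confirming they vanish on the ``wrong'' exterior powers and on the ``wrong'' basis vectors of the right one. No deep input is needed beyond the CAR relations and elementary semisimple algebra theory; the cited references \cite[Examples III.5.4--5]{MR1402012} and \cite[Chapter 18]{JonesVNA} already contain the algebra structure, so the proof is essentially an identification of the Wedderburn blocks with exterior powers plus the Pascal-triangle branching.
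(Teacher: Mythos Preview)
Your argument has a factual error that undermines the block identification in part (2). You claim that $a_{i_k}\cdots a_{i_1}a_{j_1}^*\cdots a_{j_k}^*$ annihilates $\Lambda^m\cH_n$ for all $m\neq k$, but this is false for $m>k$: if $T\supsetneq J$ and $I\cap(T\setminus J)=\emptyset$, then this element sends $\xi_T$ to $\pm\xi_{I\cup(T\setminus J)}\neq 0$ (e.g.\ $a_1a_1^*$ acts as the identity on $\xi_1\wedge\xi_2\in\Lambda^2\cH_2$). So these elements are \emph{not} matrix units in the Wedderburn decomposition, and you have not shown that the representations $\Lambda^k\cH_n$ are pairwise non-isomorphic --- a genuine issue since $\binom{n}{k}=\binom{n}{n-k}$, so dimension alone does not separate them. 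Without this, you cannot conclude from Wedderburn theory that the $\Lambda^k\cH_n$ exhaust the simple modules.

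The fix is easy: the number operator $N=\sum_{i=1}^n a_i^*a_i$ lies in the center of $G_n$ and acts as $k\cdot\id$ on $\Lambda^k\cH_n$, so the representations are distinct. Alternatively, the paper's proof of (2) is more hands-on and avoids the issue: it writes down the $2^n$ minimal projections of $G_n$ explicitly as words in the commuting projections $a_ia_i^*$ and $a_j^*a_j=1-a_ja_j^*$, observes that exactly $\binom{n}{k}$ of them have $k$ factors of the form $a_ia_i^*$, and exhibits a concrete isomorphism $G_n p\cong\Lambda^k\cH_n$ for each such $p$. Your treatments of (1) and (3) are correct and essentially identical to the paper's.
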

\begin{proof}
\mbox{}
\be
\item
This is straightforward. 
One can use that any vector of the form $\xi_{i_1}\wedge \cdots \wedge \xi_{i_k}$ with $i_1<\cdots <i_k$ and $k\leq n$ generates $\Lambda^k \cH_n$ as a $G_n$-module.
\item
By Relations \eqref{rel:CAR2}-\eqref{rel:CAR3}, for each $i=1,\dots, n$, the operators $a_ia_i^*$ are commuting projections.
The words in $a_ia_i^*$ and $a_j^*a_j=1-a_ja_j^*$ for which all subscripts $1,\dots, n$ appear give the $2^n$ minimal projections in $G_n$.
Thus there are exactly $n\choose k$ minimal projections in $G_n$ with exactly $k$ projections  $a_ja_j^*$ appearing in the word.
For each of these minimal projections $p$, the left $G_n$-module $G_np$ is isomorphic to $\Lambda^k\cH_n$ via the map
$$
p=\prod_{\ell=1}^k a_{i_\ell}a_{i_\ell}^* \prod_{\ell=1}^{n-k} a_{j_\ell}^*a_{j_\ell}\longmapsto \xi_{i_1}\wedge \cdots \wedge \xi_{i_k}.
$$
Since minimal projections in a multi-matrix algebra generate equivalent representations if and only if the projections are equivalent, the result follows from Fact \ref{fact:ChooseBasis}. The last statement follows from the Artin-Wedderburn Theorem.
\item
We have two invariant subspaces of $\Lambda^k\cH_n$ under the action of $G_{n-1}$, namely
$$
\hspace{-.5cm}
\spann\set{\xi_{i_1}\wedge \cdots\wedge \xi_{i_k}}{i_1<\cdots <i_k<n} 
\text{ and }
\spann\set{\xi_{i_1}\wedge \cdots\wedge \xi_{i_k}}{i_1<\cdots <i_k=n}.
$$
Both subspaces are irreducible under the action of $G_{n-1}$ as in (1), the latter because $G_{n-1}$ never moves $\xi_n$. 
The first is isomorphic to $\Lambda^{k}\cH_n$ since $\xi_n$ never appears, and the second is isomorphic to $\Lambda^{k-1}\cH_n$ since we may ignore the $\xi_n$ which never moves.
If $k=n$, the first subspace is $(0)$, and if $k=1$, $G_{n-1}$ acts as the zero algebra on the second subspace.
\qedhere
\ee
\end{proof}

\begin{rems}
\mbox{}
\be
\item
Theorem \ref{thm:GICARRepresentations} part (3) gives another proof that the Bratteli diagram for the tower $(G_n)_{n\geq 0}$ where $G_0=\C$ is Pascal's Triangle.
\item
Remark \ref{rem:DiagrammaticGICARRep} gives a nice diagrammatic description of the representations in Theorem \ref{thm:GICARRepresentations} part (2).
\ee
\end{rems}

\subsection{Rook monoids and planar rook algebras}\label{sec:PlanarRookAlgebras}

\begin{defn}
Let $R_n$ be the set of all $n\times n$ zero-one matrices with at most one entry equal to one in each row and column. 
Then $R_n$ is a monoid under matrix multiplication.
In \cite{MR1939108}, the author named $R_n$ the \underline{rook monoid}, since the matrices are in one-to-one correspondence with placements of non-attacking rooks on an $n\times n$ chessboard.
\end{defn}

\begin{ex}
The rook monoid $R_2$ consists of the following matrices
$$
R_2=
\left\{
\begin{pmatrix}
0 & 0\\
0 & 0
\end{pmatrix}
\,,\,
\begin{pmatrix}
1 & 0\\
0 & 0
\end{pmatrix}
\,,\,
\begin{pmatrix}
0 & 1\\
0 & 0
\end{pmatrix}
\,,\,
\begin{pmatrix}
0 & 0\\
1 & 0
\end{pmatrix}
\,,\,
\begin{pmatrix}
0 & 0\\
0 & 1
\end{pmatrix}
\,,\,
\begin{pmatrix}
0 & 1\\
1 & 0
\end{pmatrix}
\,,\,
\begin{pmatrix}
1 & 0\\
0 & 1
\end{pmatrix}
\right\}.
$$
\end{ex}

In \cite{MR2541502}, a diagrammatic description of the rook monoid was given as follows. Since each matrix in $R_n$ has at most one 1 in each row and column, we can identify it with a bipartite graph on two rows of $n$ vertices such that each node has degree 0 or 1.
If the $(i,j)$-th entry of $x\in R_n$ is 1, then we connect the $i$-th node on the top row to the $j$-th node on the bottom row. 
For example, the matrices in $R_2$ are identified with the following diagrams:
$$
\left(
\begin{tikzpicture}[baseline=.2cm, scale=.6]
	\clip (-.2,-.2) -- (1.2,-.2) -- (1.2,1.2) -- (-.2,1.2);
	\filldraw (0,0) circle (.1cm);
	\filldraw (1,0) circle (.1cm);
	\filldraw (0,1) circle (.1cm);
	\filldraw (1,1) circle (.1cm);
\end{tikzpicture}
\right)
\,,\,
\left(
\begin{tikzpicture}[baseline=.2cm, scale=.6]
	\clip (-.2,-.2) -- (1.2,-.2) -- (1.2,1.2) -- (-.2,1.2);
	\draw (0,0) -- (0,1);
	\filldraw (0,0) circle (.1cm);
	\filldraw (1,0) circle (.1cm);
	\filldraw (0,1) circle (.1cm);
	\filldraw (1,1) circle (.1cm);
\end{tikzpicture}
\right)
\,,\,
\left(
\begin{tikzpicture}[baseline=.2cm, scale=.6]
	\clip (-.2,-.2) -- (1.2,-.2) -- (1.2,1.2) -- (-.2,1.2);
	\draw (1,0) -- (0,1);
	\filldraw (0,0) circle (.1cm);
	\filldraw (1,0) circle (.1cm);
	\filldraw (0,1) circle (.1cm);
	\filldraw (1,1) circle (.1cm);
\end{tikzpicture}
\right)
\,,\,
\left(
\begin{tikzpicture}[baseline=.2cm, scale=.6]
	\clip (-.2,-.2) -- (1.2,-.2) -- (1.2,1.2) -- (-.2,1.2);
	\draw (0,0) -- (1,1);
	\filldraw (0,0) circle (.1cm);
	\filldraw (1,0) circle (.1cm);
	\filldraw (0,1) circle (.1cm);
	\filldraw (1,1) circle (.1cm);
\end{tikzpicture}
\right)
\,,\,
\left(
\begin{tikzpicture}[baseline=.2cm, scale=.6]
	\clip (-.2,-.2) -- (1.2,-.2) -- (1.2,1.2) -- (-.2,1.2);
	\draw (1,0) -- (1,1);
	\filldraw (0,0) circle (.1cm);
	\filldraw (1,0) circle (.1cm);
	\filldraw (0,1) circle (.1cm);
	\filldraw (1,1) circle (.1cm);
\end{tikzpicture}
\right)
\,,\,
\left(
\begin{tikzpicture}[baseline=.2cm, scale=.6]
	\clip (-.2,-.2) -- (1.2,-.2) -- (1.2,1.2) -- (-.2,1.2);
	\draw (0,0) -- (1,1);
	\draw (1,0) -- (0,1);
	\filldraw (0,0) circle (.1cm);
	\filldraw (1,0) circle (.1cm);
	\filldraw (0,1) circle (.1cm);
	\filldraw (1,1) circle (.1cm);
\end{tikzpicture}
\right)
\,,\,
\left(
\begin{tikzpicture}[baseline=.2cm, scale=.6]
	\clip (-.2,-.2) -- (1.2,-.2) -- (1.2,1.2) -- (-.2,1.2);
	\draw (0,0) -- (0,1);
	\draw (1,0) -- (1,1);
	\filldraw (0,0) circle (.1cm);
	\filldraw (1,0) circle (.1cm);
	\filldraw (0,1) circle (.1cm);
	\filldraw (1,1) circle (.1cm);
\end{tikzpicture}
\right)
$$
Multiplicaiton then corresponds to vertical concatenation of diagrams up to isotopy, where we contract any edge which does not reach the other side, and we delete the middle nodes, e.g.,
\begin{align*}
\begin{pmatrix}
0 & 1 & 0\\
0 & 0 & 0\\
0 & 0 & 1
\end{pmatrix}
\begin{pmatrix}
0 & 0 & 0\\
1 & 0 & 0\\
0 & 0 & 0
\end{pmatrix}
&=
\begin{pmatrix}
1 & 0 & 0\\
0 & 0 & 0\\
0 & 0 & 0
\end{pmatrix}
\\&\updownarrow\\
\left(
\begin{tikzpicture}[baseline=.2cm, scale=.6]
	\clip (-.2,-.2) -- (2.2,-.2) -- (2.2,1.2) -- (-.2,1.2);
	\draw (0,1) -- (1,0);
	\draw (2,0) -- (2,1);
	\filldraw (0,0) circle (.1cm);
	\filldraw (1,0) circle (.1cm);
	\filldraw (2,0) circle (.1cm);
	\filldraw (0,1) circle (.1cm);
	\filldraw (1,1) circle (.1cm);
	\filldraw (2,1) circle (.1cm);
\end{tikzpicture}
\right)
\left(
\begin{tikzpicture}[baseline=.2cm, scale=.6]
	\clip (-.2,-.2) -- (2.2,-.2) -- (2.2,1.2) -- (-.2,1.2);
	\draw (0,0) -- (1,1);
	\filldraw (0,0) circle (.1cm);
	\filldraw (1,0) circle (.1cm);
	\filldraw (2,0) circle (.1cm);
	\filldraw (0,1) circle (.1cm);
	\filldraw (1,1) circle (.1cm);
	\filldraw (2,1) circle (.1cm);
\end{tikzpicture}
\right)
&=
\left(
\begin{tikzpicture}[baseline=.5cm, scale=.6]
	\clip (-.2,-.2) -- (2.2,-.2) -- (2.2,2.2) -- (-.2,2.2);
	\draw (0,0) -- (1,1);
	\draw (0,2) -- (1,1);
	\draw (2,1) -- (2,2);
	\filldraw (0,0) circle (.1cm);
	\filldraw (1,0) circle (.1cm);
	\filldraw (2,0) circle (.1cm);
	\filldraw (0,1) circle (.1cm);
	\filldraw (1,1) circle (.1cm);
	\filldraw (2,1) circle (.1cm);
	\filldraw (0,2) circle (.1cm);
	\filldraw (1,2) circle (.1cm);
	\filldraw (2,2) circle (.1cm);
\end{tikzpicture}
\right)
=
\left(
\begin{tikzpicture}[baseline=.2cm, scale=.6]
	\clip (-.2,-.2) -- (2.2,-.2) -- (2.2,1.2) -- (-.2,1.2);
	\draw (0,0) -- (0,1);
	\filldraw (0,0) circle (.1cm);
	\filldraw (1,0) circle (.1cm);
	\filldraw (2,0) circle (.1cm);
	\filldraw (0,1) circle (.1cm);
	\filldraw (1,1) circle (.1cm);
	\filldraw (2,1) circle (.1cm);
\end{tikzpicture}
\right),
\end{align*}
and the adjoint corresponds to vertical reflection
$$
\begin{pmatrix}
0 & 1\\
0 & 0
\end{pmatrix}
^*
=
\begin{pmatrix}
0 & 0\\
1 & 0
\end{pmatrix}
\longleftrightarrow
\left(
\begin{tikzpicture}[baseline=.2cm, scale=.6]
	\clip (-.2,-.2) -- (1.2,-.2) -- (1.2,1.2) -- (-.2,1.2);
	\draw (0,1) -- (1,0);
	\filldraw (0,0) circle (.1cm);
	\filldraw (1,0) circle (.1cm);
	\filldraw (0,1) circle (.1cm);
	\filldraw (1,1) circle (.1cm);
\end{tikzpicture}
\right)
^*
=
\left(
\begin{tikzpicture}[baseline=.2cm, scale=.6]
	\clip (-.2,-.2) -- (1.2,-.2) -- (1.2,1.2) -- (-.2,1.2);
	\draw (0,0) -- (1,1);
	\filldraw (0,0) circle (.1cm);
	\filldraw (1,0) circle (.1cm);
	\filldraw (0,1) circle (.1cm);
	\filldraw (1,1) circle (.1cm);
\end{tikzpicture}
\right).
$$

\begin{defn}
The \underline{planar rook monoid} \cite{MR2541502} $P_n$ consists of the subset of $R_n$ for which the corresponding graphs are planar.
For example, 
$$
P_2 = 
\left\{
\left(
\begin{tikzpicture}[baseline=.2cm, scale=.6]
	\clip (-.2,-.2) -- (1.2,-.2) -- (1.2,1.2) -- (-.2,1.2);
	\filldraw (0,0) circle (.1cm);
	\filldraw (1,0) circle (.1cm);
	\filldraw (0,1) circle (.1cm);
	\filldraw (1,1) circle (.1cm);
\end{tikzpicture}
\right)
\,,\,
\left(
\begin{tikzpicture}[baseline=.2cm, scale=.6]
	\clip (-.2,-.2) -- (1.2,-.2) -- (1.2,1.2) -- (-.2,1.2);
	\draw (0,0) -- (0,1);
	\filldraw (0,0) circle (.1cm);
	\filldraw (1,0) circle (.1cm);
	\filldraw (0,1) circle (.1cm);
	\filldraw (1,1) circle (.1cm);
\end{tikzpicture}
\right)
\,,\,
\left(
\begin{tikzpicture}[baseline=.2cm, scale=.6]
	\clip (-.2,-.2) -- (1.2,-.2) -- (1.2,1.2) -- (-.2,1.2);
	\draw (1,0) -- (0,1);
	\filldraw (0,0) circle (.1cm);
	\filldraw (1,0) circle (.1cm);
	\filldraw (0,1) circle (.1cm);
	\filldraw (1,1) circle (.1cm);
\end{tikzpicture}
\right)
\,,\,
\left(
\begin{tikzpicture}[baseline=.2cm, scale=.6]
	\clip (-.2,-.2) -- (1.2,-.2) -- (1.2,1.2) -- (-.2,1.2);
	\draw (0,0) -- (1,1);
	\filldraw (0,0) circle (.1cm);
	\filldraw (1,0) circle (.1cm);
	\filldraw (0,1) circle (.1cm);
	\filldraw (1,1) circle (.1cm);
\end{tikzpicture}
\right)
\,,\,
\left(
\begin{tikzpicture}[baseline=.2cm, scale=.6]
	\clip (-.2,-.2) -- (1.2,-.2) -- (1.2,1.2) -- (-.2,1.2);
	\draw (1,0) -- (1,1);
	\filldraw (0,0) circle (.1cm);
	\filldraw (1,0) circle (.1cm);
	\filldraw (0,1) circle (.1cm);
	\filldraw (1,1) circle (.1cm);
\end{tikzpicture}
\right)
\,,\,
\left(
\begin{tikzpicture}[baseline=.2cm, scale=.6]
	\clip (-.2,-.2) -- (1.2,-.2) -- (1.2,1.2) -- (-.2,1.2);
	\draw (0,0) -- (0,1);
	\draw (1,0) -- (1,1);
	\filldraw (0,0) circle (.1cm);
	\filldraw (1,0) circle (.1cm);
	\filldraw (0,1) circle (.1cm);
	\filldraw (1,1) circle (.1cm);
\end{tikzpicture}
\right)
\right\}
$$
The \underline{planar rook algebra} $\C P_n$ is the complex $*$-algebra spanned by $P_n$.
\end{defn}

\begin{fact}
The representation theory of $\C P_n$ was classified in \cite{MR2541502}. 
Moreover, it was shown that $\C P_n \cong \bigoplus_{k=0}^n M_{n\choose k}(\C)$, and the Bratteli diagram for the tower of algebras $(\C P_n)_{n\geq 0}$ is Pascals' Triangle, where the unital inclusion $\C P_n \hookrightarrow \C P_{n+1}$ is given by adding a through string on the right:
$$
\left(
\begin{tikzpicture}[baseline=.2cm, scale=.6]
	\clip (-.2,-.2) -- (1.2,-.2) -- (1.2,1.2) -- (-.2,1.2);
	\draw (0,0) -- (1,1);
	\filldraw (0,0) circle (.1cm);
	\filldraw (1,0) circle (.1cm);
	\filldraw (0,1) circle (.1cm);
	\filldraw (1,1) circle (.1cm);
\end{tikzpicture}
\right)
\longmapsto
\left(
\begin{tikzpicture}[baseline=.2cm, scale=.6]
	\clip (-.2,-.2) -- (2.2,-.2) -- (2.2,1.2) -- (-.2,1.2);
	\draw (0,0) -- (1,1);
	\draw (2,0) -- (2,1);
	\filldraw (0,0) circle (.1cm);
	\filldraw (1,0) circle (.1cm);
	\filldraw (2,0) circle (.1cm);
	\filldraw (0,1) circle (.1cm);
	\filldraw (1,1) circle (.1cm);
	\filldraw (2,1) circle (.1cm);
\end{tikzpicture}
\right)
$$
Hence the tower $(G_n)_{n\geq 0}$ is isomorphic to the tower $(\C P_n)_{n\geq 0}$. 
\end{fact}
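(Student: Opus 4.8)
The plan is to establish the structure of $\C P_n$ directly, in exact parallel with the proof of Theorem \ref{thm:GICARRepresentations}, and then to deduce the isomorphism of towers from the fact that $(\C P_n)_{n\geq 0}$ and $(G_n)_{n\geq 0}$ have the same Bratteli diagram.

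First I would introduce, for $0\leq k\leq n$, the cell module $W_k$ with basis indexed by the $k$-element subsets $S\subseteq\{1,\dots,n\}$, which we picture as half-diagrams: a planar rook diagram with $k$ through strings, cut along its top row, remembering only which bottom vertices are endpoints of through strings. A diagram $d\in P_n$ acts on the basis vector $v_S$ by stacking $d$ above the half-diagram and recording the resulting half-diagram, the result being declared $0$ whenever fewer than $k$ strings survive. Using the ``projection'' diagrams $p_S$ (identity on the strings at positions in $S$, all other vertices dotted), which satisfy $p_S\cdot v_{S'}=\delta_{S,S'}v_S$, one sees that any nonzero submodule of $W_k$ contains some $v_S$; and since the diagram with top endpoints $S'$ and bottom endpoints $S$ sends $v_S$ to $v_{S'}$, the action is transitive on the basis, so $W_k$ is irreducible. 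The modules $W_0,\dots,W_n$ have distinct dimensions $\dim W_k={n\choose k}$, hence are pairwise non-isomorphic. Counting planar rook diagrams by the number of through strings (a rank-$k$ diagram is determined by its $k$ top and $k$ bottom endpoints with the unique order-preserving matching between them) gives $\dim\C P_n=|P_n|=\sum_{k=0}^n{n\choose k}^2$, so the homomorphism $\C P_n\to\bigoplus_{k=0}^n\End(W_k)$ --- surjective by the Jacobson density theorem --- is an isomorphism by comparing dimensions. This proves $\C P_n\cong\bigoplus_{k=0}^n M_{n\choose k}(\C)$ and that the $W_k$ are a complete set of irreducibles. (Alternatively, one may simply import this from \cite{MR2541502}, where it is done via the cellular algebra formalism.)

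Next I would restrict $W_k$, now written $W_k^{(n)}$, along the unital inclusion $\C P_{n-1}\hookrightarrow\C P_n$ of the statement. A diagram in the image of $\C P_{n-1}$ has a through string at the rightmost vertex $n$ which is never joined to anything else, so the spans of $\{v_S: n\in S\}$ and of $\{v_S: n\notin S\}$ are each $\C P_{n-1}$-submodules; forgetting the forced string at $n$ identifies the first with $W_{k-1}^{(n-1)}$, and forgetting the unused vertex $n$ identifies the second with $W_k^{(n-1)}$. Thus $W_k^{(n)}\big|_{\C P_{n-1}}\cong W_{k-1}^{(n-1)}\oplus W_k^{(n-1)}$, which is Pascal's triangle --- the same branching rule obtained for the $\Lambda^k\cH_n$ in Theorem \ref{thm:GICARRepresentations}(3).

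Finally, for the last assertion: by Fact \ref{fact:ChooseBasis} together with Theorem \ref{thm:GICARRepresentations} and the above, both $(G_n)_{n\geq 0}$ and $(\C P_n)_{n\geq 0}$ are towers of finite dimensional semisimple complex algebras with unital connecting maps whose Bratteli diagram is Pascal's triangle, with every inclusion matrix entry equal to $0$ or $1$. Since such a tower is determined up to isomorphism of towers by its Bratteli diagram --- each unital inclusion of multi-matrix algebras is determined up to conjugation by an invertible element by its inclusion matrix, and one produces the tower isomorphism level by level by induction --- the two towers are isomorphic, and with a little more care the isomorphism can be taken to intertwine the diagram-reflection involution on $\C P_n$ with the adjoint on $G_n$. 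I do not expect a genuine obstacle here: once the cell modules are set up the structure of $\C P_n$ is routine, and the only point demanding care is to match the specified ``through string on the right'' inclusion --- rather than some other unital inclusion realizing the same abstract Bratteli diagram --- with the one coming from $x\mapsto x\otimes 1$ on the GICAR side.
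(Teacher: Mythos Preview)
Your argument is correct, but it takes a different route from the paper's own proof, which appears in Proposition~\ref{prop:Pascal}. You work module-theoretically: you build the cell modules $W_k$, prove them irreducible, and use a density/dimension count to get semisimplicity and the decomposition; the branching rule then comes from restricting $W_k^{(n)}$ along the right inclusion. The paper instead works inside the algebra itself using Bigelow's ``dotted strand'' trick (Definition~\ref{defn:DottedStrand}): writing the identity strand as the sum of a broken strand and a dotted strand produces $2^n$ explicit mutually orthogonal minimal projections in $\sR\sP_n\cong\C P_n$, and the diagrams with exactly $k$ dotted through strings visibly span a copy of $M_{{n\choose k}}(\C)$. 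The Bratteli diagram then drops out immediately from the relation $\identity=\eOne+\eOnePerp$, since tensoring on the right by an undotted strand splits each minimal projection into two. Your approach is closer in spirit to the cellular-algebra treatment of \cite{MR2541502} and mirrors Theorem~\ref{thm:GICARRepresentations} nicely; the paper's approach is more hands-on and has the payoff that the minimal projections and the matrix units are written down explicitly as diagrams, which is then reused throughout Sections~\ref{sec:PlanarRookCategories}--\ref{sec:DigrammaticFermions} (e.g.\ in Proposition~\ref{prop:AnnularAlgebraStructure} and in the explicit isomorphism $\theta_n$ of Theorem~\ref{thm:IsoOfTowers}). Your final step, deducing the tower isomorphism from equality of Bratteli diagrams, is fine; the paper instead writes down the isomorphism explicitly in Theorem~\ref{thm:IsoOfTowers}.
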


We give an independently found short proof of the isomorphism of towers in Proposition \ref{prop:Pascal} using a notational trick due to Bigelow.
After we establish that the towers are isomorphic, we immediately get the representation theory of the $\C P_n$ from the well-known representation theory of the GICAR algebras given in Theorem \ref{thm:GICARRepresentations}.
In Remark \ref{rem:DiagrammaticGICARRep}, we give a diagrammatic description of these representations in the spirit of Graham and Lehrer's cellular algebras \cite{MR1376244}.

\begin{rem}
We discovered these diagrams in a completely different way. 
The Temperley-Lieb diagrams in $TL_{2n}(\delta)$ with only shaded caps and cups are in one-to-one correspondence with the diagrams in $P_n$. One sees this by contracting the cups and caps to nodes and contracting shaded regions to lines as in Figure \ref{fig:ShadedTLDiagrams}. To make the multiplication agree on the nose, one must include a factor of $\delta$ for each maxima in the Temperley-Lieb diagram. (Note that the number of maxima must equal the number of minima).

\begin{figure}[!ht]
$$
\begin{tikzpicture}[rectangular]
	\clip (2.5,1.1) --(-1.1,1.1) -- (-1.1,-1.1) -- (2.5,-1.1);
	\filldraw[shaded] (1.8,-1)--(.8,1) -- (1.2,1) -- (2.2,-1);
	\filldraw[shaded] (0,1)--(0,-1) -- (.4,-1) -- (.4,1);
	\filldraw[shaded] (.8,-1) arc (180:0:.3cm);
	\filldraw[shaded] (-.8,-1) arc (180:0:.3cm);
	\filldraw[shaded] (-.8,1) arc (-180:0:.3cm);
	\filldraw[shaded] (1.6,1) arc (-180:0:.3cm);
	\draw [very thick] (2.4,1) --(-1,1) -- (-1,-1) -- (2.4,-1)--(2.4,1);
\end{tikzpicture}
\longleftrightarrow
\left(
\begin{tikzpicture}[baseline=.2cm, scale=.6]
	\clip (-.2,-.2) -- (3.2,-.2) -- (3.2,1.2) -- (-.2,1.2);
	\draw (1,0) -- (1,1);
	\draw (2,1) -- (3,0);
	\filldraw (0,0) circle (.1cm);
	\filldraw (1,0) circle (.1cm);
	\filldraw (2,0) circle (.1cm);
	\filldraw (3,0) circle (.1cm);
	\filldraw (0,1) circle (.1cm);
	\filldraw (1,1) circle (.1cm);
	\filldraw (2,1) circle (.1cm);
	\filldraw (3,1) circle (.1cm);
\end{tikzpicture}
\right)$$
\caption{TL diagrams with only shaded cups/caps and planar rook diagrams}
\label{fig:ShadedTLDiagrams}
\end{figure}
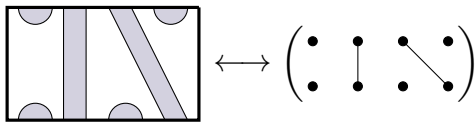

Note that this is the same map $G_n \to TL_{2n}(\delta)$ found by Evans and Connes \cite[Theorem 4.3]{MR990778} without the use of Kauffman diagrams \cite{MR899057}.
They showed this map is injective regardless of $\delta$ by verifying the minimal projections in $G_n$ (see Proposition \ref{prop:Pascal}) map to nonzero orthogonal projections in $TL_{2n}(\delta)$.
We will show a modification of this map works for infinite index subfactors (see Theorem \ref{thm:GICARinQ}).
\end{rem}

\section{Annular and rectangular GICAR categories}\label{sec:GICARCategories}

Just as the Temperley-Lieb algebras can be thought of as a category, so can the planar rook algebras.
We discuss two realizations of this category, which we show are equivalent: 
a diagrammatic category, which we call the rectangular planar rook category, 
and an abstract category via generators and relations, which we call the rectangular GICAR category.
We also have the notion of the annular planar rook and GICAR categories, which we show are equivalent.

Along the way, we will take a brief detour to discuss a diagrammatic representation of the GICAR algebra.

\begin{nota}
We denote categories using the sans-serif font $\sA\sB\sC\dots$
Given a category $\sC$, we write $X,Y\in\sC$ to denote $X,Y$ are objects in $\sC$, and we write $\sC(X,Y)$ for the space of morphisms from $X$ to $Y$. 
If the objects in $\sC$ are symbols of the form $[n]$ for $n\geq 0$, we simply write $\sC(m,n)$ for $\sC([m],[n])$.
We further simplify notation by writing $\sC_n$ for $\sC(n,n)$.
\end{nota}

\subsection{Annular and rectangular planar rook categories}\label{sec:PlanarRookCategories}

\begin{defn}
The \underline{annular planar rook category} $\sA\sP$ is the following small involutive category:
\itt{Objects} $[n]$ for $n\geq 0$, and
\itt{Morphisms} 
$\sA\sP(m,n)$ is all $\C$-linear combinations of isotopy classes of tangles on annuli with decoration as follows.
\begin{itemize}
\item There are $m$ marked points on the inner boundary, called the inner points, and $n$ marked points on the external boundary, called the outer points.
\item Each marked point is connected to exactly one string.
Each string is connected to at least one and at most two marked boundary points.
Strings do not intersect. 
No string may connect two inner points or two outer points.
Hence there are three possibilities for strings:
\be
\item
A \underline{through string} connects an inner and an outer boundary point.
\item
A \underline{cap} is a string that only connects to an inner boundary point.
\item
A \underline{cup} is a string that only connects to an outer boundary point.
\ee
We draw a dark circle on the end of a non-through string to denote that that end does not attach to another boundary point.
\item There is a distinguished interval on each boundary disk, marked by a $\star$.
\end{itemize}
\itt{Composition} Composition is the $\C$-linear extension of insertion of annuli, making sure the boundary points line up, as do the distinguished intervals.
When we get a floating string (a string connected to no boundary points), we just remove it.
$$
\begin{tikzpicture}[annular]
	\clip (0,0) circle (1.6cm);
	\draw (0:0.5cm) -- (0:1.5cm);
	\draw (60:0.5cm) -- (60:1.5cm);
	\draw (180:0.5cm) .. controls ++(180:.4cm) and ++(-60:.4cm) .. (120:1.5cm);
	\draw (120:0.5cm) -- (120:.85cm);
	\filldraw (120:.85cm) circle (.05cm);
	\draw (180:1.15cm) -- (180:1.5cm);
	\filldraw (180:1.15cm) circle (.05cm);
	\draw (240:0.5cm) .. controls ++(240:.4cm) and ++(120:.4cm) .. (300:1.5cm);
	\draw (300:0.5cm) -- (300:.85cm);
	\filldraw (300:.85cm) circle (.05cm);
	\draw (240:1.15cm) -- (240:1.5cm);
	\filldraw (240:1.15cm) circle (.05cm);
	\node at (30:.75cm) {$\star$};
	\node at (210:1.25cm) {$\star$};
	\draw [very thick] (0,0) circle (.5cm);
	\draw [very thick] (0,0) circle (1.5cm);
\end{tikzpicture}
\,\circ\,
\begin{tikzpicture}[annular]
	\clip (0,0) circle (1.6cm);
	\draw (0:0.5cm) -- (0:1.5cm);
	\draw (60:0.5cm) -- (60:1.5cm);
	\draw (180:0.5cm) .. controls ++(180:.4cm) and ++(-60:.4cm) .. (120:1.5cm);
	\draw (180:1.15cm) -- (180:1.5cm);
	\filldraw (180:1.15cm) circle (.05cm);
	\draw (240:0.5cm) .. controls ++(240:.4cm) and ++(120:.4cm) .. (300:1.5cm);
	\draw (240:1.15cm) -- (240:1.5cm);
	\filldraw (240:1.15cm) circle (.05cm);
	\node at (30:.75cm) {$\star$};
	\node at (210:1.25cm) {$\star$};
	\draw [very thick] (0,0) circle (.5cm);
	\draw [very thick] (0,0) circle (1.5cm);
\end{tikzpicture}
=
\begin{tikzpicture}[annular]
	\clip (0,0) circle (2.6cm);
	\draw (0:0.5cm) -- (0:1.5cm) .. controls ++(0:.8cm) and ++(120:.8cm) .. (300:2.5cm);
	\draw (0:2.15cm) -- (0:2.5cm);
	\filldraw (0:2.15cm) circle (.05cm);	
	\draw (60:0.5cm) -- (60:1.5cm) .. controls ++(60:.8cm) and ++(-60:.8cm) .. (120:2.5cm);
	\draw (60:2.15cm) -- (60:2.5cm);
	\filldraw (60:2.15cm) circle (.05cm);	
	\draw (180:0.5cm) .. controls ++(180:.4cm) and ++(-60:.4cm) .. (120:1.5cm) -- (120:1.85cm);
	\filldraw (120:1.85cm) circle (.05cm);
	\draw (180:1.15cm) -- (180:2.5cm);
	\filldraw (180:1.15cm) circle (.05cm);
	\draw (240:0.5cm) .. controls ++(240:.4cm) and ++(120:.4cm) .. (300:1.5cm) -- (300:1.85cm);
	\filldraw (300:1.85cm) circle (.05cm);
	\draw (240:1.15cm) -- (240:2.5cm);
	\filldraw (240:1.15cm) circle (.05cm);
	\node at (30:.75cm) {$\star$};
	\node at (210:1.25cm) {$\star$};
	\node at (210:1.75cm) {$\star$};
	\node at (30:2.25cm) {$\star$};
	\draw [very thick] (0,0) circle (.5cm);
	\draw [very thick] (0,0) circle (1.5cm);
	\draw [very thick] (0,0) circle (2.5cm);
\end{tikzpicture}
=
\begin{tikzpicture}[annular]
	\clip (0,0) circle (1.6cm);
	\draw (0:1.15cm) -- (0:1.5cm);
	\filldraw (0:1.15cm) circle (.05cm);
	\draw (60:1.15cm) -- (60:1.5cm);
	\filldraw (60:1.15cm) circle (.05cm);
	\draw (60:0.5cm) .. controls ++(60:.4cm) and ++(-60:.4cm) .. (120:1.5cm);
	\draw (180:0.5cm) -- (180:.85cm);
	\filldraw (180:.85cm) circle (.05cm);
	\draw (180:1.15cm) -- (180:1.5cm);
	\filldraw (180:1.15cm) circle (.05cm);
	\draw (0:0.5cm) .. controls ++(0:.4cm) and ++(120:.4cm) .. (300:1.5cm);
	\draw (240:0.5cm) -- (240:.85cm);
	\filldraw (240:.85cm) circle (.05cm);
	\draw (240:1.15cm) -- (240:1.5cm);
	\filldraw (240:1.15cm) circle (.05cm);
	\node at (30:.75cm) {$\star$};
	\node at (30:1.25cm) {$\star$};
	\draw [very thick] (0,0) circle (.5cm);
	\draw [very thick] (0,0) circle (1.5cm);
\end{tikzpicture}
$$
\itt{Adjoint} The adjoint is the conjugate-linear extension of flipping the tangle inside out.
$$
\left(
\begin{tikzpicture}[annular]
	\clip (0,0) circle (1.6cm);
	\draw (0:0.5cm) -- (0:1.5cm);
	\draw (60:0.5cm) -- (60:1.5cm);
	\draw (180:0.5cm) .. controls ++(180:.4cm) and ++(-60:.4cm) .. (120:1.5cm);
	\draw (120:0.5cm) -- (120:.85cm);
	\filldraw (120:.85cm) circle (.05cm);
	\draw (180:1.15cm) -- (180:1.5cm);
	\filldraw (180:1.15cm) circle (.05cm);
	\draw (240:0.5cm) .. controls ++(240:.4cm) and ++(120:.4cm) .. (300:1.5cm);
	\draw (300:0.5cm) -- (300:.85cm);
	\filldraw (300:.85cm) circle (.05cm);
	\draw (240:1.15cm) -- (240:1.5cm);
	\filldraw (240:1.15cm) circle (.05cm);
	\node at (30:.75cm) {$\star$};
	\node at (210:1.25cm) {$\star$};
	\draw [very thick] (0,0) circle (.5cm);
	\draw [very thick] (0,0) circle (1.5cm);
\end{tikzpicture}
\right)^*
=
\begin{tikzpicture}[annular]
	\clip (0,0) circle (1.6cm);
	\draw (0:0.5cm) -- (0:1.5cm);
	\draw (60:0.5cm) -- (60:1.5cm);
	\draw (120:0.5cm) .. controls ++(120:.4cm) and ++(0:.4cm) .. (180:1.5cm);
	\draw (120:1.15cm) -- (120:1.5cm);
	\filldraw (120:1.15cm) circle (.05cm);
	\draw (180:.5cm) -- (180:.85cm);
	\filldraw (180:.85cm) circle (.05cm);
	\draw (300:0.5cm) .. controls ++(300:.4cm) and ++(60:.4cm) .. (240:1.5cm);
	\draw (300:1.1cm) -- (300:1.5cm);
	\filldraw (300:1.15cm) circle (.05cm);
	\draw (240:.5cm) -- (240:.85cm);
	\filldraw (240:.85cm) circle (.05cm);
	\node at (30:1.25cm) {$\star$};
	\node at (210:0.75cm) {$\star$};
	\draw [very thick] (0,0) circle (.5cm);
	\draw [very thick] (0,0) circle (1.5cm);
\end{tikzpicture}
$$
\end{defn}

\begin{rem}
Unlike the annular Temperley-Lieb category, $\sA\sP_n$ is finite dimensional for all $n\geq 0$ due to the absence of non-contractible closed loops.
\end{rem}

We now count the number of annular tangles in $\sA\sP(m,n)$.

\begin{defn}
Let $N(m,n;k)$ be the number of annular tangles in $\sA\sP_n$ with $m$ inner points, $n$ outer points, and $k$ through strings.
Let $N(m,n)=\sum_{k=0}^{\min\{m,n\}} N(m,n;k)$.
\end{defn}

\begin{rem}\label{rem:CountTangles}
Note that
\begin{itemize}
\item
$N(m,n;k)=N(n,m;k)$ for all $m,n,k$, so we only need to count when $k\leq m\leq n$,
\item
$N(0,n)=1$ for al $n\geq 0$, and
\item
$N(m,n;0)=1$ for all $m,n\geq 0$.
\end{itemize}
\end{rem}

\begin{lem}\label{lem:CountTangles}
If $1\leq k\leq m\leq n$, then $\D N(m,n;k)=m{n\choose k}{m-1\choose k-1}$.
\end{lem}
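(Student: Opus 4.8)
The plan is to count annular tangles in $\sA\sP(m,n)$ with exactly $k$ through strings, assuming $1\le k\le m\le n$, by choosing the data that determines such a tangle. An annular tangle with $k$ through strings has $k$ through strings, $m-k$ caps (connecting two inner points is forbidden, so each cap uses a single inner point and carries a dark circle), and $n-k$ cups (each using a single outer point). Since caps attach to single inner points and cups to single outer points, once we know \emph{which} $k$ inner points are endpoints of through strings (equivalently, which $m-k$ inner points carry caps) and \emph{which} $k$ outer points are endpoints of through strings, the isotopy class of the tangle is determined by how the $k$ chosen inner points are matched, in cyclic order, to the $k$ chosen outer points. The absence of non-contractible loops and the planarity/non-crossing condition mean that a non-crossing matching of $k$ cyclically-ordered inner points with $k$ cyclically-ordered outer points in the annulus is rigid up to a cyclic rotation: fixing the image of one through string determines the rest. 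So the count factors as (choice of outer endpoints) $\times$ (choice of inner endpoints) $\times$ (rotational offset).

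First I would make precise the claim that, given a fixed $k$-subset $S$ of the inner points and a fixed $k$-subset $T$ of the outer points, the number of isotopy classes of annular tangles whose through strings connect $S$ to $T$ (non-crossing, no cups/caps among the through-string points) is exactly $k$. This is the heart of the argument: a non-crossing perfect matching between two cyclically ordered $k$-element sets on the two boundary circles of an annulus is determined by a single integer mod $k$ (the ``winding offset''), because once the cyclic orders are fixed and the strings may not cross, the matching must be order-preserving around the annulus, leaving only the choice of which outer point the first inner point connects to — and all $k$ such choices are realizable and pairwise non-isotopic. Then I would count: there are $\binom{n}{k}$ ways to choose $T\subseteq\{1,\dots,n\}$, and — here is where the factor $m$ rather than $\binom{m}{k}$ enters — the positions of the $m-k$ caps relative to the $\star$ on the inner circle, together with the cyclic structure, are what get counted, and one checks the inner data contributes $m\binom{m-1}{k-1}$. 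Concretely, $m\binom{m-1}{k-1} = k\binom{m}{k}$, so an equivalent and perhaps cleaner bookkeeping is: $\binom{m}{k}$ choices of $S$, $\binom{n}{k}$ choices of $T$, and $k$ winding offsets, giving $k\binom{m}{k}\binom{n}{k} = m\binom{m-1}{k-1}\binom{n}{k}$, which is the stated formula.

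The step I expect to be the main obstacle is establishing rigorously that the number of non-isotopic non-crossing matchings between fixed inner and outer endpoint sets is exactly $k$ — in particular, that different winding offsets genuinely give non-isotopic tangles (they do, because the annulus has nontrivial $\pi_1$ and the strings wind differently), and that no offset is ruled out by the non-crossing condition (it is not, since an order-preserving cyclic matching can always be drawn without crossings in the annulus). I would handle this by cutting the annulus along a radial arc through the $\star$'s to get a rectangle, where the $k$ through strings become a non-crossing system of arcs from the bottom to the top of a strip; the $k$ cyclic reorderings correspond to the $k$ ways of choosing which bottom point maps to the ``leftmost'' top point after reglueing, and in the rectangle each such choice has a unique non-crossing realization. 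Once this local count is in hand, the global count is the bookkeeping above, and I would double-check the edge cases against Remark~\ref{rem:CountTangles}: when $k=m$ there are no caps and the formula reads $m\binom{n}{m}\binom{m-1}{m-1}=m\binom{n}{m}$, consistent with the $m$ winding offsets, and the symmetry $N(m,n;k)=N(n,m;k)$ is visibly respected since $m\binom{n}{k}\binom{m-1}{k-1} = k\binom{m}{k}\binom{n}{k}$ is symmetric in $m,n$.
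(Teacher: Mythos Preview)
Your proof is correct and is essentially the paper's argument with the bookkeeping reorganized: the paper fixes the outer $\star$, counts $\binom{n}{k}$ cup positions, then places the inner $\star$ in a canonical position (immediately to the left of the inner endpoint of the first through string clockwise from the outer $\star$) so that the remaining $k-1$ inner through-string endpoints account for $\binom{m-1}{k-1}$ choices, and finally observes that shifting the actual inner $\star$ through its $m$ possible intervals yields $m$ distinct tangles. This is exactly your factorization $\binom{n}{k}\cdot\binom{m}{k}\cdot k$ repackaged via the identity $k\binom{m}{k}=m\binom{m-1}{k-1}$ that you already record; one small remark is that the $k$ offsets are pairwise non-isotopic simply because they give combinatorially different matchings $s_i\mapsto t_{i+r}$, not because of $\pi_1$ of the annulus.
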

\begin{proof}
Draw an annulus with $m$ inner points and $n$ outer points.
Fix the outer $\star$. 
There are exactly $n\choose k$ ways to connect $k$ through strings to the $n$ outer points.
Equivalently, there are exactly $n\choose k$ choices for the cup positions.

Let us examine one of these choices more closely. 
Look at the first through string connected to an outer point counting clockwise from the outer $\star$.
Follow the through string inward, and put the inner star on the interval to the left of this inner point, so that the region meeting the outer $\star$ meets the inner $\star$.
We now see there are exactly $m-1 \choose k-1$ ways to connect the remaining through strings to the remaining inner points.
Equivalently, there are exactly $m-1\choose k-1$ choices of the cap positions.

Fix such a choice of cap position, which we will call the tangle's initial cap position.
Note that given an annular tangle in $\sA\sP(m,n)$, the cup positions and the initial cap positions only depend on the outer $\star$.
Hence we get $m$ distinct tangles as we shift the inner $\star$ clockwise.

In summary, for each of the $n\choose k$ cup positions and for each of the resulting $m-1 \choose k-1$ initial cap positions, there are $m$ distinct tangles.
The formula follows.
\end{proof}

Remark \ref{rem:CountTangles} and Lemma \ref{lem:CountTangles} now prove the following.

\begin{prop}\label{prop:CountTangles}
If $1\leq m\leq n$, then 
$\D
N(m,n)=1+\sum_{k=1}^m m{n\choose k}{m-1\choose k-1}.
$
\end{prop}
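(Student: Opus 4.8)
The plan is to read off the claim directly from the definition of $N(m,n)$ together with Remark \ref{rem:CountTangles} and Lemma \ref{lem:CountTangles}; there is no new combinatorial content to supply, only bookkeeping. By definition $N(m,n)=\sum_{k=0}^{\min\{m,n\}}N(m,n;k)$, and the running hypothesis $m\leq n$ means $\min\{m,n\}=m$, so the sum truncates at $k=m$ and we may write $N(m,n)=\sum_{k=0}^{m}N(m,n;k)$.

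First I would peel off the $k=0$ term. By the third bullet of Remark \ref{rem:CountTangles}, $N(m,n;0)=1$: an annular tangle with no through strings must cap off every inner point and cup off every outer point, and once the two distinguished intervals are fixed there is a unique such isotopy class. This single term is the source of the leading $1$ in the asserted formula, and it is genuinely not covered by Lemma \ref{lem:CountTangles}, whose statement requires $k\geq 1$, so it has to be accounted for separately.

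For the remaining indices $1\leq k\leq m\leq n$, I would invoke Lemma \ref{lem:CountTangles} verbatim: $N(m,n;k)=m{n\choose k}{m-1\choose k-1}$. Summing over $k=1,\dots,m$ and adding the $k=0$ contribution computed above yields
$$
N(m,n)=1+\sum_{k=1}^{m}m{n\choose k}{m-1\choose k-1},
$$
which is exactly the claimed identity.

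The ``hard part'' was already done in Lemma \ref{lem:CountTangles} --- the argument that fixing the outer $\star$ determines the cup positions and the initial cap positions, after which shifting the inner $\star$ clockwise produces exactly $m$ distinct tangles. In this proposition the only things demanding a moment of care are (i) using $m\leq n$ to replace $\min\{m,n\}$ by $m$ as the upper summation limit, and (ii) handling the $k=0$ term by hand rather than through the lemma. Both are immediate, so the proof is short.
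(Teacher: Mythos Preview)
Your proposal is correct and follows exactly the approach the paper intends: the proposition is stated immediately after Remark~\ref{rem:CountTangles} and Lemma~\ref{lem:CountTangles} with the sentence that those two results ``now prove the following,'' and your write-up simply makes explicit the trivial bookkeeping of splitting off the $k=0$ term and summing the lemma over $1\leq k\leq m$.
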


We will determine the algebra structure of $\sA\sP_n$ at the end of this subsection in Proposition \ref{prop:AnnularAlgebraStructure}. 
We first treat the rectangular planar rook category as a warmup.

\begin{defn}
The \underline{rectangular planar rook category} $\sR\sP$ is the subcategory of $\sA\sP$ such that $\sR\sP(m,n)$ is the $\C$-linear combinations of diagrams in $\sA\sP(m,n)$ such that the region meeting the internal $\star$ also meets the external $\star$.
For example,
$$
\begin{tikzpicture}[annular]
	\clip (0,0) circle (1.6cm);
	\draw (0:1.15cm) -- (0:1.5cm);
	\filldraw (0:1.15cm) circle (.05cm);
	\draw (60:1.15cm) -- (60:1.5cm);
	\filldraw (60:1.15cm) circle (.05cm);
	\draw (60:0.5cm) .. controls ++(60:.4cm) and ++(-60:.4cm) .. (120:1.5cm);
	\draw (180:0.5cm) -- (180:.85cm);
	\filldraw (180:.85cm) circle (.05cm);
	\draw (180:1.15cm) -- (180:1.5cm);
	\filldraw (180:1.15cm) circle (.05cm);
	\draw (0:0.5cm) .. controls ++(0:.4cm) and ++(120:.4cm) .. (300:1.5cm);
	\draw (240:0.5cm) -- (240:.85cm);
	\filldraw (240:.85cm) circle (.05cm);
	\draw (240:1.15cm) -- (240:1.5cm);
	\filldraw (240:1.15cm) circle (.05cm);
	\node at (30:.75cm) {$\star$};
	\node at (30:1.25cm) {$\star$};
	\draw [very thick] (0,0) circle (.5cm);
	\draw [very thick] (0,0) circle (1.5cm);
\end{tikzpicture}
\in\sR\sP(4,6),
\text{ but }
\begin{tikzpicture}[annular]
	\clip (0,0) circle (1.6cm);
	\draw (0:0.5cm) -- (0:1.5cm);
	\draw (60:0.5cm) -- (60:1.5cm);
	\draw (180:0.5cm) .. controls ++(180:.4cm) and ++(-60:.4cm) .. (120:1.5cm);
	\draw (120:0.5cm) -- (120:.85cm);
	\filldraw (120:.85cm) circle (.05cm);
	\draw (180:1.15cm) -- (180:1.5cm);
	\filldraw (180:1.15cm) circle (.05cm);
	\draw (240:0.5cm) .. controls ++(240:.4cm) and ++(120:.4cm) .. (300:1.5cm);
	\draw (300:0.5cm) -- (300:.85cm);
	\filldraw (300:.85cm) circle (.05cm);
	\draw (240:1.15cm) -- (240:1.5cm);
	\filldraw (240:1.15cm) circle (.05cm);
	\node at (30:.75cm) {$\star$};
	\node at (210:1.25cm) {$\star$};
	\draw [very thick] (0,0) circle (.5cm);
	\draw [very thick] (0,0) circle (1.5cm);
\end{tikzpicture}
\notin\sR\sP(6,6).
$$
Each such morphism can be represented by a rectangular tangle rather than an annular tangle as follows.
First, cut along a path from the internal $\star$ to the external $\star$ which does not meet any strings.
Second, isotope the resulting diagram into a rectangle with lower and upper boundary points so that the inner boundary points of the annulus are now the lower boundary points of the rectangle, and the outer boundary points of the annulus are now the upper boundary points of the rectangle.
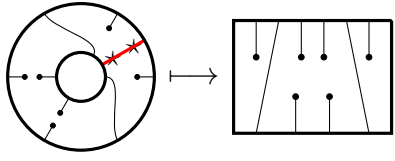
\begin{figure}[!ht]
$$
\begin{tikzpicture}[annular]
	\clip (0,0) circle (1.6cm);
	\draw (0:1.15cm) -- (0:1.5cm);
	\filldraw (0:1.15cm) circle (.05cm);
	\draw (60:1.15cm) -- (60:1.5cm);
	\filldraw (60:1.15cm) circle (.05cm);
	\draw (60:0.5cm) .. controls ++(60:.4cm) and ++(-60:.4cm) .. (120:1.5cm);
	\draw (180:0.5cm) -- (180:.85cm);
	\filldraw (180:.85cm) circle (.05cm);
	\draw (180:1.15cm) -- (180:1.5cm);
	\filldraw (180:1.15cm) circle (.05cm);
	\draw (0:0.5cm) .. controls ++(0:.4cm) and ++(120:.4cm) .. (300:1.5cm);
	\draw (240:0.5cm) -- (240:.85cm);
	\filldraw (240:.85cm) circle (.05cm);
	\draw (240:1.15cm) -- (240:1.5cm);
	\filldraw (240:1.15cm) circle (.05cm);
	\node at (30:.75cm) {$\star$};
	\node at (30:1.25cm) {$\star$};
	\draw[very thick, red] (30:.5cm) -- (30:1.5cm);
	\draw [very thick] (0,0) circle (.5cm);
	\draw [very thick] (0,0) circle (1.5cm);
\end{tikzpicture}
\longmapsto
\begin{tikzpicture}[rectangular]
	\clip (1.9,1.1) --(-1.1,1.1) -- (-1.1,-1.1) -- (1.9,-1.1);
	\draw (-.6,1)--(-.6,.35);
	\filldraw (-.6,.35) circle (.05cm);
	\draw (.2,1)--(.2,.35);
	\filldraw (.2,.35) circle (.05cm);
	\draw (.6,1)--(.6,.35);
	\filldraw (.6,.35) circle (.05cm);
	\draw (1.4,1)--(1.4,.35);
	\filldraw (1.4,.35) circle (.05cm);
	\draw (-.2,1)--(-.6,-1);
	\draw (1,1)--(1.4,-1);
	\draw (.1,-1)--(.1,-.35);
	\filldraw (.1,-.35) circle (.05cm);	
	\draw (.7,-1)--(.7,-.35);
	\filldraw (.7,-.35) circle (.05cm);	
	\draw [very thick] (1.8,1) --(-1,1) -- (-1,-1) -- (1.8,-1)--(1.8,1);
\end{tikzpicture}
$$
\caption{Cutting an annulus to get a rectangle}
\label{fig:CuttingAndGluing}
\end{figure}

Composition of annuli then corresponds to stacking rectangles,
$$
\begin{tikzpicture}[baseline=-.1cm]
	\nbox{}{(0,0)}{.4}{.2}{0}{}
	\BrokenBottomString{(-.4,0)}
	\String{(-.2,0)}
	\BrokenString{(0,0)}
	\String{(.2,0)}
\end{tikzpicture}
\,\circ\,
\begin{tikzpicture}[baseline=-.1cm]
	\nbox{}{(0,0)}{.4}{.2}{0}{}
	\BrokenTopString{(-.4,0)}
	\String{(-.2,0)}
	\PartialIsoStar{(0,0)}
\end{tikzpicture}
\,=\,
\begin{tikzpicture}[baseline=-.1cm]
	\nbox{}{(0,0)}{.8}{-.2}{-.4}{}
	\draw[very thick] (-.6,0) -- (.4,0);
	\BrokenBottomString{(-.4,.4)}
	\String{(-.2,.4)}
	\BrokenString{(0,.4)}
	\String{(.2,.4)}
	\BrokenTopString{(-.4,-.4)}
	\String{(-.2,-.4)}
	\PartialIsoStar{(0,-.4)}
\end{tikzpicture}
\,=\,
\begin{tikzpicture}[baseline=-.1cm]
	\nbox{}{(0,0)}{.4}{.2}{0}{}
	\String{(-.4,0)}
	\BrokenString{(-.2,0)}
	\BrokenString{(0,0)}
\end{tikzpicture}
$$
and the adjoint operation corresponds to vertical flipping of rectangles.
$$
\begin{tikzpicture}[baseline=-.1cm]
	\nbox{}{(0,0)}{.4}{.2}{0}{}
	\BrokenTopString{(-.4,0)}
	\String{(-.2,0)}
	\PartialIsoStar{(0,0)}
\end{tikzpicture}^{\,*}
\,=\,
\begin{tikzpicture}[baseline=-.1cm]
	\nbox{}{(0,0)}{.4}{.2}{0}{}
	\BrokenBottomString{(-.4,0)}
	\String{(-.2,0)}
	\PartialIso{(0,0)}
\end{tikzpicture}
$$
Viewing morphisms in $\sR\sP$ as rectangular tangles, we can endow $\sR\sP$ with a tensor structure.
The tensor product of objects is $[m]\otimes [n]=[m+n]$, and the tensor product of morphisms is the $\C$-linear extension of horizontal join.
$$
\begin{tikzpicture}[baseline=-.1cm]
	\nbox{}{(0,0)}{.4}{.2}{0}{}
	\BrokenBottomString{(-.4,0)}
	\String{(-.2,0)}
	\BrokenString{(0,0)}
	\String{(.2,0)}
\end{tikzpicture}
\,\otimes\,
\begin{tikzpicture}[baseline=-.1cm]
	\nbox{}{(0,0)}{.4}{.2}{0}{}
	\BrokenTopString{(-.4,0)}
	\String{(-.2,0)}
	\PartialIsoStar{(0,0)}
\end{tikzpicture}
\,=\,
\begin{tikzpicture}[baseline=-.1cm]
	\nbox{}{(0,0)}{.4}{.2}{.8}{}
	\BrokenBottomString{(-.4,0)}
	\String{(-.2,0)}
	\BrokenString{(0,0)}
	\String{(.2,0)}
	\BrokenTopString{(.4,0)}
	\String{(.6,0)}
	\PartialIsoStar{(.8,0)}
\end{tikzpicture}
$$
\end{defn}

\begin{rem}
Obviously $\sR\sP_n\cong \C P_n$ by contracting cups and caps and trading the external boundary for nodes at the marked boundary points.
$$
\begin{tikzpicture}[baseline=-.1cm]
	\nbox{}{(0,0)}{.4}{.2}{0}{}
	\BrokenTopString{(-.4,0)}
	\String{(-.2,0)}
	\PartialIsoStar{(0,0)}
\end{tikzpicture}
\longleftrightarrow
\left(
\begin{tikzpicture}[baseline=.2cm, scale=.6]
	\clip (-.2,-.2) -- (3.2,-.2) -- (3.2,1.2) -- (-.2,1.2);
	\draw (1,0) -- (1,1);
	\draw (2,1) -- (3,0);
	\filldraw (1,0) circle (.1cm);
	\filldraw (2,0) circle (.1cm);
	\filldraw (3,0) circle (.1cm);
	\filldraw (0,1) circle (.1cm);
	\filldraw (1,1) circle (.1cm);
	\filldraw (2,1) circle (.1cm);
	\filldraw (3,1) circle (.1cm);
\end{tikzpicture}
\right)
$$
We use different diagrams for morphisms in $\sR\sP_n$ than the usual diagrams for $P_n$ to utilize a notational trick of Bigelow (see Definition \ref{defn:DottedStrand}).
\end{rem}

\begin{prop}\label{prop:Pascal}
As a complex $*$-algebra, $\sR\sP_n\cong \bigoplus_{k=0}^n M_{n\choose k}(\C)$. Moreover, the Bratteli diagram for the tower of finite dimensional algebras $(\sR\sP_n)_{n\geq 0}$ under the right inclusion (adding a through string to the right) is given by Pascal's Triangle.
\end{prop}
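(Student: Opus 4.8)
The plan is to build an explicit complete system of matrix units for $\sR\sP_n$ out of Bigelow's dotted strands and then to watch what these matrix units do under the right inclusion $\sR\sP_n\hookrightarrow\sR\sP_{n+1}$.

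First I would fix the plain-diagram basis: a rectangular tangle in $\sR\sP_n$ is determined by the pair $(S,T)$ of subsets of $\{1,\dots,n\}$ recording the lower and upper endpoints of its through strings, with $|S|=|T|$ and the string pattern equal to the order-preserving bijection $\phi\colon S\to T$; call this diagram $d_{S,T}$. Then $\{d_{S,T}\}_{|S|=|T|}$ is a basis, so $\dim_\C\sR\sP_n=\sum_{k=0}^n{n\choose k}^2$. Write $e_1\in\sR\sP_1$ for the broken strand and $e_1^\perp:=\id_{[1]}-e_1$ for the dotted strand, and record the three local relations $e_1^2=e_1$, $(e_1^\perp)^2=e_1^\perp$, and $e_1e_1^\perp=e_1^\perp e_1=0$ (so a dot absorbs an adjacent cup or cap to zero). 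For $|S|=|T|$ let $\widehat{d}_{S,T}$ be $d_{S,T}$ with a dot placed on each of its through strings; expanding each dot through $e_1^\perp=\id_{[1]}-e_1$ gives
$$
\widehat{d}_{S,T}=\sum_{S_0\subseteq S}(-1)^{|S|-|S_0|}\,d_{S_0,\phi(S_0)},
$$
and since this transition matrix is triangular with ones on the diagonal, $\{\widehat{d}_{S,T}\}_{|S|=|T|}$ is again a basis of $\sR\sP_n$.

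Next I would check that, re-indexed, the $\widehat{d}_{S,T}$ are a complete system of matrix units compatible with the $*$-structure: $\widehat{d}_{S,T}^{\,*}=\widehat{d}_{T,S}$ by vertical reflection, and $\widehat{d}_{S,T}\,\widehat{d}_{U,V}=\delta_{V,S}\,\widehat{d}_{U,T}$. To see the product relation, stack $\widehat{d}_{S,T}$ on $\widehat{d}_{U,V}$ and read off the result at each endpoint of the glued circle using the three local relations: wherever a dotted through string abuts a cup or a cap the contribution dies, so unless $V=S$ the product vanishes; when $V=S$ the two dots on each surviving through string collapse to one, any residual broken arcs become floating strings and are erased, and what remains is $\widehat{d}_{U,T}$. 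Since $\widehat{d}_{S,T}\,\widehat{d}_{U,V}\neq 0$ forces $|S|=|T|=|U|=|V|$, the simple summands are indexed by $k=|S|\in\{0,\dots,n\}$, and within the $k$-th summand the ${n\choose k}$ subsets of size $k$ index matrix rows and columns; hence $\sR\sP_n\cong\bigoplus_{k=0}^n M_{{n\choose k}}(\C)$ as $*$-algebras by Artin--Wedderburn. The hard part is precisely this product computation: because $\widehat{d}_{S,T}$ is a linear combination of plain tangles rather than a single diagram, the ``a mismatch kills it'' argument must be run either by treating the dotted strand as a primitive diagram governed strand-by-strand by $e_1^2=e_1$, $(e_1^\perp)^2=e_1^\perp$, $e_1^\perp e_1=0$, or by expanding into plain diagrams and cancelling by hand; everything else is bookkeeping.

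Finally, for the Bratteli diagram I would use that the right inclusion $\iota\colon\sR\sP_n\hookrightarrow\sR\sP_{n+1}$, $x\mapsto x\otimes\id_{[1]}$, is unital, and that the displayed expansion immediately yields $\widehat{d}_{S,S}\otimes e_1=\widehat{d}_{S,S}$ and $\widehat{d}_{S,S}\otimes e_1^\perp=\widehat{d}_{S\cup\{n+1\},\,S\cup\{n+1\}}$ when read inside $\sR\sP_{n+1}$. Since $\id_{[1]}=e_1+e_1^\perp$, a minimal idempotent $\widehat{d}_{S,S}$ in the $k$-th summand of $\sR\sP_n$ (where $k=|S|$) satisfies $\iota(\widehat{d}_{S,S})=\widehat{d}_{S,S}+\widehat{d}_{S\cup\{n+1\},\,S\cup\{n+1\}}$, a sum of one minimal idempotent of the $k$-th summand of $\sR\sP_{n+1}$ and one of the $(k+1)$-st. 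Hence the $k$-th summand of $\sR\sP_n$ includes with multiplicity $1$ into the $k$-th and $(k+1)$-st summands of $\sR\sP_{n+1}$ and into no other; since $\sR\sP_0=\C$, induction gives that the Bratteli diagram of $(\sR\sP_n)_{n\geq 0}$ is Pascal's triangle, which also reconfirms that the $k$-th summand at level $n$ has dimension ${n\choose k}$.
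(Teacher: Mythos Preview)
Your proof is correct and takes essentially the same approach as the paper: both use Bigelow's dotted strand $e_1^\perp=\id_{[1]}-e_1$ and the local relations \eqref{rel:RP1}--\eqref{rel:RP3} to decompose $\sR\sP_n$ into matrix summands indexed by the number of dotted through strings, and both split $\id_{[1]}=e_1+e_1^\perp$ to read off the Bratteli diagram. The only difference is one of detail: you explicitly construct the full system of matrix units $\widehat{d}_{S,T}$ and verify the relations $\widehat{d}_{S,T}\,\widehat{d}_{U,V}=\delta_{V,S}\,\widehat{d}_{U,T}$, whereas the paper works with the minimal projections (your diagonal $\widehat{d}_{S,S}$) and asserts that the off-diagonal dotted diagrams fill out each $M_{{n\choose k}}(\C)$.
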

\begin{proof}
Let $0\leq k\leq n$. There are exactly $n\choose k$ diagrams with exactly $k$ through strings in $\sR\sP_n$. 
Hence $\dim_\C(\sR\sP_n)=\sum_{k=0}^n {n \choose k}=2^n$. 
However, it is important to note that diagrams with exactly $k$ through strings are not orthogonal to diagrams with exactly $j$ through strings for $j\neq k$.
To fix this problem, we make the following definition.
\begin{defn}\label{defn:DottedStrand}
As in \cite[Section 3]{MR2925434}, we let the dotted strand denote the following morphism in $\sR\sP_1$: 
$$
\eOnePerp=\identity-\eOne\,.
$$
We then have the following relations in $\sR\sP$:
\begin{align}
\confetti &= 1
\tag{$\sR\sP$1}\label{rel:RP1}\\
\doubleDotStrand &= \eOnePerp
\tag{$\sR\sP$2}\label{rel:RP2}\\
\doubleDotEnd &= 0.
\tag{$\sR\sP$3}\label{rel:RP3}
\end{align}
\end{defn}

\begin{rem}
Under the identification of these diagrams with those in the Temperley-Lieb category with only shaded cups and caps in Figure \ref{fig:ShadedTLDiagrams}, the broken strand corresponds to the Jones projection $e_1$, and the dotted strand corresponds to the Jones-Wenzl projection $\jw{2}=1-e_1$.
\end{rem}

With the use of the dotted strand, we find $2^n$ minimal orthogonal projections in $\sR\sP_n$ given by the simple tensors composed entirely of
$$
\eOne\, \text{ and }\,\eOnePerp\,.
$$
The diagrams with exactly $k$ through strings, all of which are dotted, span a full matrix algebra $M_{n\choose k}(\C)$.
Hence $\sR\sP_n$ is isomorphic to the orthogonal direct sum $ \bigoplus_{k=0}^n M_{n\choose k} (\C)$.

We now look at the right inclusion $\sR\sP_n\hookrightarrow \sR\sP_{n+1}$ given by adding a string to the right. Since 
$$
\identity = \eOne + \eOnePerp\,,
$$
we see that the right inclusion maps each minimal projection in $\sR\sP_n$ to the sum of exactly two minimal projections in $\sR\sP_{n+1}$. 
More precisely, each minimal projection in the simple summand corresponding to $M_{n\choose k}(\C)$ maps to the sum of two minimal projections, one in $M_{{n+1}\choose k}(\C)$, and one in $M_{{n+1} \choose {k+1}}(\C)$. Hence the Bratteli diagram is as claimed.
\end{proof}

\begin{rem}
We give an explicit formula for the resulting isomorphism of towers $(G_n)_{n\geq 0}\cong (\sR\sP_n)_{n\geq 0}$ in Theorem \ref{thm:IsoOfTowers}.
\end{rem}

\begin{cor}
$\D\dim(\sR\sP_n)=\sum_{k=0}^n {n\choose k}^2$.
\end{cor}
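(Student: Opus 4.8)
The plan is to read the dimension straight off the structure theorem. Proposition~\ref{prop:Pascal} identifies $\sR\sP_n$, as a complex $*$-algebra, with the multimatrix algebra $\bigoplus_{k=0}^n M_{\binom{n}{k}}(\C)$. Since $\dim_\C M_m(\C)=m^2$, taking dimensions block by block gives $\dim_\C(\sR\sP_n)=\sum_{k=0}^n\binom{n}{k}^2$. That is essentially the entire argument; there is no genuine obstacle, only the bookkeeping of the block sizes, which Proposition~\ref{prop:Pascal} already supplies.

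If one preferred an argument independent of Proposition~\ref{prop:Pascal}, I would instead count the diagrammatic basis of $\sR\sP_n$ directly. A rectangular planar rook tangle with exactly $k$ through strings is determined by choosing which $k$ of the $n$ lower boundary points carry through strings (the remaining $n-k$ being caps) together with which $k$ of the $n$ upper boundary points carry through strings (the remaining $n-k$ being cups); planarity then forces the matching of the chosen points to be the unique non-crossing one. Hence there are exactly $\binom{n}{k}^2$ such tangles, and summing over $0\le k\le n$ again yields $\dim_\C(\sR\sP_n)=\sum_{k=0}^n\binom{n}{k}^2$.

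I would close by remarking that, via the Vandermonde identity $\sum_{k=0}^n\binom{n}{k}^2=\binom{2n}{n}$, the formula may equivalently be written $\dim_\C(\sR\sP_n)=\binom{2n}{n}$, in agreement with the cardinality of the planar rook monoid $P_n$ under the isomorphism $\sR\sP_n\cong\C P_n$ recorded above.
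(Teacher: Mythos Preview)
Your proof is correct and matches the paper's intended argument: the corollary is stated without proof precisely because it is immediate from the multimatrix decomposition $\sR\sP_n\cong\bigoplus_{k=0}^n M_{\binom{n}{k}}(\C)$ in Proposition~\ref{prop:Pascal}, exactly as in your first paragraph. Your alternative direct count of tangles and the Vandermonde remark are correct supplementary observations but go beyond what the paper records.
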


We now determine the algebra structure of $\sA\sP_n$.
The dotted strand will be of great use to us.

\begin{prop}\label{prop:AnnularAlgebraStructure}
As a complex $*$-algebra,
$\D\sA\sP_n \cong \C\oplus \bigoplus_{k=1}^n k M_{n\choose k}(\C)$.
\end{prop}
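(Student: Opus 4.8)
The plan is to mimic the proof of Proposition~\ref{prop:Pascal}, replacing each plain through string by the dotted strand of Definition~\ref{defn:DottedStrand}; the relations \eqref{rel:RP1}, \eqref{rel:RP2}, \eqref{rel:RP3} hold verbatim in $\sA\sP$. First, Proposition~\ref{prop:CountTangles} and Lemma~\ref{lem:CountTangles}, together with the elementary identity $n\binom{n}{k}\binom{n-1}{k-1}=k\binom{n}{k}^2$, give $\dim_\C\sA\sP_n = 1+\sum_{k=1}^n k\binom{n}{k}^2$, which already matches the dimension of the proposed algebra. So it suffices to exhibit, for $1\le k\le n$, a $*$-subalgebra isomorphic to $M_{\binom{n}{k}}(\C)\otimes\C[\Z/k\Z]\cong k\,M_{\binom{n}{k}}(\C)$ (since $\C[\Z/k\Z]\cong\C^k$), plus a one-dimensional summand for $k=0$, and to verify these blocks are mutually orthogonal and span.

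\textbf{A diagram basis.} For $0\le k\le n$ let $\mathcal{B}_k$ be the set of annular tangles in $\sA\sP_n$ with exactly $k$ through strings, each carrying a single dot. Expanding $\eOnePerp=\identity-\eOne$ on each through string shows that the transition matrix between $\bigsqcup_k\mathcal{B}_k$ and the standard diagram basis is unitriangular with respect to the number of through strings (a dotted diagram equals its underlying plain diagram plus a combination of diagrams with strictly fewer through strings), so $\bigsqcup_k\mathcal{B}_k$ is a basis of $\sA\sP_n$. For $k\ge 1$, a diagram in $\mathcal{B}_k$ is determined by the $k$-element set $S$ of inner points on through strings ($\binom{n}{k}$ choices), the $k$-element set $T$ of outer points on through strings ($\binom{n}{k}$ choices), and the non-crossing bijection between them; on an annulus there are exactly $k$ such bijections, cyclic rotations of one another, so they are labelled by a \emph{shift} $c\in\Z/k\Z$. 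Hence $|\mathcal{B}_k|=k\binom{n}{k}^2$, while $|\mathcal{B}_0|=1$, the unique diagram $z$ with all caps and cups; this recovers the dimension count.

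\textbf{Block structure.} Next one computes products of elements of $\mathcal{B}_j$ and $\mathcal{B}_k$ using the relations and the fact (noted just after the definition of $\sA\sP$) that no closed loops ever arise, so no scalar loop factors appear. If the outer through-string set of one factor does not coincide with the inner through-string set of the other --- in particular whenever $j\ne k$ --- then some dotted through string is glued to a cap or cup, producing a strand with two dots ending in a dark circle, which is zero by \eqref{rel:RP3}, so the product vanishes. If the sets do match, each pair of glued dotted through strings collapses to one dotted through string by \eqref{rel:RP2}, each glued pair of caps/cups either becomes a cap/cup or a floating doubly-dotted string removed with value $1$ by \eqref{rel:RP1}, and one checks that the shifts add. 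Thus $\sA\sP_n=\bigoplus_{k=0}^n\spann(\mathcal{B}_k)$ as a direct sum of $*$-algebras; the identity of $\sA\sP_n$ decomposes as $\sum_k u_k$, where $u_k=\sum_{|S|=k}(\text{through strings on }S,\ \text{shift }0)$ is the unit of the $k$-th block, confirming this is genuinely a direct sum. Sending the diagram in $\mathcal{B}_k$ with data $(S,T,c)$ to $E_{T,S}\otimes[c]$ is then a $*$-isomorphism $\spann(\mathcal{B}_k)\cong M_{\binom{n}{k}}(\C)\otimes\C[\Z/k\Z]\cong k\,M_{\binom{n}{k}}(\C)$ for $k\ge1$, and $\spann(\mathcal{B}_0)=\C z\cong\C$.

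\textbf{Main obstacle.} The delicate part is the bookkeeping in the product computation: one must check that gluing dotted strands to caps/cups and then applying \eqref{rel:RP1}, \eqref{rel:RP2}, \eqref{rel:RP3} collapses every product to a single basis element of some $\mathcal{B}_k$ or to $0$, with no surviving scalar and no lower-order correction terms, and that the shift is genuinely additive under composition --- the absence of closed loops in $\sA\sP$ being exactly what rules out any loop-value scalar. Once that is established, recognizing the $k$-th block as $M_{\binom{n}{k}}(\C)\otimes\C[\Z/k\Z]$ and simplifying $\C[\Z/k\Z]\cong\C^k$ is routine.
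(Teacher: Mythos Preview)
Your argument is correct and rests on the same key device as the paper's proof---Bigelow's dotted strand and the relations \eqref{rel:RP1}--\eqref{rel:RP3}---but the organisation is genuinely different. The paper takes the annular avatars of the rectangular minimal projections $p_k$, observes that they are no longer minimal because the rotation $\rho$ commutes past them, and then splits each $p_k$ into $k$ minimal pieces $p_k^\omega=\tfrac1k\sum_i\omega^{-i}p_k\rho^ip_k$ indexed by $k$-th roots of unity; a dimension count against Proposition~\ref{prop:CountTangles} finishes. You instead parametrise a full system of ``matrix units'' $(S,T,c)$ for each block, verify directly via \eqref{rel:RP1}--\eqref{rel:RP3} that their products and adjoints behave like $E_{T,S}\otimes[c]\in M_{\binom{n}{k}}(\C)\otimes\C[\Z/k\Z]$, and only at the end invoke $\C[\Z/k\Z]\cong\C^k$ to split the block. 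What the paper's route buys is brevity (no case-by-case gluing analysis) and a direct connection to the representation theory in Section~\ref{sec:Representations}, where the $p_k^\omega$ reappear; what your route buys is an explicit identification of the blocks as twisted matrix algebras $M_{\binom{n}{k}}(\C)\otimes\C[\Z/k\Z]$ before diagonalising, which makes the role of the cyclic shift transparent. The shift-additivity and $*$-compatibility checks you flag as the ``main obstacle'' do go through exactly as you outline: with $S,T$ listed cyclically from the respective stars, the non-crossing matchings are precisely $s_i\mapsto t_{i+c}$, and composition (with matching middle star) adds shifts while the adjoint negates them.
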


\begin{rem}\label{rem:BinomialIdentity}
Note that we have the identity
$$
n {n\choose k}{n-1\choose k-1}=k{n\choose k}^2,
$$
so the formula in Proposition \ref{prop:AnnularAlgebraStructure} is consistent with Proposition \ref{prop:CountTangles}.
\end{rem}

\begin{proof}[Proof of Proposition \ref{prop:AnnularAlgebraStructure}]
Using Bigelow's dotted strand, consider the annular tangles which give minimal projections in $\sR\sP_n$ under the cutting operation in Figure \ref{fig:CuttingAndGluing}.
These annular tangles are orthogonal projections in $\sA\sP_n$, but the only one that remains minimal is the one with only broken strings and no dotted through strings.
Now given a projection $p_k$ with $k$ dotted through strings, the $k$ powers of the 1-click rotation tangle $\rho$ (see Figure \ref{fig:rotation}) can be compressed by $p_k$ to give $k$ distinct tangles $p_k \rho^i p_k$ for $i=1\dots, k$. 
\begin{figure}[!ht]
$$
\rho = 
\begin{tikzpicture}[annular]
	\clip (0,0) circle (1.6cm);
	\draw (216:0.5cm) .. controls ++(216:.6cm) and ++(324:.4cm) .. (144:1.5cm);
	\draw (144:0.5cm) .. controls ++(144:.6cm) and ++(252:.4cm) .. (72:1.5cm);
	\draw (72:0.5cm) .. controls ++(72:.6cm) and ++(180:.4cm) .. (0:1.5cm);
	\draw (0:0.5cm) .. controls ++(0:.6cm) and ++(108:.4cm) .. (-72:1.5cm);
	\draw (-72:0.5cm) .. controls ++(-72:.6cm) and ++(36:.4cm) .. (-144:1.5cm);
	\node at (158:.75cm) {$\star$};
	\node at (180:1.25cm) {$\star$};
	\draw [very thick] (0,0) circle (.5cm);
	\draw [very thick] (0,0) circle (1.5cm);
\end{tikzpicture}
$$
\caption{The one click rotation $\rho\in \sA\sP_5$}
\label{fig:rotation}
\end{figure}

Now if $\omega_k$ is a $k$-th root of unity, we get a projection 
$$
p_k^\omega=\frac{1}{k}\sum_{i=0}^{k-1} \omega_k^{-i} p_k \rho^i p_k
$$ 
which lives under $p_k$. Distinct $\omega$ give distinct projections, since $\rho(p_k^\omega)=\omega p_k^\omega$, so $p_k$ splits into $k$ non-zero orthogonal projections.
Now using the usual partial isometries from $\sR\sP_n$ in annular form, we see that splitting each projection with $k\geq 2$ dotted through strings into $k$ orthogonal summands also splits the corresponding copy of $M_{n\choose k}(\C)$ in $\sR\sP_n$ into $k$ copies of $M_{n\choose k}(\C)$ in $\sA\sP_n$, which results in the claimed decomposition.
By Remark \ref{rem:BinomialIdentity}, we must have all the minimal projections, since the dimension count agrees with Proposition \ref{prop:CountTangles}.
\end{proof}

\subsection{Annular and rectangular GICAR categories}

\begin{defn}
The \underline{annular GICAR category} $\sA\sG$ is the following small involutive category.
\itt{Objects} symbols $[n]$ for $n\geq 0$.
\itt{Morphisms} 
The morphisms of $\sA\sG$ are $\C$-linear combinations of the words $*$-generated by the maps
\begin{align*}
\alpha_i : [n] &\longrightarrow [n+1] \text{ for }i=1,\dots, n+1\text{ and }n\geq 0\\
\alpha_i^* : [n] &\longrightarrow [n-1] \text{ for }i=1,\dots, n\text{ and }n\geq 1\\
\tau :[n] & \longrightarrow [n] \text{ for }n\geq 0
\end{align*}
subject to the relations
\begin{align}
\alpha_i\alpha_{j-1}&=\alpha_j\alpha_i
\text{ and }
\alpha_i^*\alpha_j^* = \alpha_{j-1}^*\alpha_i^* 
\text{ for all }i<j
\label{rel:AG1}
\tag{$\sA\sG1$}
\\
\alpha_{i}^*\alpha_{j}
&=
\begin{cases}
\alpha_{j+1}\alpha_{i}^* &\text{if }i<j\\
\id_{[n]} & \text{if } i=j\\
\alpha_j \alpha_{i+1}^* & \text{if } i>j
\end{cases}
\label{rel:AG2}
\tag{$\sA\sG2$}
\\
\tau^*&=\tau^{-1}\text{ and }\tau^n=\id_{[n]}
\label{rel:AG3}
\tag{$\sA\sG3$}
\\
\alpha_i \tau &= \tau \alpha_{i-1}\text{ and }\alpha_i^*\tau = \tau\alpha_{i-1}^* \text{ for all }i=2,\dots, n.
\label{rel:AG4}
\tag{$\sA\sG4$}
\end{align}
\itt{Composition} The composition in $\sA\sG$ is the concatenation of words. 
\itt{Adjoint} The adjoint of a word $w=\ell_1\dots \ell_n$ where the letters $\ell_k\in\{\alpha_i,\alpha_j^*,\tau \}$ is given by $w^*=\ell_n^*\cdots \ell_1^*$.
\end{defn}

\begin{rem}
$\sA\sG$ is the full subcategory of ${\sf a\Delta}$ in \cite{MR2903179} generated by $\tau,\alpha_i,\alpha_j^*$, after replacing the $\alpha_i$'s by the $\beta_{2i}$'s appearing there.
\end{rem}

\begin{prop}\label{prop:ExtraRelation}
The additional relations 
\begin{equation}
\alpha_1=\tau\alpha_n
 \text{ and }
\alpha_1^*\tau = \alpha_n^*
\label{rel:AG5}
\tag{$\sA\sG5$}
\end{equation}
hold in $\sA\sG$.
\end{prop}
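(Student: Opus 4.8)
The plan is to deduce both identities in \eqref{rel:AG5} from \eqref{rel:AG3} and \eqref{rel:AG4} by a short ``cyclic'' manipulation. First note that the two identities in \eqref{rel:AG5} are adjoint to one another: since $\tau^*=\tau^{-1}$ by \eqref{rel:AG3}, taking the $*$ of $\alpha_1^*\tau=\alpha_n^*$ gives $\tau^{-1}\alpha_1=\alpha_n$, i.e.\ $\alpha_1=\tau\alpha_n$. So it is enough to prove the annihilator identity $\alpha_1^*\tau=\alpha_n^*$ (which typechecks only with $\tau=\tau_{[n]}$ on the source $[n]$). The idea is to slide the ``last'' annihilator $\alpha_n^*$ past successive powers of $\tau$ using the half of \eqref{rel:AG4} that reads $\alpha_i^*\tau=\tau\alpha_{i-1}^*$ (valid for $i=2,\dots,n$), and then to collapse the accumulated rotations using the order relation in \eqref{rel:AG3}.

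Writing $\tau_{[m]}$ for the copy of $\tau$ at the object $[m]$, the key step I would establish, by induction on $k$, is
$$
\alpha_n^*\,\tau_{[n]}^{\,k}\;=\;\tau_{[n-1]}^{\,k}\,\alpha_{n-k}^* \qquad\text{for } 0\le k\le n-1 .
$$
The base case $k=0$ is trivial. For the inductive step from $k$ to $k+1$ with $k\le n-2$, one writes $\alpha_n^*\tau_{[n]}^{\,k+1}=(\alpha_n^*\tau_{[n]}^{\,k})\tau_{[n]}=\tau_{[n-1]}^{\,k}\bigl(\alpha_{n-k}^*\tau_{[n]}\bigr)$ and applies \eqref{rel:AG4} in the form $\alpha_i^*\tau=\tau\alpha_{i-1}^*$ with $i=n-k$; this is a legal instance precisely because $0\le k\le n-2$ forces $i=n-k$ into the allowed range $\{2,\dots,n\}$. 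Thus the recursion runs exactly up to $k=n-1$, using the indices $i=n,n-1,\dots,2$ and in particular never invoking the absent endpoint case $i=1$ of \eqref{rel:AG4} --- which is exactly the gap that \eqref{rel:AG5} records.

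Taking $k=n-1$ and using $\tau_{[n-1]}^{\,n-1}=\id_{[n-1]}$ (from \eqref{rel:AG3} at the object $[n-1]$) yields $\alpha_n^*\tau_{[n]}^{\,n-1}=\alpha_1^*$. Multiplying both sides on the right by $\tau_{[n]}$ and using $\tau_{[n]}^{\,n}=\id_{[n]}$ then gives $\alpha_n^*=\alpha_1^*\tau_{[n]}$, which is the second identity of \eqref{rel:AG5}; the first then follows by the adjoint remark above. (For $n\le 1$ the statement is degenerate, since \eqref{rel:AG3} forces $\tau_{[0]}=\id_{[0]}$ and $\tau_{[1]}=\id_{[1]}$.)

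I do not expect a genuine obstacle here: the whole argument is essentially the observation that transporting the last annihilator around by the order-$n$ rotation brings it back to the first. The only point demanding care is the bookkeeping of source and target objects --- making sure that each use of \eqref{rel:AG4} and \eqref{rel:AG3} is applied at the correct object and within the stated index range, and in particular checking, as above, that the induction is arranged so that the forbidden ``wrap-around'' instance of \eqref{rel:AG4} is never needed.
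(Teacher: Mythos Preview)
Your proof is correct and follows essentially the same approach as the paper: repeatedly apply \eqref{rel:AG4} to slide powers of $\tau$ past the $\alpha$'s, then collapse using the order relations in \eqref{rel:AG3}. The only cosmetic difference is that the paper proves the creation identity $\alpha_1=\tau\alpha_n$ first (writing the chain $\alpha_1=\tau^{n+1}\alpha_1=\tau^n\alpha_2\tau=\cdots=\tau\alpha_n\tau^n=\tau\alpha_n$) and then takes adjoints, whereas you prove the annihilation identity $\alpha_1^*\tau=\alpha_n^*$ first and then take adjoints; your explicit tracking of source and target objects is if anything more careful than the paper's.
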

\begin{proof}
By Relations \eqref{rel:AG3} and \eqref{rel:AG4}, we have
$$
\alpha_1 = \tau^{n+1}\alpha_1 = \tau^n \alpha_2\tau=\cdots=\tau\alpha_n\tau^n=\tau\alpha_n. 
$$
Now take adjoints. (Note there is a typo in the proof of this relation in \cite[Proposition 3.6.(1)]{MR2903179}).
\end{proof}

\begin{rem}
By the results of \cite{MR2903179}, there is a $*$-equivalence of categories $\sA\sP\cong \sA\sG$. 
We provide a short proof of this fact for the convenience of the reader along the same line of reasoning as \cite{MR2903179}.
\end{rem}

\begin{prop}\label{prop:Standard}
Suppose $w\in \sA\sG(m,n)$ is a word in the $\alpha_i,\tau,\alpha_j^*$. 
Then $w$ can be written uniquely in the standard form
$$
w=\alpha_{i_k}\cdots \alpha_{i_1} \tau^r \alpha_{j_1}^*\cdots \alpha_{j_\ell}^*
$$
where $i_1< \cdots <i_k$, $0\leq r < m-k$, and $j_1<\cdots < j_\ell$. 

In particular, the words in standard form give bases for $\sG(m,n)$, and thus $\dim_\C(\sA\sG(m,n))<\I$ for all $m,n$.
\end{prop}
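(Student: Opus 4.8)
The plan is to prove existence of the standard form by a terminating rewriting procedure using only the relations \eqref{rel:AG1}--\eqref{rel:AG5}, and then to deduce uniqueness (so that the standard-form words are a basis and $\dim_\C\sA\sG(m,n)<\infty$) by pushing the problem forward into the diagrammatic category $\sA\sP$, where distinct tangles are linearly independent by construction.

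\emph{Existence.} Given an arbitrary word $w$ in the $\alpha_i$, $\alpha_j^*$ and $\tau^{\pm1}$ (note $\tau^{-1}=\tau^*$), I would rewrite it in three stages. \textbf{(i)} Using \eqref{rel:AG2}, move every $\alpha_j^*$ to the right of every $\alpha_i$, cancelling any pair $\alpha_i^*\alpha_i=\id$ produced along the way; whenever an $\alpha_j^*$ must pass a $\tau^{\pm1}$, use \eqref{rel:AG4} and \eqref{rel:AG5} to carry that $\tau^{\pm1}$ to the left (possibly absorbing it). After this stage $w=u\cdot v$, with $v$ a word in the $\alpha_j^*$ only and $u$ a word in the $\alpha_i$ and $\tau^{\pm1}$. \textbf{(ii)} Inside $u$, use \eqref{rel:AG4} and \eqref{rel:AG5} to slide every $\tau^{\pm1}$ rightward until all of them merge into a single block immediately to the left of $v$; the $\alpha_i$-indices shift as a $\tau$ passes them, and a $\tau$ is only ever destroyed (when it crosses an $\alpha_1$ or an $\alpha_n$), so the number of $\tau$'s cannot increase and each moves monotonically, whence the process terminates. \textbf{(iii)} Sort the remaining $\alpha_i$-block and the block $v$ into strictly increasing index order using \eqref{rel:AG1}; these are precisely the coface and face sorting moves of the simplex category, so a bubble sort terminates. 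Finally, reduce the central $\tau$-exponent using $\tau^{N}=\id_{[N]}$ from \eqref{rel:AG3} on the object $[N]$ that it acts on, placing $r$ in the stated range. Thus every word equals a standard-form word, so the standard forms span $\sA\sG(m,n)$; there are only finitely many of them because the admissible index data are bounded in terms of $m$ and $n$.

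\emph{Uniqueness.} I would assign to each generator an annular tangle: $\alpha_i$ and $\alpha_j^*$ to the elementary tangles inserting one non-through string at the appropriate outer, respectively inner, boundary position, and $\tau$ to the one-click rotation of Figure \ref{fig:rotation}. A routine isotopy check shows these tangles satisfy \eqref{rel:AG1}--\eqref{rel:AG4}, so one obtains a $*$-functor $F\colon\sA\sG\to\sA\sP$. Applying $F$ to the standard-form word with data $(i_1<\dots<i_k;\,r;\,j_1<\dots<j_\ell)$ produces an annular tangle whose cup positions are read off from the $i$'s, whose cap positions are read off from the $j$'s (using the standard bijection between increasing coface/face composites in $\Delta$ and subsets), and whose residual rotation of the through strings is read off from $r$; hence distinct data give non-isotopic tangles. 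Since distinct tangles are linearly independent in $\sA\sP(m,n)$ by definition, the $F$-images of the standard-form words are linearly independent, so the standard-form words are linearly independent in $\sA\sG(m,n)$. Combined with the spanning statement, they form a basis. (Counting these tangles via Proposition \ref{prop:CountTangles} then also yields the $*$-equivalence $\sA\sG\cong\sA\sP$.)

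\emph{Expected main obstacle.} The delicate point is the $\tau$-bookkeeping in the existence argument: one must check that interleaving ``push the $\tau$'s to the middle'' with ``re-sort the $\alpha$-blocks'' genuinely terminates and never strands a $\tau$ in the wrong region, and that the index wrap-arounds governed by \eqref{rel:AG5} place the central exponent in exactly the asserted interval, with the stated dependence on $m$ and $k$. The uniqueness half is conceptually straightforward once $F$ is constructed; the one thing to pin down there is that the entire triple of data can be recovered from the isotopy class of $F$ applied to a standard-form word.
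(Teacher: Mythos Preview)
Your proposal is correct and follows essentially the same route as the paper. The paper's own proof is much terser: it only sketches the existence half (push $\alpha_i$'s left and $\alpha_j^*$'s right via \eqref{rel:AG1}--\eqref{rel:AG5}, sort with \eqref{rel:AG1}, reduce the $\tau$-exponent with \eqref{rel:AG3}) and does not argue uniqueness at all within the proof of the proposition; uniqueness is effectively deferred to Theorem~\ref{thm:Equivalence}, where the explicit inverse $\Psi^{-1}$ shows that $\Psi$ is injective on standard-form words. Your uniqueness argument via the functor $F$ into $\sA\sP$ is exactly this step made explicit, so you are not doing anything genuinely different---just filling in what the paper leaves implicit. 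One small caution on your termination bookkeeping: in stage (i) you should not need to move any $\tau$'s while pushing $\alpha^*$'s past $\alpha$'s, since \eqref{rel:AG2} handles $\alpha^*\alpha$ pairs directly; it is cleaner (and matches the paper) to first move all $\tau$'s out of the way using \eqref{rel:AG4}--\eqref{rel:AG5}, then separate $\alpha$'s from $\alpha^*$'s, then sort. Also note that the $\tau$-block sits on the object $[m-\ell]$ (equivalently $[n-k]$), so the exponent range is governed by the number of through strings; the ``$m-k$'' in the statement appears to be a typo for $m-\ell$.
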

\begin{proof}
Using Relations \eqref{rel:AG1}-\eqref{rel:AG5}, first push all the $\alpha_i$'s all the way to the left and all the $\alpha_j^*$'s all the way the right, leaving the $\tau$'s in the middle. Then use Relation \eqref{rel:AG1} to put the $\alpha_i$'s in decreasing order and the $\alpha_j^*$'s in increasing order. Finally, use Relation \eqref{rel:AG3} to reduce the number of $\tau$'s in the middle.
\end{proof}

\begin{thm}\label{thm:Equivalence}
There is a $*$-equivalence of categories $\sA\sP\cong \sA\sG$.
\end{thm}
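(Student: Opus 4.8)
The plan is to construct a $*$-functor $F\colon\sA\sG\to\sA\sP$ on objects by $F([n])=[n]$, on generators by sending $\alpha_i$, $\alpha_i^*$ to the obvious "add a cap at slot $i$", "add a cup at slot $i$" annular tangles and $\tau$ to the one-click rotation $\rho$ of Figure \ref{fig:rotation}, and then to check that $F$ is well defined, full, and faithful. Well-definedness amounts to verifying that the annular tangles chosen for $\alpha_i,\alpha_i^*,\tau$ satisfy Relations \eqref{rel:AG1}--\eqref{rel:AG4}; this is a routine isotopy check on annular diagrams (moving a cap past a cap/cup, the ``zig-zag'' cancellation $\alpha_i^*\alpha_i=\id$, $\tau^n=\id$, and rotation-equivariance of adding a cap), and one also gets \eqref{rel:AG5} for free from Proposition \ref{prop:ExtraRelation} or directly. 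Fullness is immediate once one observes that every annular tangle in $\sA\sP(m,n)$ is, up to isotopy, built by: first placing the caps on the inner circle (a word in the $\alpha_i^*$'s read appropriately), then a rotation $\tau^r$ to align the distinguished intervals, then placing the cups on the outer circle (a word in the $\alpha_i$'s); concretely, a tangle with inner caps at a given cyclic position, outer cups at a given cyclic position, and $k$ through-strings with relative twist $r$ is the image of a standard-form word $\alpha_{i_k}\cdots\alpha_{i_1}\tau^r\alpha_{j_1}^*\cdots\alpha_{j_\ell}^*$.

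The heart of the argument is faithfulness. The clean way: combine Proposition \ref{prop:Standard} (the standard-form words span $\sA\sG(m,n)$) with the explicit count of annular tangles from Proposition \ref{prop:CountTangles}. It suffices to show (i) $\dim_\C\sA\sG(m,n)\le N(m,n)$ — which follows from Proposition \ref{prop:Standard} once we check the standard forms are exactly parametrized the same way the tangles are counted in Lemma \ref{lem:CountTangles}, i.e. $k$ through-strings contribute a choice of $k$ of the $n$ ``cup slots'', $k$ of the $m$ ``cap slots'' refined by the $m/(m-k)$ bookkeeping, and the $r$ with $0\le r<m-k$ matching the rotational freedom — and (ii) the images $F(w)$ of the standard-form words are linearly independent in $\sA\sP(m,n)$, which holds because distinct standard forms map to the $N(m,n)$ genuinely distinct (non-isotopic) annular tangles spanning $\sA\sP(m,n)$. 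Together with fullness (surjectivity of $F$ on morphism spaces) this forces $F$ to be a linear isomorphism on each $\sA\sP(m,n)$, hence a faithful $*$-functor, hence an equivalence (it is bijective on objects). That $F$ is a $*$-functor is the observation that flipping an annular tangle inside-out sends the chosen $\alpha_i$-tangle to the chosen $\alpha_i^*$-tangle and $\rho$ to $\rho^{-1}$, matching the definition $w^*=\ell_n^*\cdots\ell_1^*$ in $\sA\sG$.

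The main obstacle is bookkeeping in matching the two combinatorial parametrizations: making sure the range $0\le r<m-k$ of the rotational exponent in Proposition \ref{prop:Standard}, together with the ordering conventions $i_1<\cdots<i_k$ and $j_1<\cdots<j_\ell$, lines up exactly with the ``fix the outer $\star$, then the first through-string determines the inner $\star$ up to $m$ shifts'' description in the proof of Lemma \ref{lem:CountTangles} — in particular handling the degenerate cases $k=0$ (no through-strings, $\tau$ acts trivially, $N(m,n;0)=1$) and $k=m$ correctly so the dimension inequality is actually an equality. Once that matching is pinned down, faithfulness is automatic from the dimension count and no further diagram chasing is needed. I would present the functor explicitly, state well-definedness as a one-line isotopy remark, prove fullness by the cap--rotate--cup normal form, and then invoke Propositions \ref{prop:Standard} and \ref{prop:CountTangles} to conclude.
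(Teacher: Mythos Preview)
Your proposal is correct and follows essentially the same route as the paper: define the functor on generators by sending $\alpha_i$, $\alpha_i^*$, $\tau$ to the obvious cup, cap, and rotation tangles, verify the relations by isotopy, and then establish the bijection between standard-form words (Proposition \ref{prop:Standard}) and annular tangles. The only cosmetic difference is that the paper writes down the inverse functor $\Psi^{-1}$ explicitly---reading off from each tangle its cap positions $j_1<\cdots<j_\ell$, cup positions $i_1<\cdots<i_k$, and relative star position $r$ to produce the corresponding standard-form word---rather than packaging the same bijection as a full/faithful/dimension-count argument.
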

\begin{proof}
We construct a $*$-functor $\Psi:\sA\sG\to \sA\sP$.
First, define $\Psi([n])=[n]$.
Next, we define $\Psi$ on the morphisms $\alpha_i,\tau,\alpha_j^*$.
\begin{itemize}
\item
$\alpha_i\in \sA\sG(n,n+1)$ maps to the tangle in $\sR\sP$ with $n$ inner points, $n+1$ outer points, a cap attached to outer boundary point $i$, and all other boundary points are connected by through strings so that the region meeting the internal $\star$ also meets the external $\star$.
$$
\underset{\Psi(\alpha_1\in \sA\sG(4,5))}{
\begin{tikzpicture}[annular]
	\clip (0,0) circle (1.6cm);
	\draw (120:1.15cm) -- (120:1.5cm);
	\filldraw (120:1.15cm) circle (.05cm);
	\draw (60:0.5cm) -- (60:1.5cm);
	\draw (0:0.5cm) -- (0:1.5cm);
	\draw (-60:0.5cm) -- (-60:1.5cm);
	\draw (-120:0.5cm) -- (-120:1.5cm);
	\node at (180:.75cm) {$\star$};
	\node at (180:1.25cm) {$\star$};
	\draw [very thick] (0,0) circle (.5cm);
	\draw [very thick] (0,0) circle (1.5cm);
\end{tikzpicture}
}
\,,\,
\underset{\Psi(\alpha_2\in \sA\sG(4,5))}{
\begin{tikzpicture}[annular]
	\clip (0,0) circle (1.6cm);
	\draw (120:0.5cm) -- (120:1.5cm);
	\draw (60:1.15cm) -- (60:1.5cm);
	\filldraw (60:1.15cm) circle (.05cm);
	\draw (0:0.5cm) -- (0:1.5cm);
	\draw (-60:0.5cm) -- (-60:1.5cm);
	\draw (-120:0.5cm) -- (-120:1.5cm);
	\node at (180:.75cm) {$\star$};
	\node at (180:1.25cm) {$\star$};
	\draw [very thick] (0,0) circle (.5cm);
	\draw [very thick] (0,0) circle (1.5cm);
\end{tikzpicture}
}
\,,\,
\underset{\Psi(\alpha_3\in \sA\sG(4,5))}{
\begin{tikzpicture}[annular]
	\clip (0,0) circle (1.6cm);
	\draw (120:0.5cm) -- (120:1.5cm);
	\draw (60:0.5cm) -- (60:1.5cm);
	\draw (0:1.15cm) -- (0:1.5cm);
	\filldraw (0:1.15cm) circle (.05cm);
	\draw (-60:0.5cm) -- (-60:1.5cm);
	\draw (-120:0.5cm) -- (-120:1.5cm);
	\node at (180:.75cm) {$\star$};
	\node at (180:1.25cm) {$\star$};
	\draw [very thick] (0,0) circle (.5cm);
	\draw [very thick] (0,0) circle (1.5cm);
\end{tikzpicture}
}
\,,\,
\underset{\Psi(\alpha_4\in \sA\sG(4,5))}{
\begin{tikzpicture}[annular]
	\clip (0,0) circle (1.6cm);
	\draw (120:0.5cm) -- (120:1.5cm);
	\draw (60:0.5cm) -- (60:1.5cm);
	\draw (0:0.5cm) -- (0:1.5cm);
	\draw (-60:1.15cm) -- (-60:1.5cm);
	\filldraw (-60:1.15cm) circle (.05cm);
	\draw (-120:0.5cm) -- (-120:1.5cm);
	\node at (180:.75cm) {$\star$};
	\node at (180:1.25cm) {$\star$};
	\draw [very thick] (0,0) circle (.5cm);
	\draw [very thick] (0,0) circle (1.5cm);
\end{tikzpicture}
}
\,,\,
\underset{\Psi(\alpha_5\in \sA\sG(4,5))}{
\begin{tikzpicture}[annular]
	\clip (0,0) circle (1.6cm);
	\draw (120:0.5cm) -- (120:1.5cm);
	\draw (60:0.5cm) -- (60:1.5cm);
	\draw (0:0.5cm) -- (0:1.5cm);
	\draw (-60:0.5cm) -- (-60:1.5cm);
	\draw (-120:1.15cm) -- (-120:1.5cm);
	\filldraw (-120:1.15cm) circle (.05cm);
	\node at (180:.75cm) {$\star$};
	\node at (180:1.25cm) {$\star$};
	\draw [very thick] (0,0) circle (.5cm);
	\draw [very thick] (0,0) circle (1.5cm);
\end{tikzpicture}
}
$$
\item
$\alpha_j^*\in\sA\sG(n,n-1)$ maps to $\Psi(\alpha_j) ^*\in\sA\sP(n-1,n)$.
$$
\underset{\Psi(\alpha_1^*\in \sA\sG(5,4))}{
\begin{tikzpicture}[annular]
	\clip (0,0) circle (1.6cm);
	\draw (120:0.5cm) -- (120:.85cm);
	\filldraw (120:.85cm) circle (.05cm);
	\draw (60:0.5cm) -- (60:1.5cm);
	\draw (0:0.5cm) -- (0:1.5cm);
	\draw (-60:0.5cm) -- (-60:1.5cm);
	\draw (-120:0.5cm) -- (-120:1.5cm);
	\node at (180:.75cm) {$\star$};
	\node at (180:1.25cm) {$\star$};
	\draw [very thick] (0,0) circle (.5cm);
	\draw [very thick] (0,0) circle (1.5cm);
\end{tikzpicture}
}
\,,\,
\underset{\Psi(\alpha_2^*\in \sA\sG(5,4))}{
\begin{tikzpicture}[annular]
	\clip (0,0) circle (1.6cm);
	\draw (120:0.5cm) -- (120:1.5cm);
	\draw (60:0.5cm) -- (60:.85cm);
	\filldraw (60:.85cm) circle (.05cm);
	\draw (0:0.5cm) -- (0:1.5cm);
	\draw (-60:0.5cm) -- (-60:1.5cm);
	\draw (-120:0.5cm) -- (-120:1.5cm);
	\node at (180:.75cm) {$\star$};
	\node at (180:1.25cm) {$\star$};
	\draw [very thick] (0,0) circle (.5cm);
	\draw [very thick] (0,0) circle (1.5cm);
\end{tikzpicture}
}
\,,\,
\underset{\Psi(\alpha_3^*\in \sA\sG(5,4))}{
\begin{tikzpicture}[annular]
	\clip (0,0) circle (1.6cm);
	\draw (120:0.5cm) -- (120:1.5cm);
	\draw (60:0.5cm) -- (60:1.5cm);
	\draw (0:0.5cm) -- (0:.85cm);
	\filldraw (0:.85cm) circle (.05cm);
	\draw (-60:0.5cm) -- (-60:1.5cm);
	\draw (-120:0.5cm) -- (-120:1.5cm);
	\node at (180:.75cm) {$\star$};
	\node at (180:1.25cm) {$\star$};
	\draw [very thick] (0,0) circle (.5cm);
	\draw [very thick] (0,0) circle (1.5cm);
\end{tikzpicture}
}
\,,\,
\underset{\Psi(\alpha_4^*\in \sA\sG(5,4))}{
\begin{tikzpicture}[annular]
	\clip (0,0) circle (1.6cm);
	\draw (120:0.5cm) -- (120:1.5cm);
	\draw (60:0.5cm) -- (60:1.5cm);
	\draw (0:0.5cm) -- (0:1.5cm);
	\draw (-60:0.5cm) -- (-60:.85cm);
	\filldraw (-60:.85cm) circle (.05cm);
	\draw (-120:0.5cm) -- (-120:1.5cm);
	\node at (180:.75cm) {$\star$};
	\node at (180:1.25cm) {$\star$};
	\draw [very thick] (0,0) circle (.5cm);
	\draw [very thick] (0,0) circle (1.5cm);
\end{tikzpicture}
}
\,,\,
\underset{\Psi(\alpha_5^*\in \sA\sG(5,4))}{
\begin{tikzpicture}[annular]
	\clip (0,0) circle (1.6cm);
	\draw (120:0.5cm) -- (120:1.5cm);
	\draw (60:0.5cm) -- (60:1.5cm);
	\draw (0:0.5cm) -- (0:1.5cm);
	\draw (-60:0.5cm) -- (-60:1.5cm);
	\draw (-120:0.5cm) -- (-120:.85cm);
	\filldraw (-120:.85cm) circle (.05cm);
	\node at (180:.75cm) {$\star$};
	\node at (180:1.25cm) {$\star$};
	\draw [very thick] (0,0) circle (.5cm);
	\draw [very thick] (0,0) circle (1.5cm);
\end{tikzpicture}
}
$$
\item
$\tau\in \sA\sG_n$ maps to the counter-clockwise one click rotation in $\sA\sP_n$,
and $\tau^*=\tau^{-1}$ maps to the clockwise one click rotation.
$$
\underset{\Psi(\tau\in \sA\sG(5,5))}{
\begin{tikzpicture}[annular]
	\clip (0,0) circle (1.6cm);
	\draw (216:0.5cm) .. controls ++(216:.6cm) and ++(324:.4cm) .. (144:1.5cm);
	\draw (144:0.5cm) .. controls ++(144:.6cm) and ++(252:.4cm) .. (72:1.5cm);
	\draw (72:0.5cm) .. controls ++(72:.6cm) and ++(180:.4cm) .. (0:1.5cm);
	\draw (0:0.5cm) .. controls ++(0:.6cm) and ++(108:.4cm) .. (-72:1.5cm);
	\draw (-72:0.5cm) .. controls ++(-72:.6cm) and ++(36:.4cm) .. (-144:1.5cm);
	\node at (158:.75cm) {$\star$};
	\node at (180:1.25cm) {$\star$};
	\draw [very thick] (0,0) circle (.5cm);
	\draw [very thick] (0,0) circle (1.5cm);
\end{tikzpicture}
}
\text{ and }
\underset{\Psi(\tau^*\in \sA\sG(5,5))}{
\begin{tikzpicture}[annular]
	\clip (0,0) circle (1.6cm);
	\draw (72:0.5cm) .. controls ++(72:.6cm) and ++(324:.4cm) .. (144:1.5cm);
	\draw (0:0.5cm) .. controls ++(0:.6cm) and ++(252:.4cm) .. (72:1.5cm);
	\draw (-72:0.5cm) .. controls ++(-72:.6cm) and ++(180:.4cm) .. (0:1.5cm);
	\draw (-144:0.5cm) .. controls ++(-144:.6cm) and ++(108:.4cm) .. (-72:1.5cm);
	\draw (144:0.5cm) .. controls ++(144:.6cm) and ++(36:.4cm) .. (-144:1.5cm);
	\node at (-158:.75cm) {$\star$};
	\node at (180:1.25cm) {$\star$};
	\draw [very thick] (0,0) circle (.5cm);
	\draw [very thick] (0,0) circle (1.5cm);
\end{tikzpicture}
}
$$
\end{itemize}
One sees that these tangles satisfy the relations of $\sA\sG$ by drawing the appropriate diagrams.

We define $\Psi^{-1}$ by its $\C$-linear extension on tangles from $\sA\sP$. 
Given an annular tangle $T\in \sA\sP(m,n)$, there is a unique $r$ satisfying
$$
0\leq r< \#(\text{through strings of }T)
$$ 
which is the number of through strings to the left of the inner $\star$ that one must cross to get to the region which meets the outer $\star$.
We call this $r$ the relative star position of $T$.
Now, $\Psi^{-1}(T)\in \sA\sG(m,n)$ is the word in standard form
$$
\Psi^{-1}(T)=\alpha_{i_k}\cdots \alpha_{i_1}\tau^r\alpha_{j_1}^*\cdots \alpha_{j_\ell}^*
$$
where $j_1<\cdots < j_\ell$ are the positions of the caps of $T$, $r$ is the relative star position, and $i_1<\cdots < i_k$ are the positions of the cups of $T$.
That $\Psi^{-1}\circ \Psi= \id_{\sA\sG}$ and $\Psi\circ\Psi^{-1}=\id_{\sA\sP}$ follows immediately.
\end{proof}

\begin{defn}
The \underline{rectangular GICAR category} $\sR\sG$ is the subcategory of $\sA\sG$ such that $\sR\sG(m,n)$ consists of all $\C$-linear combinations of words $w$ on $\alpha_i,\tau,\alpha_j^*$ such that in the standard form of $w$ afforded by Proposition \ref{prop:Standard}, no $\tau$ appears, i.e., $r=0$.
\end{defn}

\begin{thm}\label{thm:RectangularEquivalence}
There is a $*$-equivalence of categories $\sR\sP\cong \sR\sG$.
\end{thm}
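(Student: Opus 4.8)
The plan is to deduce $\sR\sP\cong\sR\sG$ by simply restricting the $*$-equivalence $\Psi:\sA\sG\to\sA\sP$ built in the proof of Theorem \ref{thm:Equivalence}. The crux is that the two rectangularity conditions correspond to one another under $\Psi$: a word $w\in\sA\sG(m,n)$ lies in $\sR\sG(m,n)$ exactly when its standard form (Proposition \ref{prop:Standard}) has $r=0$, while an annular tangle $T\in\sA\sP(m,n)$ lies in $\sR\sP(m,n)$ exactly when its relative star position is $0$ --- which is precisely the requirement that the region meeting the inner $\star$ also meets the outer $\star$. Since the formula for $\Psi^{-1}$ sends a tangle $T$ with relative star position $r$, cap positions $j_1<\cdots<j_\ell$, and cup positions $i_1<\cdots<i_k$ to the standard-form word $\alpha_{i_k}\cdots\alpha_{i_1}\tau^r\alpha_{j_1}^*\cdots\alpha_{j_\ell}^*$, the conditions ``$r=0$ in standard form'' and ``relative star position zero'' are matched.

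First I would check that $\Psi$ carries $\sR\sG$ into $\sR\sP$. By definition $\sR\sG(m,n)$ is spanned by words $\alpha_{i_k}\cdots\alpha_{i_1}\alpha_{j_1}^*\cdots\alpha_{j_\ell}^*$ in which no $\tau$ appears; because $\Psi$ is a functor, such a word is sent to a composite of the tangles $\Psi(\alpha_i)$ and $\Psi(\alpha_j^*)$, each of which is by construction a rectangular tangle (relative star position $0$), and $\sR\sP$ is a subcategory of $\sA\sP$, hence closed under composition, so the composite is again rectangular. Next I would check the reverse: given any $T\in\sR\sP(m,n)$, its relative star position is $0$, so $\Psi^{-1}(T)=\alpha_{i_k}\cdots\alpha_{i_1}\tau^0\alpha_{j_1}^*\cdots\alpha_{j_\ell}^*$ contains no $\tau$ and therefore lies in $\sR\sG(m,n)$.

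It then follows formally that $\Psi$ restricts to a $*$-functor $\sR\sG\to\sR\sP$ which is the identity on objects; on each morphism space it is the restriction of the linear isomorphism $\sA\sG(m,n)\cong\sA\sP(m,n)$ to the subspace $\sR\sG(m,n)$, and by the two inclusions above its image is exactly $\sR\sP(m,n)$, so it is a linear isomorphism $\sR\sG(m,n)\cong\sR\sP(m,n)$. Compatibility with $*$ is inherited from $\Psi$, since the involutions on $\sR\sG$ and $\sR\sP$ are the ones on $\sA\sG$ and $\sA\sP$. The identities $\Psi^{-1}\circ\Psi=\id$ and $\Psi\circ\Psi^{-1}=\id$ on the rectangular categories are just the corresponding identities from Theorem \ref{thm:Equivalence} restricted to the relevant subspaces. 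Hence the restriction is a $*$-equivalence.

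The only step needing any care is the combinatorial bookkeeping in the first two paragraphs --- identifying ``no $\tau$ in standard form,'' ``relative star position zero,'' and ``the two distinguished regions coincide'' --- and this is essentially a rereading of the construction of $\Psi^{-1}$; everything else is free from Theorem \ref{thm:Equivalence}. (One could instead argue directly that rectangular planar-rook tangles biject with $r=0$ words and track composition by hand, but piggybacking on $\Psi$ avoids repeating that work.) If desired, one may additionally note that this restricted equivalence is monoidal for the tensor structure on $\sR\sP$ given by horizontal juxtaposition, since $\Psi$ sends $\alpha_i\in\sA\sG(n,n+1)$ and the through-string padding to the corresponding horizontally-joined rectangular tangles; but this is not needed for the stated $*$-equivalence.
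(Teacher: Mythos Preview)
Your proposal is correct and follows exactly the same approach as the paper: restrict the $*$-equivalence $\Psi$ from Theorem \ref{thm:Equivalence} to the rectangular subcategories. The paper's proof is terser (``it is clear the functor $\Psi$ \ldots restricts to a $*$-equivalence'') and then records the explicit rectangular images of the generators $\alpha_i,\alpha_j^*$, whereas you spell out the bookkeeping that identifies ``$r=0$ in standard form'' with ``relative star position zero''; but the underlying argument is identical.
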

\begin{proof}
First, it is clear the functor $\Psi$ constructed in Theorem \ref{thm:Equivalence} restricts to a $*$-equivalence $\sR\sP\cong \sR\sG$.
In fact, 
\begin{itemize}
\item
$\alpha_i\in\sR\sG(n,n+1)$ maps to the diagram with $n$ lower boundary points, $n+1$ upper boundary points, a cup attached to lower boundary point $i$, and all other boundary points connected by undotted through strings.
$$
\hspace{-.5cm}
\underset{\Psi(\alpha_1\in\sR\sG(3,4))}{
\begin{tikzpicture}[baseline=-.1cm]
	\nbox{}{(0,0)}{.4}{.2}{0}{}
	\BrokenTopString{(-.4,0)}
	\String{(-.2,0)}
	\String{(0,0)}
	\String{(.2,0)}
\end{tikzpicture}
}
\,,\,
\underset{\Psi(\alpha_2\in\sR\sG(3,4))}{
\begin{tikzpicture}[baseline=-.1cm]
	\nbox{}{(0,0)}{.4}{.2}{0}{}
	\String{(-.4,0)}
	\BrokenTopString{(-.2,0)}
	\String{(0,0)}
	\String{(.2,0)}
\end{tikzpicture}
}
\,,\,
\underset{\Psi(\alpha_3\in\sR\sG(3,4))}{
\begin{tikzpicture}[baseline=-.1cm]
	\nbox{}{(0,0)}{.4}{.2}{0}{}
	\String{(-.4,0)}
	\String{(-.2,0)}
	\BrokenTopString{(0,0)}
	\String{(.2,0)}
\end{tikzpicture}
}
\,,\,
\underset{\Psi(\alpha_4\in\sR\sG(3,4))}{
\begin{tikzpicture}[baseline=-.1cm]
	\nbox{}{(0,0)}{.4}{.2}{0}{}
	\String{(-.4,0)}
	\String{(-.2,0)}
	\String{(0,0)}
	\BrokenTopString{(.2,0)}
\end{tikzpicture}
}
$$
\item
$\alpha_j^*\in\sR\sG(n,n-1)$ maps to $\Psi(\alpha_j)^*\in \sR\sP(n-1,n)$.
\begin{align*}
&
\underset{\Psi(\alpha_1^*\in\sR\sG(4,3))}{
\begin{tikzpicture}[baseline=-.1cm]
	\nbox{}{(0,0)}{.4}{.2}{0}{}
	\BrokenBottomString{(-.4,0)}
	\String{(-.2,0)}
	\String{(0,0)}
	\String{(.2,0)}
\end{tikzpicture}
}
\,,\,
\underset{\Psi(\alpha_2^*\in\sR\sG(4,3))}{
\begin{tikzpicture}[baseline=-.1cm]
	\nbox{}{(0,0)}{.4}{.2}{0}{}
	\String{(-.4,0)}
	\BrokenBottomString{(-.2,0)}
	\String{(0,0)}
	\String{(.2,0)}
\end{tikzpicture}
}
\,,\,
\underset{\Psi(\alpha_3^*\in\sR\sG(4,3))}{
\begin{tikzpicture}[baseline=-.1cm]
	\nbox{}{(0,0)}{.4}{.2}{0}{}
	\String{(-.4,0)}
	\String{(-.2,0)}
	\BrokenBottomString{(0,0)}
	\String{(.2,0)}
\end{tikzpicture}
}
\,,\,
\underset{\Psi(\alpha_4^*\in\sR\sG(4,3))}{
\begin{tikzpicture}[baseline=-.1cm]
	\nbox{}{(0,0)}{.4}{.2}{0}{}
	\String{(-.4,0)}
	\String{(-.2,0)}
	\String{(0,0)}
	\BrokenBottomString{(.2,0)}
\end{tikzpicture}
}
\qedhere
\end{align*}
\end{itemize}
\end{proof}

\begin{rem}
We can now pull back the tensor structure on $\sR\sP$ to get a tensor structure on $\sR\sG$.
The tensor product of objects is $[m]\otimes [n]=[m+n]$, and the tensor product of morphisms in standard form
\begin{align*}
\varphi&=
\alpha_{i_k}\cdots \alpha_{i_1}\alpha_{j_1}^*\cdots \alpha_{j_\ell}^*
\in \sR\sG(m_1,n_1)
\text{ and }
\\
\psi&=
\alpha_{i_{k'}'}\cdots a_{i_{1}'}\alpha_{j_{1}'}^*\cdots \alpha_{j_{\ell'}'}^*
\in \sR\sG(m_2,n_2)
\end{align*}
is given by
$$
\varphi\otimes \psi=
\alpha_{i_{k'}'+n_1}\cdots \alpha_{i_{1}'+n_1}\alpha_{i_k}\cdots \alpha_{i_1}
\alpha_{j_1}^*\cdots \alpha_{j_\ell}^*\alpha_{j_{1}'+m_1}^*\cdots \alpha_{j_{\ell'}'+m_1}^*
\in \sR\sG(m_1+m_2,n_1+n_2).
$$
With this tensor structure, the functor $\Psi$ in Theorem \ref{thm:RectangularEquivalence} is a $*,\otimes$-functor.
\end{rem}

\section{Representation theory of the GICAR categories}\label{sec:Representations}

We now compute the representation theory of the GICAR categories $\sR\sG\cong \sR\sP$ and $\sA\sG\cong \sA\sP$ in the spirit of Graham and Lehrer's theory of cellular algebras \cite{MR1376244,MR1659204}.

\subsection{A diagrammatic representation of the GICAR algebra}\label{sec:DigrammaticFermions}

We first give a diagrammatic description of the GICAR algebra acting on fermionic Fock space using the diagrams from $\sR\sP$ so that we may use Bigelow's dotted strand (Definition \ref{defn:DottedStrand}).
These diagrams implicitly appear in \cite{MR990778}, while our diagrams for fermionic Fock space arise from the cellular structure in the spirit of \cite{MR1376244,MR1659204}.

Our diagrammatic representation of $\GICAR(\cH)$ relies on choosing an orthonormal basis of $\cH$. 
This should neither surprise nor worry the reader for the following reason. 
Recall from Fact \ref{fact:ChooseBasis} that we must choose an orthonormal basis to show that $\CAR(\cH)\cong \bigotimes^\I M_2(\C)$ and to show that the Bratteli diagram for the tower of algebras $(G_n=A_n^{U(1)})_{n\geq 0}$ is given by Pascal's Triangle.
Since our diagrams in $\sR\sP$ are equivalent to those for the $\C P_n$'s, we are relying on the AF structure of $\GICAR(\cH)$, which relies on the choice of basis.

Suppose $\cH$ is a separable, infinite dimensional Hilbert space with a fixed choice of orthonormal basis $(\xi_i)_{i\geq 1}$.
Define $\cH_n = \spann\{\xi_1,\dots, \xi_n\}$.
We use the abbreviations  $a_i=a(\xi_i)$ and $a_j^*=a(\xi_j)^*$.
Recall the following facts about fermionic Fock space and the GICAR algebra. 

\begin{facts}
\mbox{}
\be
\item
An orthonormal basis of $\cF(\cH)$ is given by symbols of the form $\xi_{i_1}\wedge \cdots \wedge \xi_{i_n}$ for $i_1<i_2<\cdots < i_n$ together with the vacuum vector $\Omega$.
\item
By \cite[Lemma 2.2]{MR990778}, $\GICAR(\cH_n)$ has a presentation as a $*$-algebra with generators $f_i$ for $i=1,\dots, n$ and $u_i$ for $i=1,\dots, n-1$ and relations:
\begin{align}
&\text{$f_i=f_i^*=f_i^2$}
\tag{G1}
\label{rel:G1}
\\
&\text{$[f_i,f_j]=0$ if $j\neq i$}
\tag{G2}
\label{rel:G2}
\\
&\text{$[u_i,f_j]=0$ if $j\neq i,i+1$}
\tag{G3}
\label{rel:G3}
\\
&\text{$[u_i,u_j]=[u_i,u_j^*]=0$ if $|i-j|\geq 2$}
\tag{G4}
\label{rel:G4}
\\
&\text{$u_i^*u_i=f_{i+1}(1-f_i)$ and $u_iu_i^*=f_i(1-f_{i+1})$}
\tag{G5}
\label{rel:G5}
\end{align}
The isomorphism is given by $f_i\mapsto a_i^*a_i$ and $u_i\mapsto a_i^*a_{i+1}$.
\ee
\end{facts}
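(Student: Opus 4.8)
The plan is to verify item~(1) by the usual linear algebra of exterior powers and item~(2) by checking the defining relations by hand, invoking \cite[Lemma 2.2]{MR990778} only for the structural part of~(2). For item~(1): by definition $\xi_{i_1}\wedge\cdots\wedge\xi_{i_n}=\sqrt{n!}\,p_n(\xi_{i_1}\otimes\cdots\otimes\xi_{i_n})$, and $p_n$ is the orthogonal projection of $\bigotimes^n\cH$ onto $\Lambda^n\cH$; since the simple tensors of basis vectors span $\bigotimes^n\cH$, their images span $\Lambda^n\cH$. The antisymmetry built into $p_n$ kills a wedge with a repeated index and turns a reordering of indices into a sign change, so the \emph{strictly increasing} wedges already span. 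They are orthonormal by the determinant formula for the inner product: for strictly increasing multi-indices $\underline i,\underline j$ the Gram matrix $(\langle\xi_{i_a},\xi_{j_b}\rangle)_{a,b}$ is the identity when $\underline i=\underline j$ and has a zero row otherwise, so the inner product is $1$ or $0$ accordingly. Since $\cF(\cH)=\bigoplus_{n\ge0}\Lambda^n\cH$ is an orthogonal direct sum and $\Lambda^0\cH=\C\Omega$, the claimed set is an orthonormal basis.

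For item~(2), first I would check that the assignment is a well-defined $*$-homomorphism. Put $f_i=a_i^*a_i$ and $u_i=a_i^*a_{i+1}$ in $\CAR(\cH_n)$; each has equally many creation and annihilation operators, hence is fixed by the $U(1)$ Bogoliubov action and lies in $\GICAR(\cH_n)$. Using only (CAR1)--(CAR3) with $\langle\xi_i,\xi_j\rangle=\delta_{ij}$ one verifies (G1)--(G5): $f_i^*=f_i$ is clear, and $f_i^2=a_i^*(a_ia_i^*)a_i=a_i^*(1-a_i^*a_i)a_i=f_i$ since $(a_i^*)^2=0$; relations (G2)--(G4) hold because two generators supported on disjoint modes anticommute, so any two of the quadratic elements $f_i,u_i,u_i^*$ with disjoint supports commute; and (G5) is the computation $u_i^*u_i=a_{i+1}^*(a_ia_i^*)a_{i+1}=a_{i+1}^*(1-f_i)a_{i+1}=f_{i+1}-f_{i+1}f_i$, with the symmetric computation for $u_iu_i^*$. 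Thus the presented algebra maps onto the $*$-subalgebra of $\GICAR(\cH_n)$ generated by the $f_i$ and $u_i$.

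It remains to see this map is an isomorphism, and here I would invoke \cite[Lemma 2.2]{MR990778}; alternatively it follows from Fact~\ref{fact:ChooseBasis}. For surjectivity, the $2^n$ products $\prod_{i=1}^ng_i$ with $g_i\in\{f_i,1-f_i\}$ are the minimal projections of $G_n\cong\bigoplus_{k=0}^nM_{\binom nk}(\C)$, and multiplying them by words in the $u_j$ yields a full system of matrix units in each block. Injectivity reduces to a dimension count: a normal-form argument using (G1)--(G5) shows the presented algebra has dimension at most $\sum_{k=0}^n\binom nk^2=\dim G_n$. The main obstacle is precisely this normal form, since the $u_j$ commute with $f_k$ only for $k\ne j,j+1$ and otherwise satisfy only (G5), so assembling a spanning set of exactly the right cardinality requires some bookkeeping; this is why for that step it is cleanest to quote \cite[Lemma 2.2]{MR990778} and give only the short verification of (G1)--(G5) above.
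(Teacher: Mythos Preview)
The paper does not prove this statement at all: it is recorded as a \emph{Facts} block, with item~(1) left as a standard fact about fermionic Fock space and item~(2) attributed entirely to \cite[Lemma 2.2]{MR990778}. Your write-up is therefore strictly more detailed than the paper's treatment, and the computations you give are correct. In particular, your verification of (G1)--(G5) for $f_i=a_i^*a_i$ and $u_i=a_i^*a_{i+1}$ from the CAR relations is accurate (the key point, that quadratic words in fermions supported on disjoint modes commute, is exactly what makes (G2)--(G4) work), and your handling of the isomorphism---checking well-definedness directly and deferring injectivity/surjectivity to the cited lemma or to the dimension count from Fact~\ref{fact:ChooseBasis}---is appropriate. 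There is nothing to compare against in the paper beyond the bare citation, and your sketch would serve as a fine elaboration of what the authors left implicit.
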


We now construct a diagrammatic Hilbert space $\cD_n$ on which we represent $\sR\sP_n$. 
We then give a spatial isomorphism $\Theta_n: \cF(\cH_n)\to \cD_n$ and a $*$-isomorphism of algebras $\theta_n: G_n\to \sR\sP_n$ which intertwines the actions, i.e., for all $\eta,\zeta\in \cF(\cH_n)$ and all $x,y\in G_n$, we have 
\begin{align}
\langle \eta,\zeta\rangle_{\cF(\cH_n)} 
&=
\langle  \Theta_n(\eta),\Theta_n(\zeta)\rangle_{\cD_n},
\label{rel:D1}
\tag{D1}
\\
\theta_n(xy^*)
&=
\theta_n(x)\theta_n(y)^*,
\text{ and}
\label{rel:D2}
\tag{D2}
\\
\Theta_n(x\eta)
&=
\theta_n(x)\Theta_n(\eta)
\label{rel:D3}
\tag{D3}
\end{align}

\begin{defn}
For $n\geq 0$, let $\cD_n$ be the complex span of diagrams in $\sR\sP$ with $n$ top boundary points, at most $n$ bottom boundary points, and no caps, such that all through strings are dotted.
Define an inner product on $\cD_n$ by declaring the diagrammatic basis of $\sR\sP$ to be orthonormal.
Let $\sR\sP_n$ act on $\cD_n$ by the usual composition of maps in $\sR\sP$.
\end{defn}

\begin{defn}
We define $\Theta_n: \cF(\cH_n)\to \cD_n$ as follows.
Let $\Theta_n(\xi_{i_1}\wedge\cdots \wedge \xi_{i_k})$ where $i_1<\cdots <i_k$ and $k\leq n$ be the diagram with $n$ upper boundary points, $n-k$ lower boundary points, cups in the $i_\ell$-th positions for all $\ell=1,\dots, k$, and all other strings are dotted through strings.
For example, when $k\leq 2$, we have
\begin{align*}
\Omega&\longmapsto
\begin{tikzpicture}[baseline=-.1cm]
	\nbox{unshaded}{(0,0)}{.4}{.2}{.2}{}
	\DottedString{(-.4,0)}
	\node at (.05,0) {$\cdots$};	
	\DottedString{(.4,0)}
	\node at (.4,.6) {\scriptsize{$n$}};
\end{tikzpicture}\,,
\\
\xi_i&\longmapsto
\begin{tikzpicture}[baseline=-.1cm]
	\nbox{unshaded}{(0,0)}{.4}{.2}{1.4}{}
	\DottedString{(-.4,0)}
	\node at (.05,0) {$\cdots$};	
	\DottedString{(.4,0)}
	\coordinate (a) at (.6,0);
	\BrokenTopString{(a)};
	\node at ($(a)+(0,.6)$)  {\scriptsize{$i$}};
	\DottedString{($(a)+(.2,0)$)}
	\node at ($(a)+(.65,0)$) {$\cdots$};
	\DottedString{($(a)+(1,0)$)}
	\node at ($(a)+(1,.6)$) {\scriptsize{$n$}};
\end{tikzpicture}\,,\text{ and}
\\
\xi_i\wedge\xi_j &\longmapsto
\begin{tikzpicture}[baseline=-.1cm]
	\nbox{unshaded}{(0,0)}{.4}{.2}{2.6}{}
	\DottedString{(-.4,0)}
	\node at (.05,0) {$\cdots$};	
	\DottedString{(.4,0)}
	\coordinate (a) at (.6,0);
	\BrokenTopString{(a)};
	\node at ($(a)+(0,.6)$)  {\scriptsize{$i$}};
	\DottedString{($(a)+(.2,0)$)}
	\node at ($(a)+(.65,0)$) {$\cdots$};
	\DottedString{($(a)+(1,0)$)}
	\coordinate (b) at ($(a)+(1.2,0)$);
	\BrokenTopString{(b)};
	\node at ($(b)+(0,.6)$)  {\scriptsize{$j$}};
	\DottedString{($(b)+(.2,0)$)}
	\node at ($(b)+(.65,0)$) {$\cdots$};
	\DottedString{($(b)+(1,0)$)}
	\node at ($(b)+(1,.6)$) {\scriptsize{$n$}};
\end{tikzpicture}
\text{ for }i<j.
\end{align*}
\end{defn}

\begin{thm}\label{thm:IsoOfTowers}
Define the map $\theta_n : G_n \to \sR\sP_n$ by 
$$
a_i^*a_i \mapsto
\begin{tikzpicture}[baseline=-.1cm]
	\nbox{unshaded}{(0,0)}{.4}{.3}{1.1}{}
	\String{(-.5,0)};
	\node at (-.15,0) {$\cdots$};
	\coordinate (a) at (.4,0);
	\String{($(a)+(-.2,0)$)};
	\DottedString{(a)};
	\node at ($(a)+(0,.6)$) {\scriptsize{$i$}};
	\String{($(a)+(.2,0)$)};
	\node at ($(a)+(.55,0)$) {$\cdots$};
	\String{($(a)+(.9,0)$)};
	\node at ($(a)+(.9,.6)$) {\scriptsize{$n$}};
\end{tikzpicture}\,,
\text{ and }
a_i^*a_{i+1} \mapsto
\begin{tikzpicture}[baseline=-.1cm]
	\nbox{unshaded}{(0,0)}{.4}{.3}{1.3}{}
	\String{(-.5,0)};
	\node at (-.15,0) {$\cdots$};
	\coordinate (a) at (.4,0);
	\String{($(a)+(-.2,0)$)};
	\PartialDottedIsoStar{(a)}
	\node at ($(a)+(0,.6)$) {\scriptsize{$i$}};
	\String{($(a)+(.4,0)$)};
	\node at ($(a)+(.75,0)$) {$\cdots$};
	\String{($(a)+(1.1,0)$)};
	\node at ($(a)+(1.1,.6)$) {\scriptsize{$n$}};
\end{tikzpicture}\,.
$$
Then $\Theta_n$ and $\theta_n$ satisfy Equations \eqref{rel:D1}-\eqref{rel:D3}.
\end{thm}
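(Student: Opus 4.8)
The plan is to establish \eqref{rel:D1}, then \eqref{rel:D2}, then \eqref{rel:D3}, reducing each to a finite check. Equation \eqref{rel:D1} is immediate: the set $\{\Omega\}\cup\{\xi_{i_1}\wedge\cdots\wedge\xi_{i_k}:i_1<\cdots<i_k\le n\}$ is an orthonormal basis of $\cF(\cH_n)$, and $\Theta_n$ carries it bijectively onto the (orthonormal by fiat) diagrammatic basis of $\cD_n$ -- both sides are parametrised by the subset of $\{1,\dots,n\}$ recording the cup positions -- so $\Theta_n$ is unitary and in particular $\dim\cD_n=2^n=\dim\cF(\cH_n)$.

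For \eqref{rel:D2}, note that specialising \eqref{rel:D2} to $y=1$ and to $x=1$ forces $\theta_n$ to be unital and to commute with $*$, after which \eqref{rel:D2} in general is just multiplicativity; thus \eqref{rel:D2} is exactly the assertion that $\theta_n$ is a unital $*$-homomorphism. To see it is well defined I would invoke the Connes--Evans presentation of $G_n$ recalled above, with generators $f_i$, $u_i$ and relations \eqref{rel:G1}--\eqref{rel:G5}, and verify that the tangles $\theta_n(f_i)$, $\theta_n(u_i)$ (with $\theta_n(u_i^*):=\theta_n(u_i)^*$ the vertical flip) satisfy \eqref{rel:G1}--\eqref{rel:G5}. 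Each verification is a short tangle manipulation using \eqref{rel:RP1}--\eqref{rel:RP3} together with the facts that $\eOnePerp$ is a self-adjoint idempotent and $\identity=\eOne+\eOnePerp$: relation \eqref{rel:G1} is idempotence and flip-invariance of a dotted strand; \eqref{rel:G2} and \eqref{rel:G4} hold because the tangles involved occupy disjoint tensor slots; \eqref{rel:G3} holds because $\theta_n(u_i)$ is the identity off slots $i,i+1$; and \eqref{rel:G5} is the one substantive computation -- stack $\theta_n(u_i)^*\theta_n(u_i)$ and $\theta_n(u_i)\theta_n(u_i)^*$, remove the resulting closed pieces via \eqref{rel:RP1}, and read off $1-f_i$ as the broken strand $\eOne$ in slot $i$. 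A well-defined unital $*$-homomorphism $\theta_n$ is then automatically a $*$-isomorphism: by Proposition~\ref{prop:Pascal} the $2^n$ simple tensors in $\eOne,\eOnePerp$ are precisely the minimal orthogonal projections of $\sR\sP_n$, and $\theta_n$ sends the $2^n$ analogous simple tensors in $f_i,\,1-f_i$ bijectively onto them, so $\theta_n$ is injective and hence (since $\dim G_n=2^n=\dim\sR\sP_n$) surjective.

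For \eqref{rel:D3}, linearity of $\Theta_n$ and multiplicativity of $\theta_n$ reduce the statement to checking $\Theta_n(x\eta)=\theta_n(x)\Theta_n(\eta)$ when $x$ runs over the generators $f_i$, $u_i$, $u_i^*$ and $\eta$ over the basis vectors $\xi_{i_1}\wedge\cdots\wedge\xi_{i_k}$ (and $\Omega$). Each case is the composite of a tangle supported on one or two adjacent slots with a basis tangle of $\cD_n$, and collapses under \eqref{rel:RP1}--\eqref{rel:RP3}: for $f_i$, either $i$ is a cup position of $\Theta_n(\eta)$, in which case the dotted strand meets the cup, a forbidden double dot appears, and the composite is $0=\Theta_n(f_i\eta)$, or it is not, and the dotted strand is absorbed into the dotted through string, giving $\Theta_n(\eta)=\Theta_n(f_i\eta)$; for $u_i$, the composite is nonzero exactly when slot $i$ carries a cup and slot $i+1$ a dotted through string, i.e.\ exactly when $u_i\eta\ne0$, and then the cap of $\theta_n(u_i)$ cancels the cup via \eqref{rel:RP1}, the dotted diagonal threads through, and the result is the basis tangle with the cup shifted from slot $i$ to slot $i+1$, namely $\pm\Theta_n(u_i\eta)$; the case $u_i^*$ is the vertical mirror. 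One can economise here: once $\theta_n$ is a $*$-isomorphism and $\Theta_n$ a unitary, $\cD_n\cong\bigoplus_k\Lambda^k\cH_n$ as $G_n$-modules by Theorem~\ref{thm:GICARRepresentations}, so \eqref{rel:D3} need only be checked on one vector per block, say on $\xi_1\wedge\cdots\wedge\xi_k$.

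I expect the sign in the $u_i$ (and $u_i^*$) case to be the main obstacle. In $\cF(\cH_n)$ the action of $a(f)$ and $a(f)^*$ carries Koszul signs, so $u_i\eta$ equals $\xi_{i'_1}\wedge\cdots\wedge\xi_{i'_k}$ only up to a sign, whereas composition of planar rook tangles never produces a sign on the nose. These signs must be carried by the dotted (Jones--Wenzl) strands: expanding every dotted strand appearing in $\theta_n(x)$ and in $\Theta_n(\eta)$ via $\eOnePerp=\identity-\eOne$, composing the resulting honest planar rook tangles, and re-collecting into the basis of $\cD_n$ produces a signed sum, and the real content of \eqref{rel:D3} is to check that the surviving coefficient is exactly the Koszul sign dictated by the Fock-space computation -- a finite check (one basis vector per $\Lambda^k\cH_n$ suffices) that in turn pins down the precise sign conventions in the definitions of $\theta_n$ and $\Theta_n$. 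Everything else is bookkeeping.
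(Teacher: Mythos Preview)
Your three-step architecture is exactly the paper's: \eqref{rel:D1} via orthonormal bases, \eqref{rel:D2} by verifying \eqref{rel:G1}--\eqref{rel:G5} for the images and then counting dimensions, and \eqref{rel:D3} by reducing to the generators $f_i,u_i$ against basis wedges. On strategy there is nothing to add.

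Where you diverge from the paper is precisely where you should: you flag the Koszul sign, while the paper writes the action of $a_i^*a_{i+1}$ on a basis wedge with no sign and declares the rest ``straightforward''. Your caution is warranted, but your proposed mechanism---that the signs are hidden in the expansion of the dotted strands---does not work. Expanding every $\eOnePerp$ as $\identity-\eOne$ and recombining is just another way of computing the deterministic $\sR\sP$ composite; it does not manufacture a free sign. Concretely, take $n=2$ and $\eta=\xi_1$. On the Fock side,
\[
u_1\xi_1=a_1^*a_2\,\xi_1=a_1^*(\xi_2\wedge\xi_1)=-\xi_2,
\]
while on the diagram side the cap of $\theta_2(u_1)$ meets the cup of $\Theta_2(\xi_1)$ (removed as a floating string), the two dotted strands compose to a single dotted through string by \eqref{rel:RP2}, and one reads off $\theta_2(u_1)\Theta_2(\xi_1)=+\Theta_2(\xi_2)$. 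So \eqref{rel:D3} fails by a sign with the maps exactly as written. (The same $-1$ appears for every basis wedge in the nonzero case: $a_i^*a_{i+1}$ replaces $\xi_i$ by $\xi_{i+1}$ with a uniform factor $-1$.) No rescaling of $\Theta_n$ on each $\Lambda^k\cH_n$ can cure this, since $u_i$ preserves $k$.

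The clean repair is in $\theta_n$: send $a_i^*a_{i+1}$ to \emph{minus} the dotted partial isometry (and $a_{i+1}^*a_i$ to minus its adjoint). Relations \eqref{rel:G1}--\eqref{rel:G5} are unaffected because every occurrence of $u_i$ in them is either quadratic or inside a commutator, so your verification of \eqref{rel:D2} goes through verbatim, and with this sign \eqref{rel:D3} now holds on the nose by the computation above. With that one correction, your outline (and the paper's) is complete.
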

\begin{proof}
First, Equation \eqref{rel:D1} holds since $\Theta_n$ is a spatial isomorphism which maps an orthonormal basis to an orthonormal basis.
Second, by verifying that Relations \eqref{rel:G1}-\eqref{rel:G5} hold for $f_i=\theta_n(a_i^*a_i)$ and $u_i=\theta_n(a_i^*a_{i+1})$, we see that $\theta_n$ is an injective $*$-algebra homomorphism, since $\dim(G_n)=\dim(\sR\sP_n)$ by Proposition \ref{prop:Pascal}.

It remains to show Equation \eqref{rel:D3}. 
Since Relations \eqref{rel:D1}-\eqref{rel:D2} hold, it suffices to verify Equation \eqref{rel:D3} when $x$ is one of $a_i^*a_i,a_i^*a_{i+1}$, and $\eta$ is of the form $\xi_{i_1}\wedge\cdots \wedge \xi_{i_k}$ where $i_1<\cdots <i_k$.
Clearly
\begin{align*}
a_i^*a_i (\xi_{i_1}\wedge\cdots \wedge \xi_{i_k})
&=
\begin{cases}
0 & \text{if }i\in \{i_1,\dots, i_k\}\\
\xi_{i_1}\wedge\cdots \wedge \xi_{i_k} & \text{otherwise, and}
\end{cases}
\\
a_i^*a_{i+1} (\xi_{i_1}\wedge\cdots \wedge \xi_{i_k})
&=
\begin{cases}
0 & \text{if }i\notin \{i_1,\dots, i_k\}\text{ and }\\ &i+1\in \{i_1,\dots, i_k\}\\
\xi_{i_1}\wedge\cdots\wedge \widehat{\xi_i}\wedge \xi_{i+1}\wedge\cdots \wedge \xi_{i_k} & \text{otherwise.}
\end{cases}
\end{align*}
The rest is straightforward using Relations \eqref{rel:RP1}-\eqref{rel:RP3}.
\end{proof}

\begin{rem}
There is an easy graphical description of the inner product. 
If $\eta,\zeta$ are single diagrams in $\cD_n$, we look at the composite $\zeta^*\eta$ in $\sR\sP$, which is well-defined since $\eta,\zeta$ both have $n$ top boundary points. 
We then use Relations \eqref{rel:RP1}-\eqref{rel:RP3}. 
If we get a non-zero composite, then $\zeta^*\eta$ consists of only dotted through strings. 
Thus it must be the case that $\eta=\zeta$, since the cap positions must agree, and $\langle \eta,\zeta\rangle = 1$.
We leave it to the reader to extend this discussion to a formal definition of the graphical inner product.
\end{rem}

\begin{rem}
The following diagram commutes where the maps $G_n\to G_{n+1}$ and $\cH_n\to \cH_{n+1}$ are the usual inclusions, the map $\sR\sP_n\to \sR\sP_{n+1}$ is the right inclusion, and the map $\cD_n\to \cD_{n+1}$ is adding a dotted through string on the right.
\[
\xymatrix{
G_n\ar[rr]\ar[dd]^{\theta_n} \ar[dr]&& G_{n+1}\ar[dd]^(.6){\theta_{n+1}}\ar[dr]
\\
& B(\cF(\cH_n))\ar[dd]^(.6){\Ad(\Theta_{n})}\ar[rr] && B(\cF(\cH_{n+1}))\ar[dd]^{\Ad(\Theta_{n+1})}
\\
\sR\sP_n\ar[rr]\ar[dr] && \sR\sP_{n+1}\ar[dr]
\\
& B(\cD_n)\ar[rr] && B(\cD_{n+1})
}
\]
\end{rem}

\begin{ex}
We have the following diagrammatic representation of $G_3$ on $\cF(\cH_3)$.
\begin{align*}
\sR\sP(3,3)&=C^*\left\{
\underset{a_1a_1^*}{
\begin{tikzpicture}[baseline=-.1cm]
	\nbox{unshaded}{(0,0)}{.4}{0}{0}{}
	\BrokenString{(-.2,0)}
	\String{(0,0)}
	\String{(.2,0)}
\end{tikzpicture}
}
\,,\,
\underset{a_1a_2^*}{
\begin{tikzpicture}[baseline=-.1cm]
	\nbox{unshaded}{(0,0)}{.4}{0}{0}{}
	\PartialDottedIso{(-.2,0)}
	\String{(.2,0)}
\end{tikzpicture}
}
\,,\,
\underset{a_2a_1^*}{
\begin{tikzpicture}[baseline=-.1cm,]
	\nbox{unshaded}{(0,0)}{.4}{0}{0}{}
	\PartialDottedIsoStar{(-.2,0)}
	\String{(.2,0)}
\end{tikzpicture}
}
\,,\,
\underset{a_2a_2^*}{
\begin{tikzpicture}[baseline=-.1cm]
	\nbox{unshaded}{(0,0)}{.4}{0}{0}{}
	\String{(-.2,0)}
	\BrokenString{(0,0)}
	\String{(.2,0)}
\end{tikzpicture}
}
\,,\,
\underset{a_2a_3^*}{
\begin{tikzpicture}[baseline=-.1cm]
	\nbox{unshaded}{(0,0)}{.4}{0}{0}{}
	\PartialDottedIso{(0,0)}
	\String{(-.2,0)}
\end{tikzpicture}
}
\,,\,
\underset{a_3a_2^*}{
\begin{tikzpicture}[baseline=-.1cm,]
	\nbox{unshaded}{(0,0)}{.4}{0}{0}{}
	\PartialDottedIsoStar{(0,0)}
	\String{(-.2,0)}
\end{tikzpicture}
}
\,,\,
\underset{a_3a_3^*}{
\begin{tikzpicture}[baseline=-.1cm]
	\nbox{unshaded}{(0,0)}{.4}{0}{0}{}
	\String{(-.2,0)}
	\String{(0,0)}
	\BrokenString{(.2,0)}
\end{tikzpicture}
}
\right\}
\text{ and }
\\
\cD_3&=
\spann
\left\{
\underset{\Omega}{
\begin{tikzpicture}[baseline=-.1cm]
	\nbox{unshaded}{(0,0)}{.4}{0}{0}{}
	\DottedString{(-.2,0)};
	\DottedString{(0,0)};
	\DottedString{(.2,0)};	
\end{tikzpicture}
}
\,,\,
\underset{\xi_1}{
\begin{tikzpicture}[baseline=-.1cm]
	\nbox{unshaded}{(0,0)}{.4}{0}{0}{}
	\BrokenTopString{(-.2,0)};
	\DottedString{(0,0)};
	\DottedString{(.2,0)};	
\end{tikzpicture}
}
\,,\,
\underset{\xi_2}{
\begin{tikzpicture}[baseline=-.1cm]
	\nbox{unshaded}{(0,0)}{.4}{0}{0}{}
	\DottedString{(-.2,0)};
	\BrokenTopString{(0,0)};
	\DottedString{(.2,0)};	
\end{tikzpicture}
}
\,,\,
\underset{\xi_3}{
\begin{tikzpicture}[baseline=-.1cm]
	\nbox{unshaded}{(0,0)}{.4}{0}{0}{}
	\DottedString{(-.2,0)};
	\DottedString{(0,0)};	
	\BrokenTopString{(.2,0)};
\end{tikzpicture}
}
\,,\,
\underset{\xi_1\wedge \xi_2}{
\begin{tikzpicture}[baseline=-.1cm]
	\nbox{unshaded}{(0,0)}{.4}{0}{0}{}	
	\BrokenTopString{(-.2,0)};
	\BrokenTopString{(0,0)};
	\DottedString{(.2,0)};
\end{tikzpicture}
}
\,,\,
\underset{\xi_1\wedge \xi_3}{
\begin{tikzpicture}[baseline=-.1cm]
	\nbox{unshaded}{(0,0)}{.4}{0}{0}{}
	\BrokenTopString{(-.2,0)};
	\DottedString{(0,0)};
	\BrokenTopString{(.2,0)};
\end{tikzpicture}
}
\,,\,
\underset{\xi_2\wedge \xi_3}{
\begin{tikzpicture}[baseline=-.1cm]
	\nbox{unshaded}{(0,0)}{.4}{0}{0}{}
	\DottedString{(-.2,0)};	
	\BrokenTopString{(0,0)};
	\BrokenTopString{(.2,0)};
\end{tikzpicture}
}
\,,\,
\underset{\xi_1\wedge \xi_2\wedge \xi_3}{
\begin{tikzpicture}[baseline=-.1cm]
	\nbox{unshaded}{(0,0)}{.4}{0}{0}{}
	\BrokenTopString{(-.2,0)};
	\BrokenTopString{(0,0)};
	\BrokenTopString{(.2,0)};
\end{tikzpicture}
}
\right\}.
\end{align*}
However, note that we need a linear combination of diagrams to represent $a_1a_3^*$ and $a_3a_1^*$.
Using Relations \eqref{rel:CAR2}-\eqref{rel:CAR3}, we have
\begin{align*}
a_1a_3^*
&=a_1a_2^*a_2a_3^*+a_1a_2a_2^*a_3^*
=a_1a_2^*a_2a_3^*+a_2a_1a_3^*a_2^*
=a_1a_2^*a_2a_3^*-a_2a_3^*a_1a_2^*
\\
&\longmapsto
\begin{tikzpicture}[baseline=-.1cm]
	\draw[very thick] (-.4,-.8) rectangle (.4,.8);
	\draw[very thick] (-.4,0) -- (.4,0);
	\PartialDottedIso{(-.2,.4)}
	\String{(.2,.4)}
	\PartialDottedIso{(0,-.4)}
	\String{(-.2,-.4)}	
\end{tikzpicture}
-
\begin{tikzpicture}[baseline=-.1cm]
	\draw[very thick] (-.4,-.8) rectangle (.4,.8);
	\draw[very thick] (-.4,0) -- (.4,0);
	\PartialDottedIso{(-.2,-.4)}
	\String{(.2,-.4)}
	\PartialDottedIso{(0,.4)}
	\String{(-.2,.4)}	
\end{tikzpicture}
=
\begin{tikzpicture}[baseline=-.1cm]
	\nbox{unshaded}{(0,0)}{.4}{.2}{.2}{}
	\draw (-.4,-.4) -- (0,.4);
	\draw (0,-.4) -- (.4,.4);
	\draw (-.4,.4) -- (-.4,.15);
	\draw (.4,-.4) -- (.4,-.15);
	\filldraw (-.4,.15) circle (.05cm);
	\filldraw (.4,-.15) circle (.05cm);
	\filldraw (-.2,0) circle (.05cm);
	\filldraw (.2,0) circle (.05cm);
\end{tikzpicture}
-
\begin{tikzpicture}[baseline=-.1cm]
	\nbox{unshaded}{(0,0)}{.4}{.2}{.2}{}
	\draw (-.4,-.4) -- (.4,.4);
	\draw (-.4,.4) -- (-.4,.15);
	\draw (.4,-.4) -- (.4,-.15);
	\draw (.1,-.4) -- (.1,-.15);
	\draw (-.1,.4) -- (-.1,.15);
	\filldraw (-.4,.15) circle (.05cm);
	\filldraw (.4,-.15) circle (.05cm);
	\filldraw (.1,-.15) circle (.05cm);
	\filldraw (-.1,.15) circle (.05cm);
	\filldraw (0,0) circle (.05cm);
\end{tikzpicture}
\,.
\end{align*}
\end{ex}

\begin{rem}
At this point, we do not know if it is possible to use these diagrams or similar ones to represent $\CAR(\cH)$ on $\cF(\cH)$. 
One might be tempted to define $a_1$ by a cup in the first position on the top.
In order for $a_1$ to kill $\xi_1$, we must connect a dotted string to the first lower point. 
However, if this dotted string connected to upper point $i$, the image of $a_1$ would never contain an antisymmetric tensor containing a $\xi_i$, which is absurd.
\end{rem}

\begin{rem}\label{rem:DiagrammaticGICARRep}
We now get a nice diagrammatic description of the representations given in Theorem \ref{thm:GICARRepresentations} part (2). 
There are exactly $n\choose k$ minimal projections in $\sR\sP_n$ with exactly $k$ through strings, all of which are dotted. Each of these minimal projections $p$ generates a copy of $\Lambda^{n-k}\cH_n$ as $\sR\sP_np$, where we ignore the bottom of the broken strands. For example, if we have the minimal projection with $k$ broken strings on the left and $n-k$ doted through strings on the right,
$$
\sR\sP_n
\left(
\begin{tikzpicture}[baseline=-.1cm]
	\nbox{}{(0,0)}{.4}{.6}{.8}{};
	\BrokenString{(-.8,0)};
	\node at (-.35,0) {$\dots$};
	\BrokenString{(0,0)};	
	\DottedString{(.2,0)};	
	\node at (.65,0) {$\dots$};	
	\DottedString{(1,0)};	
\end{tikzpicture}
\right)
\cong
\sR\sP_n
\left(
\begin{tikzpicture}[baseline=-.1cm]
	\nbox{}{(0,0)}{.4}{.6}{.8}{};
	\BrokenTopString{(-.8,0)};
	\node at (-.35,0) {$\dots$};
	\BrokenTopString{(0,0)};	
	\DottedString{(.2,0)};	
	\node at (.65,0) {$\dots$};	
	\DottedString{(1,0)};	
\end{tikzpicture}
\right)
\cong
G_n(
\xi_1\wedge \cdots \wedge \xi_k)
=
\Lambda^k \cH_n.
$$
It would also be interesting to describe these representations using a unitary version of Graham and Lehrer's theory of cellular algebras \cite{MR1376244,MR1659204}.
\end{rem}

\subsection{Representations of small involutive categories}

We now discuss the representation theory of small involutive categories, where for simplicity, we work with finite dimensional complex Hilbert spaces.
Our treatment is along the lines of \cite[Sections 2-3]{MR1929335}. We provide proofs for completeness.

\begin{defn}
Suppose $\sC$ is a small involutive category whose hom spaces are finite dimensional complex vector spaces.
A \underline{Hilbert $\sC$-module} is a $*$-functor $V: \sC\to {\sf Hilb}$, the category of finite dimensional Hilbert spaces and linear maps. 
We denote $V(n)$ by $V_n$ for $n\in\sC$, and we just use the notation $c$ for $V(c)\in B(V_m,V_n)$ when $c\in\sC(m,n)$.
This means for all $\xi\in V_m$ and $\eta\in V_n$, we have 
$$
\langle c \xi, \eta\rangle_{V_n}=\langle \xi, c^* \eta\rangle_{V_m}.
$$ 
\end{defn}

\begin{rem}
Sometimes one defines a $\sC$-module as a functor originating in $\sC^{\sf op}$, e.g., simplicial sets are functors ${\sf \Delta^{op}}\to {\sf Set}$.
Since $\sC$ is involutive, $\sC\cong \sC^{\sf op}$ via the involution, so we just use covariant functors.
\end{rem}

\begin{defn}
We call a Hilbert $\sC$-module 
\begin{itemize}
\item
\underline{indecomposable} if it cannot be written as the direct sum of two non-trivial orthogonal submodules, or
\item
\underline{irreducible} if it has no proper submodules, i.e., any non-zero element in $V_m$ for any $m$ generates all of $V$.
\end{itemize}
\end{defn}

\begin{lem}\label{lem:IrreducibleAndIndecomposable}
Suppose $V$ is a Hilbert $\sC$-module.
Then $V$ is irreducible if and only if $V$ is indecomposable. 
\end{lem}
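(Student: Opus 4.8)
The plan is to prove the two implications separately by contraposition, with the forward direction being essentially immediate and the reverse direction resting on the observation that, in a \emph{Hilbert} $\sC$-module, objectwise orthogonal complements of submodules are automatically submodules.

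First I would dispatch ``irreducible $\Rightarrow$ indecomposable''. If $V$ were decomposable, say $V = W_1 \oplus W_2$ as an orthogonal direct sum of two nonzero submodules, then $W_1$ is a submodule with $0 \neq W_1 \neq V$, so any nonzero vector in $(W_1)_m$ fails to generate all of $V$, contradicting irreducibility.

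For the converse, ``indecomposable $\Rightarrow$ irreducible'', suppose $V$ is not irreducible, so there is a submodule $W$ with $0 \neq W \neq V$; concretely $W_n \subseteq V_n$ for every object $n$, $c\,W_m \subseteq W_n$ for every $c \in \sC(m,n)$, and $W_{n_0} \subsetneq V_{n_0}$ for some object $n_0$. Define $W^\perp$ objectwise by $(W^\perp)_n = (W_n)^\perp \subseteq V_n$, the orthogonal complement inside the finite-dimensional Hilbert space $V_n$. The key step is to verify that $W^\perp$ is again a Hilbert $\sC$-submodule: given $c \in \sC(m,n)$ and $\xi \in (W_m)^\perp$, for any $\eta \in W_n$ the $*$-functor property gives $\langle c\xi, \eta\rangle_{V_n} = \langle \xi, c^*\eta\rangle_{V_m}$, and $c^*\eta \in W_m$ since $W$ is a submodule and $c^* \in \sC(n,m)$; hence $\langle c\xi,\eta\rangle_{V_n} = 0$, i.e.\ $c\xi \in (W_n)^\perp$. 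Since each $V_n$ is finite-dimensional, $V_n = W_n \oplus (W_n)^\perp$ orthogonally, so $V = W \oplus W^\perp$ as Hilbert $\sC$-modules, and this decomposition is nontrivial because $(W^\perp)_{n_0} = (W_{n_0})^\perp \neq 0$. Thus $V$ is decomposable, completing the contrapositive.

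The only real content is the verification that $W^\perp$ is a submodule, and the point there is precisely that $V$ is a $*$-functor: a bare functor would only let us push $W$ forward along morphisms, not its complement, whereas adjointness converts a statement about $W$ being stable under $c^*$ into a statement about $W^\perp$ being stable under $c$. Accordingly I expect the ``main obstacle'' to be only notational vigilance — remembering that ``submodule'' means stable under \emph{all} morphisms of $\sC$ (including those that decrease the index), and applying the inner-product identity with the morphism on the correct side.
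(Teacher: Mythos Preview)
Your proof is correct and follows essentially the same approach as the paper: both arguments hinge on the observation that in a Hilbert $\sC$-module the objectwise orthogonal complement of a submodule is again a submodule, using the $*$-functor identity $\langle c\xi,\eta\rangle = \langle \xi, c^*\eta\rangle$. The paper proves each implication directly rather than by contrapositive, but the content is identical; if anything, you are slightly more explicit in verifying that $W^\perp$ is $\sC$-stable, which the paper leaves implicit.
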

\begin{proof}
Suppose $V$ is indecomposable, and let $\xi\in V_n$ for some $n\in\sC$. 
Define Hilbert $\sC$-modules $W,W^\perp$ by $W_m=\sC(m,n)\xi$ and $W_m^\perp$ is the usual orthogonal complement for all $m\geq 0$.
Then $V=W\oplus W^\perp$, so $W^\perp$ must be the zero module, i.e., $W_m=V_m$ for all $m$.
Thus $V$ is irreducible.

Suppose now that $V$ is irreducible, and suppose $V=W\oplus X$ for orthogonal Hilbert $\sC$-modules $W$ and $X$. 
Suppose $\xi\in W_m$ is non-zero for some $m\in\sC$.
Let $\eta\in X_n$. 
Then for all $c\in\sC(m,n)$, we have $\langle c\xi,\eta \rangle_{V_n} = 0$, but $\sC(m,n)\xi=V_n$, so $\eta=0$. 
Hence $X$ is the zero module, and $V$ is indecomposable.
\end{proof}

\begin{lem}\label{lem:Orthogonal}
Suppose $V$ is a Hilbert $\sC$-module.
Suppose $W_m$ and $X_m$ are orthogonal $\sC(m,m)$-invariant subspaces of $V_m$. Then $\sC(W)$ is orthogonal to $\sC(X)$.
\end{lem}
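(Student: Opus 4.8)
The plan is to unwind the notation and then make a one-line computation using the $*$-functor axiom. Here $\sC(W)$ denotes the Hilbert $\sC$-submodule of $V$ generated by $W_m$, i.e.\ $\sC(W)_n=\sC(m,n)W_m=\spann\{c\xi : c\in\sC(m,n),\ \xi\in W_m\}$, and likewise $\sC(X)_n=\sC(m,n)X_m$; saying $\sC(W)\perp\sC(X)$ means $\sC(W)_n\perp\sC(X)_n$ inside $V_n$ for every object $n\in\sC$.

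First I would fix an object $n\in\sC$ and, by sesquilinearity of the inner product on $V_n$, reduce to checking orthogonality on spanning vectors: it suffices to show $\langle c\xi,d\eta\rangle_{V_n}=0$ for all $c,d\in\sC(m,n)$, $\xi\in W_m$, and $\eta\in X_m$. Next I would apply the $*$-functor property of $V$ to move $d$ across the inner product, giving $\langle c\xi,d\eta\rangle_{V_n}=\langle\xi,c^*d\,\eta\rangle_{V_m}$ (using that $c^*=V(c^*)$ since $V$ is a $*$-functor).

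Then the key observation is that $c^*d\in\sC(m,m)$: indeed $d\colon[m]\to[n]$ and $c^*\colon[n]\to[m]$, so the composite is an endomorphism of $[m]$. Since $X_m$ is $\sC(m,m)$-invariant, $c^*d\,\eta\in X_m$, and since $W_m\perp X_m$ by hypothesis, $\langle\xi,c^*d\,\eta\rangle_{V_m}=0$. (One could equally push $c$ the other way and use invariance of $W_m$.) This completes the argument. There is essentially no obstacle here; the only point requiring a moment's care is identifying that $c^*d$ lands in the endomorphism algebra $\sC(m,m)$ and hence that invariance of $W_m$ and $X_m$ is exactly what is needed — everything else is bookkeeping and the definition of a $*$-functor.
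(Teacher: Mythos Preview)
Your proposal is correct and follows essentially the same approach as the paper's proof: both reduce to spanning vectors, move one of the morphisms across the inner product via the $*$-functor property, observe that the resulting composite lies in $\sC(m,m)$, and then invoke invariance of one of the two subspaces. The only cosmetic difference is that the paper moves $c_2$ (your $d$) to the left and uses invariance of $W_m$, whereas you move $c$ to the right and use invariance of $X_m$; you even note this symmetry yourself.
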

\begin{proof}
If $\xi=c_1\xi_0$ for some $c_1\in\sC(m,n)$ and $\xi_0\in W_m$, and $\eta=c_2 \eta_0$ for some $c_2\in \sC(m,n)$ and $\eta_0\in X_m$, then
$$
\langle \xi,\eta \rangle_{V_n} = \langle c_1\xi_0, c_2\eta_0\rangle_{V_n} = \langle c_2^*c_1 \xi_0, \eta_0\rangle_{V_m}=0
$$
since $c_2^*c_1\in \sC(m,m)$ and $W_m$ is $\sC(m,m)$-invariant.
\end{proof}

\begin{lem}\label{lem:IrreducibleSubmodule}
Suppose $W\subset V_m$ is an irreducible $\sC(m,m)$-module for some $m$. Then $W_n=\sC(W)_n$ is irreducible for all $n$. 
\end{lem}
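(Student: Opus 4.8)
The plan is to show that the Hilbert $\sC$-module generated by $W$, namely $\sC(W)$ with $\sC(W)_n=\sC(m,n)W$, is irreducible; by Lemma \ref{lem:IrreducibleAndIndecomposable} it suffices to prove it is indecomposable.

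The key preliminary observation is that the ``bottom floor'' of the generated module is exactly $W$, i.e.\ $\sC(W)_m=W$. Indeed, $\sC(m,m)W\subseteq W$ because $W$ is $\sC(m,m)$-invariant, and $W\subseteq\sC(m,m)W$ because $\id_{[m]}\in\sC(m,m)$ acts as the identity on $V_m$ (as $V$ is a functor).

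Now suppose $\sC(W)=Y\oplus Z$ is an orthogonal direct sum of Hilbert $\sC$-submodules. Restricting to the object $[m]$ gives $W=\sC(W)_m=Y_m\oplus Z_m$, and $Y_m,Z_m$ are $\sC(m,m)$-invariant subspaces of $V_m$ since $Y,Z$ are $\sC$-submodules. Because $W$ is irreducible as a $\sC(m,m)$-module, one of these summands is zero; say $Z_m=(0)$, so $Y_m=W$. Then for every object $[n]$ we get $\sC(W)_n=\sC(m,n)W=\sC(m,n)Y_m\subseteq Y_n\subseteq\sC(W)_n$, hence $Y_n=\sC(W)_n$ and $Z_n=(0)$ for all $n$. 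Thus $Z$ is the zero module, $\sC(W)$ is indecomposable, and therefore irreducible.

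I do not expect a genuine obstacle here: the argument is formal once the identity $\sC(W)_m=W$ is in hand, and the only point requiring care is not conflating $\sC(W)_n=\sC(m,n)W$ with any a priori larger or smaller space. If one additionally wants the companion statement that each $\sC(W)_n$ is either $(0)$ or an irreducible module over $\sC(n,n)$, the same dichotomy works with Lemma \ref{lem:Orthogonal} supplying the orthogonality, together with positivity of the inner product (so that $c^*c\xi=0$ forces $c\xi=0$) to discard the degenerate possibility that $\sC(m,n)W$ collapses while being nonzero; this adds nothing substantial.
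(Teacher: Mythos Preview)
Your proof is correct, but it establishes a slightly different statement than the paper does. The lemma as phrased --- ``$W_n$ is irreducible for all $n$'' --- is read in the paper as: each $W_n$ is an irreducible $\sC(n,n)$-module. The paper's proof is direct: given nonzero $\xi\in W_n$ and any $\eta\in W_n$, one pulls $\xi$ back to $W$ via some $c_1^*$ (the result is nonzero by positivity, since $c_1^*c_1\xi_0=0$ would force $\xi=c_1\xi_0=0$), uses irreducibility of $W$ to reach any prescribed target in $W$, then pushes forward via some $c_2\in\sC(m,n)$ to hit $\eta$. The composite $c_2c_3c_1^*$ lies in $\sC(n,n)$, giving level-wise irreducibility.

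You instead prove that $\sC(W)$ is irreducible as a Hilbert $\sC$-module, via indecomposability and the observation $\sC(W)_m=W$. This is a genuinely different route. In the paper's applications (Propositions~\ref{prop:IrreducibleDecomposition} and~\ref{prop:UniqueAnnularModule}), Lemma~\ref{lem:IrreducibleSubmodule} is always invoked together with Lemma~\ref{lem:AllIrreducible} precisely to conclude global irreducibility of $\sC(W)$ --- so your argument reaches that endpoint in one step, bypassing the $(1)\Rightarrow(2)$ direction of Lemma~\ref{lem:AllIrreducible} (and hence Assumption~\ref{assume:Generating}). On the other hand, the paper's version yields the sharper level-wise conclusion directly. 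Your closing sketch for the level-wise statement is on the right track: decomposing $W_n=X\oplus X'$, pushing down to level $m$ via $\sC(n,m)$, and invoking irreducibility of $W$ together with positivity does work, and when written out essentially reconstructs the paper's argument from the other side.
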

\begin{proof}
Suppose $\xi\in W_n$ is nonzero, and let $\eta\in W_n$ be another vector. 
Write  $\xi=c_1 \xi_0$ and $\eta= c_2 \eta_0$ for $\eta_0,\xi_0\in W$ and $c_1,c_2\in\sC(m,n)$.
Then $c_1^*\xi = c_1^*c_1 \xi_0\in W$ is non-zero, so there is a $c_3\in \sC(m,m)$ with $c_3c_1^*\xi=\eta_0$. 
Then $c_2c_3c_1^*\xi=\eta$, and $W_n$ is irreducible.
\end{proof}

\begin{assume}\label{assume:Generating}
We now assume that our Hilbert $\sC$-module $V$ satisfies the following generating property:
\begin{itemize}
\item
For any two objects $m,n\in\sC$ such that $V_m,V_n\neq (0)$, the image of $\sC(m,n)$ in $B(V_m,V_n)$ contains a non-zero map.
\end{itemize}
\end{assume}

\begin{lem}\label{lem:AllIrreducible}
The following are equivalent.
\be
\item
$V_m$ is an irreducible $\sC(m,m)$-module for all $m$,
and 
\item
$V$ is irreducible.
\ee
\end{lem}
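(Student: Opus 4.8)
The plan is to prove the two implications separately; both are formal, relying only on the submodule-generation formalism (a nonzero $\xi\in V_n$ generates the submodule $n'\mapsto\sC(n,n')\xi$) and, for one direction, on Assumption \ref{assume:Generating}. Throughout I would use that $V$ is a functor, so $V(\id_{[n]})=\id_{V_n}$ and hence $\xi=\id_{[n]}\xi\in\sC(n,n)\xi$.

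For $(2)\Rightarrow(1)$, I would argue directly: assuming $V$ irreducible, fix an object $m$ and let $U\subseteq V_m$ be any nonzero $\sC(m,m)$-invariant subspace; the goal is $U=V_m$. Passing to the submodule $\sC(U)$ of $V$ generated by $U$, one has $\sC(U)_{n'}=\sC(m,n')U$, and in particular $\sC(U)_m=\sC(m,m)U=U$ because $U$ is $\sC(m,m)$-invariant and contains $\id_{V_m}U$. Choosing a nonzero $u\in U$, irreducibility of $V$ says the submodule generated by $u$ is all of $V$; since that submodule lies inside $\sC(U)$, we get $\sC(U)=V$, so $U=\sC(U)_m=V_m$. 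Hence each $V_m$ is an irreducible $\sC(m,m)$-module.

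For $(1)\Rightarrow(2)$, I would take $0\neq\xi\in V_n$ for an arbitrary object $n$ and let $W$ be the submodule it generates, $W_{n'}=\sC(n,n')\xi$. Each $W_{n'}$ is $\sC(n',n')$-invariant, so by hypothesis it is $(0)$ or $V_{n'}$; and $W_n=\sC(n,n)\xi=V_n$ since $\xi\neq0$ and $V_n$ is $\sC(n,n)$-irreducible. For any object $n'$ with $V_{n'}\neq(0)$, Assumption \ref{assume:Generating} furnishes $c\in\sC(n,n')$ acting as a nonzero map $V_n\to V_{n'}$, so $(0)\neq cV_n=cW_n\subseteq W_{n'}$ and therefore $W_{n'}=V_{n'}$; for $n'$ with $V_{n'}=(0)$ nothing is needed. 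Thus $W=V$ and $V$ is irreducible.

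The single nontrivial step is the propagation in $(1)\Rightarrow(2)$: $W_n=V_n$ carries no information about $W_{n'}$ for a different object $n'$, and Assumption \ref{assume:Generating} is exactly what is needed to move nontriviality between objects (indeed the statement fails without it, e.g.\ for a module supported on one object). Everything else is a routine consequence of the definitions, so I do not expect a real obstacle here.
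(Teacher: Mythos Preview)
Your proof is correct. The $(1)\Rightarrow(2)$ direction is essentially the same as the paper's: both use irreducibility at one level together with Assumption~\ref{assume:Generating} to propagate a generating vector to every other level, differing only in whether one phrases it via the generated submodule $W$ or via an explicit chain $c_3c_2c_1\xi=\eta$.

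For $(2)\Rightarrow(1)$ the routes diverge. You argue directly: a nonzero $\sC(m,m)$-invariant $U\subseteq V_m$ generates a $\sC$-submodule with $\sC(U)_m=U$, and irreducibility of $V$ forces $\sC(U)=V$, hence $U=V_m$. The paper instead takes the contrapositive and uses the Hilbert structure: given a proper nonzero $W_m\subset V_m$, it invokes the $*$-action to get the invariant complement $W_m^\perp$, then applies Lemma~\ref{lem:Orthogonal} to produce two nonzero orthogonal $\sC$-submodules, contradicting indecomposability (and hence irreducibility via Lemma~\ref{lem:IrreducibleAndIndecomposable}). Your argument is more elementary and in fact does not need the inner product or the involution at all for this direction; the paper's route is tied to the Hilbert setting but makes the decomposition picture explicit.
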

\begin{proof}
\itm{(1)\Rightarrow (2)}
Suppose $\xi\in V_m$ and $\eta\in V_n$ are nonzero. 
There is a nonzero map $c_2\in\sC(m,n)$ by Assumption \ref{assume:Generating}, so there is an $\zeta\in V_m$ such that $c_2 \zeta\in V_n\setminus\{0\}$. 
Since $V_m$ is irreducible, there is a $c_1\in \sC(m,m)$ such that $c_1\xi=\zeta$.
Since $V_n$ is irreducible, there is a $c_3\in \sC(n,n)$ such that $\eta=c_3c_2\zeta=c_3c_2c_1\xi$.

\itm{(2)\Rightarrow (1)}
We prove the contrapositive.
Suppose $V_m$ has a non-zero proper $\sC(m,m)$-module $W_m$ for some $m$.
Since $\sC(m,m)$ acts as a $*$-algebra of bounded operators on $V_m$, $W_m^\perp$ is also a non-zero proper submodule of $V_m$.
Applying $\sC$ to $W_m$ and $W_m^\perp$ yields two proper $\sC$-submodules of $V$ which are orthogonal by Lemma \ref{lem:Orthogonal}.
\end{proof}

\begin{assume}\label{assume:ObjectsAreN}
We now assume that the objects of $\sC$ are the symbols $[n]$ for $n\geq 0$, which come with the usual total order on $\N\cup\{0\}$.
Moreover, we assume that for $m\leq n$, there is a monomorphism in $\sC(m,n)$, so that Assumption \ref{assume:Generating} is satisfied.
\end{assume}

\begin{ex}
The (annular) Temperley-Lieb and the (annular) GICAR categories satisfy Assumption \ref{assume:ObjectsAreN}.
\end{ex}

\begin{defn}
The \underline{weight} $\wt(V)$ of a $\sC$-module $V$ is the least number $n$ such that $V_n$ is non-zero.
Elements of $V_{\wt(V)}$ are called \underline{lowest weight vectors}.
\end{defn}

\begin{prop}\label{prop:IrreducibleDecomposition}
Every Hilbert $\sC$-module has a canonical decomposition as an orthogonal direct sum of irreducible Hilbert $\sC$-modules.
\end{prop}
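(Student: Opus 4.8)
The plan is to argue by induction on the weight $\wt(V)$, reducing the lowest-level structure to the ordinary representation theory of the finite-dimensional $*$-algebra $\sC(n,n)$ and then transporting the resulting decomposition up to all of $V$ by inflating along the functor $\sC$. Assume $V\neq(0)$ and set $n=\wt(V)$, so $V_m=(0)$ for $m<n$ and $V_n\neq(0)$.

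First I would regard $V_n$ as a unital $*$-module over the finite-dimensional $*$-algebra $\sC(n,n)$; since $\sC(n,n)$ acts by a unital $*$-homomorphism into $B(V_n)$ with $V_n$ finite dimensional, $V_n$ is completely reducible, so I can write $V_n=\bigoplus_i W_i$ as an orthogonal direct sum of irreducible $\sC(n,n)$-modules. For each $i$, the inflation $\sC(W_i)$, with $\sC(W_i)_m=\sC(n,m)W_i$, is a Hilbert $\sC$-submodule of $V$, and it is irreducible: Lemma~\ref{lem:IrreducibleSubmodule} shows each $\sC(W_i)_m$ is an irreducible $\sC(m,m)$-module, whence $\sC(W_i)$ is irreducible by Lemma~\ref{lem:AllIrreducible}. (This is precisely where Assumptions~\ref{assume:Generating} and~\ref{assume:ObjectsAreN} enter: the monomorphisms $[m]\to[n]$ carry irreducibility down to small levels, the adjoints carry it up.) The $\sC(W_i)$ are pairwise orthogonal by Lemma~\ref{lem:Orthogonal}, the $W_i$ being orthogonal $\sC(n,n)$-invariant subspaces of $V_n$. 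Put $U=\bigoplus_i\sC(W_i)$; then $U_n=\bigoplus_i\sC(n,n)W_i=\bigoplus_i W_i=V_n$. The degreewise orthogonal complement $U^{\perp}$, defined by $U^{\perp}_m=(U_m)^{\perp}\subseteq V_m$, is again a $\sC$-submodule because $V$ is a $*$-functor (for $c\in\sC(m,k)$, $\xi\in U^{\perp}_m$, $\eta\in U_k$ we get $\langle c\xi,\eta\rangle=\langle\xi,c^{*}\eta\rangle=0$), and $U^{\perp}_m=(0)$ for every $m\leq n$. Hence either $U^{\perp}=(0)$ or $\wt(U^{\perp})\geq n+1$, and I would apply the inductive hypothesis to $U^{\perp}$. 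Iterating this produces $V=\bigoplus_{n\geq 0}\bigoplus_i\sC(W_i^{(n)})$, an orthogonal direct sum of irreducibles; there is no completion issue, since in each fixed degree $m$ only the finitely many weights $n\leq m$ contribute, so $V_m$ is a genuine finite orthogonal direct sum. For the adjective ``canonical'' I would, at each weight, decompose not into arbitrary irreducible $\sC(n,n)$-modules but into the (canonical) isotypic components of $V_n$, so that the multiplicity of each irreducible Hilbert $\sC$-module in $V$ is a well-defined invariant, the only residual freedom being the usual choice of basis inside a multiplicity space.

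I expect the main obstacle to be bookkeeping rather than any hard estimate, concentrated in three points: (i) that $\sC(W_i)$ is irreducible \emph{as a $\sC$-module}, not merely level by level --- this rests on Lemma~\ref{lem:AllIrreducible} and therefore on the standing assumptions forcing the generating property; (ii) that the orthogonal complement of a $\sC$-submodule is again a $\sC$-submodule, which genuinely uses the involution; and (iii) that the resulting, a priori infinite, direct sum over weights actually reassembles $V$, which is verified by noting that after processing all weights $\leq m$ the remaining complement vanishes in degree $m$. None of these requires computation, but each must be stated carefully for the induction to close.
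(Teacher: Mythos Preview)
Your proof is correct and follows essentially the same approach as the paper's: decompose $V_{\wt(V)}$ into irreducible $\sC_{\wt(V)}$-modules, inflate each to an irreducible $\sC$-module via Lemmas~\ref{lem:IrreducibleSubmodule} and~\ref{lem:AllIrreducible}, use Lemma~\ref{lem:Orthogonal} for orthogonality, and induct on the weight of the orthogonal complement. You supply more detail than the paper (closure of $U^\perp$ under $\sC$, finiteness in each degree, and the isotypic reading of ``canonical''), but the skeleton is identical.
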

\begin{proof}
First decompose $V_{\wt(V)}$ into an orthogonal direct sum of irreducible $\sC_{\wt(V)}$-modules.
The direct summands generate irreducible $\sC$-modules by Lemmas \ref{lem:IrreducibleSubmodule} and \ref{lem:AllIrreducible}, all of which are mutually orthogonal by \ref{lem:Orthogonal}.
The orthogonal complement of these $\sC$-modules have higher weight, so we are finished by an induction argument.
\end{proof}

\begin{lem}\label{lem:ExtendMorphism}
If $V,W$ are two Hilbert $\sC$-modules with $V$ irreducible, and $\theta : V_m \to W_m$ is a non-zero homomorphism of $\sC_m$-modules, then $\theta$ extends uniquely to an injective homomorphism $\Theta$ of Hilbert $\sC$-modules.
\end{lem}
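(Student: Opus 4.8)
The plan is to build $\Theta$ one level at a time using the irreducibility of $V$; the only step carrying real content will be well-definedness, which I would get from a Schur-type scalar identity. Since $\theta\neq 0$ we have $V_m\neq(0)$, so by Lemma \ref{lem:AllIrreducible} the space $V_m$ is an irreducible $\sC_m$-module. First I would record that the Hilbert-space adjoint $\theta^*:W_m\to V_m$ is again $\sC_m$-linear: for $c\in\sC_m$, $\xi\in V_m$, $\eta\in W_m$ one computes $\langle\theta^*c\eta,\xi\rangle_{V_m}=\langle c\eta,\theta\xi\rangle_{W_m}=\langle\eta,c^*\theta\xi\rangle_{W_m}=\langle\eta,\theta c^*\xi\rangle_{W_m}=\langle c\theta^*\eta,\xi\rangle_{V_m}$. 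Hence $\theta^*\theta$ is a $\sC_m$-module endomorphism of the irreducible module $V_m$, so by Schur's lemma (finite dimensions over $\C$) $\theta^*\theta=\lambda\,\id_{V_m}$ for a scalar $\lambda$, with $\lambda>0$ because $\theta^*\theta\geq 0$ and $\theta\neq 0$. Equivalently, $\langle\theta\xi,\theta\eta\rangle_{W_m}=\lambda\langle\xi,\eta\rangle_{V_m}$ for all $\xi,\eta\in V_m$, i.e. $\lambda^{-1/2}\theta$ is isometric.

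Next, fix a nonzero $\xi_0\in V_m$. Irreducibility of $V$ gives $\sC(m,n)\xi_0=V_n$ for every $n$ (the submodule generated by $\xi_0$ has $n$th level $\sC(m,n)\xi_0$, hence is all of $V$), so each $\xi\in V_n$ has the form $\xi=c\xi_0$ with $c\in\sC(m,n)$, and I would set $\Theta(\xi):=c\,\theta(\xi_0)\in W_n$. To see this does not depend on the choice of $c$, suppose $c\xi_0=c'\xi_0$; then $(c-c')^*(c-c')\in\sC_m$, and using $\sC_m$-linearity of $\theta$ together with the identity above,
$$
\|(c-c')\theta(\xi_0)\|_{W_n}^2=\langle\theta((c-c')^*(c-c')\xi_0),\theta(\xi_0)\rangle_{W_m}=\lambda\,\|(c-c')\xi_0\|_{V_n}^2=0 .
$$
The same computation with $c'=0$ gives $\|\Theta(\xi)\|_{W_n}^2=\lambda\,\|\xi\|_{V_n}^2$, so $\Theta$ is injective on each $V_n$ (and bounded, which is automatic in finite dimensions).

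The remaining properties are immediate from the formula. $\Theta$ is $\C$-linear because $c\mapsto c\xi_0$ and $c\mapsto c\,\theta(\xi_0)$ are both linear. $\Theta$ extends $\theta$, since for $\xi=c\xi_0\in V_m$ (with $c\in\sC_m$) one has $\Theta(\xi)=c\,\theta(\xi_0)=\theta(c\xi_0)=\theta(\xi)$ by $\sC_m$-linearity. And $\Theta$ intertwines the $\sC$-action: for $d\in\sC(n,n')$, $\Theta(d\cdot c\xi_0)=\Theta((dc)\xi_0)=(dc)\theta(\xi_0)=d\,\Theta(c\xi_0)$. Finally, uniqueness is forced: any homomorphism $\Theta'$ of Hilbert $\sC$-modules extending $\theta$ must satisfy $\Theta'(c\xi_0)=c\,\Theta'(\xi_0)=c\,\theta(\xi_0)=\Theta(c\xi_0)$, and such elements exhaust $V_n$. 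I expect the only genuine obstacle to be the well-definedness of $\Theta$, and the identity $\theta^*\theta=\lambda\,\id$ produced by Schur's lemma is precisely the tool that resolves it; everything else is bookkeeping.
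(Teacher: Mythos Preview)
Your proof is correct and follows the same blueprint as the paper's: fix a generator $\xi_0\in V_m$, write every $\xi\in V_n$ as $c\xi_0$ with $c\in\sC(m,n)$, and define $\Theta(\xi)=c\,\theta(\xi_0)$. The one genuine difference is in how well-definedness and injectivity are established. You invoke Schur's lemma to get $\theta^*\theta=\lambda\,\id_{V_m}$ with $\lambda>0$, which yields the norm identity $\|\Theta(\xi)\|^2=\lambda\|\xi\|^2$ and hence both well-definedness and injectivity in one stroke. The paper instead argues well-definedness by testing $a\theta(\eta)-b\theta(\eta)$ against all vectors of the form $c\zeta$ (using only $\sC_m$-linearity of $\theta$ and the $*$-structure, no Schur's lemma), and then obtains injectivity from the fact that $\ker\Theta$ is a proper $\sC$-submodule of the irreducible $V$. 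Your route is slightly more informative---it shows $\lambda^{-1/2}\Theta$ is an isometry---at the cost of invoking Lemma~\ref{lem:AllIrreducible} and Schur's lemma; the paper's route is more bare-handed. Note, incidentally, that your well-definedness computation does not actually require the scalar $\lambda$: once $(c-c')\xi_0=0$, you have $(c-c')^*(c-c')\xi_0=0$, so $\|(c-c')\theta(\xi_0)\|^2=\langle\theta(0),\theta(\xi_0)\rangle=0$ by $\sC_m$-linearity alone. The Schur step is only really needed for your injectivity argument.
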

\begin{proof}
Suppose $\xi\in V_n$. 
Then $\xi=a \eta$ for some $a\in \sC(m,n)$ and $\eta\in V_m$ since $V$ is irreducible. 
We claim $\Theta(\xi)=c\theta(\eta)$ gives a well-defined map of $\sC$-modules. 
If $\xi=b\eta$ for another $b\in \sC(m,n)$, then for any $c\in \sC(m,n)$ and $\zeta\in V_m$, we have
$$
\langle a\theta(\eta), c\zeta\rangle_{V_n}
= \langle c^*a \theta(\eta),\zeta\rangle 
= \langle \theta(c^*b \eta),\zeta\rangle 
= \langle c^*b \theta(\eta),\zeta\rangle 
= \langle b \theta(\eta),c \zeta\rangle,
$$
so $a\theta(\eta)=b\theta(\eta)$, so $\Theta$ is well-defined.
By construction $\Theta$ is a $\sC$-module map, and it is injective since $V$ is irreducible.
The uniqueness of the extension $\Theta$ of $\theta$ is obvious.
\end{proof}

\subsection{The representation theory of $\sA\sG\cong \sA\sP$}

Since the annular GICAR category $\sA\sG\cong \sA\sP$ satisfies Assumption \ref{assume:Generating}, the lemmas from the last subsection apply, and we easily obtain the complete classification of the representations of $\sA\sG\cong \sA\sP$.
We work with $\sA\sP$ so we can work modulo the ideal of diagrams without the maximal number of through strings.
We give two equivalent constructions of the irreducible modules; the first follows the technique of \cite{MR1929335}, and the second uses the algebra decomposition of $\sA\sP_k$ given in Proposition \ref{prop:AnnularAlgebraStructure}.

\begin{nota}
We identify the maps $\alpha_i,\alpha_j^*,\tau\in \sA\sG$ with their images in $\sA\sP$ under the equivalence $\Psi$ given in Theorem \ref{thm:Equivalence}.
\end{nota}

\begin{defn}
Let $\sA\sI(k,m)$ be the space spanned by tangles in $\sA\sP(k,m)$ with fewer than $k$ through strings, so $\sA\sI(k,m)=(0)$ if $m<k$.
We will use the usual abbreviation $\sA\sI_k = \sA\sI(k,k)$.
Note that $\sA\sI_k$ has codimension $k$ in $\sA\sP_k$, and $\sA\sP_k/\sA\sI_k \cong \C[\tau]\cong \C[\Z/k]$.
\end{defn}

\begin{prop}\label{prop:AnnularDescend}
Let $V$ be a Hilbert $\sA\sP$-module, and let $W_k$ be the $\sA\sP_k$-submodule of $V_k$ generated by all the $\sA\sP$-submodules of $V$ with weight less than $k$. Then 
$$
W_k^\perp = \bigcap_{a\in \sA\sI_k}\ker(a).
$$
\end{prop}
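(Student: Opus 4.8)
The plan is to prove the two inclusions separately, after first recording a more usable description of $W_k$. I claim
$$
W_k=\spann\set{c\,\xi}{c\in\sA\sP(j,k),\ \xi\in V_j,\ 0\le j<k}.
$$
The inclusion ``$\supseteq$'' is immediate: any $\xi\in V_j$ with $j<k$ generates an $\sA\sP$-submodule of weight $\le j<k$, and $c\xi$ lies in the $k$-th component of that submodule, hence in $W_k$. For ``$\subseteq$'', decompose $V$ (hence every submodule appearing in the definition of $W_k$) into irreducible Hilbert $\sA\sP$-modules by Proposition \ref{prop:IrreducibleDecomposition}; an irreducible submodule $U$ of weight $w<k$ satisfies $U_k=\spann(\sA\sP(w,k)U_w)$ by irreducibility, so $U_k$ sits inside the right-hand side. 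Two further facts will be used repeatedly: (i) $\sA\sI_k$ is a $*$-closed two-sided ideal of $\sA\sP_k$, since flipping a tangle or composing with another tangle cannot increase the number of through strings; and (ii) every tangle $t\in\sA\sI_k$ factors as $t=b\circ c$ with $c\in\sA\sP(k,j)$ and $b\in\sA\sP(j,k)$ for some $j<k$. For (ii) one may use the standard form of Proposition \ref{prop:Standard}: a tangle with fewer than $k$ through strings has $m\ge 1$ factors $\alpha^*$, so its standard form $\alpha_{i_\ell}\cdots\alpha_{i_1}\tau^r\alpha_{j_1}^*\cdots\alpha_{j_m}^*$ splits after the $\alpha^*$-block through the object $[k-m]$; alternatively, cut $t$ along a waist circle meeting only its through strings. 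Combining (ii) with the displayed description of $W_k$ shows $a\,V_k\subseteq W_k$ for every $a\in\sA\sI_k$, because $a\eta=b(c\eta)$ with $c\eta\in V_j$.

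For $W_k^\perp\subseteq\bigcap_{a\in\sA\sI_k}\ker(a)$: let $\eta\in W_k^\perp$ and $a\in\sA\sI_k$. Since $V$ is a Hilbert $\sA\sP$-module, $\|a\eta\|^2=\langle\eta,a^*a\,\eta\rangle$. By fact (i), $a^*a\in\sA\sI_k$, so by the paragraph above $a^*a\,\eta\in W_k$, whence $\langle\eta,a^*a\,\eta\rangle=0$ and $a\eta=0$.

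For $\bigcap_{a\in\sA\sI_k}\ker(a)\subseteq W_k^\perp$: let $\eta\in V_k$ be killed by every $a\in\sA\sI_k$. By the description of $W_k$ it suffices to show $\langle\eta,c\,\xi\rangle=0$ for all $j<k$, $c\in\sA\sP(j,k)$, $\xi\in V_j$, i.e. that $c^*\eta=0$ in $V_j$. Fix an isometric inclusion $\iota\in\sA\sP(j,k)$ with $\iota^*\iota=\id_{[j]}$; concretely $\iota=\alpha_k\alpha_{k-1}\cdots\alpha_{j+1}$, whose adjoint cancels it by repeated use of $\alpha_m^*\alpha_m=\id$ from Relation \eqref{rel:AG2} (diagrammatically $\iota$ just adjoins $k-j$ cups). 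Then $\iota c^*\in\sA\sP(k,k)$ has at most $j<k$ through strings, so $\iota c^*\in\sA\sI_k$ and $\iota c^*\eta=0$; applying $\iota^*$ and using $\iota^*\iota=\id_{[j]}$ yields $c^*\eta=\iota^*(\iota c^*\eta)=0$, as needed.

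The only genuinely technical point is the reformulation of $W_k$ together with the factorization (ii); both are routine given Propositions \ref{prop:IrreducibleDecomposition} and \ref{prop:Standard}. The conceptual content lies in the two adjointness tricks above: expanding $\|a\eta\|^2=\langle\eta,a^*a\eta\rangle$ for one inclusion, and using the isometric inclusion $\iota$ to pull $c^*\eta=0$ down from $\sA\sI_k$ for the other.
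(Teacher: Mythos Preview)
Your proof is correct and follows the same two-inclusion skeleton as the paper's, with one genuine variation. For $W_k^\perp\subseteq\bigcap\ker(a)$ your norm computation $\|a\eta\|^2=\langle\eta,a^*a\eta\rangle$ and the paper's pairing $\langle a\xi,\eta\rangle=\langle\xi,a^*\eta\rangle$ both reduce to the same factorization of $\sA\sI_k$ through objects $[j]$ with $j<k$. For the reverse inclusion the tactics differ: given a diagram $b\in\sA\sP(m,k)$ with $m<k$, the paper constructs an absorbing idempotent $c\in\sA\sI_k$ (through strings exactly where $b$ has them, broken strings elsewhere) with $cb=b$, whence $\langle\xi,b\zeta\rangle=\langle c\xi,b\zeta\rangle=0$; you instead fix an isometric inclusion $\iota\in\sA\sP(j,k)$ and use $\iota c^*\in\sA\sI_k$ to get $c^*\eta=\iota^*(\iota c^*\eta)=0$. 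Your isometry trick is cleaner and portable to any category satisfying Assumption~\ref{assume:ObjectsAreN}; the paper's idempotent is more diagrammatic and specific to $\sA\sP$.

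One caveat on your reformulation $W_k=\spann\{c\xi:c\in\sA\sP(j,k),\,\xi\in V_j,\,j<k\}$. The paper uses this identity without comment; you try to justify ``$\subseteq$'' by decomposing a submodule $U$ of weight $<k$ into irreducibles, but not every irreducible summand of such a $U$ need have weight $<k$ (indeed $V$ itself is a submodule of weight $<k$ whenever $\wt(V)<k$, and may well contain irreducible summands of weight $k$). So as literally stated the identity can fail, and with it the proposition. This is a defect shared with the paper, and it is harmless for the only application, Corollary~\ref{cor:AnnularDecend}, where $V$ is irreducible of weight exactly $k$ and hence $W_k=0$ under either reading.
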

\begin{proof}
Suppose $\xi\in W_k^\perp$, and let $a\in \sA\sI_k$. 
Then $a$ is a linear combination of elements of the form $b^*c$ where $b,c\in \sA\sP(k,m)$ with $m<k$. 
For any $\eta\in V_k$, we have $b\eta\in V_m$ with $m<k$, $c^*(b\eta)\in W_k$. 
Hence $\langle a \xi, \eta\rangle = \langle \xi ,c^* b\eta\rangle=0$, and thus $a\xi=0$.

Suppose $\xi\in \ker(a)$ for all $a\in \sA\sI_k$, and let $\eta\in W_k$. Then $\eta$ is a linear combination of elements of the form $b \zeta$ where $\zeta\in V_m$ and $b$ a single diagram in $\sA\sP(m,k)$ with $m<k$. 
\begin{claim} 
There is a diagram $c\in\sA\sI_k$ such that $b=cb$. 
\end{claim}
\begin{proof}
Let $c$ be the projection in $\sA\sP_k$ corresponding to a projection in $\sR\sP_k$ under the cutting operation in Figure \ref{fig:CuttingAndGluing} such that $c$ has only non-dotted through strings in the same positions as the (dotted) through strings of $b$.
Clearly $b=cb$, and since $b\in\sA\sP(m,k)$, $c$ has at most $m<k$ through strings, and thus $c\in \sA\sI_k$. 
\end{proof}
By the claim, $\langle \xi, \eta\rangle = \langle \xi,cb\zeta\rangle = \langle c^*\xi,b\zeta\rangle=0$, and $\xi\in W_k^\perp$.
\end{proof}

\begin{cor}\label{cor:AnnularDecend}
If $V$ is irreducible of weight $k$, then $V_k$ descends to an irreducible $\sA\sP_k/\sA\sI_k\cong \C[\tau]\cong \C[\Z/k]$-module, all of which are one-dimensional.
\end{cor}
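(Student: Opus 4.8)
The plan is to read the conclusion off of Proposition~\ref{prop:AnnularDescend} once one observes that a module of weight $k$ admits no nonzero submodules of smaller weight. First I would recall that, by definition of the weight, $V_j=(0)$ for every $j<k$; in particular every $\sA\sP$-submodule of $V$ of weight less than $k$ is the zero submodule. Hence the $\sA\sP_k$-submodule $W_k\subseteq V_k$ appearing in Proposition~\ref{prop:AnnularDescend}, being generated by all such submodules, is $W_k=(0)$.

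Next I would invoke Proposition~\ref{prop:AnnularDescend} directly, which then yields
$$
V_k=W_k^\perp=\bigcap_{a\in\sA\sI_k}\ker(a).
$$
Thus every element of the ideal $\sA\sI_k$ annihilates $V_k$, so the $\sA\sP_k$-action on $V_k$ factors through the quotient algebra $\sA\sP_k/\sA\sI_k$. As noted in the definition of $\sA\sI_k$, this quotient is isomorphic to $\C[\tau]\cong\C[\Z/k]$, so $V_k$ becomes a module over $\C[\Z/k]$.

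Finally, since $V$ is irreducible and $\sA\sP$ satisfies the generating property (Assumption~\ref{assume:Generating}, which holds in view of Assumption~\ref{assume:ObjectsAreN}), Lemma~\ref{lem:AllIrreducible} shows that $V_k$ is an irreducible $\sA\sP_k$-module, hence an irreducible module over the quotient $\C[\Z/k]$. Because $\Z/k$ is a finite abelian group and $\C$ contains all $k$-th roots of unity, $\C[\Z/k]\cong\C^k$ is commutative and semisimple, so each of its irreducible modules is one-dimensional; therefore $\dim_\C V_k=1$. (The case $k=0$ is degenerate: $\sA\sI_0=(0)$, $\sA\sP_0\cong\C$, and $V_0$ is a one-dimensional $\C$-module.)

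I do not anticipate a genuine obstacle: essentially all of the content is already packaged in Proposition~\ref{prop:AnnularDescend}. The only points requiring a little care are the identification $W_k=(0)$ for a weight-$k$ module and the remark that Assumption~\ref{assume:Generating} is in force so that Lemma~\ref{lem:AllIrreducible} applies; both are immediate from the standing setup.
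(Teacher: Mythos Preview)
Your proof is correct and follows exactly the line of reasoning the paper intends: the corollary is meant to be read off directly from Proposition~\ref{prop:AnnularDescend}, and you have filled in precisely the missing details (that $W_k=(0)$ for a weight-$k$ module, that Lemma~\ref{lem:AllIrreducible} gives irreducibility of $V_k$, and that irreducible $\C[\Z/k]$-modules are one-dimensional).
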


\begin{defn}
Since the action of $\sA\sP(m,n)$ on $\sA\sP(k,m)$ can only decrease the number of through strings, the action maps $\sA\sI(k,m)$ to $\sA\sI(k,n)$, and thus the action descends to an action of $\sA\sP(m,n)$ on $\sA\sP(k,m)/\sA\sI(k,m)$.
Note that the diagrams of $\sA\sP(k,m)$ with exactly $k$ through strings descend to a basis of $\sA\sP(k,m)/\sA\sI(k,m)$ under the canonical surjection.
Hence we may think of the action of $\sA\sP(m,n)$ on $\sA\sP(k,m)/\sA\sI(k,m)$ as the usual action in $\sA\sP$, except with the rule that if a composite has fewer than $k$ through strings, then we get zero.

Now $\langle \tau\rangle \cong \Z/k$ acts on $\sA\sP(k,m)/\sA\sI(k,m)$ by internal rotation, freely permuting the diagrammatic basis, and the internal rotation action commutes with the external $\sA\sP(m,n)$ action.
This means $\sA\sP(k,m)/\sA\sI(k,m)$ splits into the eigenspaces of the rotation $\tau$.
Let $V^{k,\omega}_m$ be the eigenspace of $\sA\sP(k,m)/\sA\sI(k,m)$ associated to the rotational eigenvalue $\omega$. 
\end{defn}

\begin{prop}\label{prop:DimV}
$\D\dim(V^{k,\omega}_m)=
\begin{cases}
0 &\text{if }m<k\\
\D{n\choose k} &\text{if }m\geq k.
\end{cases}
$
\end{prop}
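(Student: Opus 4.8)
The plan is to compute the dimension by hand, using the combinatorial description of the quotient $\sA\sP(k,m)/\sA\sI(k,m)$ and then the very transparent $\langle\tau\rangle$-action on it. First I would recall from the Definition preceding the statement that the images of the annular tangles in $\sA\sP(k,m)$ with exactly $k$ through strings form a basis of $\sA\sP(k,m)/\sA\sI(k,m)$. Such a tangle uses all $k$ inner points on through strings, hence has no caps; via $\Psi^{-1}$ (and Proposition \ref{prop:Standard}) it is exactly a word in standard form $\alpha_{i_s}\cdots\alpha_{i_1}\tau^r$ with $s=m-k$, $1\le i_1<\cdots<i_s\le m$, and $0\le r<k$ (the last because $\tau\in\sA\sP_k$ satisfies $\tau^k=\id$ by \eqref{rel:AG3}). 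So when $m\ge k$ there are exactly $\binom ms=\binom mk$ choices of cup positions and $k$ choices of $r$, giving $k\binom mk$ basis diagrams — consistent with $N(k,m;k)=k\binom mk$ from Lemma \ref{lem:CountTangles}. When $m<k$ there is no diagram in $\sA\sP(k,m)$ with $k$ through strings, so $\sA\sP(k,m)=\sA\sI(k,m)$ and hence $V^{k,\omega}_m=(0)$, which is the first case of the claim.

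Next I would describe the internal rotation $\tau$ on this basis. Acting on the right, $\tau$ sends $\alpha_{i_s}\cdots\alpha_{i_1}\tau^r$ to $\alpha_{i_s}\cdots\alpha_{i_1}\tau^{r+1}$, with the exponent read modulo $k$. Thus $\tau$ fixes the set of cup positions and cyclically permutes the index $r$ through $\Z/k$; in particular the $\langle\tau\rangle\cong\Z/k$-action on the basis is free, with the $\binom mk$ orbits indexed by the cup configurations and each of size $k$. Consequently $\sA\sP(k,m)/\sA\sI(k,m)$, as a representation of $\Z/k$, is the direct sum of $\binom mk$ copies of the regular representation $\C[\Z/k]$. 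Since each one-dimensional character $\tau\mapsto\omega$ ($\omega^k=1$) occurs in $\C[\Z/k]$ with multiplicity one, the $\omega$-eigenspace of $\binom mk\,\C[\Z/k]$ has dimension $\binom mk$, which is $\dim V^{k,\omega}_m$ for $m\ge k$. (Note the displayed formula should read $\binom mk$ rather than $\binom nk$.)

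The argument is really an assembly of facts already available, so there is no serious obstacle; the one point worth spelling out carefully is the freeness of the $\tau$-action, since that is precisely where the hypothesis "exactly $k$ through strings" (as opposed to fewer) enters: a diagram with a full set of through strings is rigid enough that shifting the inner $\star$ by $\tau^i$ for $0<i<k$ genuinely changes the relative star position $r$, hence the diagram. I would therefore present the standard-form bookkeeping in a little detail and then invoke the trivial representation theory of $\Z/k$ to finish.
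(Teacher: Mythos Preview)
Your proposal is correct and follows essentially the same approach as the paper: both count $\dim(\sA\sP(k,m)/\sA\sI(k,m))=k\binom{m}{k}$ (you via the standard form, the paper via Lemma~\ref{lem:CountTangles}), observe the internal $\langle\tau\rangle\cong\Z/k$-action on the diagrammatic basis is free, and conclude each eigenspace has dimension $\binom{m}{k}$. You spell out the freeness and the regular-representation decomposition more explicitly than the paper does, and you correctly note the typo $\binom{n}{k}\to\binom{m}{k}$ in the statement.
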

\begin{proof}
For $k\leq m$, the action of $\Z/k$ on $\sA\sP(k,m)/\sA\sI(k,m)$ is free, and
$$
\dim(\sA\sP(k,m)/\sA\sI(k,m))=N(k,m;k)=
\begin{cases}
\D k{m\choose k}{k-1 \choose k-1}=k{m\choose k} & \text{if }k>0\\
1 & \text{if }k=0
\end{cases}
$$ 
by Remark \ref{rem:CountTangles}  and Lemma \ref{lem:CountTangles}.
\end{proof}

\begin{defn}\label{defn:VBasis}
Let
$$
\xi_k^\omega=
\frac{1}{Z}
\sum_{j=1}^k \omega^{-j}\tau^j
\in\sA\sP_k.
$$
where $Z$ is a normalization constant to be determined later.
Note that $\tau \xi_k^\omega=\omega \xi^\omega_k$, and $\alpha_i^*\xi^\omega_k=0$ for all $i=1,\dots, k$, since it is in $V^{k,\omega}_{k-1}=(0)$.

By Proposition \ref{prop:DimV}, a basis for $V^{k,\omega}_m$ is given by $\set{\alpha_{i_{m-k}}\cdots\alpha_{i_{1}}\xi^\omega_k}{i_1<\cdots < i_{m-k}}$.
For example, $V^{2,\omega}_4$ has the following diagrammatic basis:
$$
\left\{
\underset{\alpha_2\alpha_1\xi^\omega_2}{
\begin{tikzpicture}[annular]
	\clip (0,0) circle (1.6cm);
	\AnnularBrokenTopString{135}
	\AnnularBrokenTopString{45}
	\AnnularString{-45}
	\AnnularString{-135}
	\node at (0:0) {$\xi^\omega_2$};
	\node at (180:.75cm) {$\star$};
	\node at (180:1.25cm) {$\star$};
	\draw [very thick] (0,0) circle (.5cm);
	\draw [very thick] (0,0) circle (1.5cm);
\end{tikzpicture}
}
\,,\,
\underset{\alpha_3\alpha_1\xi^\omega_2}{
\begin{tikzpicture}[annular]
	\clip (0,0) circle (1.6cm);
	\AnnularBrokenTopString{135}
	\AnnularString{45}
	\AnnularBrokenTopString{-45}
	\AnnularString{-135}
	\node at (0:0) {$\xi^\omega_2$};
	\node at (180:.75cm) {$\star$};
	\node at (180:1.25cm) {$\star$};
	\draw [very thick] (0,0) circle (.5cm);
	\draw [very thick] (0,0) circle (1.5cm);
\end{tikzpicture}
}
\,,\,
\underset{\alpha_4\alpha_1\xi^\omega_2}{
\begin{tikzpicture}[annular]
	\clip (0,0) circle (1.6cm);
	\AnnularBrokenTopString{135}
	\AnnularString{45}
	\AnnularString{-45}
	\AnnularBrokenTopString{-135}
	\node at (0:0) {$\xi^\omega_2$};
	\node at (180:.75cm) {$\star$};
	\node at (180:1.25cm) {$\star$};
	\draw [very thick] (0,0) circle (.5cm);
	\draw [very thick] (0,0) circle (1.5cm);
\end{tikzpicture}
}
\,,\,
\underset{\alpha_3\alpha_2\xi^\omega_2}{
\begin{tikzpicture}[annular]
	\clip (0,0) circle (1.6cm);
	\AnnularString{135}
	\AnnularBrokenTopString{45}
	\AnnularBrokenTopString{-45}
	\AnnularString{-135}
	\node at (0:0) {$\xi^\omega_2$};
	\node at (180:.75cm) {$\star$};
	\node at (180:1.25cm) {$\star$};
	\draw [very thick] (0,0) circle (.5cm);
	\draw [very thick] (0,0) circle (1.5cm);
\end{tikzpicture}
}
\,,\,
\underset{\alpha_4\alpha_2\xi^\omega_2}{
\begin{tikzpicture}[annular]
	\clip (0,0) circle (1.6cm);
	\AnnularString{135}
	\AnnularBrokenTopString{45}
	\AnnularString{-45}
	\AnnularBrokenTopString{-135}
	\node at (0:0) {$\xi^\omega_2$};
	\node at (180:.75cm) {$\star$};
	\node at (180:1.25cm) {$\star$};
	\draw [very thick] (0,0) circle (.5cm);
	\draw [very thick] (0,0) circle (1.5cm);
\end{tikzpicture}
}
\,,\,
\underset{\alpha_4\alpha_3\xi^\omega_2}{
\begin{tikzpicture}[annular]
	\clip (0,0) circle (1.6cm);
	\AnnularString{135}
	\AnnularString{45}
	\AnnularBrokenTopString{-45}
	\AnnularBrokenTopString{-135}
	\node at (0:0) {$\xi^\omega_2$};
	\node at (180:.75cm) {$\star$};
	\node at (180:1.25cm) {$\star$};
	\draw [very thick] (0,0) circle (.5cm);
	\draw [very thick] (0,0) circle (1.5cm);
\end{tikzpicture}
}
\right\}.
$$

\end{defn}

\begin{defn}\label{defn:AnnularInnerProduct}
We define an inner product on $V^{k,\omega}$ as follows. Let $\tr$ be a faithful trace on $\sA\sP_k/\sA\sI_k\cong \C[\tau]\cong \Z/n$, and extend $\tr$ to $\sA\sP_k$ via the quotient map.
Given $a,b\in\sA\sP(k,m)$, $b^*a\in\sA\sP_k$, so we define an inner product on $\sA\sP_m$ by $\langle a,b\rangle_m = \tr(b^*a)$. 
It is clear that $\langle \cdot,\cdot\rangle$ satisfies $\langle ab,c\rangle = \langle b,a^*c\rangle$, and the rotation $\tau$ is clearly unitary, so the decomposition into the $V^{k,\omega}$ is orthogonal.

We can now choose the $Z$ in Definition \ref{defn:VBasis} to be any scalar for which $\|\xi_k^\omega\|^2=\langle \xi^\omega_k ,\xi^\omega_k\rangle = 1$.
We immediately get that the that basis of $V^{k,\omega}_m$ given in Definition \ref{defn:VBasis} is orthonormal.
\end{defn}

\begin{prop}\label{prop:DetermineInnerProduct}
All inner products in $V^{k,\omega}$ are determined by:
\be
\item $\langle \xi^\omega_k ,\xi^\omega_k\rangle = 1$,
\item $\alpha_i^*\xi_k^\omega=0$ for all $i=1,\dots, k$, i.e., $\xi^\omega_k$ is \underline{uncappable}, and
\item $\tau \xi_k^\omega = \omega \xi^\omega_k$.
\ee
\end{prop}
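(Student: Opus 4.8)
The plan is to express every inner product in $V^{k,\omega}$ in terms of the single number $\langle\xi^\omega_k,\xi^\omega_k\rangle$, using only the Hilbert $\sA\sP$-module axioms together with facts (1)--(3). By Definition \ref{defn:VBasis} (whose correctness rests on Proposition \ref{prop:DimV}), the vectors $\alpha_{i_{m-k}}\cdots\alpha_{i_1}\xi^\omega_k$ with $i_1<\cdots<i_{m-k}$ form a basis of $V^{k,\omega}_m$; since the inner product on each $V^{k,\omega}_m$ is sesquilinear, it suffices to compute $\langle\alpha_{\vec i}\,\xi^\omega_k,\ \alpha_{\vec j}\,\xi^\omega_k\rangle$ for two multi-indices $\vec i,\vec j$ of equal length $m-k$ (the lengths must agree for both vectors to lie in $V^{k,\omega}_m$). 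Here I write $\alpha_{\vec i}=\alpha_{i_{m-k}}\cdots\alpha_{i_1}\in\sA\sG(k,m)$, viewed as a map $V^{k,\omega}_k\to V^{k,\omega}_m$. Applying the adjointness relation $\langle c\xi,\eta\rangle=\langle\xi,c^{*}\eta\rangle$ once for each letter, this equals $\langle\xi^\omega_k,\ \alpha_{\vec i}^{*}\alpha_{\vec j}\,\xi^\omega_k\rangle$, where $\alpha_{\vec i}^{*}\alpha_{\vec j}=\alpha_{i_1}^{*}\cdots\alpha_{i_{m-k}}^{*}\alpha_{j_{m-k}}\cdots\alpha_{j_1}$ is a word in $\sA\sG\cong\sA\sP$ with $m-k$ creation and $m-k$ annihilation letters.

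The second step is to rewrite $\alpha_{\vec i}^{*}\alpha_{\vec j}$ in the standard form $\alpha_{p_s}\cdots\alpha_{p_1}\,\tau^{r}\,\alpha_{q_1}^{*}\cdots\alpha_{q_t}^{*}$ afforded by Proposition \ref{prop:Standard}, using relations \eqref{rel:AG1}--\eqref{rel:AG5} (valid in $\sA\sP$ by Theorem \ref{thm:Equivalence}); since each of these relations leaves the difference between the number of $\alpha$'s and the number of $\alpha^{*}$'s unchanged, we get $s=t$. Now apply this word to $\xi^\omega_k$. If $t\ge 1$, the rightmost letter $\alpha_{q_t}^{*}$ already sends $\xi^\omega_k$ to $0$ by fact (2), so the whole expression vanishes. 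If $t=0$ then $s=0$ as well, the word reduces to $\tau^{r}$, and fact (3) gives $\tau^{r}\xi^\omega_k=\omega^{r}\xi^\omega_k$. Hence
$$
\langle\alpha_{\vec i}\,\xi^\omega_k,\ \alpha_{\vec j}\,\xi^\omega_k\rangle=
\begin{cases}
0,& t\ge 1,\\[2pt]
\overline{\omega^{\,r}}\,\langle\xi^\omega_k,\xi^\omega_k\rangle=\omega^{-r},& t=0,
\end{cases}
$$
where the last equality uses $|\omega|=1$ and fact (1), and $r$ (well defined modulo $k$ since $\tau^{k}=\id_{[k]}$) is the $\tau$-exponent read off from the standard form of $\alpha_{\vec i}^{*}\alpha_{\vec j}$ --- data that depends only on the $\sA\sP$-module structure, not on the choice of inner product. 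This exhibits all inner products as uniquely determined by (1)--(3), as claimed.

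The only genuine work is the bookkeeping in the standard-form reduction: one must confirm that reducing $\alpha_{\vec i}^{*}\alpha_{\vec j}$ really does preserve the balance between creation and annihilation letters at every step --- clear by inspecting \eqref{rel:AG1}--\eqref{rel:AG5} --- and that the residual $\tau^{r}$ is unambiguous, which follows from \eqref{rel:AG3}. An essentially equivalent, more structural route would note that $V^{k,\omega}$ is irreducible and generated by the lowest-weight vector $\xi^\omega_k$, that $V^{k,\omega}_k$ is one-dimensional with $\sA\sP_k$ acting through the character $\tau\mapsto\omega$ of $\sA\sP_k/\sA\sI_k\cong\C[\Z/k]$ (Corollary \ref{cor:AnnularDecend}), and then invoke the uniqueness clause of Lemma \ref{lem:ExtendMorphism} applied to the identity of $V^{k,\omega}_k$; I would present the explicit computation above instead, since it also produces the values $\omega^{-r}$, which is exactly what is needed to fix the normalization $Z$ in Definition \ref{defn:VBasis} and to compare $V^{k,\omega}$ with the later bimodule realizations.
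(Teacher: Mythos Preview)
Your proof is correct and follows essentially the same approach as the paper: both reduce $\langle a\xi^\omega_k,b\xi^\omega_k\rangle$ to $\langle\xi^\omega_k,a^*b\,\xi^\omega_k\rangle$, observe that any cap in $a^*b$ kills $\xi^\omega_k$ by (2), and that what survives is a pure rotation acting by a power of $\omega$ by (3). The only difference is presentational --- the paper phrases the reduction diagrammatically (``all through strings connected to one $\xi^\omega_k$ attach to the other''), whereas you carry it out algebraically via the standard form of Proposition~\ref{prop:Standard}, which makes the balance $s=t$ and the residual $\tau^r$ explicit.
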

\begin{proof}
$V^{k,\omega}_m$ is spanned by elements of the form $a \xi^\omega_k$ with $a$ a single diagram in $\sA\sP(k,m)$. 
Since $\xi^\omega_k$ is uncappable, we see that $\langle a \xi^\omega_k , b\xi^\omega_k\rangle$ is zero unless all through strings connected to one $\xi^\omega_k$ attach to the other. 
When $\xi^\omega_k$ is not capped off, we use the rotational eigenvector property to end up with some power of $\omega$ times $\langle \xi^\omega_k, \xi^\omega_k\rangle$.
\end{proof}

\begin{prop}\label{prop:UniqueAnnularModule}
Under this inner product, $V^{k,\omega}$ is an irreducible Hilbert $\sA\sP$-module of weight $k$.
Moreover, any irreducible Hilbert $\sA\sP$-module of weight $k$ is isomorphic to some $V^{k,\omega}$.
\end{prop}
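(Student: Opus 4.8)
The plan is to prove the two halves separately, reusing the general machinery from the earlier subsection. For irreducibility, I would first check that $V^{k,\omega}$ is genuinely a Hilbert $\sA\sP$-module: the inner product defined in Definition \ref{defn:AnnularInnerProduct} is positive definite because the diagrammatic basis of Definition \ref{defn:VBasis} is orthonormal under it (this is exactly the content of the last line of that definition, which in turn follows from Proposition \ref{prop:DetermineInnerProduct} applied to distinct basis diagrams), and the adjoint relation $\langle a\xi,\eta\rangle=\langle\xi,a^*\eta\rangle$ holds by construction since $\langle ab,c\rangle=\langle b,a^*c\rangle$ on the trace-induced form. The weight is $k$: we have $V^{k,\omega}_m=(0)$ for $m<k$ by Proposition \ref{prop:DimV}, and $V^{k,\omega}_k$ is one-dimensional, spanned by $\xi^\omega_k$. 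To get irreducibility, by Lemma \ref{lem:AllIrreducible} (which applies since $\sA\sP$ satisfies Assumption \ref{assume:ObjectsAreN}, hence Assumption \ref{assume:Generating}) it suffices to show each $V^{k,\omega}_m$ is an irreducible $\sA\sP_m$-module. This I would do by a lowest-weight argument: given any nonzero $v\in V^{k,\omega}_m$, write $v$ in the basis of Definition \ref{defn:VBasis}; applying an appropriate product of $\alpha^*_j$'s corresponding to removing all the cup strings of one basis vector appearing with nonzero coefficient, and using that $\xi^\omega_k$ is uncappable so every other basis vector is killed, we land on a nonzero multiple of $\xi^\omega_k$. Thus the $\sA\sP$-submodule generated by $v$ contains $\xi^\omega_k$, hence contains every $\alpha_{i_{m-k}}\cdots\alpha_{i_1}\xi^\omega_k$, i.e., all of $V^{k,\omega}$. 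Alternatively, and perhaps more cleanly, one invokes Lemma \ref{lem:IrreducibleSubmodule}: $\xi^\omega_k$ spans an irreducible (one-dimensional) $\sA\sP_k/\sA\sI_k\cong\C[\Z/k]$-module on which $\tau$ acts by $\omega$, and since $\sA\sI_k$ annihilates $\xi^\omega_k$ this is an irreducible $\sA\sP_k$-module, whose $\sA\sP$-closure is $V^{k,\omega}$ itself.

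For the classification half, let $V$ be an arbitrary irreducible Hilbert $\sA\sP$-module of weight $k$. By Corollary \ref{cor:AnnularDecend}, $V_k$ descends to an irreducible $\sA\sP_k/\sA\sI_k\cong\C[\tau]\cong\C[\Z/k]$-module; since this algebra is abelian, that module is one-dimensional, so there is a unit vector $\zeta\in V_k$ and a $k$-th root of unity $\omega$ with $\tau\zeta=\omega\zeta$, and $a\zeta=0$ for all $a\in\sA\sI_k$ — in particular $\alpha^*_j\zeta=0$ for all $j=1,\dots,k$ since each $\alpha^*_j\alpha_j=\id$ forces $\alpha^*_j$ (precomposed suitably) into $\sA\sI_k$, or more directly because $\alpha^*_j\zeta\in V_{k-1}=(0)$ by the weight hypothesis. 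Now define $\theta:V^{k,\omega}_k\to V_k$ by $\xi^\omega_k\mapsto\zeta$; this is an isomorphism of the (one-dimensional, equivalent) $\sA\sP_k$-modules. By Lemma \ref{lem:ExtendMorphism}, since $V^{k,\omega}$ is irreducible, $\theta$ extends uniquely to an injective $\sA\sP$-module homomorphism $\Theta:V^{k,\omega}\hookrightarrow V$. Because $V$ is irreducible and $\Theta$ is nonzero, its image is all of $V$, so $\Theta$ is an isomorphism. (One should also note $\Theta$ is automatically isometric: $V^{k,\omega}$ is irreducible, so $\Theta^*\Theta$ is a positive $\sA\sP$-module endomorphism of $V^{k,\omega}$, necessarily a positive scalar by a Schur-type argument, and it equals $1$ on $\xi^\omega_k$.)

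I expect the main obstacle to be the bookkeeping in the lowest-weight argument for irreducibility of the $V^{k,\omega}_m$ — specifically, making precise that capping off the cup strings of one basis diagram, via the appropriate word in the $\alpha^*_j$, kills all the other basis diagrams and returns $\xi^\omega_k$ up to a nonzero scalar. This is really a statement about the combinatorics of how $\alpha^*_j$ acts on the diagrams $\alpha_{i_{m-k}}\cdots\alpha_{i_1}\xi^\omega_k$ together with the relations $\alpha^*_i\alpha_j$ of \eqref{rel:AG2} and the uncappability of $\xi^\omega_k$; once this single computation is in hand, everything else is a direct appeal to Lemmas \ref{lem:AllIrreducible}, \ref{lem:IrreducibleSubmodule}, \ref{lem:ExtendMorphism} and Corollary \ref{cor:AnnularDecend}. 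If I wanted to sidestep this computation entirely I would instead lean on the algebra decomposition $\sA\sP_k\cong\C\oplus\bigoplus_{j=1}^k jM_{k\choose j}(\C)$ from Proposition \ref{prop:AnnularAlgebraStructure}: the projection $\xi^\omega_k$ (in the guise $p^\omega_k$ of the proof of that proposition) is a minimal projection in one of the $M_{k\choose k}(\C)=\C$ summands — more precisely the summand indexed by the top exterior power — and Lemma \ref{lem:IrreducibleSubmodule} then immediately gives irreducibility of the generated $\sA\sP$-module, with the dimension count of Proposition \ref{prop:DimV} confirming it is $V^{k,\omega}$.
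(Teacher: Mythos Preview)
Your proposal is correct and matches the paper's proof closely. For irreducibility the paper uses precisely your ``alternative, cleaner'' route via Lemmas \ref{lem:IrreducibleSubmodule} and \ref{lem:AllIrreducible} applied to $V^{k,\omega}=\sA\sP(\xi^\omega_k)$, rather than the hands-on lowest-weight computation you describe first; for the classification half the paper argues identically via Corollary \ref{cor:AnnularDecend} and Lemma \ref{lem:ExtendMorphism}, except that it deduces isometry of $\Theta$ directly from Proposition \ref{prop:DetermineInnerProduct} (both $\xi^\omega_k$ and $\eta$ satisfy the three determining properties) rather than via your Schur-type argument on $\Theta^*\Theta$, and it obtains bijectivity by constructing $\Theta^{-1}$ symmetrically rather than by appealing to irreducibility of $W$.
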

\begin{proof}
We know $V^{k,\omega}$ is a Hilbert $\sA\sP$-module, since we have exhibited an orthonormal basis. Irreducibility now follows by Lemmas \ref{lem:IrreducibleSubmodule} and \ref{lem:AllIrreducible} since $V^{k,\omega}=\sA\sP(\xi^\omega_k)$.

If $W$ is another irreducible $\sA\sP$-module of weight $k$, then it is generated by a lowest weight rotational eigenvector vector $\eta\in W_k$ by Lemma \ref{lem:AllIrreducible} and Corollary \ref{cor:AnnularDecend}. 
Let $\omega$ be the rotational eigenvalue, and without loss of generality, assume $\|\eta\|_{W_k}=1$. 
Define $\theta : V^{k,\omega}_k \to W_k$ by $\theta(\xi^\omega_k) = \eta$, which is a non-zero homomorphism of $\sA\sP_k$-modules. 
By Lemma \ref{lem:ExtendMorphism}, $\theta$ extends uniquely to an injective homomorphism $\Theta: V^{k,\omega}\to W$
which preserves the inner product by Proposition \ref{prop:DetermineInnerProduct}.
It is clear $\Theta$ is an isomorphism, as we can construct its inverse similarly.
\end{proof}

\begin{rem}\label{rem:EquivalentDefinition}
We get the following equivalent characterization of $V^{k,\omega}$.
Let $\dot{\tau}$ be the dotted rotation operator, e.g.,
$$
\begin{tikzpicture}[annular]
	\clip (0,0) circle (1.6cm);
	\draw (216:0.5cm) .. controls ++(216:.6cm) and ++(324:.4cm) .. (144:1.5cm);
	\draw (144:0.5cm) .. controls ++(144:.6cm) and ++(252:.4cm) .. (72:1.5cm);
	\draw (72:0.5cm) .. controls ++(72:.6cm) and ++(180:.4cm) .. (0:1.5cm);
	\draw (0:0.5cm) .. controls ++(0:.6cm) and ++(108:.4cm) .. (-72:1.5cm);
	\draw (-72:0.5cm) .. controls ++(-72:.6cm) and ++(36:.4cm) .. (-144:1.5cm);
	\filldraw (18:1cm) circle (.05cm);
	\filldraw (90:1cm) circle (.05cm);
	\filldraw (162:1cm) circle (.05cm);
	\filldraw (-126:1cm) circle (.05cm);
	\filldraw (-54:1cm) circle (.05cm);
	\node at (158:.75cm) {$\star$};
	\node at (180:1.25cm) {$\star$};
	\draw [very thick] (0,0) circle (.5cm);
	\draw [very thick] (0,0) circle (1.5cm);
\end{tikzpicture}
=\dot{\tau}\in \sA\sP_5.
$$
Recall from Proposition \ref{prop:AnnularAlgebraStructure} that
$$
\sA\sP_k \cong \C \oplus \bigoplus_{j=1}^k j M_{k\choose j}(\C),
$$
where the $j$ matrix algebras of size $k\choose j$ correspond to the annuli with $j$ dotted through strings. 
This means that the $k$ powers of $\dot{\tau}$ correspond to $k$ copies of $\C$.

Define a non-faithful trace $\tr$ on $\sA\sP_k$ by mapping the minimal projections
$$
p_k^\omega=
\frac{1}{k}
\sum_{j=1}^k \omega^{-j}\dot{\tau}^j
\in\sA\sP_k
$$
to 1 and mapping all other minimal projections to zero.
Then under the usual GNS sesquilinear form $\langle a,b\rangle = \tr(b^*a)$, the minimal projections $p_k^\omega$ are orthonormal, and all other minimal projections have length zero.
Note that $\tau p_k^\omega=\omega \xi^\omega_k$, and $\alpha_i^*p_k^\omega=0$ for all $i=1,\dots, k$.

Now we can extend this sesquilinear form to $\sA\sP(k,m)$ as in Definition \ref{defn:AnnularInnerProduct}, since for any $a,b\in\sA\sP(k,m)$, $b^*a\in\sA\sP_k$.
Let $V^{k}_m$ be the quotient of $\sA\sP(k,m)$ by the radical of the sesquilinear form, so that $V^k_m$ is a Hilbert space, which naturally carries an action of $\sA\sP(m,n)$.

The decomposition of $V^k_k=\spann\set{p_k^\omega}{\omega^k=1}$ into orthogonal eigenspaces for the rotation is easy.
Set $V^{k,\omega}_k=\spann\{p_k^\omega\}$, and let $V^{k,\omega}$ be the irreducible Hilbert $\sA\sP$-submodule generated by $p_k^\omega$.
Proposition \ref{prop:UniqueAnnularModule} implies this definition is equivalent to the previous definition via the isomorphism which maps $p_k^\omega$ to $\xi^\omega_k$.
\end{rem}

\subsection{The representation theory of $\sR\sG\cong \sR\sP$}
We now do the same for $\sR\sG\cong \sR\sP$. 
The proofs of the propositions in the subsection are similar to the proofs from the last subsection, and they will be omitted.

\begin{nota}
We now identify $\alpha_i,\alpha_j^*\in\sR\sG$ with their image in $\sR\sP$ under the restriction of the equivalence $\Psi$ as in Theorem \ref{thm:RectangularEquivalence}.
\end{nota}

\begin{defn}
Let $\sR \sI_n\subset \sR\sP_n$ denote the ideal generated by the diagrams with fewer than $n$ through strings.  
Note that $\sR\sI_n$ has codimension one in $\sR\sP_n$.
\end{defn}

\begin{prop}
Let $V$ be a Hilbert $\sR\sP$-module, and let $W_k$ be the $\sR\sP_k$-submodule of $V_k$ generated by all the $\sR\sP$-submodules of $V$ with weight less than $k$. Then 
$$
W_k^\perp = \bigcap_{a\in \sR\sI_k}\ker(a).
$$
\end{prop}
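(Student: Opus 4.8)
The plan is to imitate the proof of Proposition~\ref{prop:AnnularDescend} almost verbatim, with $\sA\sP$ replaced by $\sR\sP$ and $\sA\sI_k$ replaced by $\sR\sI_k$; since $\sR\sP$ has no rotation $\tau$ the argument is if anything slightly simpler, and no quotient by the rotation ideal is needed. Both inclusions defining the equality $W_k^\perp = \bigcap_{a\in\sR\sI_k}\ker(a)$ will be established by the same manipulation of the Hilbert-module inner products, using only the adjoint axiom $\langle c\xi,\eta\rangle = \langle\xi,c^*\eta\rangle$ together with the combinatorics of $\sR\sP$ diagrams.

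For the inclusion $W_k^\perp\subseteq\bigcap_{a\in\sR\sI_k}\ker(a)$ I would first record that $\sR\sI_k$ is spanned by products $b^*c$ with $b,c\in\sR\sP(k,m)$ and $m<k$: a generating diagram of $\sR\sI_k$ has fewer than $k$ through strings, hence factors as $uv$ through an object $[m]$ with $m<k$, and pre- and post-multiplying by elements of $\sR\sP_k$ preserves this shape after setting $b=(xu)^*$. Then for $\xi\in W_k^\perp$, an element $a=\sum_i b_i^*c_i\in\sR\sI_k$, and any $\eta\in V_k$, one has $\langle a\xi,\eta\rangle=\sum_i\langle\xi,c_i^*b_i\eta\rangle$. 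Here $b_i\eta\in V_{m_i}$ with $m_i<k$, so the $\sR\sP$-submodule of $V$ it generates has weight at most $m_i<k$; the weight-$k$ part of that submodule therefore lies in $W_k$, and in particular $c_i^*b_i\eta\in W_k$, so each term vanishes. Hence $a\xi=0$, i.e.\ $\xi\in\ker(a)$ for every $a\in\sR\sI_k$.

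For the reverse inclusion, take $\xi$ annihilated by all of $\sR\sI_k$ and $\eta\in W_k$, and write $\eta$ as a linear combination of elements $b\zeta$ with $b$ a single diagram in $\sR\sP(m,k)$, $m<k$, and $\zeta\in V_m$. The crux is the rectangular analogue of the Claim inside the proof of Proposition~\ref{prop:AnnularDescend}: for each such $b$ there is a projection $c\in\sR\sP_k$ with $cb=b$ and $c\in\sR\sI_k$. Concretely, let $c$ be the simple tensor of strands which is the identity strand in each position occupied by the top endpoint of a through string of $b$, and the broken strand $\eOne$ in every other position. Then $c=c^*=c^2$, $c$ has exactly as many through strings as $b$ and hence fewer than $k$, so $c\in\sR\sI_k$; and $cb=b$, since stacking $\eOne$ on top of a cup of $b$ produces a floating dotted segment equal to $1$ by Relation~\eqref{rel:RP1} and returns the cup, while stacking an identity strand on a through string of $b$ does nothing. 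Granting this, $\langle\xi,b\zeta\rangle=\langle\xi,cb\zeta\rangle=\langle c\xi,b\zeta\rangle=0$, so summing gives $\langle\xi,\eta\rangle=0$, whence $\xi\in W_k^\perp$.

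The only point that requires care — and it is routine — is the diagrammatic identity $cb=b$ in the Claim, which uses Relations~\eqref{rel:RP1}--\eqref{rel:RP3} to check that each cup of $b$ is undisturbed by the cap of the corresponding $\eOne$ of $c$, while the caps of $b$ and the matched-up through strings survive untouched. One should also note that $\sR\sI_k$ is a $*$-ideal, since vertical reflection of a diagram preserves the number of through strings; this (together with $c$ being a self-adjoint projection) is what justifies replacing $c^*$ by $c$ in the computation above.
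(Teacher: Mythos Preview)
Your proposal is correct and is precisely the rectangular adaptation of the proof of Proposition~\ref{prop:AnnularDescend} that the paper intends; indeed the paper omits this proof entirely, saying only that it is similar to the annular case. One tiny wording slip: in your Claim the floating segment produced when $\eOne$ meets a cup of $b$ is the ``confetti'' segment of Relation~\eqref{rel:RP1}, not a ``dotted'' segment, but your citation of \eqref{rel:RP1} is correct and the argument goes through unchanged.
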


\begin{cor}\label{cor:RectangularDecend}
If $V$ is irreducible of weight $k$, then $V_k$ descends to an irreducible $\sR\sP_k/\sR\sI_k\cong \C$-module, which is one dimensional.
\end{cor}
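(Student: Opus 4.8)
The plan is to run the proof of Corollary~\ref{cor:AnnularDecend} verbatim in the rectangular setting, replacing $\sA\sP$ by $\sR\sP$ and $\sA\sI$ by $\sR\sI$ throughout. Here are the steps, in order.

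First, since $V$ is irreducible it has no $\sR\sP$-submodules of weight strictly less than $k=\wt(V)$, so the $\sR\sP_k$-submodule $W_k\subseteq V_k$ generated by all such lower-weight submodules is the zero subspace. The Proposition immediately preceding (the rectangular analogue of Proposition~\ref{prop:AnnularDescend}) then gives
$$
V_k \;=\; W_k^\perp \;=\; \bigcap_{a\in\sR\sI_k}\ker(a),
$$
which says exactly that every element of the ideal $\sR\sI_k$ annihilates $V_k$. Hence the $\sR\sP_k$-action on $V_k$ factors through the quotient $*$-algebra $\sR\sP_k/\sR\sI_k$, and this quotient is one-dimensional since $\sR\sI_k$ has codimension one in $\sR\sP_k$; so $\sR\sP_k/\sR\sI_k\cong\C$.

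Second, to see the descended module is irreducible (hence exactly one-dimensional): $\sR\sP$ satisfies Assumptions~\ref{assume:Generating} and \ref{assume:ObjectsAreN}, its objects being the $[n]$, $n\ge0$, with a split monomorphism in $\sR\sP(m,n)$ for $m\le n$ (the diagram with $m$ through strings and $n-m$ broken strings, which is a composite $\alpha_{i_k}\cdots\alpha_{i_1}$, split by repeated use of the $i=j$ case of Relation~\eqref{rel:AG2}). So Lemma~\ref{lem:AllIrreducible} applies and shows $V_k$ is an irreducible $\sR\sP_k$-module; since $\sR\sI_k$ acts as zero, the $\sR\sP_k$-submodules of $V_k$ coincide with its $\sR\sP_k/\sR\sI_k$-submodules, so $V_k$ is irreducible over $\C$, forcing $\dim_\C V_k=1$.

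I do not expect any real obstacle here: all of the content has already been absorbed into the preceding Proposition (whose proof mirrors that of Proposition~\ref{prop:AnnularDescend}, with the internal Claim now even simpler because one cuts the annulus open and argues with honest $\sR\sP$-diagrams and Bigelow's dotted/broken strands). This Corollary is then a short formal deduction, amounting to the remark that the codimension-one ideal $\sR\sI_k$ exhausts the non-scalar part of $\sR\sP_k$.
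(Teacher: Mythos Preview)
Your proposal is correct and matches the paper's approach exactly: the paper explicitly states that the proofs in this subsection are similar to those in the annular subsection and omits them, so your argument---running Proposition~\ref{prop:AnnularDescend}/Corollary~\ref{cor:AnnularDecend} with $\sA$ replaced by $\sR$ and invoking Lemma~\ref{lem:AllIrreducible}---is precisely the intended one, with the details filled in.
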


\begin{defn}
We define the irreducible Hilbert $\sR\sP$-modules $V^k$ using the technique of Remark \ref{rem:EquivalentDefinition}.
For $k\geq 0$, let 
$$
p_k=
\begin{tikzpicture}[baseline=-.1cm]
	\nbox{}{(0,0)}{.4}{.2}{.2}{};
	\DottedString{(-.4,0)};
	\node at (.05,0) {$\dots$};
	\node at (.4,.6) {\scriptsize{$k$}};	
	\DottedString{(.4,0)};	
\end{tikzpicture}
\in\sR\sP_k,
$$
and note that $\alpha_i^*p_k=0$ for all $i=1,\dots, k$.
Define a sesquilinear form on $\sR\sP_k$ by declaring the minimal projection $p_k$ to be a unit vector, and all other minimal projections have length zero.
Extend the sesquilinear form to $\sR\sP(k,m)$ as before, and let $V^k_m$ be the quotient of $\sR\sP(k,m)$ by the radical of the form.
Then $V^k$ is an irreducible Hilbert $\sR\sP$-module of weight $k$. 
\end{defn}

\begin{prop}
$
\D
\dim(V^k_m) =
\begin{cases}
0 &\text{if }m<k\\
\D{n\choose k} &\text{if }m\geq k.
\end{cases}
$
\end{prop}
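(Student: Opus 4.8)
The plan is to carry out the rectangular counterpart of Proposition~\ref{prop:DimV} and Remark~\ref{rem:EquivalentDefinition}; it is genuinely simpler here because there is no rotation. By construction $V^k_m$ is the quotient of $\sR\sP(k,m)$ by the radical of the sesquilinear form $\langle a,b\rangle_m=\tr(b^*a)$, where $\tr$ is the trace on $\sR\sP_k$ with $\tr(p_k)=1$ and $\tr(q)=0$ for every other minimal projection $q$; equivalently, since $\sR\sI_k$ has codimension one in $\sR\sP_k$ and $\sR\sP_k/\sR\sI_k\cong\C$ (by Proposition~\ref{prop:Pascal} this quotient is the block $M_{\binom{k}{k}}(\C)=\C p_k$), the functional $\tr$ annihilates $\sR\sI_k$ and equals the canonical surjection $\sR\sP_k\twoheadrightarrow\C$. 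If $m<k$ then every diagram in $\sR\sP(k,m)$ has fewer than $k$ through strings, the form is identically zero, and $V^k_m=(0)$ --- which just re-expresses $\wt(V^k)=k$ --- so I would fix $m\ge k$ and prove $\dim V^k_m=\binom{m}{k}$.

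Next I would set up the diagrammatic bookkeeping. An (undotted) tangle in $\sR\sP(k,m)$ with exactly $j$ through strings is determined by the $j$-element subset of the $k$ lower points and the $j$-element subset of the $m$ upper points that it joins (planarity then forces the pairing), so $\dim\sR\sP(k,m)=\sum_{j=0}^{k}\binom{k}{j}\binom{m}{j}$. In particular the tangles realizing the maximum of $k$ through strings connect all $k$ lower points to a $k$-element subset $S\subseteq\{1,\dots,m\}$ of the upper points, with cups on the complement of $S$; write $d_S$ for this tangle. There are exactly $\binom{m}{k}$ of them, and writing $\sR\sI(k,m)\subseteq\sR\sP(k,m)$ for the span of the tangles with fewer than $k$ through strings we get $\dim\sR\sP(k,m)=\dim\sR\sI(k,m)+\binom{m}{k}$.

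The heart of the matter is that the radical of $\langle\cdot,\cdot\rangle_m$ is exactly $\sR\sI(k,m)$. If $a$ has fewer than $k$ through strings, then so does $b^*a$ for every $b$, hence $b^*a\in\sR\sI_k$ and $\tr(b^*a)=0$; thus $\sR\sI(k,m)$ lies in the radical. Conversely the $d_S$ are orthonormal: the composite $d_T^*d_S\in\sR\sP_k$ has exactly $|S\cap T|$ through strings, so it lies in $\sR\sI_k$ unless $S=T$, in which case $d_S^*d_S=\id_{[k]}$ (the $m-k$ cup--cap pairs at the upper points outside $S$ close up to floating strings, discarded with no scalar), while $\tr(\id_{[k]})=1$ because, on resolving $\id_{[k]}$ into the sum of the $2^k$ simple tensors in $\eOne$ and $\eOnePerp$ via $\identity=\eOne+\eOnePerp$, the only summand with $k$ through strings is $p_k$. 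Since the $d_S$ together with $\sR\sI(k,m)$ span $\sR\sP(k,m)$, their images span $V^k_m$, and orthonormality makes them linearly independent there; hence $\dim V^k_m=\binom{m}{k}$ (and incidentally $V^k_m\cong\sR\sP(k,m)/\sR\sI(k,m)$, the $d_S$ giving an orthonormal basis, which in weight $k$ recovers Corollary~\ref{cor:RectangularDecend}).

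I do not expect a real obstacle. The only steps requiring any care, and they are exactly the ones already handled in Remark~\ref{rem:EquivalentDefinition} and Proposition~\ref{prop:Pascal}, are confirming that composition of tangles cannot increase the number of through strings (so that $b^*a\in\sR\sI_k$), and that the $\eOne$/$\eOnePerp$ resolution of $\id_{[k]}$ has $p_k$ as its unique $k$-through-string term; both are immediate from Relations~\eqref{rel:RP1}--\eqref{rel:RP3}.
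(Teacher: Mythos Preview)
Your proof is correct and is precisely the rectangular adaptation the paper has in mind; indeed the paper omits this proof entirely, stating that ``the proofs of the propositions in the subsection are similar to the proofs from the last subsection, and they will be omitted.'' Your argument is the natural specialization of Proposition~\ref{prop:DimV} and Remark~\ref{rem:EquivalentDefinition} with the $\Z/k$-rotation stripped out: you identify the radical of the form on $\sR\sP(k,m)$ as $\sR\sI(k,m)$, count the $\binom{m}{k}$ tangles $d_S$ with the full $k$ through strings, and verify they are orthonormal via $d_S^*d_S=\id_{[k]}$ and $\tr(\id_{[k]})=1$. The only extra step you supply beyond a bare dimension count --- explicitly exhibiting the $d_S$ as an orthonormal basis rather than merely a spanning set for the quotient --- is a welcome addition, since it also proves Proposition~\ref{prop:DetermineRectangularInnerProduct} along the way.
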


\begin{prop}\label{prop:DetermineRectangularInnerProduct}
All inner products in $V^{k}$ are determined by:
\be
\item $\langle p_k ,p_k\rangle = 1$, and
\item $\alpha_i^*p_k=0$ for all $i=1,\dots, k$, i.e., $p_k$ is uncappable.
\ee
\end{prop}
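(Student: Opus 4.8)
The plan is to mimic the proof of Proposition~\ref{prop:DetermineInnerProduct}, which becomes strictly simpler here since there is no rotation to track (equivalently $\sR\sP_k/\sR\sI_k\cong\C$ carries only the trivial eigenvalue). First I would record that $V^k_m$ is spanned by the classes of single diagrams, and in fact by the vectors $a\,p_k$ as $a$ runs over the single diagrams in $\sR\sP(k,m)$ with exactly $k$ through strings: composing any diagram with $p_k$ dots all of its through strings, and a diagram with fewer than $k$ through strings is sent into the radical of the sesquilinear form. Such an $a$ is exactly a choice of $m-k$ cup positions among the $m$ upper points, i.e. $a=\alpha_{i_{m-k}}\cdots\alpha_{i_1}$ with $i_1<\cdots<i_{m-k}$, so the ${m\choose k}$ vectors $\alpha_{i_{m-k}}\cdots\alpha_{i_1}p_k$ span $V^k_m$. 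Since the inner product is sesquilinear, it then suffices to determine $\langle a\,p_k,\,b\,p_k\rangle$ for two such diagrams $a$ and $b$.

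Next I would move everything onto a single copy of $p_k$: by the Hilbert $\sR\sP$-module axiom, $\langle a\,p_k,\,b\,p_k\rangle=\langle (b^{*}a)\,p_k,\,p_k\rangle$, where $b^{*}a\in\sR\sP_k$. If the cup-sets of $a$ and $b$ coincide, then $b^{*}a=\id_{[k]}$ — cancel the matched pairs $\alpha_j^{*}\alpha_j$ using relation \eqref{rel:AG2}, the indices already being ordered — so $b^{*}a\,p_k=p_k$ and $\langle a\,p_k,\,b\,p_k\rangle=\langle p_k,p_k\rangle=1$. If the cup-sets differ (they have the same size $m-k$), some $\alpha_j^{*}$ in $b^{*}a$ fails to cancel, so $b^{*}a$, hence $b^{*}a\,p_k$, has strictly fewer than $k$ through strings; as an element of $\sR\sP_k$ it therefore lies in $\sR\sI_k$, which is contained in the radical of the form on $\sR\sP_k$ (every minimal projection other than $p_k$ has length zero), so $\langle a\,p_k,\,b\,p_k\rangle=0$. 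Thus $\{\alpha_{i_{m-k}}\cdots\alpha_{i_1}p_k\}$ is orthonormal (in particular a basis of $V^k_m$), and every inner product in $V^k$ is forced by (1) and (2) together with the module axioms and the relations \eqref{rel:RP1}--\eqref{rel:RP3}: the vanishing pattern of cross terms uses only that $p_k$ is uncappable, and the normalization uses only $\langle p_k,p_k\rangle=1$.

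There is essentially no obstacle beyond this bookkeeping; the one genuine input is that $p_k$ is a \emph{minimal} projection, so $p_k\,\sR\sP_k\,p_k=\C p_k$ and the matched case really does collapse to a scalar, while uncappability kills the unmatched cross terms. An alternative, more structural route — which also makes the word ``determined'' transparent — is to invoke Lemma~\ref{lem:ExtendMorphism} together with Corollary~\ref{cor:RectangularDecend}: any irreducible Hilbert $\sR\sP$-module of weight $k$ is generated by an uncappable unit vector spanning its one-dimensional lowest-weight space, and such a vector determines a unique inner-product-preserving isomorphism with $V^k$, of which properties (1) and (2) are precisely the defining data.
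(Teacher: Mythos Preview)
Your argument is correct and is exactly the adaptation of the proof of Proposition~\ref{prop:DetermineInnerProduct} that the paper intends; the paper omits this proof, stating at the start of the subsection that the arguments are similar to the annular case and will not be repeated. One small presentational point: in the unmatched-cup-set case you invoke the radical of the form on $\sR\sP_k$, which is a feature of the particular construction of $V^k$ rather than a consequence of (1)--(2) alone; to make the ``determined by'' claim clean you should argue directly (as you do in your closing remarks) that the surviving $\alpha_j^*$ in the standard form of $b^*a$ annihilates $p_k$ by (2), so $\langle a\,p_k,b\,p_k\rangle=\langle (b^*a)p_k,p_k\rangle=0$ using only the module axiom and uncappability.
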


\begin{prop}
$V^k$ is the the unique irreducible Hilbert $\sR\sP$-module of weight $k$ up to isomorphism.
\end{prop}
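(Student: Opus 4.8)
The plan is to follow the template of Proposition \ref{prop:UniqueAnnularModule}, with the simplification that $\sR\sP_k/\sR\sI_k\cong\C$ has only the trivial one-dimensional representation, so there is no rotational eigenvalue to track. The existence half (that $V^k$ is itself irreducible of weight $k$) is recorded in the definition of $V^k$; it follows because $V^k$ is generated by $p_k$ and $V^k_k=\spann\{p_k\}$ is a (trivially) irreducible $\sR\sP_k$-module, so Lemmas \ref{lem:IrreducibleSubmodule} and \ref{lem:AllIrreducible} apply. So the content of the statement is uniqueness.

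To prove uniqueness, let $W$ be an arbitrary irreducible Hilbert $\sR\sP$-module of weight $k$. By Corollary \ref{cor:RectangularDecend}, $W_k$ descends to an irreducible $\sR\sP_k/\sR\sI_k\cong\C$-module and is therefore one-dimensional; fix a unit vector $\eta$ spanning it. Since $W$ has weight $k$ we have $W_{k-1}=(0)$, so $\alpha_i^*\eta=0$ for every $i=1,\dots,k$, i.e.\ $\eta$ is uncappable, exactly as $p_k$ is. Consequently the linear map $\theta:V^k_k\to W_k$ defined by $\theta(p_k)=\eta$ is a nonzero homomorphism of $\sR\sP_k$-modules, both sides carrying the $\sR\sP_k$-action through the quotient $\sR\sP_k/\sR\sI_k\cong\C$ because $\sR\sI_k$ annihilates both $p_k$ and $\eta$.

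Next I would invoke Lemma \ref{lem:ExtendMorphism}: since $V^k$ is irreducible, $\theta$ extends uniquely to an injective homomorphism of Hilbert $\sR\sP$-modules $\Theta:V^k\to W$. By Proposition \ref{prop:DetermineRectangularInnerProduct} all inner products in $V^k$ are determined by $\langle p_k,p_k\rangle=1$ together with the uncappability of $p_k$; since $\eta=\Theta(p_k)$ satisfies exactly these two conditions in $W$, the map $\Theta$ preserves inner products. Finally $\Theta(V^k)$ is a nonzero $\sR\sP$-submodule of the irreducible module $W$, hence all of $W$, so $\Theta$ is an isomorphism of Hilbert $\sR\sP$-modules, proving uniqueness up to isomorphism.

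There is no serious obstacle here: the only points needing care are the verification that $\theta$ is a well-defined $\sR\sP_k$-module map and that the extension produced by Lemma \ref{lem:ExtendMorphism} is automatically isometric, and both reduce immediately to the characterization of the inner product in Proposition \ref{prop:DetermineRectangularInnerProduct}. All the structural input (Assumption \ref{assume:ObjectsAreN} holds for the GICAR categories, so Lemmas \ref{lem:AllIrreducible}--\ref{lem:ExtendMorphism} are available) has already been established.
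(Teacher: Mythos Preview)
Your proof is correct and follows essentially the same approach as the paper, which omits the proof but indicates it is analogous to Proposition \ref{prop:UniqueAnnularModule}. The only cosmetic difference is that the paper concludes $\Theta$ is an isomorphism by constructing its inverse similarly, whereas you argue that the image $\Theta(V^k)$ is a nonzero submodule of the irreducible $W$; both are fine.
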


\begin{rem}
It would be interesting to fully compute the decomposition of the Temperley-Lieb algebras $TL_n(\delta)$ as irreducible $\sA\sG$ and $\sR\sG$-modules.
For example, while there is only one Temperley-Lieb module for Temperley-Lieb, there are many GICAR modules.
It is well known that the $n$-th Catalan number counts the number of non-crossing partitions of $\{1,\dots,n\}$.
Using this fact, we expect to find for each $n\in \bbN$ a new low-weight generator corresponding to the partition which includes all $\{1,\dots, n\}$ and to use these elements to decompose the $TL_n(\delta)$ into irreducible modules by applying rotational symmetries.
We leave this for another time.
\end{rem}

\section{Hilbert bimodules and {\rm II}$_1$-subfactors}\label{sec:InfiniteBackground}

We now have a brief interlude to introduce the background necessary to construct our {\rm II}$_1$-factor bimodule and subfactor representations of the annular and rectangular GICAR categories.

\subsection{Hilbert bimodules}\label{sec:BimoduleBackground}

We refer to \cite[Section 2]{MR3040370} for the background on Hilbert bimodules. We rapidly introduce our notation and conventions.

\begin{nota}\mbox{}
\begin{itemize}
\item
$A$ is a {\rm II}$_1$-factor.
\item
$H$ is an $A-A$ Hilbert bimodule.
\item
$D({\sb{A}H})$ is the set of left $A$-bounded vectors.
\begin{itemize}
\item
For each $\eta\in D({\sb{A}H})$ the right multiplication operator $R(\eta): L^2(A)\to H$ is the unique extension of $\widehat{a}\mapsto a\eta$.
\item
On $D({\sb{A}H})$, the $A$-valued inner product ${\sb{A}\langle} \eta_1,\eta_2\rangle = JR(\eta_1)^*R(\eta_2)J\in A$ is $A$-linear on the \underline{left}.
\item
An $\sb{A}H$ basis (which exists by \cite{MR561983}) is a set of vectors $\{\alpha\}\subset D({\sb{A}H})$ such that 
$$
\sum\limits_\alpha R(\alpha)R(\alpha)^* = 1_H \Longleftrightarrow \sum_\alpha {\sb{A}\langle}\eta ,\alpha\rangle\alpha = \eta\text{ for all }\eta\in D({\sb{A}H}).
$$
\item
The canonical normal, faithful, semifinite (n.f.s.) trace on $A'\cap B(H)$ is given by $\Tr_{A'\cap B(H)}(x)= \sum_\alpha \langle x \alpha,\alpha\rangle$ where $\{\alpha\}$ is any $\sb{A}H$ basis.
\end{itemize}
\item
$D(H_A)$ is the set of right $A$-bounded vectors.
\begin{itemize}
\item
For each $\xi\in D(H_A)$ the left multiplication operator $L(\xi): L^2(A)\to H$ is the unique extension of $\widehat{a}\mapsto \xi a$.
\item
On $D(H_A)$, the $A$-valued inner product $\langle \xi_1|\xi_2\rangle_A = L(\xi_1)^*L(\xi_2)\in A$ is $A$-linear on the \underline{right}.
\item
An $H_A$ basis (which exists by \cite{MR561983}) is a set of vectors $\{\beta\}\subset D(H_A)$ such that 
$$
\sum\limits_\beta L(\beta)L(\beta)^* = 1_H\Longleftrightarrow \sum_\beta \beta \langle \beta|\xi\rangle_A = \xi \text{ for all } \xi\in D(H_A).
$$
\item
The canonical n.f.s. trace on on $(A\op)'\cap B(H)$ is given by  $\Tr_{(A\op)'\cap B(H)}(x)= \sum_\beta \langle x \beta,\beta\rangle$ where $\{\beta\}$ is any $H_A$ basis.
\end{itemize}
\item
$H^n = \bigotimes^n_A H$, and we use the convention $H^0=L^2(A)$.
\begin{itemize}
\item 
For $\eta\in D(H^k_A)$ and $\xi\in D({\sb{A}H^n})$, we denote their relative tensor product in $H^{k+n}$ by $\eta\otimes \xi$.
\item
For each $\eta\in D({\sb{A}H})$, the right creation operator $R_{\eta} : H^n \to H^{n+1}$ is the unique extension of $\zeta\mapsto \zeta\otimes \eta$ for $\zeta\in D(H^n_A)$. 
Its adjoint is given by $R_{\eta}^* (\zeta\otimes \xi)=\zeta {\sb{A}\langle } \xi,\eta\rangle$ for $\zeta,\xi$ appropriate bounded vectors.
\item
For each $\xi_1\in D(H_A)$, the right creation operator $L_{\xi_1} : H^n \to H^{n+1}$ is the unique extension of $\xi_2\mapsto \xi_1\otimes \xi_2$ for $\xi_w\in D({\sb{A}H^n})$.
 Its adjoint is given by $L_{\xi}^* (\eta\otimes \zeta)=\langle \xi| \eta\rangle_A \zeta$ for $\eta,\zeta$ appropriate bounded vectors.
\item
If $x\in (A\op)'\cap B(H^k)$ and $y\in A'\cap B(H^n)$, the operator $x\otimes_A y \in B(H^{k+n})$ given by the unique extension of $\xi\otimes \eta\mapsto (x\eta)\otimes(y\xi)$ where $\eta\in D(H^k_A)$ and $\xi\in D({\sb{A}H^n})$ is well-defined and bounded, and $\|x\otimes_A y\|_\I\leq \|x\|_\I\|y\|_\I$.
\end{itemize}
\item
$B^n=D(\sb{A}H^n)\cap D(H^n_A)$, the simultaneously left and right-bounded vectors, which are dense in $H^n$ \cite[Lemma 1.2.2]{correspondences}. 
Also, we use the convention that $B=B^1$, and we note $B^0=A$.

{\textbf{Note:}} In the case that $H$ is obtained from a {\rm II}$_1$-superfactor $A_1$ of $A=A_0$, $B$ will have a different meaning. 
However, all statements we make about $B$ will still hold for either definition of $B$.
See Remark \ref{rem:NewDefinitionForB} in Subsection \ref{sec:SubfactorBackground}.

\item
Fix $\{\alpha\}\subset B$ an $\sb{A}H$ basis (see Lemma \ref{lem:BasesInB} below). 
We have 
$$\{\alpha^n\}=\set{\alpha_{1}\otimes\cdots\otimes\alpha_{n} }{\alpha_{i}\in\{\alpha\}\text{ for all } i=1,\dots,n}\subset B^n$$ 
is the corresponding $\sb{A}H^n$ basis, since $R_{\alpha_{1}\otimes\cdots\otimes \alpha_{n}} =R_{\alpha_{1}}\cdots R_{\alpha_{n}}$. 
Similarly, we let $\{\beta\}\subset B$ be an $H_A$ basis, and we have the corresponding $H_A^n$ basis $\{{\beta}^n\}\subset B^n$.
\item
$C_n = (A^{\OP})'\cap B(H^n)$, the commutant of the right $A$-action on $H^n$, which has a canonical trace $\Tr_n=\sum_{{\beta}^n} \langle \, \cdot \, {\beta}^n,{\beta}^n\rangle$.
\begin{itemize}
\item
The inclusion $C_n\hookrightarrow C_{n+1}$ is given by $x\mapsto x\otimes_A \id_H$.
\item
The unique trace preserving n.f.s. operator valued weight\\ 
$T_{n+1} : (C_{n+1}^+,\Tr_{n+1})\to (\widehat{C_{n}^+},\Tr_{n})$ is given by $x\mapsto \sum_\beta R_\beta^* x R_\beta$.
\end{itemize}
\item
$C_n^{\OP} = A'\cap B(H^n)$, which has a canonical trace $\Tr_n^{\OP}=\sum_{{\alpha}^n} \langle \, \cdot \, {\alpha}^n,{\alpha}^n\rangle$.
\begin{itemize}
\item
The inclusion $C_n\op\hookrightarrow C_{n+1}\op$ is given by $y\mapsto \id_H\otimes_A y$.
\item
The unique trace preserving n.f.s. operator valued weight\\
$T_{n+1}\op: ((C_{n+1}\op)^+,\Tr_{n+1}\op)\to (\widehat{(C_{n}\op)^+},\Tr_{n}\op)$ is given by $y\mapsto \sum_\alpha L_\alpha^* y L_\alpha$.
\end{itemize}
\item 
The \underline{standard invariant} of $H$ is the sequences of centralizer algebras $\cQ_n=C_n\cap C_n\op$ and central $L^2$-vectors $\cP_n = A'\cap H^n=\set{\zeta\in H^n}{a\zeta=\zeta a \text{ for all }a\in A}$. 
\begin{itemize}
\item 
The planar calculus of \cite{MR3040370} acts on the $\cQ_n$ and $\cP_n$. 
Note that the $\cQ_n$ naturally act on the $\cP_n$.
\item
Note that $\cP_0=A'\cap L^2(A) = \C\widehat{1}$.
\end{itemize}
\end{itemize}
\end{nota}

The next lemma was used without proof in \cite{MR3040370}; due to its importance, we provide a proof for the convenience of the reader.
The proof uses ideas similar to \cite[Lemma 1.2.2]{correspondences} and \cite[Proposition 3.2.19]{1111.1362}.
The latter proves this result in the subfactor case (but $B$ has a different meaning - see Remark \ref{rem:NewDefinitionForB}).

\begin{lem}\label{lem:BasesInB}
There exist $\sb{A}H$ and $H_A$-bases which are subsets of $B$.
\end{lem}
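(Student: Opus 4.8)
The plan is to prove only the existence of an $H_A$-basis inside $B=B^1=D(H_A)\cap D({}_AH)$; the ${}_AH$-case is symmetric, obtained by interchanging the roles of $L$, $\langle\cdot|\cdot\rangle_A$, $C_1=(A^{\OP})'\cap B(H)$ with $R$, ${}_A\langle\cdot,\cdot\rangle$, $C_1^{\OP}=A'\cap B(H)$. The only external input is that $B$ is dense in $H$ (\cite[Lemma 1.2.2]{correspondences}). I will use two easy preliminary facts. First, $B$ is an $A$--$A$-subbimodule: for $\zeta\in B$ and $a,b\in A$ one has $R(a\zeta b)\widehat c=(R(\zeta)\widehat{ca})\cdot b$ and $L(a\zeta b)\widehat c=a\cdot L(\zeta)(\widehat{cb})$, both bounded in $\|\widehat c\|_2$, so $a\zeta b\in B$. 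Second, a cut-down lemma: given $0\ne\eta\in B$, put $x=\langle\eta|\eta\rangle_A\in A_+$ (which lies in $A$ precisely because $\eta\in D(H_A)$), $q_\varepsilon=\chi_{[\varepsilon,\infty)}(x)\in A$, and $\beta_\varepsilon:=\eta\, x^{-1/2}q_\varepsilon$; then $x^{-1/2}q_\varepsilon\in A$ is bounded, so $\beta_\varepsilon\in B$, and $\langle\beta_\varepsilon|\beta_\varepsilon\rangle_A=q_\varepsilon$, whence $L(\beta_\varepsilon)$ is a partial isometry and $e(\beta_\varepsilon):=L(\beta_\varepsilon)L(\beta_\varepsilon)^*$ is a projection in $C_1$ with $\im L(\beta_\varepsilon)\subseteq\overline{\im L(\eta)}$. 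Moreover, as $\varepsilon\downarrow 0$ the projections $e(\beta_\varepsilon)$ increase to the range projection $e(\eta)$ of $L(\eta)$, and since $\eta=L(\eta)\widehat 1\in e(\eta)H$ we get $\|e(\beta_\varepsilon)\eta-\eta\|\to 0$.

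Now fix a sequence $\{\zeta_k\}_{k\ge 1}\subseteq B$ dense in $H$ (possible since $H$ is separable and $B$ is dense). I will build a countable family $\{\beta_i\}\subseteq B$ whose $e(\beta_i)$ are mutually orthogonal projections in $C_1$ with $\sum_i e(\beta_i)=1_H$; as each $L(\beta_i)$ is then a partial isometry with final projection $e(\beta_i)$, this gives $\sum_i L(\beta_i)L(\beta_i)^*=1_H$, i.e.\ an $H_A$-basis. The construction is inductive: after stage $k$ we have a \emph{finite} subfamily with $P_k\in C_1$ the (finite) sum of its $e(\beta_i)$, a projection, such that $\|\zeta_j-P_k\zeta_j\|<2^{-j}$ for all $j\le k$; set $P_0=0$. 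At stage $k+1$ consider $\eta:=\zeta_{k+1}-P_k\zeta_{k+1}$. The crucial point is that $\eta\in B$: indeed $P_k\zeta_{k+1}=\sum_i L(\beta_i)L(\beta_i)^*\zeta_{k+1}=\sum_i\beta_i\langle\beta_i|\zeta_{k+1}\rangle_A$, a \emph{finite} sum (because $P_k$ is a finite sum) whose terms are right-multiples of the $\beta_i\in B$ by the elements $\langle\beta_i|\zeta_{k+1}\rangle_A\in A$ (these lie in $A$ since $\zeta_{k+1},\beta_i\in D(H_A)$), so $P_k\zeta_{k+1}\in B$ and hence $\eta\in B$. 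If $\eta=0$, put $P_{k+1}:=P_k$. Otherwise, note $\im L(\eta)\subseteq(1-P_k)H$ because $L(\eta)\widehat c=(1-P_k)L(\zeta_{k+1})\widehat c$; applying the cut-down lemma, choose $\varepsilon$ so small that $\beta:=\beta_\varepsilon\in B$ satisfies $\|\eta-e(\beta)\eta\|<2^{-(k+1)}$, where $e(\beta)$ is a projection in $C_1$ with $e(\beta)\le 1-P_k$ (hence orthogonal to every earlier $e(\beta_i)$); adjoin $\beta$ and set $P_{k+1}:=P_k+e(\beta)$. Then $(1-P_{k+1})\zeta_{k+1}=(1-e(\beta))\eta$, so $\|\zeta_{k+1}-P_{k+1}\zeta_{k+1}\|<2^{-(k+1)}$, and $\|\zeta_j-P_{k+1}\zeta_j\|\le\|\zeta_j-P_k\zeta_j\|<2^{-j}$ for $j\le k$ since $P_{k+1}\ge P_k$.

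Finally let $P:=\sup_k P_k=\sum_i e(\beta_i)$, an increasing limit of projections, hence a projection in $C_1$. For $\xi\in H$ and any $k$, $\|(1-P)\xi\|\le\|\xi-\zeta_k\|+\|(1-P_k)\zeta_k\|\le\|\xi-\zeta_k\|+2^{-k}$; letting $\zeta_k\to\xi$ shows $(1-P)\xi=0$, so $P=1_H$ and $\{\beta_i\}$ is the required $H_A$-basis inside $B$. The one genuine obstacle is exactly this forcing of $\eta\in B$: a naive Zorn's-lemma argument, upon reaching a maximal orthogonal family $\{\beta_i\}$ with $\sum e(\beta_i)=P\ne 1$, would need a nonzero element of $B$ lying inside the right-submodule $(1-P)H$, but applying $1-P\in C_1$ to a $B$-vector destroys left-boundedness (as $1-P\notin A'$ in general); keeping every partial sum $P_k$ \emph{finite} and iterating along a dense sequence with geometric error control sidesteps this, since then $(1-P_k)\zeta_{k+1}$ is produced from $B$-vectors by finitely many operations, each of which keeps one inside $B$.
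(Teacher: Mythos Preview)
Your argument is correct and takes a genuinely different route from the paper's own proof. The paper begins with an \emph{existing} orthogonal $H_A$-basis $\{\beta_i\}\subset D(H_A)$ (from \cite{MR1387518}) and then forces each $\beta_i$ into $D({}_AH)$ by spectrally chopping with respect to the operator-valued weight: writing $T_1(L(\beta_i)L(\beta_i)^*)=\int_0^\infty\lambda\,de^i_\lambda$ and setting $\gamma_{i,n}=(e^i_n-e^i_{n-1})\beta_i$, the key estimate $\|a\gamma_{i,n}\|^2=\tr_A(aT_1(L(\gamma_{i,n})L(\gamma_{i,n})^*)a^*)\le n\|\widehat a\|_2^2$ shows $\gamma_{i,n}\in D({}_AH)$. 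Your approach instead builds the basis directly from the density of $B$, using only the polar-decomposition cut-down and a sequential exhaustion with finite partial sums $P_k$; the role of $T_1$ is entirely avoided. What you gain is a more elementary, self-contained argument that does not invoke a pre-existing basis or operator-valued weights; what the paper's approach buys is a conceptual explanation of \emph{why} left-boundedness is controllable (it is precisely finiteness of $T_1$ on trace-class elements) and a construction that refines any given basis rather than starting over. Your final remark pinpointing why a naive Zorn argument fails---that $(1-P)\zeta$ need not stay in $B$ when $P$ is an infinite sum---is exactly the subtlety both proofs have to negotiate, and your device of keeping $P_k$ finite is a clean way around it.
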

\begin{proof}
We show there is a $H_A$-basis $\{\gamma\}\subset B$, and the other case is similar.

First, let  $\{\beta_i\}$ be an orthogonal $H_A$-basis, so the $L(\beta_i)L(\beta_i)^*$ are projections which sum to $1_H$, and $L(\beta_i)^*L(\beta_j)=\langle \beta_i|\beta_j\rangle_A=0$ if $i\neq j$.
This exists by \cite[Proposition 2.2]{MR1387518}.
Moreover, $C_1=\set{L(\eta)L(\xi)^*}{\eta,\xi\in D(H_A)}''$ by \cite{MR561983}.

For each $i$, $T_1(L(\beta_i)L(\beta_i)^*)$ has finite trace, and  thus has a spectral resolution
$$
T_1(L(\beta_i)L(\beta_i)^*)=\int_0^\infty \lambda \, de_\lambda^i.
$$
Mimicking \cite[Proposition 3.2.19]{1111.1362}, set $\gamma_{i,1}=e_1^i \beta$ and $\gamma_{i,n} = (e_{n}^i-e_{n-1}^i)\beta$ for $n\geq 2$.
Thus $T_1(L(\gamma_{i,n})L(\gamma_{i,n})^*)$ has norm at most $n$, and $L(\beta_i)L^2(A) = \bigoplus_{n\in \bbN} L(\gamma_{i,n})L^2(A)$. 
Moreover, the $\gamma_{i,n}$ are orthogonal, since $L(\gamma_{i,m})^*L(\gamma_{j,n})=L(\beta_i)^*e_me_nL(\beta_j)$, which is zero unless $i=j$ and $m=n$.
Thus $\{\gamma_{i,n}\}$ is an $H_A$-basis.

Finally, to show each $\gamma_{i,n}\in D({\sb{A}H})$, for each $a\in A$, we have
\begin{align*}
\|a\gamma_{i,n}\|_H^2 
&= 
\| aL(\gamma_{i,n}) \widehat{1}\|_H^2 
= 
\tr_A(L(\gamma_{i,n})^*a^*aL(\gamma_{i,n}))
=
\Tr_{1}(aL(\gamma_{i,n})L(\gamma_{i,n})^*a^*)\\
&=
\tr_A(a T_1(L(\gamma_{i,n})L(\gamma_{i,n})^*)a^*)
\leq
n\tr_A(aa^*)
=
n\|\widehat{a}\|_{L^2(A)}^2.
\qedhere
\end{align*}
\end{proof}

\begin{defn}\label{defn:symmetric}
$H$ is called \underline{symmetric} if there is a conjugate-linear isomorphism $J: H\to H$ such that $J(a\xi b)= b^* (J\xi) a^*$ for all $a,b\in A$ and $\xi\in H$ and $J^2=\id_H$. 
\end{defn}

\begin{rem}\label{rem:symmetric}
If $H$ is symmetric, then for $n\geq 1$, $H^n$ is symmetric with conjugate-linear isomorphism $J_n: H^n\to H^n$ given by the extension of
$$
J_n( \xi_1\otimes \cdots \otimes \xi_n)=(J\xi_1)\otimes\cdots \otimes (J\xi_n).
$$
for $\xi_i\in B$ for all $i$. Note that $J_nAJ_n=A^{\OP}$, $J_n C_nJ_n=C_n^{\OP}$, and $J_n B^n=B^n$. On $B(H^n)$, we define $j_n$ by $j_n(x)=J_nx^*J_n$. Note that $j_n^2=\id$ and $\Tr_n=\Tr_n^{\OP}\circ j_n$. 

If $H$ is not symmetric, then in general, $C_n^{\OP}$ is not the opposite algebra of $C_n$, e.g. $\sb{R\otimes 1} L^2(R\otimes R)_{R\otimes R}$ where $R$ is the hyperfinite {\rm II}$_1$-factor.
\end{rem}

\subsection{Arbitrary index {\rm II}$_1$-subfactors}\label{sec:SubfactorBackground}

Suppose $A=A_0$ is contained in a {\rm II}$_1$-factor $A_1$.
The $A-A$ bimodule $H=L^2(A_1)$ is the motivating example for this paper.

\begin{rem}\label{rem:NewDefinitionForB}
In the case that $H=L^2(A_1)$, we no longer use the notation $B$ for $D({\sb{A}L^2(A_1)})\cap D(L^2(A_1)_A)$.
Rather, we set $B=A_1$, which is obviously still dense in $L^2(A_1)$.
Note that the bi-bounded vectors $D({\sb{A}L^2(A_1)})\cap D(L^2(A_1)_A)$ do not agree with the image of $B$ in $L^2(B)$ when $[B:A]=\I$.
However, all statements we made concerning the $B$ in the previous subsection still apply for $B=A_1$.
\end{rem}

We can perform the Jones basic construction to get another type {\rm II} factor $\langle B, e_A\rangle$, which is type {\rm II}$_1$ if and only if $[B: A]<\I$ \cite{MR696688}. 
When $[B:A]<\I$, we can form the Jones tower, and the higher relative commutants form a planar algebra \cite{math/9909027}, which always includes a Temperley-Lieb planar subalgebra.

When $[B:A]=\I$, $\langle B, e_A\rangle$ is a {\rm II}$_\I$-factor, and we must be more careful. 
Detailed analysis of this situation was started in \cite{MR1387518,1111.1362}, 
and planar structure was given for the centralizer algebras and central $L^2$-vectors in \cite{MR3040370}.
We rapidly recall the necessary background from \cite{MR1387518,1111.1362}.

\begin{defn}
Suppose $M$ is a semifinite von Neumann algebra with n.f.s. trace $\Tr_M$. Define
\begin{align*}
\n_{\Tr_M}&=\set{x\in M}{\Tr_M(x^*x)<\I}\text{ and}\\
\m_{\Tr_M}&= \n_{\Tr_M}^*\n_{\Tr_M} = \spann\set{x^* y}{x,y\in \n_{\Tr_M}}.
\end{align*}
Suppose $N$ is a von Neumann subalgebra of $M$ with n.f.s. trace $\Tr_N$. Then by \cite{MR534673}, there is a unique trace-preserving n.f.s. operator valued weight $T: M^+\to \widehat{N^+}$, the extended positive cone of $N$. We define
\begin{align*}
\n_{T}&=\set{x\in M}{T(x^*x)\in N^+}\text{ and}\\
\m_{T}&= \n_{T}^*\n_{T} = \spann\set{x^* y}{x,y\in \n_{T}}.
\end{align*}
\end{defn}

Suppose $A_0=A\subset B=A_1$ is an infinite index {\rm II}$_1$-subfactor.
First, recall that $A_0,A_1$ have normal, faithful, finite normalized traces $\tr_0,\tr_1$ respectively, and $\tr_1|_{A_0}=\tr_0$. 
We have $T_1=E_1 :A_1 \to A_0$ is the normal, faithful conditional expectation, which is implemented by the Jones projection $e_1\in A_0'\cap B(L^2(A_1,\tr_1))$.

The basic construction of $A_0\subset A_1$ is $A_2$, a type {\rm II}$_\I$-factor, and the canonical n.f.s. trace $\Tr_2$ on $A_2$ satisfies 
$$
\Tr_2(xe_1 y^*)=\Tr_2(L(\widehat{x})L(\widehat{y})^*)=\tr_1(xy^*)
$$ 
for all $x,y\in A_1$. (Note that in the notation of the previous subsection, $A_2=C_1$, and $\Tr_2=\Tr_1$.)
We form the $L^2$-space in the usual way as the closure of $\n_{\Tr_2}$.

Now the unique trace-preserving operator valued weight $T_2: A_2^+\to \widehat{A_1^+}$ is not a conditional expectation. For $x\in \n_{T_2}\subset A_2$, we define left creation operators $\Lambda_{T_2}(x) : L^2(A_1)\to L^2(A_2)$ which commute with the right $A_1$-action by $\widehat{y}\mapsto \widehat{xy}$ for $y\in \n_{\tr_1}=A_1$ which is well-defined and bounded since $xy\in \n_{T_2}\cap \n_{\Tr_2}$:
$$
\|xy\|_2^2=\Tr_2((yy^*)^{1/2} x^*x(yy^*)^{1/2})=\tr_1((yy^*)^{1/2} T(x^*x)(yy^*)^{1/2})\leq \|T(x^*x)\|_\I \|y\|_2^2.
$$
Moreover, the maps $\Lambda_{T_2}(x)$  satisfy
\begin{itemize}
\item $\Lambda_{T_2}(x)^*\widehat{y}=\widehat{T_2(x^*y)}$ for all $x\in\n_{T_2}$ and $y\in \n_{\Tr_2}$, and
\item $\Lambda_{T_2}(x)^*\Lambda_{T_2}(y) = E_1(x^*y)$ for all $x,y\in\n_{T_2}$.
\end{itemize}

To iterate the basic construction, note that the modular conjugation $J_2$ on $\n_{\Tr_2}$ extends to an anti-linear unitary. Hence we may define the basic construction by $A_3=J_2 (A_1'\cap B(L^2(A_2,\Tr_2)))J_2$.
It follows from \cite{MR1387518} that
$$
A_3= \set{\Lambda_{T_2}(x)\Lambda_{T_2}(y)^*}{x,y\in\n_{T_{2}}}''.
$$
The canonical n.f.s. trace $\Tr_3$ on $A_3$ is given by 
$$
\Tr_3(\Lambda_{T_2}(x)\Lambda_{T_2}(y)^*)=\tr_1(\Lambda_{T_2}(y)^*\Lambda_{T_2}(x))=\tr_1(y^*x),
$$
and the unique trace-preserving n.f.s. operator valued weight satisfies
$$
T_3(\Lambda_{T_2}(x)\Lambda_{T_2}(x)^*)=xx^*.
$$
By \cite[Section 3.2.1]{1111.1362}, $\Tr_3|_{A_2^+}=\Tr_2$, so $T_3=E_3$ is a conditional expectation, which is implemented by a Jones projection $e_3\in A_2'\cap B(L^2(A_3,\Tr_3))$.

One continues this process as in \cite{MR1387518,1111.1362} to get a tower of type {\rm II} factors $(A_n,\Tr_n)_{n\geq 0}$ together with 
\begin{itemize}
\item
the conjugate-linear unitary $J_n$ on $L^2(A_n,\Tr_n)$ extending the adjoint on $\n_{\Tr_n}$,
\item
the basic construction 
$$
A_{n+1}
=J_n (A_{n-1}'\cap B(L^2(A_n,\Tr_n))J_n
= \set{\Lambda_{T_2}(x)\Lambda_{T_2}(y)^*}{x,y\in\n_{T_{n}}}'',
$$
\item
the operator valued weights $T_{n+1}: A_{n+1}^+\to \widehat{A_{n}^+}$,
\item
the left creation operators $\Lambda_{T_n}(x): L^2(A_{n-1},\Tr_{n-1})\to L^2(A_n,\Tr_n)$ for $x\in \n_{T_n}$ which commute with the right $A_{n-1}$ action,
\item
the n.f.s. traces $\Tr_{n+1}$ satisfying
\begin{align*}
\Tr_{n+1}(\Lambda_{T_n}(x)\Lambda_{T_n}(y)^*)&=\Tr_{n-1}(\Lambda_{T_n}(y)^*\Lambda_{T_n}(x))=\Tr_{n-1}(y^*x)\text{ for all }x,y\in\n_{T_n},
\end{align*}
\end{itemize}

We have the following facts due to \cite{MR1387518,1111.1362}.

\begin{facts}
\mbox{}
\be
\item
$L^2(A_n)\cong \bigotimes_A^n L^2(B)$ via isomorphisms $\theta_n$ (see Subsection \ref{sec:RealizeOddJones}).
\item 
Writing $B^n=\bigotimes^n_A\widehat{B}$ (again, this notation differs from Subsection \ref{sec:BimoduleBackground}), $B^n$ is dense in $\bigotimes_A^n L^2(B)$.
\item (Multistep basic construction \cite{MR1387518}) For $0\leq k\leq n$, the inclusions $A_{n-k}\subseteq A_n\subseteq A_{n+k}$ are standard, i.e.  
$$
(\id_k\otimes_A J_{n-k} A_{n-k}J_{n-k})'=J_n(A_{n-k}\otimes_A \id_k)'J_n \cong A_{n+k}.
$$
\item (Shifts \cite{MR1387518})For $0\leq k\leq n$, we let $j_n(x)=J_n x^* J_n$ for $x\in B(L^2(A_n,\Tr_n))$. Then $j_n$ is an anti-isomorphism of $A_{n-k}$ onto $A_{n+k}'$. Hence if we compose $j_n$ and $j_{n+1}$, we get an isomorphism:
$$
j_{n+1}j_n (A_{n-k}'\cap A_n)\underset{anti}{\cong} j_{n+1} (A_{n}'\cap A_{n+k} )\underset{anti}{\cong} A_{n-k+2}'\cap A_{n+2}.
$$
\item 
(Odd Jones projections \cite[Section 3.2.1]{1111.1362}) For all $n\in\N$, $\Tr_{2n}|_{A_{2n-1}^+}=\I$ and $\Tr_{2n+1}|_{A_{2n}^+}=\Tr_{2n}$. Therefore, $T_{2n+1}: A_{2n+1}\to A_{2n}$ is a conditional expectation, which gives rise to the odd Jones projection $e_{2n+1}$. 

When we realize $A_{2n}$ acting on $L^2(A_{n},\Tr_{n})$ from the multistep basic construction, 
$e_{2n-1}=J_{n} e_1 J_{n}$.
\ee
\end{facts}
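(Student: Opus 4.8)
The plan is to take facts (1)--(4) as established in the cited works \cite{MR1387518,1111.1362} --- the explicit isomorphisms $\theta_n$ in (1) are constructed in Section~\ref{sec:RealizeOddJones} --- and to concentrate on (5), whose content is the odd/even dichotomy for the operator-valued weights $T_m$ together with the identification $e_{2n-1}=J_n e_1 J_n$. First I would prove the trace relations $\Tr_{2n}|_{A_{2n-1}^+}=\I$ and $\Tr_{2n+1}|_{A_{2n}^+}=\Tr_{2n}$ by induction on $n$. The base cases are recalled above: $\Tr_2|_{A_1^+}=\I$ because $A_2$ is type {\rm II}$_\I$ with $[A_1:A_0]=\I$, and $\Tr_3|_{A_2^+}=\Tr_2$ from \cite[Section 3.2.1]{1111.1362}. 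For the inductive step, combine the trace recursion $\Tr_{m+1}(\Lambda_{T_m}(x)\Lambda_{T_m}(y)^*)=\Tr_{m-1}(y^*x)$ with fact (3): composing two consecutive basic constructions recovers the standard inclusion $A_{m-1}\subseteq A_{m+1}$, under which $\Tr_{m+1}|_{A_{m-1}}=\Tr_{m-1}$, so whether $\Tr_m$ restricts to a finite or an infinite trace on $A_{m-1}$ alternates with the parity of $m$, which is the claim.

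Next, given $\Tr_{2n+1}|_{A_{2n}^+}=\Tr_{2n}$, the unique $\Tr$-preserving n.f.s.\ operator-valued weight $T_{2n+1}\colon A_{2n+1}^+\to\widehat{A_{2n}^+}$ has $T_{2n+1}(1)=1$ and hence is a conditional expectation $E_{2n+1}$ onto $A_{2n}$ --- this is the standard criterion that a $\Tr$-preserving operator-valued weight is a conditional expectation precisely when it restricts the larger trace to the full (not rescaled) trace on the subalgebra. Performing the Jones basic construction of $A_{2n}\subseteq A_{2n+1}$ with respect to $E_{2n+1}$ then produces the odd Jones projection $e_{2n+1}\in A_{2n}'\cap A_{2n+2}$, characterized among projections there by $e_{2n+1}xe_{2n+1}=E_{2n+1}(x)e_{2n+1}$ for $x\in A_{2n+1}$ together with the usual normalization and faithfulness conditions.

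For the formula $e_{2n-1}=J_n e_1 J_n$, realize $A_{2n}$ on $L^2(A_n,\Tr_n)$ using fact (3) with $k=n$, so that $A_{2n}=J_n(A_0'\cap B(L^2(A_n)))J_n$. Since $e_1\in A_2$ is the Jones projection of $A_0\subseteq A_1$ it commutes with $A_0$, hence $e_1\in A_0'\cap A_n\subseteq A_0'\cap B(L^2(A_n))$, so $J_n e_1 J_n\in A_{2n}$; and by the shift of fact (4) (the anti-isomorphism $j_n$ carries $A_2$ onto $A_{2n-2}'$) we also get $J_n e_1 J_n\in A_{2n-2}'\cap A_{2n}$, the same corner that contains $e_{2n-1}$. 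It then remains to check that $J_n e_1 J_n$ satisfies the relations characterizing the Jones projection of the standard inclusion $A_{2n-2}\subseteq A_{2n-1}$ inside $A_{2n}$: the implementation identity $(J_n e_1 J_n)\,x\,(J_n e_1 J_n)=E_{2n-1}(x)\,(J_n e_1 J_n)$ for $x\in A_{2n-1}$, and the trace normalization $\Tr_{2n}(a(J_n e_1 J_n)b)=\Tr_{2n-1}(ab)$ for suitable $a,b\in A_{2n-1}$. Each is obtained by conjugating the corresponding relation for $e_1$ (with inclusion $A_0\subseteq A_1$, expectation $E_1$, traces $\Tr_1$, $\Tr_2$) by the anti-automorphism $x\mapsto J_n x^* J_n$, using that the composite shift $j_{n+1}j_n$ of fact (4) carries $E_1$ to $E_{2n-1}$ and that $\Tr_n=\Tr_n^{\OP}\circ j_n$ intertwines the relevant traces; uniqueness of the Jones projection then gives $e_{2n-1}=J_n e_1 J_n$.

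The main obstacle is the bookkeeping in the last step: turning the slogan ``conjugation by $J_n$ reflects the tower $A_0\subseteq\cdots\subseteq A_n$ onto $A_n\subseteq\cdots\subseteq A_{2n}$ and sends $e_1$ to $e_{2n-1}$'' into a rigorous argument requires unwinding the multistep basic construction of fact (3) one step at a time and tracking precisely how the conditional expectations $E_1$ and $E_{2n-1}$, and the normalizing traces, correspond under the shifts of fact (4). Once those identifications are set up (which is essentially the content of \cite{MR1387518,1111.1362}), the remaining verifications are routine basic-construction manipulations.
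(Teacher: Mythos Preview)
The paper does not prove this statement at all: it is presented as a \texttt{facts} environment summarizing results from \cite{MR1387518} and \cite[Section 3.2.1]{1111.1362}, with no accompanying proof in the text. Your proposal therefore goes further than the paper, which simply cites the literature and moves on.

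That said, your sketch for (5) is broadly correct and follows the line of reasoning in the cited references. A few comments. First, your inductive argument for the trace dichotomy is slightly imprecise: the key mechanism is not really that ``composing two consecutive basic constructions recovers $\Tr_{m+1}|_{A_{m-1}}=\Tr_{m-1}$'' and then parity alternates --- rather, one shows directly (as in \cite[Section 3.2.1]{1111.1362}) that the two-step trace identity $\Tr_{m+1}|_{A_{m-1}^+}=\Tr_{m-1}$ holds for all $m$, and separately that $\Tr_m|_{A_{m-1}^+}$ is finite or infinite according to whether the operator-valued weight $T_m$ is bounded, which in turn is governed by whether the index at that stage is finite. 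The alternation comes from the fact that the ``odd'' inclusions $A_{2n}\subseteq A_{2n+1}$ are anti-isomorphic (via $j_n$) to inclusions with a conditional expectation, while the ``even'' ones inherit infinite index. Second, for the identification $e_{2n-1}=J_n e_1 J_n$, your strategy of checking the characterizing relations of the Jones projection after conjugation by $J_n$ is the right one, but note that in the infinite-index setting one must be careful which normalization condition one uses (the pull-down identity $\Tr_{2n}(a e_{2n-1} b^*)=\Tr_{2n-1}(ab^*)$ for $a,b\in\n_{T_{2n-1}}$ is the correct form, not an unrestricted one). Your acknowledgment that the bookkeeping is the real content is accurate: the cited papers carry out exactly this unwinding.
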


\begin{rem}
For $x_1,\dots x_n\in B$, we write $\widehat{x_1}\otimes\cdots \otimes \widehat{x_n} \in B^n$ omitting the subscript $A$ to distinguish between operators and vectors, such as
$x\otimes_A \id_1$ and $\widehat{x}\otimes \widehat{1}$ for $x\in B$. One is left multiplication by $x\in B$ on $L^2(B)\otimes_A L^2(B)$, and the other is $\theta_2^{-1}(\widehat{xe_1})$.
\end{rem}

\subsection{Identifying the Jones projections}\label{sec:RealizeOddJones}

We now identify the Jones projections acting on $\bigotimes_A^n L^2(B)$ via $\theta_n$.
We recall Burns' definition of the isomorphisms $\theta_n : \bigotimes^n_A L^2(B) \to L^2(A_n)$ \cite[Section 3.2.2]{1111.1362}.
Note that our numbering differs from Burns' numbering in that we start with $A_0\subset A_1$.
Also, Burns' definition is more general in that he works with arbitrary type {\rm II} factors, and he defines a more general set of isomorphisms.
We provide a simplified definition for the reader's convenience.

\begin{defn}
The isomorphisms $\theta_n : \bigotimes^n_A L^2(B) \to L^2(A_n)$ are composites of other known isomorphisms. We define:
\begin{itemize}
\item
$v_{k+1} : L^2(A_{k})\underset{A_{k-1}}{\otimes} L^2(A_{k})\to L^2(A_{k+1})$ by $\eta\otimes J_{k}\xi \mapsto L(\eta)L(\xi)^*$ for $\eta,\xi\in D(L^2(A_k)_{A_{k-1}})$,
\item
$\iota_{k} : L^2(A_{k})\underset{A_k}{\otimes}L^2(A_{k})\to L^2(A_{k})$ by $\widehat{x}\otimes \widehat{y} \mapsto \widehat{xy}$ for $x,y\in\n_{\Tr_n}$,
\item
$\D
\psi_{k,n}=\left(\bigotimes_{A_k}^{n-1} v_{k+1} \right) \circ \left(\id_k \underset{A_{k-1}}{\otimes} \left(\bigotimes_{A_{k-1}}^{n-2} \iota_k^* \right) \underset{A_{k-1}}{\otimes} \id_k\right) 
$
which doubles, regroups, and contracts, i.e., $\psi_{k,n}$ is the composite map
$$
\hspace{-1.2cm}
\bigotimes_{A_{k-1}}^{n} L^2(A_k) 
\to
\bigotimes_{A_{k-1}}^{n} \left(L^2(A_k)\underset{A_k}{\otimes} L^2(A_k)\right) 
\cong
\bigotimes_{A_{k}}^{n-1} \left(L^2(A_k)\underset{A_{k-1}}{\otimes} L^2(A_k)\right) 
\to 
\bigotimes_{A_k}^{n-1} L^2(A_{k+1}),
$$
and
\item
for $n\geq 2$, 
$\theta_n : \bigotimes^n_{A} L^2(B) \to L^2(A_{n})$ by $\psi_{n-1,2}\circ \psi_{n-3,3}\circ\cdots \circ \psi_{2,n-1}\circ\psi_{1,n}$.
\end{itemize}
Note that $\theta_n$ is compatible with $J : \bigotimes^n_A L^2(B)\to \bigotimes^n_A L^2(B)$ by $\xi_1\otimes \cdots\otimes \xi_n \mapsto J_B\xi_n \otimes\cdots \otimes J_B\xi_1$ since the $v_{k+1}$, $\iota_k$, and $\psi_{k,j}$ are also.
\end{defn}

\begin{lem}\label{lem:IdentifyOddJonesProjections}
When we use $\theta_n$ to transport the action of $A_{2n}$ to $\bigotimes^n_{A} L^2(B)$, the Jones projection $e_1$ maps to $e_1\otimes_A \id_{n-1}$ and the Jones projection $e_{2n-1}$ maps to $\id_{n-1} \otimes_A e_1$.
\end{lem}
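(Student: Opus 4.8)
The plan is to prove the two assertions separately: the statement about $e_1$ is the substantive one, and the statement about $e_{2n-1}$ then follows from it using the compatibility of $\theta_n$ with the conjugations $J$. First I would record what these operators are on $L^2(A_n)$. In the realization of $A_{2n}$ on $L^2(A_n)$ afforded by the multistep basic construction, the subalgebra $A_n\subseteq A_{2n}$ acts by left multiplication, hence so does $A_2\subseteq A_n$, and in particular $e_1\in A_2$; moreover $e_1$ is the Jones projection on $L^2(A_1)=L^2(B)$, so $e_1\in\cQ_1\subseteq C_1\cap C_1^{\OP}$, which is exactly what makes both $e_1\otimes_A\id_{n-1}$ and $\id_{n-1}\otimes_A e_1$ meaningful. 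Thus the first claim is the intertwining identity: $\theta_n$ intertwines $e_1\otimes_A\id_{n-1}$ on $\bigotimes^n_A L^2(B)$ with left multiplication by $e_1$ on $L^2(A_n)$.

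For this I would use the factorization $\theta_n=\theta'\circ\psi_{1,n}$, where $\theta':=\psi_{n-1,2}\circ\cdots\circ\psi_{2,n-1}\colon\bigotimes^{n-1}_{A_1}L^2(A_2)\to L^2(A_n)$ is the evident analogue of $\theta_{n-1}$ for the tower $A_1\subseteq A_2\subseteq\cdots$ (read off the defining product for $\theta_n$ by peeling $\psi_{1,n}$ off the right). The base case $n=2$ is immediate: there $\theta_2=\psi_{1,2}=v_2$, and for $x\in A_2=C_1$ and right-bounded $\eta,\xi\in D(L^2(A_1)_{A_0})$ one computes, using $xL(\eta)=L(x\eta)$ (valid since $x$ commutes with the right $A_0$-action),
$$x\cdot v_2(\eta\otimes J_1\xi)=xL(\eta)L(\xi)^*=L(x\eta)L(\xi)^*=v_2\bigl((x\otimes_A\id_1)(\eta\otimes J_1\xi)\bigr),$$
so $\theta_2$ intertwines $x\otimes_A\id_1$ with left multiplication by $x$; take $x=e_1$. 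For general $n$: each of the maps $v_k$ $(k\geq 3)$ and $\iota_k$ $(k\geq 2)$ is left $A_2$-linear, hence so is the composite $\theta'$, and $\theta'$ is a unitary carrying left multiplication by $A_2$ on the first $L^2(A_2)$-tensorand of its domain to left multiplication by $A_2\subseteq A_n$ on $L^2(A_n)$; therefore conjugation by $\theta'$ turns left multiplication by $e_1$ on $L^2(A_n)$ into left multiplication by $e_1$ on that first $L^2(A_2)$-tensorand. Writing $\psi_{1,n}=\bigl(\bigotimes^{n-1}_{A_1}v_2\bigr)\circ M$ with $M=\id_1\otimes_{A_0}\bigl(\bigotimes^{n-2}_{A_0}\iota_1^*\bigr)\otimes_{A_0}\id_1$, that first tensorand is the image under the first copy of $v_2$ of the leading pair $L^2(A_1)\otimes_{A_0}L^2(A_1)$ in the regrouped domain, whose left factor is the first tensor slot and is left untouched by the leading $\id_1$ of $M$. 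By the base case applied to this first $v_2$, left multiplication by $e_1$ there pulls back to $e_1\otimes_{A_0}\id$ on the leading pair, i.e.\ to $e_1$ on the first slot; and since $M$ acts as the identity on that slot, this pulls back through $M$ to $e_1\otimes_A\id_{n-1}$ on $\bigotimes^n_{A_0}L^2(A_1)$, as desired.

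For $e_{2n-1}$ I would invoke that on $L^2(A_n)$ one has $e_{2n-1}=J_ne_1J_n$ (odd Jones projections) and that $\theta_n$ intertwines the reversal $J\colon\xi_1\otimes\cdots\otimes\xi_n\mapsto J_B\xi_n\otimes\cdots\otimes J_B\xi_1$ on $\bigotimes^n_A L^2(B)$ with $J_n$ on $L^2(A_n)$. Hence $\theta_n$ intertwines $e_{2n-1}$ with $J(e_1\otimes_A\id_{n-1})J$, and a one-line computation using $J_Be_1J_B=e_1$ (the Jones projection commutes with the modular conjugation) gives $J(e_1\otimes_A\id_{n-1})J=\id_{n-1}\otimes_A e_1$.

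The main obstacle will be the bookkeeping in the general-$n$ step: unwinding the relative-tensor reassociations implicit in the ``double, regroup, contract'' map $\psi_{1,n}$ and checking that left multiplication by $e_1$ on the first $L^2(A_2)$-tensorand of its target corresponds, after the first $v_2$ and after $M$, to $e_1$ acting on the first $L^2(A_1)$-slot only. The $n=2$ computation is short but requires care with the precise $\otimes_A$-convention and with which vectors are bounded on which side.
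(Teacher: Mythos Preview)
Your argument is correct. The paper's own proof is a one-line citation to Burns' thesis (\cite[Lemma 3.3.20]{1111.1362}) for the $e_1$ statement, together with the compatibility of $\theta_n$ with $J$ for the $e_{2n-1}$ statement. Your treatment of $e_{2n-1}$ is exactly the same as the paper's; the difference is that for $e_1$ you unpack Burns' lemma directly from the definition of $\theta_n$, factoring $\theta_n=\theta'\circ\psi_{1,n}$ and observing that $\theta'$ is left $A_2$-linear while $\psi_{1,n}$ leaves the first $L^2(A_1)$-slot untouched before feeding it into the first copy of $v_2$. This buys a self-contained proof that does not require the reader to locate the relevant statement in Burns' thesis, at the cost of the tensor-reassociation bookkeeping you flag; the paper simply outsources that bookkeeping.
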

\begin{proof}
The result follows from \cite[Lemma 3.3.20]{1111.1362} and the compatibility of $\theta_n$ and $J$. 
\end{proof}

\begin{prop}\label{prop:IdentifyOddJonesProjections}
When we use $\theta_n$ to transport the action of $A_{2n}$ to $\bigotimes^n_{A} L^2(B)$, then for $1\leq i\leq n$, we may identify the Jones projection $e_{2i-1}$ with $\id_{i-1} \otimes_A e_1\otimes_A \id_{n-i}$ .
\end{prop}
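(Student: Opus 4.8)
The plan is to induct on $n$ and reduce every intermediate index $i$ to the two extremal cases $i=1$ and $i=n$, which are precisely Lemma~\ref{lem:IdentifyOddJonesProjections}. Conceptually, the point is that $e_{2i-1}\in A_{2i}$ ``sees only the first $i$ tensor slots.'' Applying Lemma~\ref{lem:IdentifyOddJonesProjections} with $n$ replaced by $i$ — which is legitimate, since $e_{2i-1}$ is the top odd Jones projection of the finite subtower $A_0\subset A_1\subset\cdots\subset A_{2i}$ — tells us that under $\theta_i\colon H^i\to L^2(A_i)$ the projection $e_{2i-1}$ is carried to $\id_{i-1}\otimes_A e_1$. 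The proposition then amounts to the assertion that passing from the realization of $A_{2i}$ on $L^2(A_i)$ to its realization (as a subalgebra of $A_{2n}$) on $L^2(A_n)$, and correspondingly from $\theta_i$ to $\theta_n$, merely tensors this operator on the right by $\id_{n-i}$.

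Here is how I would carry this out. For $n=1$ the only case is $i=1=n$, handled by Lemma~\ref{lem:IdentifyOddJonesProjections}. For $n\ge 2$: if $i=n$, Lemma~\ref{lem:IdentifyOddJonesProjections} gives $e_{2n-1}\mapsto\id_{n-1}\otimes_A e_1$ directly. If $1\le i<n$, then $e_{2i-1}\in A_{2i}\subseteq A_{2n-2}$, so it suffices to combine (i) the inductive hypothesis for the pair $(n-1,i)$, which says that under $\theta_{n-1}$ the projection $e_{2i-1}$ acts on $L^2(A_{n-1})\cong H^{n-1}$ as $\id_{i-1}\otimes_A e_1\otimes_A\id_{n-1-i}$, with (ii) the statement that the standard inclusion $A_{2n-2}\subseteq A_{2n}$, realized via the multistep basic construction on $L^2(A_{n-1})$ and $L^2(A_n)$ respectively, corresponds under $\theta_{n-1}$ and $\theta_n$ to the inclusion $C_{n-1}\hookrightarrow C_n$, $x\mapsto x\otimes_A\id_H$. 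Granting (ii) we obtain $e_{2i-1}\mapsto(\id_{i-1}\otimes_A e_1\otimes_A\id_{n-1-i})\otimes_A\id_H=\id_{i-1}\otimes_A e_1\otimes_A\id_{n-i}$, completing the induction. Here (ii) itself rests on the fact that the inclusions $A_0\subseteq A_{n-1}\subseteq A_{2n-2}$ and $A_0\subseteq A_n\subseteq A_{2n}$ are standard, so that $A_{2m}\cong J_m A_0'J_m$ is the commutant of the right $A$-action, hence $A_{2n-2}\cong C_{n-1}$ and $A_{2n}\cong C_n$ via $\theta_{n-1}$ and $\theta_n$; one must then check these two identifications are compatible with the inclusion $A_{2n-2}\hookrightarrow A_{2n}$.

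The main obstacle is exactly step (ii): the bookkeeping that matches the abstract multistep basic construction of \cite{1111.1362} (together with the conjugations $J_m$) against Burns' explicit formula for $\theta_n$ as a composite of the doubling, regrouping, and contracting maps $v_{k+1},\iota_k,\psi_{k,j}$ — equivalently, verifying that $\theta_n$ ``restricts to $\theta_{n-1}$ on the first $n-1$ tensor slots'' in the precise sense needed to make ``$A_{2i}$ acts only on the first $i$ slots'' literally true at the level of the identifications $\theta_\bullet$. A secondary, purely technical nuisance throughout is tracking boundedness of vectors under the relative tensor products, but this is routine given Lemma~\ref{lem:BasesInB} and the density of $B^n$. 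Once the compatibility in (ii) is pinned down, the proposition is immediate.
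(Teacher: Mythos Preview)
Your proposal is correct and follows essentially the same route as the paper: strong induction on $n$, with the extremal case $i=n$ handled by Lemma~\ref{lem:IdentifyOddJonesProjections} and the remaining indices handled by the inductive hypothesis together with the fact that the inclusion $A_{2n-2}\hookrightarrow A_{2n}$ transports via $\theta_{n-1},\theta_n$ to $C_{n-1}\hookrightarrow C_n$, $x\mapsto x\otimes_A\id_H$. The paper simply asserts your step~(ii) as part of the standing notation from Subsection~\ref{sec:BimoduleBackground} rather than flagging it as something to be verified, so your caution there is well placed but not a divergence in strategy.
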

\begin{proof}
We use strong induction on $n$. The case $n=1$ is trivial. Suppose the result holds for all $0\leq i \leq n$. Now use $\theta_{n+1}$ to realize the action of $A_{2n+2}$ on $\bigotimes^{n+1}_A L^2(B)$. 
In the notation of Subsection \ref{sec:BimoduleBackground}, the inclusion $A_{2n}\hookrightarrow A_{2n+2}$ transports to the inclusion $C_n\hookrightarrow C_{n+1}$ which is given by $x\mapsto x\otimes_A \id_{1}$, so the result is true for all $1\leq i < n+1$ by the associativity of the relative tensor product of $A-A$ bilinear operators. The result for $i=n+1$ now follows by Lemma \ref{lem:IdentifyOddJonesProjections}. 
\end{proof}

\section{Representations via subfactors and bimodules}\label{sec:BimoduleRepresentations}

The rectangular GICAR category $\sR\sG$ acts on tensor powers of a {\rm II}$_1$-factor bimodule which contains a copy of the trivial bimodule.
The action of the tensor category $\sR\sG$ is compatible with the tensor structure of the tensor category of bimodules.
One can imagine that the annular GICAR category $\sA\sG$ is obtained from $\sR\sG$ by tensoring the morphisms with themselves around the outside, i.e., gluing the rectangles into annuli (the opposite of Figure \ref{fig:CuttingAndGluing}).
This no longer leaves us with a tensor category, and thus $\sA\sG$ must act on the spaces obtained from the bimodules by tensoring themselves on the outside, i.e., the invariant vectors of the bimodules.

\subsection{Rectangular GICAR representations}\label{sec:RectangularGICARReps}

Let $A$ be a {\rm II}$_1$-factor and let $H$ be a Hilbert $A-A$ bimodule.
We assume the following.

\begin{assume}\label{assume:ContainsTrivial}
Suppose $H$ is not the trivial bimodule, but $H$ contains a distinguished copy of the trivial bimodule, i.e., $H\cong L^2(A)\oplus K$ for a non-zero $A-A$ Hilbert bimodule $K$.
\end{assume}

\begin{rem}\label{rem:ACentral}
Containing a distinguished copy of the trivial bimodule is equivalent to the existence of a distinguished central $L^2$-vector $\zeta\in \cP_1$ with $\langle \zeta|\zeta\rangle_A = 1_A$.
Note that all central $L^2$-vectors are automatically $A-A$ bounded. 
See \cite[Sections 3.3-3.4]{MR3040370} for more details.
For all $A-A$ bounded $\kappa\in K \neq (0)$, we have $\langle \kappa |\zeta\rangle_A = 0$.
\end{rem}

Below is the main theorem of this subsection, which is implied by Theorem \ref{thm:GICARinRH}.

\begin{thm}\label{thm:GICARinQ}
There is a faithful $*,\otimes$-representation of the rectangular GICAR category $\sR\sG$ as $A-A$ bimodule maps between the $H^n$, which is independent of the left and right von Neumann dimension of $H$.
\end{thm}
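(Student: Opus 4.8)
The plan is to build the representation explicitly on generators and check the relations \eqref{rel:AG1}--\eqref{rel:AG2} (with $r=0$, i.e. no $\tau$) of $\sR\sG$, then verify faithfulness via the algebra structure from Proposition \ref{prop:Pascal}. Using Assumption \ref{assume:ContainsTrivial} fix the distinguished central vector $\zeta\in\cP_1$ with $\langle\zeta|\zeta\rangle_A=1_A$ from Remark \ref{rem:ACentral}. The generator $\alpha_i:[n]\to[n+1]$ should map to the $A$--$A$ bilinear operator $H^n\to H^{n+1}$ that ``inserts $\zeta$ in the $i$-th slot'': concretely, using the right creation operators $R_\zeta$ and $L_\zeta$ from the notation section, set
$$
\Phi(\alpha_i)=\id_{i-1}\otimes_A R_\zeta\big|_{\text{on the }i\text{-th tensor leg grouping}}
$$
more precisely the unique bounded extension of $\xi_1\otimes\cdots\otimes\xi_n\mapsto \xi_1\otimes\cdots\otimes\xi_{i-1}\otimes\zeta\otimes\xi_i\otimes\cdots\otimes\xi_n$, and let $\Phi(\alpha_i^*)$ be its adjoint, which by the formula for $R_\zeta^*$ contracts the $i$-th leg against $\zeta$ using the $A$-valued inner product. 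Since $\zeta$ is central and $A$--$A$ bounded, each such map is $A$--$A$ bilinear and bounded, so it lands in $\End_{A-A}(H^n,H^{n+1})$.

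The first block of work is to check that these operators satisfy the defining relations of $\sR\sG$. The braiding-type relations $\alpha_i\alpha_{j-1}=\alpha_j\alpha_i$ for $i<j$ are immediate from bookkeeping of which slot $\zeta$ is inserted into, and likewise for the starred version by taking adjoints. For \eqref{rel:AG2}, the case $i=j$ reduces to $R_\zeta^*R_\zeta=\id$, which holds because $R_\zeta^*R_\zeta$ acts as multiplication by ${\sb{A}\langle}\zeta,\zeta\rangle$ — and here one must be slightly careful: we normalized $\langle\zeta|\zeta\rangle_A=1_A$ (right inner product), so one should check the left version ${\sb{A}\langle}\zeta,\zeta\rangle=1_A$ as well, which follows since $\zeta$ is central (the left and right inner products of a central vector coincide, cf.\ \cite[Sections 3.3--3.4]{MR3040370}); the cases $i\ne j$ are again pure slot-shuffling. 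The second block is compatibility with the $*$ and $\otimes$ structures: $\Phi$ is a $*$-functor by construction ($\Phi(w^*)=\Phi(w)^*$ since $\Phi$ sends $\alpha_j^*$ to the operator adjoint), and it is a $\otimes$-functor because inserting $\zeta$ into slots of $H^{m_1}$ or $H^{m_2}$ and then concatenating agrees with inserting into the corresponding slots of $H^{m_1+m_2}$ — this is exactly the index-shift formula in the tensor-structure remark after Theorem \ref{thm:RectangularEquivalence}, now read off the relative tensor product of $A$--$A$ bilinear operators.

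For faithfulness, the cleanest route is to use $\sR\sG_n\cong\bigoplus_{k=0}^n M_{n\choose k}(\C)$ from Proposition \ref{prop:Pascal} together with the image of the minimal projections. Under $\Psi$, the dotted strand $\eOnePerp=\id-\eOne$ corresponds to $\Phi$ of the morphism $\id_{[1]}-\alpha_1\alpha_1^*$; and $\Phi(\alpha_1\alpha_1^*)=R_\zeta R_\zeta^*$ is precisely the orthogonal projection of $H$ onto its distinguished copy $L^2(A)=\overline{A\zeta}$, so $\Phi(\eOnePerp)$ is the projection onto $K$. Hence the $2^n$ simple tensors in $\eOne$ and $\eOnePerp$ map to a family of $2^n$ mutually orthogonal nonzero projections in $\End_{A-A}(H^n)$ — nonzero because $K\ne(0)$ by Assumption \ref{assume:ContainsTrivial}, so each of the $2^n$ tensor-product subbimodules $K^{\epsilon_1}\otimes_A\cdots\otimes_A K^{\epsilon_n}$ (with $K^0=L^2(A)$, $K^1=K$) is nonzero. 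Since these are exactly the minimal projections generating all the matrix summands of $\sR\sG_n$, and their images are orthogonal nonzero projections, $\Phi$ is injective on $\sR\sG_n$ for every $n$; together with compatibility with the inclusions $\sR\sG_n\hookrightarrow\sR\sG_{n+1}$ (which $\Phi$ respects since adding a through-string on the right corresponds to $\otimes_A\id_H$) this gives faithfulness of the whole representation. Von-Neumann-dimension independence is then automatic: $K$ is nonzero regardless of $\dim_{-A}H$ or $\dim_{A-}H$.

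\textbf{Main obstacle.} I expect the genuinely delicate point to be the analytic well-definedness and boundedness of the generators $\Phi(\alpha_i)$, $\Phi(\alpha_i^*)$ as maps on the relative tensor products $H^n$ — i.e.\ checking that ``insert $\zeta$ in slot $i$'' really is the restriction of a well-defined bounded $A$--$A$ bilinear operator and that its adjoint is the contraction one expects. This is where one must invoke that central $L^2$-vectors are automatically $A$--$A$ bi-bounded (Remark \ref{rem:ACentral}), use the explicit formulas for $R_\zeta,R_\zeta^*,L_\zeta,L_\zeta^*$ from the notation section, and use associativity/functoriality of $\otimes_A$ on bilinear operators; everything downstream (relations, faithfulness) is then essentially combinatorial.
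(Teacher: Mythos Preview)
Your construction of the functor on generators (inserting $\zeta$ in slot $i$) and verification of the relations is exactly what the paper does; see Proposition~\ref{prop:RelativeTensorMaps} and Proposition~\ref{prop:Relations}. Your remark that the analytic well-definedness rests on $\zeta$ being $A$--$A$ bi-bounded is also on the mark.

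Where you diverge from the paper is in the faithfulness argument. The paper proves linear independence of the standard-form words directly: Lemma~\ref{lem:Kappa} produces a vector $\kappa\in K$ with $\langle\kappa|\kappa\rangle_A=1_A$, and Proposition~\ref{prop:injective} uses simple tensors built from $\kappa$ and $\zeta$ as test vectors to separate a word of minimal length from all longer words, whence Corollary~\ref{cor:LinearlyIndependent} follows by induction. Your approach---observing that the $2^n$ minimal projections of $\sR\sG_n$ (in Bigelow's dotted-strand basis) map to the projections onto the nonzero summands $K^{\epsilon_1}\otimes_A\cdots\otimes_A K^{\epsilon_n}$, so the $*$-homomorphism on the semisimple algebra $\sR\sG_n$ has trivial kernel---is the ``simple proof'' alluded to in the introduction just after Theorem~\ref{thm:Main2}, and is arguably cleaner on the endomorphism algebras. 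It does implicitly use that Connes fusion of nonzero bimodules over a {\rm II}$_1$-factor is nonzero (equivalently, some $\kappa\otimes\cdots\otimes\kappa$ has positive norm), which is precisely the content of Lemma~\ref{lem:Kappa}; you should state this rather than just asserting ``nonzero because $K\neq(0)$''.

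There is, however, a genuine gap in your last step. Injectivity on every $\sR\sG_n$ together with compatibility with the right inclusions $\sR\sG_n\hookrightarrow\sR\sG_{n+1}$ does \emph{not} by itself yield injectivity on the off-diagonal hom-spaces $\sR\sG(m,n)$ for $m\neq n$: those morphisms never sit inside any $\sR\sG_k$ via the right inclusion. The easy fix is to use positivity: if $\Phi(x)=0$ for $x\in\sR\sG(m,n)$, then $\Phi(x^*x)=0$ in $\sR\sG_m$, so $x^*x=0$ by your argument, and then $x=0$ provided $\sR\sG$ is positive. The paper establishes positivity in the remark immediately following Theorem~\ref{thm:GICARinRH}---but note that there it is deduced \emph{from} the faithful embedding, so to avoid circularity you must either prove positivity of $\sR\sG$ directly from the standard form (which is routine: distinct standard-form diagrams with $k$ through strings have orthogonal source and range projections in the dotted basis), or extend your minimal-projection argument to $\sR\sG(m,n)$ by compressing with minimal projections $p\in\sR\sG_m$, $q\in\sR\sG_n$ and observing that each $qxp$ is a scalar multiple of an explicit bimodule isomorphism between nonzero summands.
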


Of particular importance is the following corollary, which tells us some basic structure of the centralizer algebras $\cQ_n$.

\begin{cor}
The $G_n$ embed faithfully in the centralizer algebras $\cQ_n$, so $\cQ_n$ is nonabelian for $n\geq 2$.
\end{cor}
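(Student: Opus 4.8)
The plan is to read this off Theorem \ref{thm:GICARinQ} together with the already-established algebra structure of $\sR\sG_n$. The first step is to identify the target algebra: since $C_n=(A^{\OP})'\cap B(H^n)$ and $C_n^{\OP}=A'\cap B(H^n)$, the centralizer algebra $\cQ_n=C_n\cap C_n^{\OP}$ coincides with $A'\cap(A^{\OP})'\cap B(H^n)=\End_{A-A}(H^n)$, the algebra of $A-A$ bimodule endomorphisms of $H^n$. Hence, restricting the faithful $*,\otimes$-functor of Theorem \ref{thm:GICARinQ} to the endomorphism algebra $\sR\sG_n=\sR\sG(n,n)$ yields an injective unital $*$-algebra homomorphism $\sR\sG_n\hookrightarrow\End_{A-A}(H^n)=\cQ_n$.

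Next I would invoke the structural results already proved: by Proposition \ref{prop:Pascal} together with Fact \ref{fact:ChooseBasis} we have a chain of $*$-isomorphisms $G_n\cong\bigoplus_{k=0}^n M_{n\choose k}(\C)\cong\sR\sP_n\cong\sR\sG_n$, so composing with the embedding above gives a faithful $*$-representation $G_n\hookrightarrow\cQ_n$. (Concretely, by Proposition \ref{prop:IdentifyOddJonesProjections} one may take the image to be the unital $*$-algebra generated by the odd Jones projections $\id_{i-1}\otimes_A e_1\otimes_A\id_{n-i}$ for $1\leq i\leq n$, together with the partial isometries implementing the equivalences of Theorem \ref{thm:Main2}; but the abstract statement only uses faithfulness of the functor of Theorem \ref{thm:GICARinQ}.)

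Finally, for the nonabelian assertion: when $n\geq 2$ the $k=1$ summand of $G_n\cong\bigoplus_{k=0}^n M_{n\choose k}(\C)$ is $M_n(\C)$ with $n\geq 2$, which is noncommutative; since $G_n$ sits inside $\cQ_n$ as a unital $*$-subalgebra, $\cQ_n$ cannot be abelian. There is no genuine obstacle in this corollary: all the substance lies in Theorem \ref{thm:GICARinQ} (existence of the representation, its faithfulness, and the fact that the representing operators are $A-A$ bimodule maps), and what remains is the bookkeeping identification $\cQ_n=\End_{A-A}(H^n)$ and the Artin--Wedderburn decomposition of $G_n$ recorded in Fact \ref{fact:ChooseBasis}.
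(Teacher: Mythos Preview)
Your proposal is correct and follows exactly the intended route: the paper states this corollary immediately after Theorem \ref{thm:GICARinQ} without a separate proof, treating it as an immediate consequence, and you have correctly unpacked the details (the identification $\cQ_n=\End_{A-A}(H^n)$, the chain $G_n\cong\sR\sP_n\cong\sR\sG_n$, and the presence of an $M_n(\C)$ summand for $n\geq 2$).
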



Using the binomial theorem, it is easy to see how the algebras $G_n$ should arise as intertwiners among the $H^n$. For $n\geq 0$, let $K^n=\bigotimes_A^n K$, where as usual, $K^0=L^2(A)$. Then we have
$$
H^n\cong \left(L^2(A)\oplus K\right)^{\otimes n}\cong \bigoplus_{j=0}^n {n \choose j} K^j,
$$
and we get a canonical inclusion $G_n\hookrightarrow \End_{A-A}(H^n)$. 
If $K^j$ is irreducible and distinct for all $j\in\N$, then $G_n\cong \End_{A-A}(H^n)$ for all $n\geq 0$.

\begin{ex}\label{ex:OuterZAction}
Let $\sigma : \Z\to \Out(R)$ be an outer action, where $R$ is the hyperfinite {\rm II}$_1$-factor. 
We denote $\sigma(n)$ by $\sigma^n$.
Let $K=L^2(A)_\sigma$, where the action is given by $a\cdot \widehat{b}\cdot c = (ab\sigma(c))^{\widehat{\,\,}}$.
Recall that $K^j\cong L^2(A)_{\sigma^j}$ for all $j\geq 0$.
Hence each $K^j$ is irreducible and distinct, and 
$\End_{R-R}\left(\bigotimes^n_R(L^2(R)\oplus K)\right)\cong G_n$ for all $n\geq 0$.
\end{ex}

\begin{quests}
Is there such a $K$...
\begin{itemize}
\item 
which is symmetric?
\item
which is of the form $L^2(B)\ominus L^2(A)$ for a (necessarily infinite index) {\rm II}$_1$-subfactor $A\subset B$?
\end{itemize}
\end{quests}

With more care, we obtain a faithful representation of the entire rectangular GICAR category $\sR\sG$ as $A-A$ bimodule maps among the $H^n$'s.

\begin{rem}
Recall that $H\cong L^2(A)\oplus K$, where the copy of $L^2(A)$ corresponds to the distinguished central $L^2$-vector $\zeta\in\cP_1$.
Note that $L(\zeta): L^2(A)\to H$ can be viewed as the inclusion, and $L(\zeta)^* : H\to L^2(A)$ behaves like the Jones projection for a {\rm II}$_1$-subfactor. 
More precisely, if $\xi\in B=D({\sb{A}H})\cap D(H_A)$, then $L(\zeta)^*\xi=\langle \zeta|\xi\rangle_A$ defines an element of $A$.
\end{rem}

\begin{nota}
We write $e_A=L(\zeta)^*$ and $e_A^*=L(\zeta)$.
Note that $e_A,e_A^*$ are $A-A$ bilinear since $\zeta\in\cP_1$.
\end{nota}

\begin{defn}
Given an $A-A$ bimodule $H$, we define the rectangular bimodule category $\sR(H)$ as the following small involutive tensor category:
\itt{Objects} $H^n$ for $n\geq 0$.
\itt{Tensoring objects} 
Connes relative tensor product. Note that $H^m\otimes_A H^n \cong H^{m+n}$.
The associators are the unique extensions of the obvious associators on the subspaces of bounded vectors $B^n$.
\itt{Morphisms} 
For $1\leq i\leq n$, define the maps $\a_i^*: H^n \to H^{n-1}$ by the following commutative diagrams:
\[
\xymatrix{
H^n \ar@{<->}[rr]^(.3)\cong\ar[d]_{\a_i^*} &&H^{i-1} \otimes_A H \otimes_A H^{n-i} \ar[d]^{\id_{i-1}\otimes_A e_A \otimes_A \id_{n-i}}\\
H^{n-1}\ar@{<->}[rr]^(.3)\cong &&H^{i-1}\otimes_A L^2(A) \otimes_A H^{n-i}.
}
\]
The horizontal arrows are the associator isomorphisms.
For $1\leq i\leq n$, we define the maps $\a_j: H^n \to H^{n-1}$ similarly, but replacing $e_A$ with $e_A^*$.
The morphisms of $\sR(H)$ are $\C$-linear combinations of all composites of the $\a_i,\a_j^*$.
Note that these morphisms are all $A-A$ bimodule maps.
\itt{Composition} composition of operators.
\itt{Adjoint} adjoint of operators.
\end{defn}

We have the following explicit characterization of the maps $\a_i,\a_j^*$.

\begin{prop}\label{prop:RelativeTensorMaps}
The maps $a_i$, $a_i^*$ are given by the unique extensions of
\begin{align*}
\a_i(\xi_1\otimes\cdots \otimes \xi_n) & = \xi_1\otimes\cdots \otimes \xi_{i-1} \otimes \zeta \otimes \xi_i\otimes\cdots\otimes \xi_n\tag{creation}\\
\a_i^*(\xi_1\otimes\cdots \otimes \xi_n) & =\xi_1\otimes\cdots\otimes \xi_{i-1} \otimes e_A(\xi_i)\xi_{i+1}\otimes\cdots\otimes \xi_n \tag{annihilation}
\end{align*}
where $\xi_j\in B$ for all $j$. 
\end{prop}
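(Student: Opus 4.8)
The plan is to unwind the definitions. Recall that $\a_i^*:H^n\to H^{n-1}$ was defined via the commutative diagram composing the associator $H^n\cong H^{i-1}\otimes_A H\otimes_A H^{n-i}$, then applying $\id_{i-1}\otimes_A e_A\otimes_A\id_{n-i}$, then the associator back to $H^{n-1}\cong H^{i-1}\otimes_A L^2(A)\otimes_A H^{n-i}$. So the first step is to observe that on the dense subspace $B^n$ of bi-bounded vectors (dense by \cite[Lemma 1.2.2]{correspondences}), the associator isomorphisms act as the identity on simple tensors $\xi_1\otimes\cdots\otimes\xi_n$ with $\xi_j\in B$ — this is exactly the content of the stipulation that "the associators are the unique extensions of the obvious associators on the subspaces of bounded vectors $B^n$". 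Hence on such a simple tensor, $\a_i^*$ is computed by applying $e_A$ to the $i$-th tensorand: since $e_A=L(\zeta)^*$ and for $\xi_i\in B$ we have $e_A(\xi_i)=\langle\zeta|\xi_i\rangle_A\in A$ (as noted in the remark preceding the statement), the middle slot $H$ becomes $L^2(A)$ with the vector $\widehat{e_A(\xi_i)}=\langle\zeta|\xi_i\rangle_A\widehat{1}$. Then the associator identifying $H^{i-1}\otimes_A L^2(A)\otimes_A H^{n-i}$ with $H^{n-1}$ absorbs this scalar-from-$A$ into the adjacent tensorand: $\xi_{i-1}\otimes\langle\zeta|\xi_i\rangle_A\widehat{1}\otimes\xi_{i+1}$ becomes $\xi_{i-1}\otimes e_A(\xi_i)\xi_{i+1}$ by the defining property of the relative tensor product over $A$ (moving an element of $A$ across the tensor symbol). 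This yields the claimed annihilation formula.

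For the creation maps $\a_i:H^n\to H^{n+1}$, one does the symmetric computation with $e_A$ replaced by $e_A^*=L(\zeta):L^2(A)\to H$. On a simple tensor $\xi_1\otimes\cdots\otimes\xi_n$ with $\xi_j\in B$, one inserts in position $i$ the image $e_A^*(\widehat{1})=L(\zeta)\widehat{1}=\zeta$; using the associators (again acting as identity on bounded vectors) this gives $\xi_1\otimes\cdots\otimes\xi_{i-1}\otimes\zeta\otimes\xi_i\otimes\cdots\otimes\xi_n$, which is the creation formula. One should note that $\zeta\in\cP_1\subset B$ (central $L^2$-vectors are automatically bi-bounded, as recalled in Remark \ref{rem:ACentral}), so the resulting vector indeed lies in $B^{n+1}$ and the formula makes sense on the nose.

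Finally, I would address well-definedness and uniqueness of the extension. The maps $\a_i,\a_i^*$ are already defined as bounded $A$–$A$ bimodule maps via the diagrams (composites of associators with $\id\otimes_A e_A\otimes_A\id$, all of which are bounded — using $\|e_A\|=\|L(\zeta)\|\le 1$ and the norm bound $\|x\otimes_A y\|\le\|x\|\|y\|$ on relative tensor products from the Notation section), so the only claim to verify is that they \emph{agree} with the stated formulas on $B^n$; since $B^n$ is dense in $H^n$ and both sides are bounded, a bounded operator is determined by its restriction to $B^n$, giving uniqueness. I expect the main (and really only) subtlety to be bookkeeping with the associators and the precise way an element of $A$ produced by $e_A$ gets reabsorbed across the relative tensor product — i.e., making rigorous the informal statement that "associators act as the identity on simple tensors of bounded vectors" and that $\langle\zeta|\xi_i\rangle_A\widehat 1\otimes\xi_{i+1}=\widehat 1\otimes\langle\zeta|\xi_i\rangle_A\xi_{i+1}$ in the relative tensor product. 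This is routine given the conventions set up in Subsection \ref{sec:BimoduleBackground}, so the proof should be short.
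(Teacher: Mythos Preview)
Your proposal is correct and follows exactly the approach the paper takes: the paper's proof simply verifies well-definedness of the two right-hand sides (using that $\zeta\in\cP_1$ is central and bi-bounded, and that $e_A(\xi_i)=\langle\zeta|\xi_i\rangle_A\in A$ for bounded $\xi_i$) and then declares the rest ``a straightforward calculation.'' You have written out that straightforward calculation in full, unwinding the associators on $B^n$ and tracking how the $A$-valued inner product slides across the relative tensor; this is precisely what the paper leaves implicit.
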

\begin{proof}
Since $\zeta\in \cP_1$, the right hand side of the first formula is well-defined.
Since $\xi_j$ is $A$-bounded, $e_A(\xi_j)=\langle \zeta|\xi_j\rangle_A$ defines an element of $A$.
Since $\zeta\in\cP_1$ is $A$-central, $e_A$ is $A-A$ bilinear, and the right hand side of the second formula is well-defined.
The rest is a straightforward calculation.
\end{proof}

Compare Relations \eqref{rel:AG1}-\eqref{rel:AG2} and Proposition \ref{prop:Standard} with Proposition \ref{prop:Relations}.

\begin{prop}\label{prop:Relations}
\mbox{}
\be
\item The words on $\a_i,\a_j^*$ satisfy the following relations:
\begin{enumerate}[(i)]
\item $\a_i\a_{j-1} = \a_{j}\a_i$ and $\a_i^* \a_{j}^* = \a_{j-1}^* \a_{i}^*$ for all $i<j$,
\item $\D \a_{i}^*\a_{j}=
\begin{cases}
\a_{j+1}\a_{i}^* &\text{if }i<j\\
\id_{n} & \text{if } i=j\\
\a_j \a_{i+1}^* & \text{if } i>j 
\end{cases}$ \hspace{.2in} and
\hspace{.2in}
\item $\a_i\a_i^* =\id_{i-1}\underset{A}{\otimes}\, e_A^*e_A \underset{A}{\otimes} \id_{n-i}$ for all $i\leq n$.
\ee
\item Each word in the $a_i,a_j^*$ has a unique standard form
$$
\a_{i_k}\cdots \a_{i_1}\a_{j_1}^*\cdots \a_{j_\ell}^*
$$
where $i_1<\cdots <i_{k}$ and $j_1<\cdots j_\ell$.
\ee
\end{prop}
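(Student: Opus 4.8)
The plan is to verify part (1) by direct computation using the explicit formulas for $\a_i,\a_j^*$ from Proposition \ref{prop:RelativeTensorMaps}, and then to deduce part (2) as a normal-form/straightening argument identical in spirit to Proposition \ref{prop:Standard} for $\sA\sG$, together with a dimension count borrowed from the diagrammatic category $\sR\sP$.

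For part (1)(i) and (1)(ii): since the morphisms are all bounded and agree iff they agree on the dense subspace spanned by simple tensors $\xi_1\otimes\cdots\otimes\xi_n$ with $\xi_j\in B$, it suffices to evaluate both sides on such a vector. For (i), inserting $\zeta$ at position $i$ and then at position $j$ (with $i<j$, so the second insertion pushes to slot $j$ on the left) visibly equals inserting at slot $j$ first and then at slot $i$; and the $\a^*$ relation is the adjoint of this, using that $\a_i^*=\a_i^{*}$ is literally the adjoint of $\a_i$ (both are $A$–$A$ bilinear and one checks $\langle\a_i\xi,\eta\rangle=\langle\xi,\a_i^*\eta\rangle$ via $L(\zeta)^*=e_A$, $L(\zeta)=e_A^*$). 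For (ii), one computes $\a_i^*\a_j$ on a simple tensor: when $i<j$, $\a_j$ inserts $\zeta$ at slot $j$ and then $\a_i^*$ contracts slot $i$ against $\zeta$, and rebracketing shows this equals $\a_{j+1}\a_i^*$; when $i=j$ one gets $e_A(\zeta)=\langle\zeta|\zeta\rangle_A=1_A$ by the normalization in Remark \ref{rem:ACentral}, hence the identity; when $i>j$ the symmetric computation gives $\a_j\a_{i+1}^*$. For (1)(iii), $\a_i\a_i^*$ acting on $\xi_1\otimes\cdots\otimes\xi_n$ contracts slot $i$ against $\zeta$ and reinserts $\zeta$ there, which is precisely $\id_{i-1}\otimes_A (e_A^*e_A)\otimes_A\id_{n-i}$ since $e_A^*e_A=L(\zeta)L(\zeta)^*$ is the orthogonal projection onto the distinguished copy of $L^2(A)$ inside $H$ at slot $i$.

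For part (2): existence of the standard form follows by the same straightening procedure as in Proposition \ref{prop:Standard} — use (1)(ii) to move every $\a^*$ to the right past every $\a$ (the $i=j$ case killing a creation–annihilation pair, contributing nothing since $\id$ is the empty word), then use (1)(i) to sort the surviving $\a$'s into strictly decreasing index order and the $\a^*$'s into strictly increasing order; note there is no $\tau$ to worry about here. For uniqueness, observe that a word in standard form $\a_{i_k}\cdots\a_{i_1}\a_{j_1}^*\cdots\a_{j_\ell}^*\in\sR\sG(m,n)$ is precisely the image under the equivalence $\Psi$ of Theorem \ref{thm:RectangularEquivalence} (restricted to $\sR\sG$, i.e.\ $r=0$) of a diagram in $\sR\sP(m,n)$ with cap positions $j_1<\cdots<j_\ell$ and cup positions $i_1<\cdots<i_k$; distinct standard forms give distinct such diagrams, and by Theorem \ref{thm:RectangularEquivalence} these diagrams are linearly independent in $\sR\sP(m,n)$, hence the standard-form words are distinct elements of $\sR\sG$. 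Equivalently and more self-containedly, the standard-form words biject with subsets-and-bijections data counted by the dimension of $\sR\sG(m,n)=\sR\sP(m,n)$, so the straightening map, being surjective onto a space of that dimension, must be injective on standard forms.

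The main obstacle I anticipate is purely bookkeeping: getting the index shifts in (1)(ii) exactly right (the $j+1$ versus $j$, $i+1$ versus $i$) when evaluating composites on simple tensors and rebracketing via the associators. Everything is a routine check once one is careful that $\a_j$ inserts \emph{before} the old slot $j$ so that indices to its right shift up by one; I would organize this by fixing the convention that $\a_j:H^n\to H^{n+1}$ places $\zeta$ in the new $j$-th slot and then tracking a single general simple tensor through each of the three cases. No genuine conceptual difficulty is expected; the content is entirely contained in Propositions \ref{prop:RelativeTensorMaps} and \ref{prop:Standard} and Theorem \ref{thm:RectangularEquivalence}.
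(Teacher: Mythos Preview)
Your proof of part~(1) is correct and is exactly the paper's approach: its entire proof reads ``Straightforward from Proposition~\ref{prop:RelativeTensorMaps}.'' Your straightening argument for existence in part~(2) is likewise what the paper intends.

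One clarification on your uniqueness argument for~(2): you establish that distinct standard-form words are linearly independent \emph{in $\sR\sG$} via Theorem~\ref{thm:RectangularEquivalence}, but the proposition is about the concrete operators $\a_i,\a_j^*$ acting on the $H^n$ (and indeed you conflate notations, writing ``$\a_{i_k}\cdots\in\sR\sG(m,n)$''). What actually transfers to $\sR(H)$ from your argument is only the \emph{syntactic} statement---the rewriting system given by~(1)(i)--(ii) coincides with \eqref{rel:AG1}--\eqref{rel:AG2} and is therefore confluent, so each word reduces to a single standard-form expression regardless of reduction order---and this is all the paper claims here. The stronger assertion that distinct standard-form expressions yield distinct \emph{operators} in $B(H^m,H^n)$ does not follow from anything you wrote; the paper defers it to Proposition~\ref{prop:injective} and Corollary~\ref{cor:LinearlyIndependent} (the latter invokes the present proposition only for spanning). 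So your proof is correct once read in the syntactic sense, and your route through Theorem~\ref{thm:RectangularEquivalence} is a legitimate if heavier justification of confluence, but be aware that semantic uniqueness genuinely needs the later bimodule-specific argument with the vector~$\kappa$ of Lemma~\ref{lem:Kappa}.
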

\begin{proof}
Straightforward from Proposition \ref{prop:RelativeTensorMaps}.
\end{proof}

Comparing $(1.iii)$ in Proposition \ref{prop:Relations} with Proposition \ref{prop:IdentifyOddJonesProjections}, we make the following definition.

\begin{defn}[Odd Jones projections]\label{def:OddJones}
For $1\leq i\leq n$, define $e_{2i-1}=\a_i\a_i^*$.
\end{defn}

\begin{cor}
For $1\leq i,j \leq n$, the $\a_i\a_j^*\in \cQ_n$ witness the von Neumann equivalence of the projections $e_{2i-1},e_{2j-1}\in \cQ_n$. 
Thus once we know $e_{2i-1}\neq e_{2j-1}$ (which follows from Corollary \ref{cor:LinearlyIndependent}), $\cQ_n$ is not abelian for $n\geq 2$.
\end{cor}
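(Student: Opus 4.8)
The plan is to exhibit $v := \a_i\a_j^*$ as a partial isometry lying in $\cQ_n$ whose source projection is $e_{2j-1}$ and range projection is $e_{2i-1}$, and then to deduce non-abelianity of $\cQ_n$ for $n\geq 2$ from the fact that $\cQ_n$ then contains two distinct but von Neumann equivalent projections.

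First I would record that $\cQ_n = C_n\cap C_n^{\OP} = A'\cap (A^{\OP})'\cap B(H^n) = \End_{A-A}(H^n)$, and that each $\a_k$ and each $\a_\ell^*$ is an $A$--$A$ bimodule map by the very definition of $\sR(H)$; hence every composite of them, in particular $v=\a_i\a_j^*$ and each $e_{2k-1}=\a_k\a_k^*$, lies in $\cQ_n$. Next, $e_{2k-1}$ is a projection: it is self-adjoint since $(\a_k\a_k^*)^*=\a_k\a_k^*$, and it is idempotent since by the $i=j$ case of Proposition \ref{prop:Relations}(1.ii) we have $\a_k^*\a_k = \id$, whence $(\a_k\a_k^*)^2 = \a_k(\a_k^*\a_k)\a_k^* = \a_k\a_k^*$. (Equivalently one reads this off Proposition \ref{prop:Relations}(1.iii), noting that $e_A^*e_A = L(\zeta)L(\zeta)^*$ is a projection on $H$ because $\langle\zeta|\zeta\rangle_A = 1_A$ makes $L(\zeta)$ an isometry.)

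The core computation is then just two applications of $\a_k^*\a_k = \id$ (and $\a_k\a_k^* = e_{2k-1}$): since $v^* = (\a_i\a_j^*)^* = \a_j\a_i^*$,
$$
v^*v = \a_j(\a_i^*\a_i)\a_j^* = \a_j\a_j^* = e_{2j-1},
\qquad
vv^* = \a_i(\a_j^*\a_j)\a_i^* = \a_i\a_i^* = e_{2i-1},
$$
so $v\in\cQ_n$ is a partial isometry implementing the von Neumann equivalence $e_{2j-1}\sim e_{2i-1}$. The only thing requiring attention is the degree bookkeeping (here $\a_j^*\colon H^n\to H^{n-1}$, $\a_i\colon H^{n-1}\to H^n$, so $\a_i^*\a_i$ is being evaluated on $H^{n-1}$), but the identities of Proposition \ref{prop:Relations} hold at every object of the category, so this is harmless.

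Finally, for $n\geq 2$ take $i=1$, $j=2$; Corollary \ref{cor:LinearlyIndependent} gives $e_1\neq e_3$, i.e.\ $e_{2i-1}\neq e_{2j-1}$. If $\cQ_n$ were abelian, then for any $v\in\cQ_n$ we would have $v^*v = vv^*$ by commutativity, forcing $e_{2j-1}=e_{2i-1}$, a contradiction. Hence $\cQ_n$ is not abelian for $n\geq 2$. I do not expect a genuine obstacle in this argument: everything is a formal consequence of Proposition \ref{prop:Relations} together with the single substantive external input, Corollary \ref{cor:LinearlyIndependent} (the linear independence of the standard-form words, which is proved separately), so the "hard part" has been isolated into that corollary rather than into the equivalence-of-projections argument itself.
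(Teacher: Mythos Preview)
Your proof is correct and follows essentially the same approach as the paper: both reduce to the identities $(\a_i\a_j^*)(\a_j\a_i^*)=\a_i\a_i^*=e_{2i-1}$ and $(\a_j\a_i^*)(\a_i\a_j^*)=\a_j\a_j^*=e_{2j-1}$ via $\a_k^*\a_k=\id$ from Proposition~\ref{prop:Relations}. You include more justification (membership in $\cQ_n$, that the $e_{2k-1}$ are projections, and the non-abelianity deduction), but the paper's proof is just the two-line computation and leaves those points implicit.
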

\begin{proof}
By Proposition \ref{prop:Relations},
\begin{align*}
(\a_i\a_j^*)(\a_j\a_i^*) 
&= \a_i\a_i^* = e_{i-1}\otimes_A e_1\otimes_A \id_{n-i} =e_{2i-1}\text{ and}\\
(\a_j\a_i^*)(\a_i\a_j^*) 
&= \a_j\a_j^* = e_{j-1}\otimes_A e_1\otimes_A \id_{n-j} =e_{2j-1}.
\end{align*}
\end{proof}

\begin{rem}
Suppose $H=L^2(B)$ where $A\subset B$ is a {\rm II}$_1$-subfactor.
In this case, since $H^n\cong L^2(A_n)$, we have $\cQ_n\cong A_0'\cap A_{2n}$, and the odd Jones projections in Definition \ref{def:OddJones} agree with Burns' odd Jones projections via Proposition \ref{prop:IdentifyOddJonesProjections}. 
Thus $A_0'\cap A_{2n}$ is not abelian for $n\geq 2$.
\end{rem}

\begin{lem}\label{lem:Kappa}
There is a $\kappa\in K$ with $\|\kappa\|_K=1$ such that 
\be
\item 
$\kappa\in D(H_A)$ and $\langle \kappa |\kappa\rangle_A = 1_A$, or 
\item
$\kappa\in D({\sb{A}K})$ and ${\sb{A}\langle} \kappa,\kappa\rangle = 1_A$.
\ee
Hence the simple relative tensors consisting of only $\kappa$'s and $\zeta$'s are well-defined vectors in $H^n$.
\end{lem}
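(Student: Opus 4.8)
The plan is to use the decomposition $H \cong L^2(A)\oplus K$ from Assumption \ref{assume:ContainsTrivial}, where $K\neq (0)$, and to produce the desired normalized vector inside $K$ by chopping down an arbitrary bounded vector with a suitable projection in the algebra of $A-A$ bimodule maps of $K$. The point is that $K$ need not be self-dual: its left von Neumann dimension and right von Neumann dimension may differ, and one or both may be infinite, so we cannot assume a priori that a single vector $\kappa$ is simultaneously left-bounded with $\sb{A}\langle\kappa,\kappa\rangle = 1_A$ \emph{and} right-bounded with $\langle\kappa|\kappa\rangle_A = 1_A$. The dichotomy in the statement reflects exactly this: whichever side of $K$ is ``large enough'' we can normalize against.

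First I would pick any nonzero $A-A$ bounded vector $\kappa_0\in B\cap K$; such vectors exist and are dense in $K$ since $B^1 = B$ is dense in $H$ and $K$ is a bimodule summand, so the orthogonal projection of a dense set of bi-bounded vectors lands densely in $K$ (and projecting a bi-bounded vector onto a bimodule summand keeps it bi-bounded, as the summand projection is an $A-A$ bimodule map). Now consider the two positive operators $\sb{A}\langle\kappa_0,\kappa_0\rangle\in A$ and $\langle\kappa_0|\kappa_0\rangle_A\in A$ (using the notation of Subsection \ref{sec:BimoduleBackground}). The idea is: take spectral projections of these elements of $A$ to cut $\kappa_0$ down to a vector on which the relevant $A$-valued inner product is bounded below and above by positive scalars, and then rescale. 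Concretely, in case (2): let $p = \chi_{[\varepsilon,\infty)}(\sb{A}\langle\kappa_0,\kappa_0\rangle)\in A$ be a nonzero spectral projection (nonzero for small enough $\varepsilon>0$ since $\kappa_0\neq 0$), and form $\kappa_1 = \kappa_0 p$ — wait, one must be careful about sides; since $\sb{A}\langle\,\cdot\,,\,\cdot\,\rangle$ is $A$-linear on the left, the correct move is to act on the left: set $\kappa_1 = p\kappa_0$ where $p$ is a spectral projection of $\sb{A}\langle\kappa_0,\kappa_0\rangle$, so that $\sb{A}\langle\kappa_1,\kappa_1\rangle = p\,\sb{A}\langle\kappa_0,\kappa_0\rangle\,p$ is bounded above and below by positive multiples of $p$; then $\sb{A}\langle\kappa_0,\kappa_0\rangle^{-1/2}$ restricted to $p$ makes sense, and acting by it on the left produces $\kappa$ with $\sb{A}\langle\kappa,\kappa\rangle = p$. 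To get $1_A$ rather than a proper projection $p$, use that $A$ is a {\rm II}$_1$-factor: if $p\neq 1_A$ we can enlarge — but this is exactly where the dichotomy enters, because if the left dimension of $K$ is strictly less than $1$ we simply cannot reach $1_A$ on the left, and must instead work on the right, i.e.\ land in case (1). So the argument should be: \emph{either} the left dimension $\dim(\sb{A}K)\geq 1$, in which case by standard {\rm II}$_1$-factor theory (comparison of projections, cutting by an $A'\cap B(K)$ projection of the right trace) we can arrange a left-bounded $\kappa$ with $\sb{A}\langle\kappa,\kappa\rangle = 1_A$; \emph{or} $\dim(\sb{A}K) < 1$, in which case $\dim(K_A)$ is whatever it is but we can still find a right-bounded $\kappa$ with $\langle\kappa|\kappa\rangle_A = 1_A$ — and in fact one of the two must hold because if \emph{both} dimensions were $<1$... here I need to think about whether that's actually impossible. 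Let me restructure: the cleanest statement is that for \emph{any} nonzero bimodule $K$, at least one of $\dim(\sb{A}K)$, $\dim(K_A)$ is $\geq 1$? That is false in general (e.g.\ $K$ could be a ``small'' bimodule both ways)... Actually for this lemma what matters is just that \emph{some} unit vector can be normalized on \emph{some} side; if $K$ is nonzero then $\dim(\sb{A}K)>0$, and if $\dim(\sb{A}K)\geq 1$ we win via (2), while if $0<\dim(\sb{A}K)<\infty$ we can still — hmm.

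Let me instead take the most robust route, which I'd actually write up. By Lemma \ref{lem:BasesInB} there exist $\sb{A}H$ and $H_A$ bases contained in $B$; restrict attention to $K$. Pick a nonzero $\kappa_0\in B\cap K$. The operator $T = L(\kappa_0)^*L(\kappa_0) = \langle\kappa_0|\kappa_0\rangle_A\in A^+$ is nonzero. If $T$ has $1_A$ in the closure of the range of its support projection in a way that lets us invert — more simply: let $q\in A$ be a nonzero spectral projection of $T$ with $q\,T\,q\geq \varepsilon q$. Then $\kappa_0 q$ is right-bounded with $\langle\kappa_0 q|\kappa_0 q\rangle_A = q T q$ invertible in $qAq$, so $\kappa := \kappa_0 q (qTq)^{-1/2}$ satisfies $\langle\kappa|\kappa\rangle_A = q$. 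Now since $A$ is a {\rm II}$_1$-factor, $q$ is Murray–von Neumann equivalent to a subprojection of $1_A$, trivially, but to get all of $1_A$ I use that the set of such attainable projections is closed under the left $A$-action orthogonal-sum operation on $K$: if $\langle\kappa|\kappa\rangle_A = q$ and $\langle\kappa'|\kappa'\rangle_A = q'$ with $q\perp q'$ and $\kappa\perp\kappa'$ then $\langle\kappa+\kappa'|\kappa+\kappa'\rangle_A = q+q'$. So the sup of attainable projections is some central-type projection $e$; if $e = 1_A$ we're in case (1), done. If $e\neq 1_A$, then $(1-e)H_A$... the $H_A$ basis argument shows $1-e$ would correspond to a part of $K$ with no right-bounded vectors, forcing that part to be handled on the left — and a symmetric argument on the left gives case (2). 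I expect \textbf{the main obstacle} to be precisely this bookkeeping: proving that one \emph{cannot} have a nonzero $K$ for which neither side admits a unit vector normalizing the relevant $A$-valued inner product to $1_A$; this should follow from the existence of bases in $B$ (Lemma \ref{lem:BasesInB}) together with the structure of {\rm II}$_1$-factor projection lattices, but it requires care to state the ``either/or'' cleanly rather than hand-waving. The final sentence of the lemma — that simple relative tensors of $\kappa$'s and $\zeta$'s are well-defined in $H^n$ — is then immediate: $\zeta\in\cP_1\subset B$ is bi-bounded, and in case (1) $\kappa\in D(H_A)$ with $\langle\kappa|\kappa\rangle_A=1_A$ (resp.\ case (2), $\kappa\in D(\sb{A}K)$ with $\sb{A}\langle\kappa,\kappa\rangle=1_A$), so each relative tensor product $x_1\otimes\cdots\otimes x_n$ with each $x_i\in\{\kappa,\zeta\}$ is defined by iterating the left/right creation operators $L_\kappa, R_\zeta$ etc.\ from the Notation section, each of which is bounded, and the unit-inner-product condition guarantees these vectors are themselves bi-bounded so the iteration makes sense in either reading order.
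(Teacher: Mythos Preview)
Your proposal has a genuine gap, and you actually put your finger on it yourself: you never establish why at least one of $\dim_{A-}({\sb{A}K})\geq 1$ or $\dim_{-A}(K_A)\geq 1$ must hold. You write ``if \emph{both} dimensions were $<1$\dots\ That is false in general (e.g.\ $K$ could be a `small' bimodule both ways),'' and then try to route around the issue with a supremum-of-attainable-projections argument that you do not complete. But for a nonzero $A$--$A$ Hilbert bimodule over a {\rm II}$_1$-factor one has the standard inequality $\dim_{A-}({\sb{A}K})\cdot\dim_{-A}(K_A)\geq 1$, so at least one factor is $\geq 1$. This is precisely the missing ingredient, and it is what the paper invokes in one line.

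Once that inequality is in hand, the paper's argument is much more direct than your spectral-projection scheme: if, say, $\dim_{-A}(K_A)\geq 1$, pick a right submodule $M_A\subset K_A$ with $\dim_{-A}(M_A)=1$; then there is a right $A$-module isometry $\phi:L^2(A)\to M$, and $\kappa:=\phi(\widehat{1})$ satisfies $L(\kappa)=\phi$, hence $\langle\kappa|\kappa\rangle_A=L(\kappa)^*L(\kappa)=1_A$. Your approach of cutting by spectral projections of $\langle\kappa_0|\kappa_0\rangle_A$ and then trying to patch together orthogonal pieces to reach $1_A$ is essentially a bare-hands reproof of ``$\dim_{-A}(K_A)\geq 1$ iff $K_A$ contains a copy of $L^2(A)_A$,'' but you still need the dimension inequality to know you succeed on at least one side. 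So the fix is not to complete your sup argument but to supply (or cite) the inequality $\dim_{A-}({\sb{A}K})\dim_{-A}(K_A)\geq 1$; after that, either your construction or the paper's one-line module-isomorphism construction finishes the job. Your treatment of the final sentence (well-definedness of the simple tensors) is fine.
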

\begin{proof}
Since $K$ is a non-zero $A-A$ bimodule, $\dim_{A-}({\sb{A}K})\dim_{-A}(K_A)\geq 1$.
Suppose $\dim_{-A}(K_A)\geq 1$, and choose a submodule $M_A\subset K_A$ such that $\dim_{-A}(M_A)=1$.
Then there is a spatial isomorphism $\phi : L^2(A)\to M$ which intertwines the right $A$-actions. Let $\kappa=\phi(\widehat{1})$. 
Then $\kappa\in D(M_A)\subset D(K_A)$, $\phi=L(\kappa)$, and $L(\kappa)^*L(\kappa)=\langle \kappa|\kappa\rangle_A=1_A$.
If $\dim_{A-}({\sb{A}K})\geq 1$, then a similar argument finds a $\kappa$ such that ${\sb{A}\langle} \kappa,\kappa\rangle = 1_A$.

The last assertion follows from the fact that for $n\geq 2$, $H^n$ is the completion of the algebraic tensor product $D(H_A)\odot_A H^{n-1}$ with the inner product $\langle \eta_1 \odot \xi_1 , \eta_2\odot \xi_2\rangle = \langle \langle \eta_2|\eta_1\rangle_A \xi_1, \xi_2\rangle_{H^{n-1}}$, and similarly for left modules.
\end{proof}

\begin{prop}\label{prop:injective}
Suppose
$$
x=\a_{i_k}\cdots \a_{i_{1}}\a_{j_{1}}^*\cdots \a_{j_\ell}^*\in \sR(H)(n,n-\ell+k)
$$
is in the standard form of Proposition \ref{prop:Relations}. Then there are $\xi\in B^n$ and $\eta\in B^{n-\ell+k}$ such that $\langle x\xi,\eta\rangle = 1$ and $\langle y\xi,\eta\rangle=0$ for all words $y\in\sR(H)(n,n-\ell+k)$ on the $\a_i,\a_j^*$ whose standard form has length at least $\ell+k$. 
\end{prop}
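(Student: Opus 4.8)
The plan is to test $x$ against a pair of explicit vectors built only from the distinguished central vector $\zeta$ and a fixed unit vector $\kappa\in K$; on such vectors the operators $\a_i,\a_j^*$ act by simply inserting, resp.\ deleting, a copy of $\zeta$, and the only two $A$-valued inner products that ever occur are $\langle\zeta\,|\,\zeta\rangle_A=1_A$ and $\langle\zeta\,|\,\kappa\rangle_A=0$ (together with their left-handed analogues), so the whole argument reduces to bookkeeping of positions. First, use Lemma \ref{lem:Kappa} to fix $\kappa\in K$ with $\|\kappa\|_K=1$; say we are in case (1), so $\kappa\in D(H_A)$ and $\langle\kappa\,|\,\kappa\rangle_A=1_A$ (case (2) is symmetric, contracting relative tensors from the left rather than the right). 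By Remark \ref{rem:ACentral} we have $e_A(\kappa)=\langle\zeta\,|\,\kappa\rangle_A=0$, whereas $e_A(\zeta)=\langle\zeta\,|\,\zeta\rangle_A=1_A$, and the simple relative tensors in $\zeta$'s and $\kappa$'s are well-defined vectors in the relevant $H^n$ by the last assertion of Lemma \ref{lem:Kappa}.

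Let $\xi=\xi_x$ be the simple tensor whose $j$-th factor equals $\zeta$ for $j\in\{j_1,\dots,j_\ell\}$ and $\kappa$ otherwise, and set $\eta=\eta_x:=x\,\xi_x$. Applying $\a_{j_\ell}^*,\dots,\a_{j_1}^*$ in turn (Proposition \ref{prop:RelativeTensorMaps}) deletes the $\zeta$'s occupying positions $j_1<\cdots<j_\ell$ — each one contributing the scalar $e_A(\zeta)=1_A$ — and leaves the $n-\ell$ remaining $\kappa$-factors in their original order, so $\a_{j_1}^*\cdots\a_{j_\ell}^*\xi_x=\kappa^{\otimes(n-\ell)}$. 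Applying $\a_{i_1},\dots,\a_{i_k}$ in turn then inserts $\zeta$'s, and since $i_1<\cdots<i_k$ an easy induction on the number of insertions shows that $\eta_x$ is the simple tensor with $\zeta$ in positions $i_1<\cdots<i_k$ and $\kappa$ elsewhere; in particular $\eta_x\neq 0$. Expanding the iterated relative-tensor inner product and using $\langle\zeta\,|\,\zeta\rangle_A=\langle\kappa\,|\,\kappa\rangle_A=1_A$ gives $\langle x\xi_x,\eta_x\rangle=\langle\eta_x,\eta_x\rangle=\tr_A(1_A)=1$.

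Finally, let $y=\a_{i'_{k'}}\cdots\a_{i'_1}\a_{j'_1}^*\cdots\a_{j'_{\ell'}}^*\in\sR(H)(n,n-\ell+k)$ be any word in the standard form of Proposition \ref{prop:Relations} with $k'+\ell'\geq k+\ell$; since $k'-\ell'=k-\ell$ this forces $k'\geq k$ and $\ell'\geq\ell$. Running the annihilation part of $y$ on $\xi_x$ as above, nonvanishing of $y\xi_x$ requires every deleted position to carry a nonzero $\zeta$-component, i.e.\ $\{j'_1,\dots,j'_{\ell'}\}\subseteq\{j_1,\dots,j_\ell\}$, which together with $\ell'\geq\ell$ forces $\ell'=\ell$, $\{j'_m\}=\{j_m\}$, and hence $k'=k$; moreover the annihilation part again produces $\kappa^{\otimes(n-\ell)}$. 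If in addition $y\neq x$, then $(i'_1,\dots,i'_k)\neq(i_1,\dots,i_k)$, and the computation of the previous paragraph identifies $y\xi_x$ with the simple tensor $\eta_y$ carrying $\zeta$'s exactly in positions $i'_1,\dots,i'_k$; since $\eta_y$ and $\eta_x$ disagree in at least one tensor factor, expanding $\langle\eta_y,\eta_x\rangle$ produces a factor $\langle\zeta\,|\,\kappa\rangle_A=0$ (or $\langle\kappa\,|\,\zeta\rangle_A=0$), so $\langle y\xi_x,\eta_x\rangle=0$. The main obstacle is purely combinatorial bookkeeping: one must keep careful track of how positions shift under successive insertions and deletions, both to justify that nonvanishing of $y\xi_x$ really forces $\ell'=\ell$, $k'=k$ and $\{j'\}=\{j\}$, and to see that distinct sets of creation indices yield mutually orthogonal simple tensors. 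The only other (routine) point is that $\xi_x$ and $\eta_x$ lie in $B^n$, resp.\ $B^{n-\ell+k}$, which follows from Lemma \ref{lem:Kappa} together with the fact that a central $L^2$-vector such as $\zeta$ is automatically bi-bounded.
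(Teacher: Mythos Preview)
Your proof is correct and follows essentially the same approach as the paper: the test vectors $\xi$ and $\eta$ are identical (simple tensors with $\zeta$'s in the annihilation, resp.\ creation, positions and $\kappa$'s elsewhere), and the computation $\langle x\xi,\eta\rangle=1$ is the same. For the vanishing part, the paper argues symmetrically via the adjoint---nonvanishing forces both $\{j'\}\subseteq\{j\}$ and $\{i'\}\subseteq\{i\}$, hence $k'\leq k$ and $\ell'\leq\ell$---whereas you use the length constraint $k'+\ell'\geq k+\ell$ together with $k'-\ell'=k-\ell$ to get $\ell'\geq\ell$, combine this with $\{j'\}\subseteq\{j\}$ to force equality, and then compute $y\xi$ explicitly and invoke orthogonality of distinct $\zeta/\kappa$ simple tensors; this is a minor reorganization of the same idea.

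One small caveat: your closing remark that $\xi_x\in B^n$ ``follows from Lemma~\ref{lem:Kappa}'' overstates what that lemma gives---it only guarantees the simple tensors are well-defined vectors in $H^n$, since $\kappa$ is merely one-sided bounded. The paper's statement has the same wording, and it is harmless because the application in Corollary~\ref{cor:LinearlyIndependent} only needs $\xi\in H^n$, $\eta\in H^{n-\ell+k}$.
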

\begin{proof}
Choose $\kappa$ as in Lemma \ref{lem:Kappa}.
Let
\begin{itemize}
\item $\xi\in B^n$ be the simple relative tensor with $\zeta$'s in positions $j_1<\dots<j_\ell$ and $\kappa$'s in the other positions, and
\item $\eta\in B^{n-\ell+k}$ be the simple realtive tensor with $\zeta$'s in positions $i_1<\cdots <i_k$ and $\kappa$'s in the other positions.
\end{itemize}
Then by Lemma \ref{lem:Kappa},
$$
\langle x\xi,\eta\rangle=\|\underbrace{\kappa \otimes\cdots \otimes\kappa}_{n-\ell\text{ vectors}} \|_{H^{n-\ell}}^2=1.
$$ 
Suppose $y\in \sR(H)(n,n-\ell+k)$ is a word on the $\a_i,\a_j^*$ with $\langle y\xi,\eta\rangle\neq 0$, and write $y$ in standard form
$$
y= \a_{i'_{k'}}\cdots \a_{i'_{1}}\a_{j'_{1}}^*\cdots \a_{j'_{\ell'}}^*.
$$
Since $e_A(\kappa)=0$, we must have $i'_1,\cdots i'_{k'}\in\{i_1,\dots,i_k\}$ and $j'_1,\cdots j'_{\ell'}\in \{j_1,\dots, j_\ell\}$, so $k'\leq k$ and $\ell'\leq \ell$. Moreover, if $k'=k$ and $\ell'=\ell$, then $y=x$.
\end{proof}

\begin{cor}\label{cor:LinearlyIndependent}
The words on $\a_i,\a_j^*$ in standard form in $\sR(H)(m,n)$ are a basis.
\end{cor}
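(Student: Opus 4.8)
The plan is to combine the spanning half, which is essentially already in hand, with a separation argument furnished by Proposition \ref{prop:injective}. For spanning: by Proposition \ref{prop:Relations}(2) every composite of the maps $\a_i,\a_j^*$ equals, as an operator on the relevant $H^p$, its unique standard form, and since by definition the morphisms of $\sR(H)$ are the $\C$-linear span of such composites, the standard-form words in $\sR(H)(m,n)$ span $\sR(H)(m,n)$. So the only thing left to prove is that these words are linearly independent.

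For linear independence, I would argue by extracting a ``shortest'' term. Suppose $\sum_{x} c_x\,x = 0$ is a finite relation among pairwise-distinct standard-form words $x \in \sR(H)(m,n)$, and suppose not all $c_x$ vanish. Choose $x_0$ with $c_{x_0}\neq 0$ whose standard form $x_0 = \a_{i_{k_0}}\cdots\a_{i_1}\a_{j_1}^*\cdots\a_{j_{\ell_0}}^*$ has minimal total length $k_0+\ell_0$. Apply Proposition \ref{prop:injective} to $x_0$ to obtain bounded vectors $\xi \in B^m$ and $\eta \in B^n$ with $\langle x_0\xi,\eta\rangle = 1$ and $\langle y\xi,\eta\rangle = 0$ for every word $y$ whose standard form has length at least $k_0+\ell_0$ and is not the standard form of $x_0$. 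Each $x$ appearing in the relation with $x\neq x_0$ has standard-form length at least $k_0+\ell_0$ by minimality and a distinct standard form, hence $\langle x\xi,\eta\rangle = 0$; pairing the relation against $\xi$ and $\eta$ then yields
\[
0 = \left\langle\Bigl(\sum_x c_x\,x\Bigr)\xi,\,\eta\right\rangle = c_{x_0}\langle x_0\xi,\eta\rangle = c_{x_0},
\]
contradicting $c_{x_0}\neq 0$. Therefore all $c_x = 0$, the standard-form words are linearly independent, and together with spanning they form a basis of $\sR(H)(m,n)$; in particular each $\sR(H)(m,n)$ is finite dimensional, with dimension the number of such words.

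The only step needing attention is the length bookkeeping: Proposition \ref{prop:injective} annihilates only contributions of words of length at least $k_0+\ell_0$ (shorter words may pair nontrivially with $\xi,\eta$), so it is crucial that $x_0$ be chosen of minimal length among the terms actually present, which guarantees every competing term falls inside the range controlled by the proposition. Beyond that the argument is a routine leading-term extraction, and I do not anticipate any real obstacle, since all of the analytic content has already been absorbed into Propositions \ref{prop:RelativeTensorMaps}, \ref{prop:Relations}, and \ref{prop:injective}.
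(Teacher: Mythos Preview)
Your proof is correct and follows essentially the same approach as the paper: spanning via Proposition \ref{prop:Relations}, and linear independence by extracting a term of minimal standard-form length and separating it using Proposition \ref{prop:injective}. The only cosmetic difference is that the paper phrases the independence step as an induction on the number of terms rather than a direct contradiction, but the content is the same minimal-term argument.
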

\begin{proof}
We already know such words span by Proposition \ref{prop:Relations}.
Suppose 
$$
0=\sum_{i=1}^k \lambda_i w_i\in \sR(H)(m,n)
$$ 
where $w_i\in\sR(H)(m,n)$ are distinct words on the $\a_i,\a_j^*$ in standard form, ordered by increasing word length. 
We show by induction on $k$ that all the $\lambda_i$'s are zero. 
If $k=1$, this is trivial, since $w\neq 0$ for all words $w$ by Proposition \ref{prop:injective} (there is a linear functional which separates $w$ from $0$). 
Suppose now that $k>1$. Since the standard form word length of $w_1$ is minimal, by Proposition \ref{prop:injective}, there are $\xi\in B^m$ and $\eta\in B^n$ such that $\langle w_i \xi,\eta\rangle = \delta_{1,i}$. 
This means 
$$
\lambda_1 =\sum_{i=1}^k \lambda_i \langle w_i \xi,\eta\rangle = \left \langle \sum_{i=1}^k \lambda_i w_i \xi,\eta\right\rangle =0.
$$
We are finished by the induction hypothesis.
\end{proof}

\begin{thm}\label{thm:GICARinRH}
The $*,\otimes$-functor $\Phi: \sR\sG \to \sR(H)$ given by $[n]\mapsto H^n$ for $n\geq 0$ and 
$$
\sR\sG(n,n+1)\ni\alpha_i\longmapsto \a_i\in \sR(H)(n,n+1) \text{ for }1\leq i\leq n+1
$$
defines an equivalence of involutive tensor categories.
\end{thm}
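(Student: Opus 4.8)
The plan is to assemble the theorem from the structural results already established. Essential surjectivity is automatic, since $\Phi$ is a bijection on objects via $[n]\leftrightarrow H^n$; so I would only need to check that $\Phi$ is a well-defined $*,\otimes$-functor and that it is fully faithful, i.e.\ a linear isomorphism on every hom-space.

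First I would check well-definedness as a functor. By Proposition~\ref{prop:RelativeTensorMaps} the operators $\a_i,\a_i^*$ are the explicit creation and annihilation maps on simple relative tensors, and Proposition~\ref{prop:Relations}(1)(i),(ii) shows they satisfy precisely Relations~\eqref{rel:AG1} and \eqref{rel:AG2}; together with the uniqueness of the standard form (Proposition~\ref{prop:Relations}(2)), these are exactly the relations needed to reduce any word on the $\a_i,\a_j^*$ to standard form. Since two words on the $\alpha_i,\alpha_j^*$ are equal in $\sR\sG\subset\sA\sG$ iff they have the same standard form (Proposition~\ref{prop:Standard} with $r=0$), extending $\alpha_i\mapsto\a_i$ $\C$-linearly and multiplicatively over concatenation of words yields a well-defined functor, which manifestly sends $\id_{[n]}$ to $\id_{H^n}$ and composites to composites.

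Next I would treat the $*$- and $\otimes$-structures. Because the adjoint in $\sR\sG$ reverses and daggers the letters of a word, $*$-compatibility reduces to $\Phi(\alpha_i)^*=\Phi(\alpha_i^*)$, i.e.\ that $\a_i^*$ is the Hilbert-space adjoint of $\a_i$; using the formulas of Proposition~\ref{prop:RelativeTensorMaps} and that $\zeta\in\cP_1$ is $A$-central, one computes $\langle\a_i\xi,\eta\rangle=\langle\xi,\a_i^*\eta\rangle$ for $\xi\in B^n$ and $\eta\in B^{n+1}$ and extends by density of the $B^k$. For the monoidal structure, the tensor product on $\sR\sG$ (the Remark after Theorem~\ref{thm:RectangularEquivalence}: $[m]\otimes[n]=[m+n]$ with the index-shifted formula for $\varphi\otimes\psi$) matches the Connes relative tensor product $H^m\otimes_A H^n\cong H^{m+n}$ on $\sR(H)$ together with its canonical associators; checking the monoidal-functor coherence on the generators $\alpha_i,\alpha_j^*$ amounts, via Proposition~\ref{prop:RelativeTensorMaps}, to the elementary identities $\a_{i+n}\otimes_A\id=\id_{H^n}\otimes_A\a_i$ and its adjoint on simple relative tensors. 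I expect this last bookkeeping of index shifts to be the most tedious step, though not a deep one.

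Finally, for full faithfulness, fix $m,n$. The map $\Phi\colon\sR\sG(m,n)\to\sR(H)(m,n)$ is surjective because $\sR(H)(m,n)$ is by definition spanned by words on the $\a_i,\a_j^*$, each of which equals a standard-form word by Proposition~\ref{prop:Relations}(2), and standard-form words are $\Phi$-images of the standard-form words that span $\sR\sG(m,n)$. It is injective because the standard-form words in $\sR(H)(m,n)$ are linearly independent by Corollary~\ref{cor:LinearlyIndependent}; thus $\Phi$ carries a spanning set of $\sR\sG(m,n)$ bijectively onto a linearly independent subset of $\sR(H)(m,n)$, hence is a linear isomorphism on every hom-space. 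Combined with bijectivity on objects, this gives the asserted $*,\otimes$-equivalence (in fact an isomorphism) of involutive tensor categories. The genuine content sits in the earlier Proposition~\ref{prop:injective}/Corollary~\ref{cor:LinearlyIndependent}, on which injectivity rests; within the present proof the only mild subtlety is stating the monoidal-functor axioms precisely.
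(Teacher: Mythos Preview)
Your proposal is correct and follows essentially the same approach as the paper: well-definedness from Proposition~\ref{prop:Relations}, the $*$- and $\otimes$-structure handled directly on generators, and full faithfulness from the fact that standard-form words are bases on both sides (Proposition~\ref{prop:Standard} and Corollary~\ref{cor:LinearlyIndependent}). You give more explicit detail on the $*$- and $\otimes$-verifications than the paper does, but the architecture is identical.
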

\begin{proof}
By Proposition \ref{prop:Relations}, the relations of $\sR\sG$ are satisfied in $\sR(H)$, so $\Phi$ is well-defined.
By definition $\Phi$ preserves the adjoint, and it is easy to check that $\Phi$ preserves $\otimes$.
Since the words on $\alpha_i,\alpha_j^*$ in $\sR\sG(m,n)$ in standard form are a basis for $\sR\sG(m,n)$ by Proposition \ref{prop:Standard}, Corollary \ref{cor:LinearlyIndependent} shows that $\Phi$ is fully faithful and essentially surjective.
\end{proof}

\begin{rem}
The involutive tensor category $\sR\sG\cong\sR\sP$ is positive, i.e., if $x\in\sR\sG(m,n)$ and $x^*x=0\in\sR\sG(m,m)$, then $x=0$.
This can be shown using the standard form in Proposition \ref{prop:Standard}, or using positivity of $\sR(H)$ which comes for free.
If $x\in \sR\sG(m,n)$ with $x^*x=0$, then $\Phi(x^*x)=0$, so $\Phi(x)=0$ as $\sR(H)$ is positive. Hence $x=0$ as $\Phi$ is injective on hom spaces.
\end{rem}

\begin{rem}\label{rem:PlanarCalculus}
The planar calculus of \cite{MR3040370} is compatible with diagrams in $\sR\sP$. The value of a free-floating strand is
$$
\confetti = 
\begin{cases}
\cwBrokenLoop &= \Tr_1(e_1) = \dim_{-A}(L^2(A))=1\\
\ccwBrokenLoop &= \Tr_1\op(e_1) = \dim_{A-}(L^2(A))=1,
\end{cases}
$$
and the value of the dotted closed oriented loops are
\begin{align*}
\cwDottedLoop &= \Tr_1(1-e_1)=\dim_{-A}(K) \text{ and }
\\ 
\ccwDottedLoop &=\Tr_1\op(1-e_1)=\dim_{A-}(K).
\end{align*}
Thus if $0\leq k\leq n$ and $q\in\sR\sP_n$ is a minimal projection with exactly $j$ dotted through strings, then
\begin{align*}
\Tr_n(\Phi\circ\Psi^{-1}(q))
&=
\dim_{-A}(K)^{j} 
\\
\Tr_n\op(\Phi\circ\Psi^{-1}(q))
&=
\dim_{A-}(K)^{j} .
\end{align*}
\end{rem}

\subsection{Annular GICAR representations}

Let $H$ be as in Assumption \ref{assume:ContainsTrivial}.
Then $\cP_n\neq (0)$ for all $n\geq 0$, since it contains the vector $\zeta\otimes \cdots \otimes \zeta$.

\begin{defn}[{\cite[Section 4]{MR3040370}}]
A Hilbert $A-A$ bimodule $H$ is called \underline{extremal} if $\Tr_1=\Tr_1\op$ on $\cQ_1$.

A \underline{Burns rotation} is an operator $\rho:\cP_n\to\cP_n$ such that for all $\zeta\in\cP_n$ and $b_1,\dots, b_n\in B$, we have
$$
\langle \rho(\zeta) , b_1\otimes\cdots \otimes b_n\rangle = \langle \zeta, b_2\otimes\cdots \otimes b_n\otimes b_1\rangle.
$$
An \underline{opposite Burns rotation} is defined similarly:
$$
\langle \rho\op(\zeta) , b_1\otimes\cdots \otimes b_n\rangle = \langle \zeta, b_n\otimes b_1\otimes\cdots \otimes b_{n-1}\rangle.
$$
Note that if such a $\rho$ exists, then it is unique, and $\rho^n=\id_{\cP_n}$. In this case, $\rho^{-1}=\rho\op$.
\end{defn}

Recall the following theorems.

\begin{thm}[{\cite[Theorem 4.7]{MR3040370}}]
The following are equivalent:
\be
\item
$H$ is extremal.
\item
$H^n$ is extremal for all $n\geq 1$.
\item
$H^n$ is extremal for some $n\geq 1$.
\ee
\end{thm}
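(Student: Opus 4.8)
The plan is to deduce the equivalences from the behaviour of the two canonical traces under the relative tensor product. The implication $(2)\Rightarrow(3)$ is trivial (take $n=1$). The engine for the rest is a multiplicativity lemma, which I would establish first: for projections $p\in\cQ_k=\End_{A-A}(H^k)$ and $q\in\cQ_\ell=\End_{A-A}(H^\ell)$, the relative tensor product $p\otimes_A q$ of the corresponding $A$-$A$-bilinear maps lies in $\cQ_{k+\ell}=\End_{A-A}(H^{k+\ell})$, has range the subbimodule $(pH^k)\otimes_A(qH^\ell)$, and satisfies
$$
\Tr_{k+\ell}(p\otimes_A q)=\Tr_k(p)\,\Tr_\ell(q),\qquad \Tr_{k+\ell}^{\OP}(p\otimes_A q)=\Tr_k^{\OP}(p)\,\Tr_\ell^{\OP}(q).
$$
Here one uses that for a projection $r\in C_m$ one has $\Tr_m(r)=\dim_{-A}(rH^m)$ (if $\{\beta^m\}$ is an $H^m_A$-basis then $\{r\beta^m\}$ is a right $A$-basis of $rH^m$), together with the multiplicativity of $\dim_{-A}$ and $\dim_{A-}$ under $\otimes_A$; the values $\Tr_1(e_1)=\Tr_1^{\OP}(e_1)=1$ and $\Tr_1(1-e_1)=\dim_{-A}(K)$, $\Tr_1^{\OP}(1-e_1)=\dim_{A-}(K)$ of Remark~\ref{rem:PlanarCalculus} are the base cases.

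Granting the lemma, $(3)\Rightarrow(1)$ is essentially immediate. If $H^n$ is extremal and $p\in\cQ_1$ is a projection, then $p^{\otimes n}:=p\otimes_A\cdots\otimes_A p\in\cQ_n$ and iterating the lemma gives
$$
\Tr_1(p)^n=\Tr_n(p^{\otimes n})=\Tr_n^{\OP}(p^{\otimes n})=\Tr_1^{\OP}(p)^n.
$$
Both sides lie in $[0,\infty]$, so finiteness of one forces finiteness of the other (run the identity with the two traces interchanged), and equality of nonnegative $n$-th powers then gives $\Tr_1(p)=\Tr_1^{\OP}(p)$. A normal trace on $\cQ_1$ is determined by its values on projections, so $\Tr_1=\Tr_1^{\OP}$ on $\cQ_1$, i.e.\ $H$ is extremal.

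For $(1)\Rightarrow(2)$ I would induct on $n$, which reduces the claim to: the relative tensor product $Z=X\otimes_A Y$ of two extremal $A$-$A$ bimodules is extremal. On a projection $q\in\End_{A-A}(Z)$ this says $\dim_{-A}(qZ)=\dim_{A-}(qZ)$. When $qZ$ is bifinite I would embed it $A$-$A$-bilinearly into $K_1\otimes_A K_2$ with $K_1\subset X$, $K_2\subset Y$ bifinite (every subbimodule of $X$ or $Y$ with finite one-sided dimension is bifinite, by extremality of $X$ and $Y$), and then invoke the standard finite-index fact that the bifinite extremal bimodules form a tensor subcategory closed under subobjects: $K_1\otimes_A K_2$ is extremal, hence so is $qZ$, hence $\dim_{-A}(qZ)=\dim_{A-}(qZ)$. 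The non-bifinite case I would handle by compressing with the operator-valued weights: $T_{m+1},T_{m+1}^{\OP}$ restrict to maps $\cQ_{m+1}\to\cQ_m$ with $\Tr_{m+1}=\Tr_m\circ T_{m+1}$ and $\Tr_{m+1}^{\OP}=\Tr_m^{\OP}\circ T_{m+1}^{\OP}$, reducing to finite-trace compressions and the inductive hypothesis on $\cQ_m$.

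The hard part is $(1)\Rightarrow(2)$, and within it the step ``$X,Y$ extremal $\Rightarrow X\otimes_A Y$ extremal'' in arbitrary index: unlike the finite-index (finitely semisimple) case, a subbimodule of $H^n$ need not be assembled from subbimodules of $H$, so controlling $\End_{A-A}(H^n)$ genuinely requires the theory of (operator-valued) von Neumann dimension for one-sided-finite and infinite-index bimodules, and one must avoid the temptation to identify $\Tr_n^{\OP}$ with $\Tr_n$ conjugated by a modular conjugation, which fails when $H$ is not symmetric. A probably cleaner route is through the planar calculus of \cite{MR3040370}: extremality of $H$ is equivalent to sphericality of that calculus, which is a local condition on a single shaded strand and is therefore inherited verbatim by every $H^n$, so that the equivalence becomes almost a tautology, modulo the compatibility of the Burns rotations on the $\cP_n$ with the tensor powers.
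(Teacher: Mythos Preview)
The paper does not prove this theorem at all: it is quoted verbatim from \cite[Theorem 4.7]{MR3040370} as background, with no argument supplied. There is therefore no ``paper's own proof'' against which to compare your proposal.

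That said, a brief comment on the proposal itself. Your $(3)\Rightarrow(1)$ via multiplicativity of the coupling constants under $\otimes_A$ is sound. Your $(1)\Rightarrow(2)$, however, has a genuine gap that you partly acknowledge: the claim that a bifinite subbimodule $qZ\subset X\otimes_A Y$ embeds $A$--$A$ bilinearly into some $K_1\otimes_A K_2$ with $K_i\subset X,Y$ bifinite is unjustified in infinite index, and the fallback ``compress with $T_{m+1},T_{m+1}^{\OP}$ and reduce to $\cQ_m$'' does not obviously control an arbitrary projection in $\cQ_{m+1}$, since $T_{m+1}$ and $T_{m+1}^{\OP}$ need not land in $\cQ_m$ on the nose (they land in $\widehat{C_m^+}$ and $\widehat{(C_m^{\OP})^+}$ respectively). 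Your final remark about sphericality of the planar calculus is closer to how \cite{MR3040370} actually treats this, but note that it is not ``almost a tautology'': sphericality there is formulated in terms of the compatibility of the two traces on all $\cQ_n$, which is exactly the statement to be proved, so one must go back to the operator-valued-weight arguments in that reference rather than invoke the calculus as a black box.
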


\begin{thm}[{\cite[Theorems 4.11, 4.20, and 4.28]{MR3040370}}]
If $H$ is extremal, then the Burns rotation $\rho=\sum_\beta L_\beta R_\beta^*$ converges strongly on $\cP_n$ for all $n\geq 2$.
Moreover, $\rho^{-1}=\rho^*$ is given by the strongly convergent sum $\sum_\alpha R_\alpha L_\alpha^*$.

Conversely, if a unitary Burns rotation $\rho$ exists on $\cP_{2n}$ and $H$ is symmetric, then $H^n$ is extremal.
\end{thm}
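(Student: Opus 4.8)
The plan is to treat the two implications separately; in both cases everything reduces to comparing the canonical traces $\Tr_n$ and $\Tr_n\op$ on $\cQ_n$, and to controlling an infinite sum of creation/annihilation operators on the dense subspace $B^n\subset H^n$ of bi-bounded vectors.

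For the forward direction, fix an $H_A$-basis $\{\beta\}\subset B$ (which exists by Lemma \ref{lem:BasesInB}) and an ${}_AH$-basis $\{\alpha\}\subset B$, and consider the partial sums $\rho_F=\sum_{\beta\in F}L_\beta R_\beta^*$ over finite $F$. First I would show, for $\zeta\in\cP_n$, that $(\rho_F(\zeta))_F$ is strongly Cauchy: using that $\zeta$ is $A$-central one rewrites the inner products $\langle \rho_F(\zeta),\rho_{F'}(\zeta)\rangle$ in terms of the operator-valued inner products $\langle\beta|\beta\rangle_A$ and reorganizes the sums using the completeness relations $\sum_\beta L_\beta L_\beta^*=1_{H^n}$ and $\sum_\alpha R_\alpha R_\alpha^*=1_{H^n}$; extremality — equivalently, $\Tr_n=\Tr_n\op$ on $\cQ_n$ by the equivalence recalled above — is exactly what makes the ``left'' bookkeeping (from $\{\beta\}$) match the ``right'' bookkeeping (from $\{\alpha\}$), so that the partial sums converge and in fact $\|\rho(\zeta)\|^2=\|\zeta\|^2$. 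Once convergence is in hand, the Burns-rotation identity $\langle \rho(\zeta),b_1\otimes\cdots\otimes b_n\rangle=\langle\zeta,b_2\otimes\cdots\otimes b_n\otimes b_1\rangle$ on simple tensors of bounded vectors is a direct computation, and likewise $\sum_\alpha R_\alpha L_\alpha^*$ is checked to be the opposite Burns rotation; uniqueness of the Burns rotation then forces $\rho^{-1}=\rho\op=\rho^*$.

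For the converse, assume $H$ is symmetric and $\rho$ is a unitary Burns rotation on $\cP_{2n}$. The symmetry $J_n$ furnishes an identification $\overline{H^n}\cong H^n$, under which $\cP_{2n}=A'\cap(H^n\otimes_A H^n)$ is identified with the space of Hilbert--Schmidt elements of $\cQ_n$; the norm coming from $H^{2n}$, computed via an $H_A^n$-basis, is the $\Tr_n$-Hilbert--Schmidt norm, while the same vector viewed through an ${}_AH^n$-basis gives the $\Tr_n\op$-norm. The key point is that the half rotation $\rho^n$, which swaps the two $H^n$ tensor factors of $H^{2n}$, corresponds under this identification to $x\mapsto J_n x^* J_n$, which interchanges the two Hilbert space structures; since $\rho$, hence $\rho^n$, is unitary, the two norms agree on the Hilbert--Schmidt part of $\cQ_n$, i.e. $\Tr_n(x^*x)=\Tr_n\op(x^*x)$ there, and a density plus normality argument upgrades this to $\Tr_n=\Tr_n\op$ on all of $\cQ_n$, which is extremality of $H^n$. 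The main obstacle is the analytic core of the forward direction: because $H$ has infinite left or right von Neumann dimension the sum $\sum_\beta L_\beta R_\beta^*$ is genuinely infinite and cannot be manipulated formally, and $\cP_n$ is not an algebra, so one must work throughout on $B^n$, prove strong convergence by hand, and verify the limit is independent of the chosen bases — extremality entering precisely where the creation-then-annihilation sums built from a right basis must cancel against those built from a left basis.
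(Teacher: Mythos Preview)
This theorem is not proved in the present paper; it is quoted from \cite{MR3040370} (Theorems 4.11, 4.20, and 4.28 there) and stated here without argument, so there is no proof in the paper to compare your proposal against.

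That said, your sketch is along the right lines for how those results are established in \cite{MR3040370}. For the forward direction, the essential analytic point is indeed that extremality allows one to compare sums built from an $H_A$-basis with sums built from an ${}_AH$-basis; the argument in \cite{MR3040370} proceeds by first showing the partial sums $\sum_{\beta\in F} L_\beta R_\beta^*$ are bounded on $\cP_n$ and then verifying the Burns-rotation identity on simple tensors, much as you outline. For the converse, your identification of $\cP_{2n}$ with the Hilbert--Schmidt part of $\cQ_n$ (this is \cite[Remark 4.27]{MR3040370} in the subfactor case) and the observation that $\rho^n$ implements $x\mapsto J_n x^* J_n$, hence exchanges $\Tr_n$ and $\Tr_n\op$, is exactly the mechanism used there. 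One small correction: your claim that $\|\rho(\zeta)\|^2=\|\zeta\|^2$ does not follow automatically from convergence alone---in \cite{MR3040370} this isometry is a separate consequence of extremality, and only once one has both convergence and the Burns-rotation identity does periodicity $\rho^n=\id$ force unitarity.
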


We now impose the following assumption.

\begin{assume}\label{assume:Extremal}
Suppose  $H$ is extremal, so that the Burns rotation $\rho=\sum_\beta L_\beta R_\beta^*$ converges strongly on $\cP_n$ for all $n\geq 2$.
\end{assume}

\begin{defn}
Given an $A-A$ bimodule $H$, we define the annular bimodule category $\sA(H)$ as the following small involutive category:
\itt{Objects} $\cP_n$ for $n\geq 0$.
\itt{Morphisms} 
For $1\leq i\leq n+1$, the maps $\a_i: H^n\to H^{n+1}$ descend to maps $\cP_n \to \cP_{n+1}$ since they are $A-A$ bilinear, i.e.,
for all $x\in A$ and $\xi\in \cP_n$,
$$
x(\a_i(\xi))=\a_i(x\xi)= \a_i(\xi x)=(\a_i(\xi))x.
$$
A similar statement holds for the $\a_j^*$.
For $n=0$, let $\rho=\id_{L^2(A)}$, and for $n\geq 1$, let $\rho$ be the Burns rotation, which preserves $\cP_n$.
The morphisms of $\sA(H)$ are $\C$-linear combinations of all composites of the $\a_i,\a_j^*,\rho$.
\itt{Composition} composition of operators.
\itt{Adjoint} adjoint of operators.
\end{defn}

The main theorem of this subsection is as follows.

\begin{thm}\label{thm:GICARinP}
The $*$-functor $\Phi_{\sA}: \sA\sG \to \sA(H)$ given by $\alpha_i\mapsto \a_i$ and $\tau\mapsto \rho$ defines a $*$-representation.
\end{thm}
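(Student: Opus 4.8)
The plan is to check that the assignment on generators $\alpha_i\mapsto\a_i$, $\alpha_i^*\mapsto\a_i^*$, $\tau\mapsto\rho$, $\tau^*\mapsto\rho^*$ is compatible with all the data defining $\sA\sG$: the images lie in $\sA(H)$, they satisfy the relations \eqref{rel:AG1}--\eqref{rel:AG4}, and the correspondence intertwines the two $*$-operations. Since the morphisms of $\sA\sG$ are linear combinations of words in these generators and composition is concatenation of words, these checks suffice for $\Phi_{\sA}$ to extend (uniquely) to a $*$-functor. That the images land in $\sA(H)$ is immediate: the maps $\a_i,\a_j^*$ are bounded and $A$--$A$ bilinear by Proposition \ref{prop:RelativeTensorMaps} (each has norm at most $1$ because $e_A=L(\zeta)^*$, $e_A^*=L(\zeta)$ and $\langle\zeta|\zeta\rangle_A=1_A$), hence restrict to the central subspaces $\cP_n\subset H^n$, while $\rho$ preserves each $\cP_n$ by Assumption \ref{assume:Extremal}.

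Two of the four families of relations require no new work. Relations \eqref{rel:AG1} and \eqref{rel:AG2} are exactly the identities in items (i)--(ii) of part $(1)$ of Proposition \ref{prop:Relations}, which hold as identities of operators on all of $H^n$ and so a fortiori on $\cP_n$. Relation \eqref{rel:AG3}, namely $\rho^n=\id_{\cP_n}$ and $\rho^*=\rho^{-1}=\rho\op$, is part of the basic theory of the Burns rotation recalled just above from \cite{MR3040370}. So the substance of the theorem is relation \eqref{rel:AG4}: that the creation and annihilation operators intertwine with the Burns rotation, $\a_i\rho=\rho\a_{i-1}$ and $\a_i^*\rho=\rho\a_{i-1}^*$ for the stated range of $i$.

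To prove $\a_i\rho=\rho\a_{i-1}$ as maps $\cP_n\to\cP_{n+1}$, I would pair both sides against simple tensors $b_1\otimes\cdots\otimes b_{n+1}$ with $b_j\in B$, which span a dense subspace of $H^{n+1}$; since $\rho$ is unitary and $\a_i,\a_i^*$ are bounded this is legitimate. Using that $\a_i$ is the Hilbert-space adjoint of $\a_i^*$, the explicit formula for $\a_i^*$ from Proposition \ref{prop:RelativeTensorMaps}, and the defining property $\langle\rho\zeta,c_1\otimes\cdots\otimes c_m\rangle=\langle\zeta,c_2\otimes\cdots\otimes c_m\otimes c_1\rangle$ of the Burns rotation, both $\langle\a_i\rho\zeta,\,b_1\otimes\cdots\otimes b_{n+1}\rangle$ and $\langle\rho\a_{i-1}\zeta,\,b_1\otimes\cdots\otimes b_{n+1}\rangle$ reduce to $\langle\zeta,c\rangle$ for one and the same $c\in H^n$; concretely this comes down to the purely combinatorial identity $\sigma\circ\a_i^*=\a_{i-1}^*\circ\sigma$ on simple tensors, where $\sigma$ is the cyclic shift carrying the first tensor leg to the last. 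Here one uses that $e_A(b_i)\in A$ and $b\in B$ give $e_A(b_i)b\in B$, so no vector leaves the bounded class, and the $A$-valued coefficient produced by $\a_i^*$ is carried past a tensor leg correctly precisely because the relative tensor product is balanced over $A$. Once $\a_i\rho=\rho\a_{i-1}$ is established, the starred identity follows by taking adjoints and invoking $\rho^*=\rho^{-1}$. Finally $\Phi_{\sA}$ preserves $*$ because $\a_i^*$ is by construction the adjoint of $\a_i$ and $\rho^*=\rho^{-1}=\rho\op$; the consequence relation \eqref{rel:AG5} then holds automatically, being derived from \eqref{rel:AG3} and \eqref{rel:AG4}.

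The main obstacle is the verification of \eqref{rel:AG4}: everything else is either a restriction of an already-proved identity or a quotation, whereas here one has to track carefully how the cyclic rotation of tensor legs interacts with the single leg at which $e_A$ evaluates, and confirm that the $A$-valued coefficients generated by $\a_i^*$ and by $\sigma$ attach to matching factors of $H^n$. The boundary values of $i$, where the rotation sweeps across the evaluated leg, are the delicate cases and should be checked on their own.
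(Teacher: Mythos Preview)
Your proposal is correct and follows essentially the same approach as the paper: reduce to checking \eqref{rel:AG1}--\eqref{rel:AG4}, dispatch the first three by quoting Proposition \ref{prop:Relations} and the unitarity/periodicity of $\rho$, and verify \eqref{rel:AG4} by pairing against simple tensors in $B^n$ and using the defining property of the Burns rotation. The paper computes the starred identity $\a_i^*\rho=\rho\a_{i-1}^*$ directly (moving $\a_i^*$ across the pairing to become $\a_i$) and declares the unstarred one similar, whereas you do the unstarred one first and obtain the starred by adjoints; these are the same computation read in opposite directions. Your closing remark about delicate boundary cases is overly cautious: relation \eqref{rel:AG4} is only asserted for $2\le i\le n$, so the rotation never sweeps past the evaluated leg, and the residual case $i=1$ is exactly \eqref{rel:AG5}, which as you note is a formal consequence.
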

\begin{proof}
Note that Relations \eqref{rel:AG1} and \eqref{rel:AG2} automatically hold in $\sA(H)$ by Proposition \ref{prop:Relations}.
It remains to show Relations \eqref{rel:AG3} and \eqref{rel:AG4} hold. Since $\rho$ is periodic and unitary, Relation \eqref{rel:AG3} follows for $\Phi_{\sA}(\tau)=\rho$ immediately. Suppose $2\leq i\leq n$. Then for all $\xi\in \cP_n$ and $b_1,\dots, b_{n-1}\in B$, by the definition of the Burns rotation, we have
\begin{align*}
\langle \Phi_{\sA}(\alpha_i^*\tau)(\xi),b_1\otimes \cdots \otimes b_{n-1}\rangle
&=
\langle \Phi_{\sA}(\alpha_i)^*\Phi_{\sA}(\tau)(\xi),b_1\otimes \cdots \otimes b_{n-1}\rangle
\\
&=
\langle \rho(\xi),\a_i(b_1\otimes \cdots \otimes b_{n-1})\rangle
\\
&=
\langle \rho(\xi),b_1\otimes \cdots \otimes b_{i-1}\otimes \zeta \otimes b_i \otimes \cdots \otimes b_{n-1} \rangle
\\
&=
\langle \xi,b_2 \cdots \otimes b_{i-1}\otimes \zeta \otimes b_i \otimes \cdots \otimes b_{n-1}\otimes b_1 \rangle
\\
&=
\langle \xi,\a_{i-1}(b_2 \otimes\cdots \otimes b_{n-1}\otimes b_1) \rangle
\\
&=
\langle\Phi_{\sA}(\alpha_{i-1})^*( \xi),b_2 \otimes\cdots \otimes b_{n-1}\otimes b_1 \rangle
\\
&=
\langle\rho(\Phi_{\sA}(\alpha_{i-1}^*)(\xi)),b_1 \otimes\cdots \otimes b_{n-1}\rangle
\\
&=
\langle\Phi_{\sA}(\tau\alpha_{i-1}^*)( \xi),b_1 \otimes\cdots \otimes b_{n-1}\rangle.
\end{align*}
The relation $\alpha_i\tau=\tau\alpha_{i-1}$ is similar, and Relation \eqref{rel:AG4} holds.
\end{proof}

\begin{rem}
The representation of Theorem \ref{thm:GICARinP} is not necessarily faithful as we will see in Examples \ref{ex:SInfinity} and \ref{ex:OuterZAction2}.
\end{rem}

Just as every subfactor planar algebra decomposes as an orthogonal direct sum of irreducible annular Temperley-Lieb modules, so does the sequence of central $L^2$-vectors $(\cP_n)_{n\geq 0}$ under our $\sA\sG$-action afforded by Theorem \ref{thm:GICARinP}.

Just as the empty diagram generates an annular Temperley-Lieb module for a subfactor planar algebra \cite{MR1929335}, 
the vector $\widehat{1}\in \cP_0=A'\cap L^2(A)$ always generates an annular GICAR module.
However, this $\sA\sG$-module is trivial, since the only $\sA\sG$-consequence of $\widehat{1}\in\cP_0$ in $\cP_n$ is the $n$-fold tensor product of $\zeta$.
In stark comparison with finite index subfactors, it may be the case that $(\cP_n)_{n\geq 0}$ only consists of the trivial $\sA\sG$-module when $A_0\subset A_1$ is a non-trivial subfactor!

\begin{ex}[{\cite[Section 5]{MR3040370}}]\label{ex:SInfinity}
Consider $A_0=R\rtimes \Stab(1)\subset R\rtimes S_\I=A_1$ where $\Stab(1)$ is the stabilizer of 1 under the action of $S_\I$ on $\N$.
Then $\dim_\C(\cP_n)=1$ for all $n\geq 0$.
More precisely, for $n\geq 1$,
$$
\cP_n=A_0'\cap \bigotimes^n_{A_0} L^2(A_1)
=\spann\left\{
\underbrace{\widehat{1}\otimes \cdots \otimes \widehat{1}}_{n\text{ vectors}}
\right\}
\cong \C \,
\begin{tikzpicture}[baseline=-.1cm]
	\nbox{}{(0,0)}{.4}{.2}{.2}{};
	\BrokenTopString{(-.4,0)};
	\node at (.05,0) {$\dots$};
	\BrokenTopString{(.4,0)};	
	\node at (.4,.6) {\scriptsize{$n$}};	
\end{tikzpicture}\,.
$$
However, note that although $\dim_\C(\cQ_n)<\I$ for all $n$, the dimension grows superfactorially, which is much faster than $\dim(G_n)=\sum_{k=0}^n {n\choose k}^2$. Thus $A_0\subset A_1$ does not have trivial standard invariant, i.e., it is not the infinite index analog of the Temperley-Lieb subfactors. 
\end{ex}

\begin{ex}\label{ex:OuterZAction2}
Recall Example \ref{ex:OuterZAction}, i.e. $H=L^2(R)\oplus L^2(R)_{\sigma}$ for an outer action $\sigma:\Z\to \Out(R)$, where we denote $\sigma^n=\sigma(n)$.
In this case, when $n\geq 1$, $K^n=L^2(R)_{\sigma^n}$ has no central vectors, so
$$
\cP_n=A'\cap H^n=\spann\left\{
\underbrace{\zeta\otimes \cdots \otimes \zeta}_{n\text{ vectors}}
\right\}
\cong \C \,
\begin{tikzpicture}[baseline=-.1cm]
	\nbox{}{(0,0)}{.4}{.2}{.2}{};
	\BrokenTopString{(-.4,0)};
	\node at (.05,0) {$\dots$};
	\BrokenTopString{(.4,0)};	
	\node at (.4,.6) {\scriptsize{$n$}};	
\end{tikzpicture}\,,
$$
where $\zeta$ is the image of $\widehat{1}\in L^2(R)$ inside $H$.
This bimodule has trivial standard invariant by Example \ref{ex:OuterZAction}, but it does not come from a {\rm II}$_1$-subfactor.
\end{ex}

\bibliographystyle{amsalpha}
{\footnotesize
\bibliography{../../bibliography}

\providecommand{\bysame}{\leavevmode\hbox to3em{\hrulefill}\thinspace}
\providecommand{\MR}{\relax\ifhmode\unskip\space\fi MR }
\providecommand{\MRhref}[2]{%
  \href{http://www.ams.org/mathscinet-getitem?mr=#1}{#2}
}
\providecommand{\href}[2]{#2}
\begin{thebibliography}{BDH11}

\bibitem[BDH11]{1110.5671}
Arthur Bartels, Christopher~L. Douglas, and Andr{\'e} Henriques,
  \emph{Dualizability and index of subfactors}, 2011, \arXiv{1110.5671}.

\bibitem[Big12]{MR2925434}
Stephen Bigelow, \emph{A diagrammatic {A}lexander invariant of tangles}, J.
  Knot Theory Ramifications \textbf{21} (2012), no.~8, 1250081, 9,
  \mathscinet{MR2925434}, \arxiv{1203.5457}, \doi{10.1142/S0218216512500812}.
  \MR{2925434}

\bibitem[BRY12]{MR2978881}
Stephen Bigelow, Eric Ramos, and Ren Yi, \emph{The {A}lexander and {J}ones
  polynomials through representations of rook algebras}, J. Knot Theory
  Ramifications \textbf{21} (2012), no.~12, 1250114, 18,
  \mathscinet{MR2978881}, \arxiv{1110.0538}, \doi{10.1142/S0218216512501143}.
  \MR{2978881}

\bibitem[Bur03]{1111.1362}
Michael Burns, \emph{Subfactors, planar algebras, and rotations}, Ph.D. thesis
  at the {U}niversity of {C}alifornia, {B}erkeley, 2003, \arXiv{1111.1362}.

\bibitem[CE89]{MR990778}
Alain Connes and David~E. Evans, \emph{Embedding of {${\rm U}(1)$}-current
  algebras in noncommutative algebras of classical statistical mechanics},
  Comm. Math. Phys. \textbf{121} (1989), no.~3, 507--525,
  \mathscinet{MR990778}.

\bibitem[Con80]{MR561983}
Alain Connes, \emph{On the spatial theory of von {N}eumann algebras}, J. Funct.
  Anal. \textbf{35} (1980), no.~2, 153--164, \mathscinet{MR561983}.

\bibitem[Dav96]{MR1402012}
Kenneth~R. Davidson, \emph{{$C^*$}-algebras by example}, Fields Institute
  Monographs, vol.~6, American Mathematical Society, Providence, RI, 1996,
  \mathscinet{MR1402012}. \MR{1402012 (97i:46095)}

\bibitem[EN96]{MR1387518}
Michel Enock and Ryszard Nest, \emph{Irreducible inclusions of factors,
  multiplicative unitaries, and {K}ac algebras}, J. Funct. Anal. \textbf{137}
  (1996), no.~2, 466--543, \mathscinet{MR1387518}.

\bibitem[FHH09]{MR2541502}
Daniel Flath, Tom Halverson, and Kathryn Herbig, \emph{The planar rook algebra
  and {P}ascal's triangle}, Enseign. Math. (2) \textbf{55} (2009), no.~1-2,
  77--92. \MR{2541502 (2010h:20018)}

\bibitem[GL96]{MR1376244}
John~J. Graham and Gus~I. Lehrer, \emph{Cellular algebras}, Invent. Math.
  \textbf{123} (1996), 1--34, \mathscinet{MR1376244}.

\bibitem[GL98]{MR1659204}
\bysame, \emph{The representation theory of affine {T}emperley-{L}ieb
  algebras}, Enseign. Math. (2) \textbf{44} (1998), no.~3-4, 173--218,
  \mathscinet{MR1659204}.

\bibitem[Haa79]{MR534673}
Uffe Haagerup, \emph{Operator-valued weights in von {N}eumann algebras. {I}},
  J. Funct. Anal. \textbf{32} (1979), no.~2, 175--206, \mathscinet{MR534673}.

\bibitem[HO89]{MR1055223}
Richard~H. Herman and Adrian Ocneanu, \emph{Index theory and {G}alois theory
  for infinite index inclusions of factors}, C. R. Acad. Sci. Paris S\'er. I
  Math. \textbf{309} (1989), no.~17, 923--927, \mathscinet{MR1055223}.

\bibitem[ILP98]{MR1622812}
Masaki Izumi, Roberto Longo, and Sorin Popa, \emph{A {G}alois correspondence
  for compact groups of automorphisms of von {N}eumann algebras with a
  generalization to {K}ac algebras}, J. Funct. Anal. \textbf{155} (1998),
  no.~1, 25--63, \mathscinet{MR1622812}.

\bibitem[Jon83]{MR696688}
Vaughan F.~R. Jones, \emph{Index for subfactors}, Invent. Math. \textbf{72}
  (1983), no.~1, 1--25, \mathscinet{MR696688}, \doi{10.1007/BF01389127}.

\bibitem[Jon99]{math/9909027}
\bysame, \emph{Planar algebras {I}}, 1999, \arXiv{math/9909027}.

\bibitem[Jon01]{MR1929335}
\bysame, \emph{The annular structure of subfactors}, Essays on geometry and
  related topics, {V}ol. 1, 2, Monogr. Enseign. Math., vol.~38, Enseignement
  Math., Geneva, 2001, \mathscinet{MR1929335}, pp.~401--463.

\bibitem[Jon10]{JonesVNA}
\bysame, \emph{Von neumann algebras}, 2010,
  \url{http://math.berkeley.edu/~vfr/MATH20909/VonNeumann2009.pdf}.

\bibitem[Jon11]{JonesPANotes}
\bysame, \emph{{J}ones' notes on planar algebras},
  \url{http://math.berkeley.edu/~vfr/VANDERBILT/pl21.pdf}, 2011.

\bibitem[Kau87]{MR899057}
Louis~H. Kauffman, \emph{State models and the {J}ones polynomial}, Topology
  \textbf{26} (1987), no.~3, 395--407, \mathscinet{MR899057},
  \doi{10.1016/0040-9383(87)90009-7}.

\bibitem[Lie67]{LiebIceModel}
Elliott~H. Lieb, \emph{Residual entropy of square ice}, Physical Review
  \textbf{162} (1967), no.~1, 162--172.

\bibitem[Ocn88]{MR996454}
Adrian Ocneanu, \emph{Quantized groups, string algebras and {G}alois theory for
  algebras}, Operator algebras and applications, Vol.\ 2, London Math. Soc.
  Lecture Note Ser., vol. 136, Cambridge Univ. Press, Cambridge, 1988,
  \mathscinet{MR996454}, pp.~119--172.

\bibitem[Ocn94]{MR1317353}
Adrian Ocneanu, \emph{Chirality for operator algebras}, Subfactors ({K}yuzeso,
  1993), World Sci. Publ., River Edge, NJ, 1994, \mathscinet{MR1317353},
  pp.~39--63. \MR{MR1317353 (96e:46082)}

\bibitem[Pen12]{MR2903179}
David Penneys, \emph{A cyclic approach to the annular {T}emperley-{L}ieb
  category}, J. Knot Theory Ramifications \textbf{21} (2012), no.~6, 1250049,
  40, \mathscinet{MR2903179} \arXiv{0912.1320} \doi{10.1142/S0218216511010012}.
  \MR{2903179}

\bibitem[Pen13]{MR3040370}
\bysame, \emph{A {P}lanar {C}alculus for {I}nfinite {I}ndex {S}ubfactors},
  Comm. Math. Phys. \textbf{319} (2013), no.~3, 595--648,
  \mathscinet{MR3040370} \arXiv{1110.3504} \doi{10.1007/s00220-012-1627-4}.
  \MR{3040370}

\bibitem[Pop86]{correspondences}
Sorin Popa, \emph{Correspondences}, INCREST Preprint, 1986.

\bibitem[Pop93]{MR1198815}
\bysame, \emph{Markov traces on universal {J}ones algebras and subfactors of
  finite index}, Invent. Math. \textbf{111} (1993), no.~2, 375--405,
  \mathscinet{MR1198815} \doi{10.1007/BF01231293}.

\bibitem[Pop95]{MR1334479}
\bysame, \emph{An axiomatization of the lattice of higher relative commutants
  of a subfactor}, Invent. Math. \textbf{120} (1995), no.~3, 427--445,
  \mathscinet{MR1334479} \doi{10.1007/BF01241137}.

\bibitem[Sol02]{MR1939108}
Louis Solomon, \emph{Representations of the rook monoid}, J. Algebra
  \textbf{256} (2002), no.~2, 309--342, \mathscinet{MR1939108}
  \doi{10.1016/S0021-8693(02)00004-2}. \MR{1939108 (2003m:20091)}

\bibitem[TL71]{MR0498284}
Harold N.~V. Temperley and Elliott~H. Lieb, \emph{Relations between the
  ``percolation'' and ``colouring'' problem and other graph-theoretical
  problems associated with regular planar lattices: some exact results for the
  ``percolation'' problem}, Proc. Roy. Soc. London Ser. A \textbf{322} (1971),
  no.~1549, 251--280, \mathscinet{MR0498284}.

\end{thebibliography}
}
\end{document}